\definecolor{bgcolor}{rgb}{0.76,0.88,0.50}
\definecolor{bgcolor0}{rgb}{0.93,0.99,1}
\definecolor{bgcolor1}{rgb}{0.8,1,1}
\definecolor{bgcolor2}{rgb}{0.8,1,0.8}
\definecolor{bgcolor3}{rgb}{0.50,0.90,0.50}
\definecolor{mydarkred}{RGB}{192,25,25}
\definecolor{mydarkgreen}{RGB}{25,192,25}
\definecolor{mydarkblue}{RGB}{25,25,192}
\definecolor{ForestGreen}{RGB}{34,139,34}
\definecolor{Black}{RGB}{0,0,0}
\definecolor{MyDimGrey}{RGB}{100,100,100}
\newcommand{\algname}[1]{{\sf #1}}
\newcommand{\algnamesmall}[1]{{\sf #1}}
\newcommand{\norm}[1]{\left\| #1 \right\|}
\newcommand{\inp}[2]{\left\langle#1,#2\right\rangle} % inner product
\newcommand{\flr}[1]{\left\lfloor #1\right\rfloor} % inner product
\newcommand{\ceil}[1]{\left\lceil #1\right\rceil} % inner product
\newcommand{\abs}[1]{\left| #1 \right|}
\newcommand{\R}{\mathbb{R}} % reals
\newcommand{\N}{\mathbb{N}}
\newcommand{\Exp}[1]{{\rm \mathbb{E}}\left[#1\right]}
\newcommand{\ExpSub}[2]{{\rm \mathbb{E}}_{#1}\left[#2\right]}
\newcommand{\ExpCond}[2]{{\mathbb{E}}\left[\left.#1\right\vert#2\right]}
\newcommand{\Prob}[1]{\mathbb{P}\left(#1\right)} % probability
\newcommand{\cA}{\mathcal{A}}
\newcommand{\cO}{\mathcal{O}}
\newcommand{\mA}{\mathbf{A}}
\newcommand{\eqdef}{:=} 
\newcommand{\vast}{\bBigg@{4}}
\newcommand{\myred}[1]{{\color{red}#1}}
\newtheorem{assumption}{Assumption}
\newtheorem{lemma}{Lemma}
\newtheorem{theorem}{Theorem}
\newtheorem{corollary}{Corollary}
\theoremstyle{definition}
\newtheorem{definition}[theorem]{Definition}
\newenvironment{hproof}{%
  \proof}{\endproof}
\newenvironment{protocol}[1][htb]{%
    \renewcommand{\ALG@name}{Protocol}% Update algorithm name
   \begin{algorithm}[#1]%
  }{\end{algorithm}}
\newcommand{\alglinelabel}{%
  \addtocounter{ALC@line}{-1}% Reduce line counter by 1
  \refstepcounter{ALC@line}% Increment line counter with reference capability
  \label% Regular \label
}
\title{On the Optimal Time Complexities in Decentralized Stochastic Asynchronous Optimization}
\author{%
  Alexander Tyurin \\
  KAUST\thanks{King Abdullah University of Science and Technology, Thuwal, Saudi Arabia},\,\,AIRI\thanks{AIRI, Moscow, Russia},\,\,Skoltech\thanks{Skolkovo Institute of Science and Technology, Moscow, Russia} \\
  \And
  Peter Richt\'{a}rik \\
  KAUST\footnotemark[1] \\
}
\begin{document}

\maketitle

\begin{abstract}
  We consider the decentralized stochastic asynchronous optimization setup, where many workers asynchronously calculate stochastic gradients and asynchronously communicate with each other using edges in a multigraph. For both homogeneous and heterogeneous setups, we prove new time complexity lower bounds under the assumption that computation and communication speeds are bounded. We develop a new nearly optimal method, \algname{Fragile SGD}, and a new optimal method, \algname{Amelie SGD}, that converge under arbitrary heterogeneous computation and communication speeds and match our lower bounds (up to a logarithmic factor in the homogeneous setting). Our time complexities are new, nearly optimal, and provably improve all previous asynchronous/synchronous stochastic methods in the decentralized setup.
\end{abstract}

\section{Introduction}
\label{sec:intro}

We consider the smooth nonconvex optimization problem 
\begin{align}
  \label{eq:main_task}
  \textstyle \min \limits_{x \in \R^d} \Big \{f(x) \eqdef \ExpSub{\xi \sim \mathcal{D}_{\xi}}{f(x;\xi)}\Big \},
\end{align}
where $f\,:\,\R^d \times \mathbb{S}_{\xi} \rightarrow \R,$ and $\mathcal{D}_{\xi}$ is a distribution on a non-empty set $\mathbb{S}_{\xi}.$ For a given $\varepsilon > 0,$ we want to find a possibly random point $\bar{x},$ called an $\varepsilon$--stationary point, such that ${\mathbb{E}}[\norm{\nabla f(\bar{x})}^2] \leq \varepsilon.$ We analyze the heterogeneous setup and the convex setup with smooth and non-smooth functions in Sections~\ref{sec:heter} and \ref{sec:convex}.
\subsection{Decentralized setup with times}
\label{sec:setup}
We investigate the following decentralized asynchronous setup. 
Assume that we have $n$ workers/nodes with the associated computation times $\{h_i\},$ and communications times $\{\rho_{i \to j}\}.$ 
It takes less or equal to $h_i \in [0, \infty]$ seconds to compute a stochastic gradient by the $i$\textsuperscript{th} node, and less or equal $\rho_{i \to j} \in [0, \infty]$ seconds to send \emph{directly} a vector $v \in \R^d$ from the $i$\textsuperscript{th} node to the $j$\textsuperscript{th} node (it is possible that $h_i = \infty$ and $\rho_{i \to j} = \infty$). All computations and communications can be done asynchronously and in parallel. 
We would like to emphasize that $h_i \in [0, \infty]$ and $\rho_{i \to j} \in [0, \infty]$ are only upper bounds, and the real and effective computation and communication times can be arbitrarily heterogeneous and random. For simplicity of presentation, we assume the upper bounds are static; however, in Section~\ref{sec:dynamic_times}, we explain that our result can be trivially extended to the case when the upper bounds are dynamic.

\begin{figure}[t]
\centering
\begin{minipage}{.45\textwidth}
\begin{tikzpicture}[rotate=270,scale=0.8]
\begin{scope}[every node/.style={circle,thick,draw}]
    \node (1) at (0,0) {1};
    \node (2) at (0,3) {2};
    \node (3) at (2.5,4) {3};
    \node (4) at (2.5,1) {4};
    \node (5) at (2.5,-3) {5};
    \node (6) at (0.0,-2.5) {6} ;
\end{scope}
\begin{scope}[every node/.style={fill=white,circle},
              every edge/.style={draw=green,ultra thick}]
    \path [->] (1) edge [bend left=15] node {$2$} (2);
    \path [->] (2) edge [bend left=15] node {$1$} (3);
    \path [<-] (1) edge [bend left=-15] node {$1$} (5);
\end{scope}
\begin{scope}[every node/.style={fill=white,circle},
              every edge/.style={draw=black,thick}]
    \path [<-] (1) edge [bend left=-15] node {$1$} (2);
    \path [<-] (2) edge [bend left=-15] node {$3$} (3);
    \path [->] (1) edge [bend left=15] node {$4$} (4);
    \path [<-] (1) edge [bend left=-15] node {$7$} (4);
    \path [->] (4) edge [bend left=15] node {$3$} (3);
    \path [->] (4) edge [bend left=15] node {$2$} (5);
    \path [<-] (4) edge [bend left=-15] node {$3$} (5);
\end{scope}
\end{tikzpicture}
\end{minipage}
\hfill
\begin{minipage}{.45\textwidth}
\begin{tikzpicture}[rotate=270,scale=0.8] 
\begin{scope}[every node/.style={circle,thick,draw}]
    \node (1) at (0,0) {1};
    \node (2) at (0,3) {2};
    \node (3) at (2.5,4) {3};
    \node (4) at (2.5,1) {4};
    \node (5) at (2.5,-3) {5};
    \node (6) at (0.0,-2.5) {6} ;
\end{scope}
\begin{scope}[every node/.style={fill=white,circle},
              every edge/.style={draw=black,thick}]
    \path [-] (1) edge (5);
    \path [-] (1) edge (6);
    \path [-] (1) edge (2);
    \path [-] (2) edge (3);
    \path [-] (4) edge (3);
\end{scope}
\end{tikzpicture}
\end{minipage}
\caption{\emph{On the left:} an example of a multigraph with $n = 6.$ The edges with $\rho_{i \to j} = \infty$ are omitted. The shortest distance between nodes $5$ and $3$ is $\tau_{5 \to 3} = \rho_{5 \to 1} + \rho_{1 \to 2} + \rho_{2 \to 3}.$ Note that $\rho_{5 \to 3} = \infty.$ \emph{On the right:} an example of a spanning tree that illustrates the shortest paths from every node to node~$3.$ The shortest distance between nodes $6$ and $3$ is $\tau_{6 \to 3} = \infty$ because $\rho_{6 \to i} = \infty$ for all $i \neq 6.$}
\label{fig:graph_example}
\end{figure}

We consider any \emph{weighted directed multigraph} parameterized by a vector $h \in \R^{n}$ such that $h_i \in [0, \infty]$, and a matrix of distances $\{\rho_{i \to j}\}_{i,j} \in \R^{n \times n}$ such that $\rho_{i \to j} \in [0, \infty]$ for all $i, j \in [n]$ and $\rho_{i \to i} = 0$ for all $i \in [n].$ Every worker $i$ is connected to any other worker $j$ with two edges $i \to j$ and $j \to i.$
For this setup, it would be convenient to define \emph{the distance of the shortest path from worker $i$ to worker $j:$}
\begin{align}
  \label{eq:shortest_path}
  \textstyle &\tau_{i \to j} \eqdef \min\limits_{\textnormal{path} \in P_{i \to j}} \sum\limits_{(u,v) \in \textnormal{path}} \rho_{u \to v} \in [0, \infty],\\
  &\textnormal{where } \textstyle P_{i \to j} \eqdef
  \big\{[(k_1,k_2), \dots, (k_{m},k_{m+1})]\,\big|\, \forall m \in \N \,\forall p \in [m + 1]\,\forall k_p \in [n], \nonumber\\
  &\qquad \qquad \qquad \quad k_1 = i, k_{m+1} = j, \forall j \in \{2, \dots, m\} \, k_{j - 1} \neq k_{j} \neq k_{j + 1}\big\} \nonumber
\end{align}
is the set of all possible paths without loops from worker $i$ to worker $j$ for all $i,j \in [n].$ One can easily show that the triangle inequality $\tau_{i \to j} \leq \tau_{i \to k} + \tau_{k \to j}$ holds for all $i,j,k \in [n].$
Note that $\tau_{i \to j} \leq \rho_{i \to j}$ for all $i,j \in [n].$ It is important to distinguish $\tau_{i \to j}$ and $\rho_{i \to j}$ because it is possible that $\tau_{i \to j} < \rho_{i \to j} = \infty$ if workers $i$ and $j$ are connected by an edge $\rho_{i \to j} = \infty,$ and there is a path through other workers (see Fig.~\ref{fig:graph_example}).

We work with the following standard assumption from smooth nonconvex stochastic optimization literature.
\begin{assumption}
  \label{ass:lipschitz_constant}
$f$ is differentiable and $L$--smooth, i.e., $\norm{\nabla f(x) - \nabla f(y)} \leq L \norm{x - y}$ $\forall x, y \in \R^d.$
\end{assumption}
\begin{assumption}
  \label{ass:lower_bound}
  There exist $f^* \in \R$ such that $f(x) \geq f^*$ for all $x \in \R^d$.
\end{assumption}
\begin{assumption}
    \label{ass:stochastic_variance_bounded}
    For all $x \in \R^d,$ stochastic gradients $\nabla f(x;\xi)$ are unbiased and $\sigma^2$-variance-bounded, i.e., ${\rm \mathbb{E}}_{\xi}[\nabla f(x;\xi)] = \nabla f(x)$ and 
    ${\rm \mathbb{E}}_{\xi}[\|\nabla f(x;\xi) - \nabla f(x)\|^2] \leq \sigma^2,$ where $\sigma^2 \geq 0.$ We also assume that computation and communication times are statistically independent of stochastic gradients.
\end{assumption}

\section{Previous Results}
\subsection{Time complexity with one worker}
\label{sec:classical_intro}
For the case when $n = 1$ and $\tau_{1 \to 1} = 0,$ convergence rates and time complexities of problem \eqref{eq:main_task} are well-understood. It is well-known that the stochastic gradient method (\algname{SGD}), i.e., $x^{k+1} = x^{k} - \gamma \nabla f(x^k;\xi^k),$ where $\{\xi^k\}$ are i.i.d.~from $\mathcal{D}_{\xi},$ has the optimal \emph{oracle complexity} $\Theta(\nicefrac{L \Delta}{\varepsilon} + \nicefrac{\sigma^2 L \Delta}{\varepsilon^2})$ \citep{ghadimi2013stochastic,arjevani2022lower}. Assuming that the computation time of one stochastic gradient is bounded by $h_1,$ we can conclude that the optimal time complexity is
\begin{align}
  \label{eq:optimal_singlr}
  \textstyle T_{\textnormal{single}}^{\tau = 0} \eqdef \Theta\left(h_1 \times \left(\frac{L \Delta}{\varepsilon} + \frac{\sigma^2 L \Delta}{\varepsilon^2}\right)\right)
\end{align}
seconds in the worst case.

\begin{table*}
  \caption{\textbf{Homogeneous Case \eqref{eq:main_task}.} The time complexities to get an $\varepsilon$-stationary point in the nonconvex setting. We assume that $\tau_{i \to j} = \tau_{j \to i}$ for all $i,j \in [n]$ in this table. Abbr.: $\sigma^2$ is defined as ${\rm \mathbb{E}}_{\xi}[\|\nabla f(x;\xi) - \nabla f(x)\|^2] \leq \sigma^2$ for all $x \in \R^d,$ $L$ is a smoothness constant of $f,$ $\Delta \eqdef f(x^0) - f^*.$}
  \label{table:complexities}
  \centering 
  \scriptsize
  \begin{threeparttable}  
    \begin{tabular}[t]{cccccc}
\toprule
   \bf  Method & \bf The Worst-Case Time Complexity Guarantees & \bf Comment \\
     \midrule
     \makecell{\algnamesmall{Minibatch SGD}} & $\max\left\{\max\limits_{i, j \in [n]} \tau_{i \to j}, \max\limits_{i \in [n]} h_i\right\}\left(\frac{L \Delta}{\varepsilon} + \frac{\sigma^2 L \Delta}{n \varepsilon^2}\right)$ & \makecell{Suboptimal since, for instance, it \\ ``linearly''\textsuperscript{{\color{blue}(c)}} depends on $\max\limits_{i \in [n]} h_i$} \\
     \midrule
     \makecell{\algnamesmall{SWIFT} \\ \citep{bornstein2022swift}} & ---\textsuperscript{{\color{blue}(b)}} & \makecell{Suboptimal since, for instance, it \\ ``linearly''\textsuperscript{{\color{blue}(c)}} depends on $\max\limits_{i \in [n]} h_i$} \\
     \midrule
     \makecell{\algnamesmall{Asynchronous SGD} \\ \citep{even2023asynchronous}} & ---\textsuperscript{{\color{blue}(b)}} & \makecell{Suboptimal, for instance, \\ even if $\tau_{i \to j} = 0 \, \forall i,j \in [n]$} \\
     \midrule
     \makecell{\algnamesmall{Fragile SGD} \\ (Corollary~\ref{cor:max_time_params})} & $\frac{L \Delta}{\varepsilon} \min\limits_{j \in [n]} t^*(\nicefrac{\sigma^2}{\varepsilon}, [h_i]_{i=1}^{n}, [\tau_{i \to j}]_{i=1}^n)$\textsuperscript{{\color{blue}(a)}} & \makecell{Optimal up to $\myred{\log n}$ factor}\\
     \midrule
     \midrule
     \makecell{Lower Bound \\ (Theorem~\ref{theorem:lower_bound_simply})} & $\frac{1}{\myred{\log n} + 1}\frac{L \Delta}{\varepsilon} \min\limits_{j \in [n]} t^*(\nicefrac{\sigma^2}{\varepsilon},[h_i]_{i=1}^n, [\tau_{i \to j}]_{i=1}^n)$\textsuperscript{{\color{blue}(a)}} & ---\\
    \bottomrule
    \end{tabular}
    \begin{tablenotes}
      \item [{\color{blue}(a)}] The mapping $t^*$ is defined in Definition~\ref{def:time_budget}.
      \item [{\color{blue}(b)}] It is not trivial to infer the time complexities for these methods. However, in Section~\ref{sec:prev_methods}, we discuss some cases where it is transparent that the obtained results are suboptimal.
      \item [{\color{blue}(c)}] Meaning that the corresponding time complexity $\to \infty$ if $\max_{i \in [n]} h_i \to \infty.$
    \end{tablenotes}
  \end{threeparttable}
\end{table*}

\subsection{Parallel optimization without communication costs}
\label{sec:no_comm}
Assume that $n > 1$ and $\tau_{i \to j} = 0$ for all $i, j \in [n],$ and computation times of stochastic stochastic gradients are arbitrarily heterogeneous. The simplest baseline method in this setup is \algname{Minibatch SGD}, i.e., 
\begin{align}
  \label{eq:kUjILlgdfxbYmlR}
  \textstyle x^{k+1} = x^{k} - \frac{\gamma}{n} \sum\limits_{i=1}^n \nabla f(x^k;\xi^{k}_i),
\end{align}
where $\{\xi^{k}_i\}$ are i.i.d.~from $\mathcal{D}_{\xi}$ and the gradient $\nabla f(x^k;\xi^{k}_i)$ is calculated in worker $i$ in parallel. This method waits for stochastic gradients from all workers; thus, it is not robust to ``stragglers'' and in the worst case the time complexity of such an algorithm is 
\begin{align*}
  \textstyle T_{\textnormal{mini}}^{\tau=0} \eqdef \Theta\left(\max\limits_{i \in [n]} h_i \times \left(\frac{L \Delta}{\varepsilon} + \frac{\sigma^2 L \Delta}{n \varepsilon^2}\right)\right),
\end{align*}
which depends on the time $\max_{i \in [n]} h_i$ of the slowest worker.
There are many other more advanced methods including \algname{Picky SGD} \citep{cohen2021asynchronous}, \algname{Asynchronous SGD} (e.g., \citep{recht2011hogwild,nguyen2018sgd,mishchenko2022asynchronous, koloskova2022sharper}), and \algname{Rennala SGD} \citep{tyurin2023optimal} that are designed to be robust to workers' chaotic computation times.
Under the assumption that the computation times of the workers are heterogeneous and bounded by $\{h_i\},$ \citet{tyurin2023optimal} showed that \algname{Rennala SGD} is the first method that achieves the optimal time complexity
\begin{align}
  \textstyle T_{\textnormal{Rennala}}^{\tau = 0} \eqdef \Theta\left(\min\limits_{m \in [n]} \left[\left(\frac{1}{m} \sum\limits_{i=1}^{m} \frac{1}{h_{\pi_i}}\right)^{-1} \left(\frac{L \Delta}{\varepsilon} + \frac{\sigma^2 L \Delta}{m \varepsilon^2}\right)\right]\right),
  \label{eq:rennala}
\end{align}
where $\pi$ is a permutation that sorts $h_i:$ $h_{\pi_1} \leq \dots \leq h_{\pi_n}.$ For instance, one can see that $T_{\textnormal{Rennala}}^{\tau = 0} \leq T_{\textnormal{mini}}^{\tau=0}$ for all parameters.

\subsection{Parallel optimization with communication costs $\tau_{i \to j}$}
\label{sec:motivation}
We now consider the setup where workers' communication times can not be ignored. This problem leads to a research field called \emph{decentralized optimization}. This setup is the primary case for us. All $n$ workers calculate stochastic gradients in parallel and communicate with each other. Numerous works consider this setup, and we refer to \citet{yang2019survey,koloskova2024optimization} for detailed surveys. Typically, methods in this setting use the gossip matrix framework \citep{duchi2011dual,shi2015proximal,koloskova2021improved} and get an \emph{iteration converge rate} that depends on the spectral gap of a mixing matrix. However, such rates do not give the physical time of algorithms (see also Section~\ref{sec:gossip}). 

\begin{table*}
  \caption{\textbf{Heterogeneous Case \eqref{eq:main_task_heterog}.} Time complexities to get an $\varepsilon$-stationary point in the nonconvex setting. Abbr.: $\sigma^2$ is defined as ${\rm \mathbb{E}}_{\xi}[\|\nabla f_i(x;\xi) - \nabla f_i(x)\|^2] \leq \sigma^2$ for all $x \in \R^d, i \in [n],$ $L$ is a smoothness constant of $f = \frac{1}{n} \sum_{i=1}^n f_i,$ $\Delta \eqdef f(x^0) - f^*.$}
  \label{table:hetrog_complexities}
  \centering 
  \scriptsize
  \begin{adjustbox}{width=\columnwidth,center}
  \begin{threeparttable}  
    \begin{tabular}[t]{cccccc}
\toprule
   \bf  Method & \bf The Worst-Case Time Complexity Guarantees & \bf Comment \\
     \midrule
     \makecell{\algnamesmall{Minibatch SGD}} & $\frac{L \Delta}{\varepsilon}\max\left\{\left(1 + \frac{\sigma^2}{n \varepsilon}\right) \max\{\max\limits_{i, j \in [n]} \tau_{i \to j}, \max\limits_{i \in [n]} h_i\}\right\}$ & \makecell{suboptimal if $\nicefrac{\sigma^2}{\varepsilon}$ is large} \\
     \midrule
     \makecell{\algname{RelaySGD}, \algname{Gradient Tracking} \\ \citep{vogels2021relaysum} \\ \citep{liu2024decentralized}} & $\geq \frac{\max\limits_{i \in [n]} L_i \Delta}{\varepsilon} \frac{\sigma^2}{n \varepsilon}\max\limits_{i \in [n]} h_i$ & \makecell{requires local $L_i$-smooth. of $f_i$, \\ suboptimal if $\nicefrac{\sigma^2}{\varepsilon}$ is large \\ (even if $\max_{i\in[n]}L_i = L$)} \\
     \midrule
     \makecell{\algnamesmall{Asynchronous SGD} \\ \citep{even2023asynchronous}} & --- & \makecell{requires similarity of the functions $\{f_i\}$, \\ requires local $L_i$-smooth. of $f_i$} \\
     \midrule
     \makecell{\algnamesmall{Amelie SGD} and Lower Bound \\ (Thm.~\ref{theorem:random_lower_bound_heter_simply} and Cor.~\ref{cor:time_complexity_heter})} & $\frac{L \Delta}{\varepsilon} \max\left\{\max\limits_{i,j \in [n]} \tau_{i \to j}, \max\limits_{i \in [n]} h_i, \frac{\sigma^2}{n \varepsilon}\left(\frac{1}{n} \sum\limits_{i=1}^n h_i\right)\right\}$ & \makecell{Optimal up to a constant factor}\\
    \bottomrule
    \end{tabular}
  \end{threeparttable}
  \end{adjustbox}
\end{table*}

Let us consider a straightforward baseline: \algname{Minibatch SGD}. We can implement \eqref{eq:kUjILlgdfxbYmlR} in a way that all workers calculate one stochastic gradient (takes at most $\max_{i \in [n]} h_i$ seconds) and then aggregate them to one \emph{pivot worker} $j^*$ (takes at most $\max_{i \in [n]} \tau_{i \to j^*}$ seconds). Then, pivot worker $j^*$ calculates a new point $x^{k+1}$ and broadcasts it to all workers (takes $\max_{i \in [n]} \tau_{j^* \to i}$ seconds). One can easily see that the time complexity of such a procedure is\footnote{because $\max_{i, j \in [n]} \tau_{i \to j} \leq \max_{i \in [n]} \tau_{i \to j^*} + \max_{i \in [n]} \tau_{j^* \to i} \leq 2 \max_{i, j \in [n]} \tau_{i \to j}$ by the triangle inequality} 
\begin{align}
  \label{eq:dPijoebwxuLIXvSe}
  \textstyle T_{\textnormal{mini}} \eqdef \Theta\left(\max\left\{\max\limits_{i, j \in [n]} \tau_{i \to j}, \max\limits_{i \in [n]} h_i\right\}\left(\frac{L \Delta}{\varepsilon} + \frac{\sigma^2 L \Delta}{n \varepsilon^2}\right)\right).
\end{align}
We can analyze any other asynchronous decentralized method, which will be done with more advanced methods in Section~\ref{sec:prev_methods}.

But what is the best possible (optimal) time complexity we can get in the setting from Section~\ref{sec:setup}? 

Unlike the setups from Sections~\ref{sec:classical_intro} and \ref{sec:no_comm} when the communication times are zero ($\tau_{i \to j} = 0$ for all $i,j \in [n]$), the optimal time complexity and an optimal method for the case $\tau_{i \to j} \geq 0$ for all $i,j \in [n]$ are not known. Our main goal in this paper is to solve this problem.
\section{Contributions}
We consider the class of functions that satisfy the setup and the assumptions from Section~\ref{sec:setup} and show that (informally) it is impossible to develop a method that will converge faster than \eqref{eq:lower_bound_simple} seconds. Next, we develop a new asynchronous stochastic method, \algname{Fragile SGD}, that is \emph{nearly} optimal (i.e., almost matches this lower bound; see Table~\ref{table:complexities} and Corollary~\ref{cor:max_time_params}). This is the first such method. It provably improves on \algname{Asynchronous SGD} \citep{even2023asynchronous} and all other synchronous and asynchronous methods \citep{bornstein2022swift}. We also consider the heterogeneous setup (see Table~\ref{table:hetrog_complexities} and Section~\ref{sec:heter}), where we discover the optimal time complexity by proving another lower bound and developing a new optimal method, \algname{Amelie SGD}, with weak assumptions. The developed methods can guarantee the \emph{iteration complexity} $\cO\left(\nicefrac{L \Delta}{\varepsilon}\right)$ with arbitrarily heterogeneous random computation and communication times (Theorems~\ref{thm:sgd_homog} and \ref{thm:sgd_heter}). Our findings are extended to the convex setup in Section~\ref{sec:convex}, where we developed new accelerated methods, \algname{Accelerated Fragile SGD} and \algname{Accelerated Amelie SGD}.

\section{Lower Bound}
\label{sec:lower_bound_main}

In order to construct our lower bound, we consider any (zero-respecting) method that can be represented by Protocol~\ref{alg:simplified_time_multiple_oracle_protocol}. This protocol captures all virtually distributed synchronous and asynchronous methods, such as \algname{Minibatch SGD}, \algname{SWIFT} \citep{bornstein2022swift}, \algname{Asynchronous SGD} \citep{even2023asynchronous}, and \algname{Gradient Tracking} \citep{koloskova2021improved}.

For all such methods we prove the following theorem.
\begin{theorem}[Lower Bound; Simplified Presentation of Theorem~\ref{theorem:random_lower_bound}]
  \label{theorem:lower_bound_simply}
  Consider Protocol~\ref{alg:simplified_time_multiple_oracle_protocol} with $\nabla f(\cdot;\cdot).$ We take any $h_i \geq 0$ and $\tau_{i \to j} \geq 0$ for all $i,j \in [n]$ such that $\tau_{i \to j} \leq \tau_{i \to k} + \tau_{k \to j}$ for all $i,k,j \in [n].$ We fix $L, \Delta, \varepsilon, \sigma^2 > 0$ that satisfy the inequality $\varepsilon < c L \Delta$ for some universal constant $c.$ For any (zero-respecting) algorithm, there exists a function $f,$ which satisfy Assumptions~\ref{ass:lipschitz_constant}, \ref{ass:lower_bound} and $f(0) - f^* \leq \Delta$, and a stochastic gradient mapping $\nabla f(\cdot;\cdot),$ which satisfies Assumption~\ref{ass:stochastic_variance_bounded}, such that the required time to find $\varepsilon$--solution is
  \begin{align}
    \label{eq:lower_bound_simple}
    &\textstyle \Omega\left(\frac{1}{\log n + 1}\frac{L \Delta}{\varepsilon} \min\limits_{j \in [n]} t^*(\nicefrac{\sigma^2}{\varepsilon},[h_i]_{i=1}^n, [\tau_{i \to j}]_{i=1}^n)\right) \\
    &\textstyle \overset{\textnormal{Def}~\ref{def:time_budget}}{\equiv} \Omega\left(\frac{1}{\log n + 1}\frac{L \Delta}{\varepsilon} \min\limits_{j \in [n]} \min\limits_{k \in [n]} \max\left\{\max\{\tau_{\pi_{j,k} \to j}, h_{\pi_{j,k}}\}, \frac{\sigma^2}{\varepsilon}\left(\sum\limits_{i=1}^{k} \frac{1}{h_{\pi_{j,i}}}\right)^{-1}\right\}\right),
    \label{eq:lower_bound}
  \end{align}
  where, for $j \in [n],$ $\pi_{j,\cdot}$ is a permutation that sorts $\{\max\{\tau_{i \to j}, h_{i}\}\}_{i=1}^n,$ i.e.,
  $\max\{\tau_{\pi_{j,1} \to j}, h_{\pi_{j,1}}\} \leq \dots \leq \max\{\tau_{\pi_{j,n} \to j}, h_{\pi_{j,n}}\}.$
\end{theorem}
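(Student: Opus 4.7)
The plan is to combine the standard zero-chain hard-function construction from the stochastic nonconvex lower bound literature with a Bernoulli stochastic oracle, and then account for the time that gradient information is forced to spend traversing shortest paths in the multigraph. Concretely, I would fix a rescaled chain-like function $f\colon\R^T\to\R$ with $T=\Theta(L\Delta/\varepsilon)$, $L$-smooth, with $f(0)-f^*\le\Delta$, and with the property that every $\varepsilon$-stationary point must have all $T$ coordinates activated. I would couple it with the standard Bernoulli oracle that returns $\frac{1}{p}\nabla f(x)$ with probability $p$ and $0$ otherwise, choosing $p=\Theta(\varepsilon/\sigma^2)$ so that Assumption~\ref{ass:stochastic_variance_bounded} is satisfied.

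Next I would define a per-node progress index $c_i(t)\in\{0,1,\dots,T\}$ equal to the largest coordinate activated by worker $i$'s state at time $t$, lifted through the zero-respecting property to Protocol~\ref{alg:simplified_time_multiple_oracle_protocol}. This index can only increase in two ways: worker $i$ performs a successful Bernoulli query (mask $=1$) at an iterate whose frontier equals $c_i$, or worker $i$ receives a message carrying strictly higher progress. Fixing a candidate ``decision node'' $j\in[n]$, I would track $c_j(\cdot)$: to advance $c_j$ by one, either $j$ itself produces a success in expected time $\Theta(h_j/p)$, or some worker $i$ produces a success whose information is then routed to $j$ along a path of total weight at least $\tau_{i\to j}$. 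If the algorithm employs a pool $S\subseteq[n]$ in parallel, a geometric-race argument gives an expected time to first success of $\Theta\bigl((p\sum_{i\in S}1/h_i)^{-1}\bigr)$, plus an unavoidable additive delay of $\max_{i\in S}\max\{h_i,\tau_{i\to j}\}$. Substituting $p=\Theta(\varepsilon/\sigma^2)$ and optimizing the pool over the permutation $\pi_{j,\cdot}$ that sorts $\max\{h_i,\tau_{i\to j}\}$ yields the per-step lower bound $t^*(\sigma^2/\varepsilon,[h_i],[\tau_{i\to j}])$; multiplying by the $T=\Theta(L\Delta/\varepsilon)$ coordinates that must be activated gives the bulk of \eqref{eq:lower_bound}.

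The outer $\min_{j\in[n]}$ together with the $\tfrac{1}{\log n+1}$ factor are handled by a doubling/pigeonhole argument across candidate decision nodes: since the algorithm does not commit in advance to any single pivot, I would discretize the possible values of $\min_k\max\{\cdots\}$ over $j$ into $O(\log n)$ geometrically-spaced buckets, and argue that some bucket must support enough of the algorithm's ``pivot mass'' that a Markov-type inequality forces the effective completion time to match the best-bucket $t^*$ up to this logarithmic slack.

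The main obstacle I anticipate is making the Bernoulli-oracle/asynchronous-schedule coupling fully rigorous: the algorithm may adaptively reroute queries and messages based on earlier outcomes, so the per-step lower bound must hold in expectation over arbitrary stopping-time strategies, combining an optimal-stopping argument for parallel Bernoulli clocks (analogous to the shared-memory analysis of \algname{Rennala SGD}) with the routing delays $\tau_{i\to j}$ induced by the multigraph. A secondary challenge is keeping the logarithmic factor at $\log n$ rather than $\log^2 n$, which will likely require a layered construction in which the $O(\log n)$ scales of $t^*$ are realized on independent sub-blocks of coordinates, so that progress absorbed at one scale cannot be recycled to speed up another.
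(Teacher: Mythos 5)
Your Part 1 (rescaled zero-chain function plus Bernoulli oracle with $p=\Theta(\varepsilon/\sigma^2)$) and the skeleton of Part 2 match the paper: the paper also reduces the protocol to a ``level game'' in which reaching progress level $T$ at node $j$ provably requires at least $y^T_j=\min_{i\in[n]}\{y^{T-1}_i+h_i\eta^T_i+\tau_{i\to j}\}$ seconds, with $\eta^k_i$ i.i.d.\ Geometric($p$), and the final time is $y^T=\min_j y^T_j$ (adaptive rerouting is absorbed pathwise by this induction together with the triangle inequality, so no separate optimal-stopping argument is needed). The genuine gap is in how you go from this to the stated bound. You propose a per-level \emph{expected}-time estimate $\Theta\bigl((p\sum_{i\in S}1/h_i)^{-1}\bigr)$ plus an additive delay, multiplied by $T$ levels, and then a doubling/pigeonhole argument over ``pivot mass'' across $O(\log n)$ buckets of candidate decision nodes. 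This does not work as stated: the theorem requires $\Exp{\inf_{k\in S_t,i}\|\nabla f(x^k_i)\|^2}>\varepsilon$, which the paper obtains by showing $\Prob{y^T\le\bar y}\le\tfrac12$ for a concrete $\bar y$, i.e.\ an anti-concentration statement about $y^T$, not a bound on its expectation. Because each stage is a minimum over $n$ workers and the final quantity is a minimum over $n$ nodes of coupled sums, per-stage expectations neither add (no Wald-type argument applies) nor control the lower tail of $\min_j y^T_j$; this is exactly the difficulty the paper flags as the reason previous sum-structured lower-bound arguments do not apply.

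The paper's missing ingredient that you would need is the MGF/Chernoff recursion of Lemma~\ref{theorem:bound_lbst}: bound $\Prob{y^T\le t}\le e^{st}\,\Exp{e^{-sy^T}}$, replace the maxima of $e^{-s y^k_j}$ arising from the $\min$ operations by sums over the $n$ workers, and show via the deterministic Lemma~\ref{lemma:min_max} (with $\bar t_j\approx\tfrac18 t^*(\cdot)$) that each per-level factor $a_j=\sum_i\Exp{e^{-s_j(h_i\eta_i^k+\tau_{i\to j})}}$ is at most $e^{-1}$ once $s_j=\log(8n)/\bar t_j$, so the MGF decays like $e^{-T}$ and the lower tail is controlled. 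The $\tfrac1{\log n+1}$ factor comes precisely from this choice $s_j\propto\log(8n)/\bar t_j$ forced by the max-by-sum step inside the MGF, not from any discretization of pivot values; the outer $\min_j$ is already handled by $y^T=\min_j y^T_j$ inside the same recursion. Your bucket/Markov ``pivot mass'' sketch has no identified random quantity to apply Markov to (the algorithm never commits to a pivot, and the bound must hold for the first node to complete the chain), and your worry about a layered construction to avoid $\log^2 n$ is a symptom of attacking the $\min_j$ in the wrong place. Without a concentration argument of the above type (or an equivalent lower-tail bound for the recursively defined $y^T$), your outline does not yield \eqref{eq:lower_bound}.
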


\begin{protocol}
  \caption{Simplified Presentation of Protocol~\ref{alg:time_multiple_oracle_protocol}}
  \label{alg:simplified_time_multiple_oracle_protocol}
  \begin{algorithmic}[1]
  \STATE Init $S_i = \emptyset$ (all available information) on worker $i$ for all $i \in [n]$
  \STATE Run the following two loops in each worker in parallel
    \WHILE{True}
    \STATE Calculate a new point $x_i^k$ based on $S_i$ \hfill (takes $0$ seconds)
    \STATE Calculate a stochastic gradient $\nabla f(x_i^k;\xi)$ (or $\nabla f_i(x_i^k;\xi)$) $\quad \xi \sim \mathcal{D}_{\xi}$ \hfill (takes $h_i$ seconds)
    \STATE Atomic add $\nabla f(x_i^k;\xi)$ (or $\nabla f_i(x_i^k;\xi)$) to $S_i$ \hfill (atomic operation, takes $0$ seconds)
    \ENDWHILE
    \WHILE{True}
    \STATE Send\textsuperscript{{\color{blue}(a)}} any vector from $\R^d$ based on $S_i$ to any worker $j$ and go to the next step of this loop without waiting
    \hfill (takes $\tau_{i \to j}$ seconds to send; worker $j$ adds this vector to $S_j$)
    \ENDWHILE
  \end{algorithmic}
  \small {\color{blue}(a)}: When we prove the lower bounds, we allow algorithms to send as many vectors as they want in parallel from worker $i$ to worker $j$ for all $i \neq j \in [n].$
\end{protocol}

The intuition and meaning of the formula \eqref{eq:lower_bound} is discussed in Section~\ref{sec:interpretation}. Note that if we take $n = 1$ and $\tau_{1 \to 1} = 0$ our lower bound reduces to the lower bound \eqref{eq:optimal_singlr} up to a log factor. Moreover, if we take $n > 1$ and $\tau_{i \to j} = 0$ for all $i,j \in [n],$ then \eqref{eq:lower_bound} reduces to \eqref{eq:rennala} up to a log factor. Thus, \eqref{eq:lower_bound} is nearly consistent with the lower bounds from \citep{arjevani2022lower,tyurin2023optimal}. We get an extra $\log n$ factor due to the generality of our setup. The reason is technical, and we explain it in Section~\ref{sec:main_lemma}. In a nutshell, the lower problem reduces to the analysis of the concentration of the time series $y^{T} \eqdef \min_{j \in [n]} y^{T}_j$ and $y^{T}_j \eqdef \min_{i \in [n]} \left\{y^{T-1}_i + h_i \eta^{T}_i + \tau_{i \to j}\right\},$ where $y^{0}_i = 0$ for all $i \in [n],$ and $\{\eta^{k}_i\}$ are i.i.d. geometric random variables. This analysis is not trivial due to the $\min_{i \in [n]}$ operations. Virtually all previous works that analyzed lower bounds did not have such a problem because they analyzed time series with a sum structure (e.g., $\bar{y}^{T} \eqdef \bar{y}^{T-1} + \varrho^{T},$ where $\{\varrho^{k}\}$ are some random variables, and $\bar{y}^{0} = 0$).

Let us define an auxiliary function to simplify readability.

\afterpage{\begin{figure}
  \begin{algorithm}[H]
    \caption{\algname{Fragile SGD}}
    \label{alg:alg_server}
    \begin{algorithmic}[1]
    \STATE \textbf{Input:} starting point $x^0$, stepsize $\gamma$, batch size $S$, pivot worker $j^*,$ spanning trees ${\overline{st}}$ and ${\overline{st}}_{\textnormal{bc}}$
    \STATE Start Process 0 (Algorithm~\ref{alg:receiver}) in worker $j^*$
    \STATE Start Process $i$ (Algorithm~\ref{alg:worker}) in all workers for all $i \in [n]$ (including worker $j^*$)
    \end{algorithmic}
  \end{algorithm}
  \vspace{-0.6cm}
  \begin{algorithm}[H]
    \caption{Process 0 (running in worker $j^*$)}
    \label{alg:receiver}
    \begin{algorithmic}[1]
    \FOR{$k = 0, 1, \dots, K - 1$}
    \STATE Send $x^{k}$ to Process $j^*$ \hfill{\tiny $\rhd $ takes $\tau_{j^* \to j^*} = 0$ seconds}
    \STATE Init $(g^k, s^k) = (0, 0)$
      \WHILE{$s < S$}
      \STATE Wait for a message $(g_{j^*, \textnormal{send}}^k, s_{j^*, \textnormal{send}}^k)$ from Process $j^*$
      \STATE $g^k = g^k + g_{j^*, \textnormal{send}}^k$; \;\; $s^k = s^k + s_{j^*, \textnormal{send}}^k$
      \ENDWHILE
      \STATE $x^{k+1} = x^k - \frac{\gamma}{s^k} g^{k}$ \alglinelabel{alg:step_homog}
    \ENDFOR
    \end{algorithmic}
  \end{algorithm}
  \vspace{-0.6cm}
  \begin{algorithm}[H]
    \caption{Process $i$ (running in worker $i$)}
    \label{alg:worker}
    \begin{algorithmic}[1]
    \STATE Init $(g_{i,\textnormal{next}}^{k}, s_{i,\textnormal{next}}^{k}) = (0, 0)$ for all $k \in \{0, \dots, K - 1\}$\textsuperscript{{\color{blue}(a)}}, $k_{\max} = 0$
      \STATE Run the following three functions in parallel
      \STATE \textbf{function} BroadcastFurtherAndCalculateStochasticGradients
      \STATE \quad \textbf{while} True
        \STATE \qquad Get a new point $x^{\bar{k}}$ sent by Process $\textnormal{next}_{{\overline{st}_{\textnormal{bc}}},j^*}(i)$ \hfill{\tiny $\rhd $ $\bar{k}$ is not necessarily equals to the current $k$ from Alg.~\ref{alg:receiver}}
        \STATE \qquad Atomic update $k_{\max} = \max\{\bar{k}, k_{\max}\}$
        \STATE \qquad \textbf{for} all $p$ such that $\textnormal{next}_{{\overline{st}_{\textnormal{bc}}},j^*}(p) = i$ \textbf{do} \hfill{\tiny $\rhd $ broadcasts $x^{\bar{k}}$ further} \alglinelabel{alg:begin}
        \STATE \qquad \quad Send\textsuperscript{{\color{blue}(b)}} $x^{\bar{k}}$ to Process $p$ and go to the next step without waiting \hfill{\tiny $\rhd $ takes at most $\tau_{i \to p}$ seconds to send}
        \STATE \qquad \textbf{end for} \alglinelabel{alg:end_broadcast}
        \STATE \qquad \textbf{while} not received a new point \hfill{\tiny $\rhd $ immediately stops the loop when receives a new point}
          \STATE \qquad \quad Calculate $\nabla f(x^{\bar{k}};\xi), \quad \xi \sim \mathcal{D}$ \hfill{\tiny $\rhd $ takes at most $h_i$ second}
          \STATE \qquad \quad Run atomic add $g_{i,\textnormal{next}}^{\bar{k}} = g_{i,\textnormal{next}}^{\bar{k}} + \nabla f(x^{\bar{k}};\xi), s_{i,\textnormal{next}}^{\bar{k}} = s_{i,\textnormal{next}}^{\bar{k}} + 1$ \alglinelabel{alg:first_k}
        \STATE \qquad \textbf{end while}
      \STATE \quad \textbf{end while}
      \STATE \textbf{end function} 
      \STATE \textbf{function} ReceiveVectorsFromPreviousWorkers
      \STATE \quad \textbf{while} True
        \STATE \qquad Wait for a message $(g_{p,\textnormal{send}}^{\hat{k}}, s_{p,\textnormal{send}}^{\hat{k}})$ from any Process $p$ such that $\textnormal{next}_{{\overline{st}},j^*}(p) = i$
        \STATE \qquad Run atomic add $g_{i,\textnormal{next}}^{\hat{k}} = g_{i,\textnormal{next}}^{\hat{k}} + g_{p,\textnormal{send}}^{\hat{k}}, s_{i,\textnormal{next}}^{\hat{k}} = s_{i,\textnormal{next}}^{\hat{k}} + s_{p,\textnormal{send}}^{\hat{k}}$ \alglinelabel{alg:second_k}
        \STATE \qquad Atomic update $k_{\max} = \max\{\hat{k}, k_{\max}\}$
        \STATE \quad \textbf{end while}
      \STATE \textbf{end function} 
      \STATE \textbf{function} SendVectorsToNextWorker
      \STATE \quad \textbf{while} True
        \STATE \qquad Atomic init $g_{i, \textnormal{send}}^{k_{\max}}, s_{i, \textnormal{send}}^{k_{\max}} = g_{i, \textnormal{next}}^{k_{\max}}, s_{i, \textnormal{next}}^{k_{\max}}$ and reset $g_{i, \textnormal{next}}^{k_{\max}} = 0, s_{i, \textnormal{next}}^{k_{\max}} = 0$ 
        \STATE \qquad Send\textsuperscript{{\color{blue}(b)}} $(g_{i, \textnormal{send}}^{k_{\max}}, s_{i, \textnormal{send}}^{k_{\max}})$ to Process $\textnormal{next}_{{\overline{st}},j^*}(i)$ and wait \hfill{\tiny $\rhd $ takes at most $\tau_{i \to \textnormal{next}_{{\overline{st}},j^*}(i)}$ seconds to wait}
        \STATE \quad \textbf{end while}
      \STATE \textbf{end function} 
    \end{algorithmic}
    \end{algorithm}
    \small {\color{blue}(a)}: To simplify the listing of the algorithm, we assume here that a worker can store $K$ auxiliary vectors in the memory. One can see that it's not necessary, and it is sufficient to maintain only one vector $g_{i, \textnormal{next}}^{k_{\max}}.$ In particular, it is sufficient to modify the logic of \begin{NoHyper}Lines~\ref{alg:first_k} and \ref{alg:second_k}, where we run the add operations only if $\bar{k} = k_{\max}$ and $\hat{k} = k_{\max}.$\end{NoHyper} The efficient implementation has $\cO(d)$ floats memory complexity per worker. \\
    \small {\color{blue}(b)}: BroadcastFurtherAndCalculateStochasticGradients and SendVectorsToNextWorker may try to send through the same edge. In this case, we can interleave their communications and decrease the speed of each line by at most two.
  \end{figure}
  \clearpage
}

\begin{definition}[Equilibrium Time]
  \label{def:time_budget}
  A mapping $t^* \,:\, \R_{\geq 0} \times \R^n_{\geq 0} \times \R^{n}_{\geq 0} \rightarrow \R_{\geq 0}$
with inputs $s$ (scalar), $[h_{i}]_{i=1}^{n}$ (vector), and $[\bar{\tau}_{i}]_{i=1}^n$ (vector) is called the \emph{equilibrium time} if it is defined as follows. 
Find a permutation\footnote{It is possible that a permutation is not unique, then the result of the mapping does not depend on the choice.} $\pi$ that sorts $\max\{\bar{\tau}_{i}, h_{i}\}$ as
  $\max\{\bar{\tau}_{\pi_{1}}, h_{\pi_{1}}\} \leq \dots \leq \max\{\bar{\tau}_{\pi_{n}}, h_{\pi_{n}}\}.$ Then the mapping returns the value 
  \begin{equation}
  \begin{aligned}
      \label{eq:equil_time}
      \textstyle t^*(s, [h_i]_{i=1}^{n}, [\bar{\tau}_{i}]_{i=1}^n) \equiv \min\limits_{k \in [n]} \max\left\{\max\{\bar{\tau}_{\pi_{k}}, h_{\pi_{k}}\}, s\left(\sum\limits_{i=1}^{k} \frac{1}{h_{\pi_{i}}}\right)^{-1}\right\} \in [0, \infty].
  \end{aligned}
  \end{equation}
\end{definition}

\section{New Method: \algname{Fragile SGD}}
\label{sec:new_method}

We introduce a novel optimization method characterized by time complexities that closely align with the lower bounds established in Section~\ref{sec:lower_bound_main}. Our algorithms leverage \emph{spanning trees}. 
A spanning tree is a tree (undirected unweighted graph) encompassing all workers. The edges of spanning trees are \emph{virtual} and not related to the edges defined in Section~\ref{sec:setup} (see Fig.~\ref{fig:graph_example}).
\begin{definition}[mapping $\textnormal{next}_{T,j}(i)$]
  \label{def:next}
  Take a spanning tree $T$ and fix any worker $j \in [n].$
  For $i = j,$ we define $\textnormal{next}_{T,j}(i) = 0.$ For all $i \neq j \in [n]$, we define $\textnormal{next}_{T,j}(i)$ as the index of the next worker
  on the path of the spanning tree $T$ from worker $i$ to worker $j.$ 
\end{definition}
Our new method, \algname{Fragile SGD}, is presented in Algorithms~\ref{alg:alg_server}, \ref{alg:worker}, and \ref{alg:receiver}. While \algname{Fragile SGD} seems to be lengthy, the idea is pretty simple. All workers do three jobs in parallel: calculate stochastic gradients, receive vectors, and send vectors through spanning trees. A pivot worker aggregates all stochastic gradients in $g^k$ and, at some moment, does $x^{k + 1} = x^k - \gamma g^k.$ The algorithms formalize this idea.

Algorithm~\ref{alg:alg_server} requires a starting point $x^0$, a stepsize $\gamma$, a batch size $S$, the index $j^*$ of a pivot worker, and spanning trees ${\overline{st}}$ and ${\overline{st}}_{\textnormal{bc}}$ for the input. We need two spanning ${\overline{st}}_{\textnormal{bc}}$ and ${\overline{st}}$ trees because, in general, the fastest communication of a vector from $j^*$ to $i$ and from $i$ to $j^*$ should be arranged through two different paths. Algorithm~\ref{alg:alg_server} starts $n + 1$ processes running in parallel. Note that the pivot worker $j^*$ runs two parallel processes, called Process $0$ and Process $j^*,$ and any other worker $i$ runs one Process $i.$ Process $0$ broadcasts a new point $x^{k}$ through ${\overline{st}}_{\textnormal{bc}}$ to all other processes and goes to the loop where it waits for messages from Process $j^*.$ Process $i$ 
starts three functions that will be running in parallel: i) the first function's job is to receive a new point, broadcast it further, and start the calculation of stochastic gradients, ii) the second function receives stochastic gradients from all previous processes that are sending vectors to worker $j^*$, iii) the third function sends vectors the next worker on the path to $j^*$. By the definition of $\textnormal{next}_{{\overline{st}},j^*}(\cdot),$ all calculated stochastic vectors are sent to worker $j^*,$ where they are first aggregated in Process $j^*,$ and then, since $\textnormal{next}_{{\overline{st}},j^*}(j^*) = 0,$ Process $j^*$ will send $g_{j^*,\textnormal{send}}^k$ to Process $0.$ This process waits for the moment when the number of stochastic gradients $s^k$ aggregated in $g^k$ is greater or equal to $S.$ When it happens, the loop stops, and Process $0$ does a gradient-like step. The structure of the algorithm and the idea of spanning trees resemble the ideas from \citep{vogels2021relaysum,tyurin2023optimal}. The main observation is that this algorithm is equivalent to $x^{k+1} = x^k - \frac{\gamma}{s^k} \sum_{i=1}^{s^k} \nabla f(x^k;\xi^k_i),$
where $s^k \geq S$ and $\{\xi^k_i\}$ are i.i.d. samples. Note that all stochastic gradients calculated at points $x^0, \dots, x^{k-1}$ will be ignored in the $k$\textsuperscript{th} iteration of Algorithm~\ref{alg:receiver}.

\begin{restatable}{theorem}{THEOREMMAINSTATIC}
  \label{thm:sgd_homog}
  Let Assumptions~\ref{ass:lipschitz_constant}, \ref{ass:lower_bound}, and \ref{ass:stochastic_variance_bounded} hold. We take
  $\gamma = 1 / 2 L,$ batch size $S = \max\{\ceil{\nicefrac{\sigma^2}{\varepsilon}}, 1\},$ any pivot worker $j^* \in [n],$ and any spanning trees ${\overline{st}}$ and ${\overline{st}}_{\textnormal{bc}}$ 
  in Algorithm~\ref{alg:alg_server}.
  For all
    $K \geq 16 L \Delta / \varepsilon,$
 we get $\frac{1}{K}\sum_{k=0}^{K-1}\Exp{\norm{\nabla f(x^k)}^2} \leq \varepsilon.$
\end{restatable}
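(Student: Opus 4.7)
The main leverage point is the equivalence noted immediately above the theorem: Algorithm~\ref{alg:alg_server} is, for convergence purposes, just stochastic gradient descent with a \emph{random, but lower-bounded, batch size},
\[
  x^{k+1} = x^k - \frac{\gamma}{s^k}\sum_{i=1}^{s^k}\nabla f(x^k;\xi^k_i),
  \qquad s^k \geq S,
\]
where $\{\xi^k_i\}$ are i.i.d. draws from $\mathcal{D}_\xi$. So the plan is to (i) justify this reduction rigorously, and (ii) plug it into the standard descent argument for $L$-smooth nonconvex SGD.

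\textbf{Step 1 (reduction).} I would first argue that every stochastic gradient aggregated into $g^k$ on \begin{NoHyper}Line~\ref{alg:step_homog}\end{NoHyper} is evaluated at the same iterate $x^k$, because (a) Process~$0$ emits $x^k$ and only harvests vectors routed through Process $j^*$ whose index field equals the current $k$, and (b) the ``$\bar k = k_{\max}$'' bookkeeping in the workers (footnote (a) of Algorithm~\ref{alg:worker}) guarantees that any straggling gradient computed at an older point $x^0,\ldots,x^{k-1}$ is discarded. The number $s^k$ is an integer at least $S$ by the \textbf{while} guard in Process~$0$. Crucially, $s^k$ is a function only of the computation and communication times, which by Assumption~\ref{ass:stochastic_variance_bounded} are statistically independent of the stochastic gradients; hence, conditioned on $x^k$ and $s^k$, the samples $\xi^k_1,\ldots,\xi^k_{s^k}$ are i.i.d.~from $\mathcal{D}_\xi$.

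\textbf{Step 2 (descent lemma).} With the reduction in hand, the remainder is textbook. Using Assumption~\ref{ass:lipschitz_constant} and letting $g^k \eqdef \frac{1}{s^k}\sum_{i=1}^{s^k}\nabla f(x^k;\xi^k_i)$, I would apply the $L$-smooth upper bound
\[
  f(x^{k+1}) \le f(x^k) - \gamma\inp{\nabla f(x^k)}{g^k} + \tfrac{L\gamma^2}{2}\norm{g^k}^2,
\]
take conditional expectation given $(x^k,s^k)$ to get $\ExpCond{g^k}{x^k,s^k} = \nabla f(x^k)$ and $\ExpCond{\norm{g^k-\nabla f(x^k)}^2}{x^k,s^k}\le \sigma^2/s^k \le \sigma^2/S$, then iterate expectations to obtain
\[
  \Exp{f(x^{k+1})} \le \Exp{f(x^k)} - \gamma\bigl(1-\tfrac{L\gamma}{2}\bigr)\Exp{\norm{\nabla f(x^k)}^2} + \tfrac{L\gamma^2}{2}\cdot \tfrac{\sigma^2}{S}.
\]

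\textbf{Step 3 (telescoping and parameter choice).} Substituting $\gamma = 1/(2L)$ so that $1-L\gamma/2 = 3/4$, telescoping over $k = 0,\ldots,K-1$, and using $f(x^0)-f^* \le \Delta$ from Assumption~\ref{ass:lower_bound} gives a bound of the form $\frac{1}{K}\sum_{k=0}^{K-1}\Exp{\norm{\nabla f(x^k)}^2} \le \cO(L\Delta/K) + \cO(\sigma^2/S)$. Choosing $S = \max\{\ceil{\sigma^2/\varepsilon},1\}$ and $K \ge 16L\Delta/\varepsilon$ makes each of the two terms at most $\varepsilon/2$, yielding the stated bound.

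\textbf{Expected difficulty.} The computations in Steps~2--3 are routine. The only subtle point, and where I would spend the most care, is Step~1: convincing the reader that the complicated asynchronous tree-routing in Algorithms~\ref{alg:worker}--\ref{alg:receiver} really does collapse to minibatch SGD with a \emph{conditionally independent} random batch size at least $S$. Once the independence of $s^k$ from the gradient samples is locked down using Assumption~\ref{ass:stochastic_variance_bounded}, the standard analysis applies verbatim.
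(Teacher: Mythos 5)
Your proposal is correct and follows essentially the same route as the paper: it reduces Algorithm~\ref{alg:alg_server} to minibatch SGD with a random batch size $s^k \geq S$ whose randomness (through the computation/communication times) is independent of the fresh samples by Assumption~\ref{ass:stochastic_variance_bounded}, establishes conditional unbiasedness and the variance bound $\sigma^2/S$, and then runs the standard $L$-smooth nonconvex descent/telescoping argument with $\gamma = 1/(2L)$, $S = \max\{\ceil{\nicefrac{\sigma^2}{\varepsilon}},1\}$, and $K \geq 16L\Delta/\varepsilon$. The only cosmetic difference is that the paper formalizes the independence step by conditioning on a $\sigma$-algebra $\mathcal{V}_k$ containing the times and past gradients and then invokes its restated classical SGD theorem, whereas you carry out the same descent computation inline.
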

The proof is simple and uses standard techniques from \citep{lan2020first,khaled2020better}. The result of the theorem holds even if $h_i = \infty$ and $\tau_{i \to j} = \infty$ for all $i,j \in [n]$ because $h_i$ and $\tau_{i \to j}$ are only upper bounds on the real computation and communications speeds. In Algorithm~\ref{alg:receiver}, each iteration $k$ can be arbitrarily slow, and still, the result of Theorem~\ref{thm:sgd_homog} holds and the method converges after $O(L \Delta / \varepsilon)$ iterations.
The next result gives time complexity guarantees for our algorithm.

\begin{restatable}{theorem}{COROLLLARYMAIN}
  \label{cor:max_time}
  Consider the assumptions and the parameters from Theorem~\ref{thm:sgd_homog}. 
  For any pivot worker $j^* \in [n]$ and spanning trees ${\overline{st}}$ and ${\overline{st}}_{\textnormal{bc}},$ Algorithm~\ref{alg:alg_server} converges after at most 
  \begin{align}
    \label{eq:fZHPsfIz}
    \textstyle \Theta\left(\frac{L \Delta}{\varepsilon} t^*(\nicefrac{\sigma^2}{\varepsilon}, [h_i]_{i=1}^{n}, [\mu_{i \to j^*} + \mu_{j^* \to i}]_{i=1}^n)\right)
  \end{align}
  seconds, where $\mu_{i \to j^*}$ ($\mu_{j^* \to i}$) is an upper bound on the times required to send a vector from worker $i$ to worker $j^*$ (from worker $j^*$ to worker $i$) along the spanning tree ${\overline{st}}$ (spanning tree ${\overline{st}}_{\textnormal{bc}}$).
\end{restatable}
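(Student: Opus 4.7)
The plan is to combine the iteration bound from Theorem~\ref{thm:sgd_homog} with a worst-case wall-clock bound on a single iteration of Algorithm~\ref{alg:receiver} and then multiply. By Theorem~\ref{thm:sgd_homog}, $K = \lceil 16L\Delta/\varepsilon\rceil = O(L\Delta/\varepsilon)$ outer iterations suffice, so it is enough to show that each iteration $k$---measured from the moment Process~$0$ sends $x^k$ until the inner while-loop exits with $s^k\ge S$---lasts at most $T^\star = \Theta\bigl(t^*(\sigma^2/\varepsilon,[h_i]_{i=1}^n,[\mu_{i\to j^*}+\mu_{j^*\to i}]_{i=1}^n)\bigr)$ seconds in the worst case.

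\textbf{Per-iteration lower bound on the counter.} Fix an iteration $k$ with start time $t_k^{\text{start}}$ and set $\bar{\tau}_i := \mu_{i\to j^*}+\mu_{j^*\to i}$. I would establish three building blocks and combine them. (i) By induction along the edges of ${\overline{st}}_{\text{bc}}$, worker~$i$ receives $x^k$ by time $t_k^{\text{start}}+\mu_{j^*\to i}$. (ii) From that moment on, worker~$i$'s \texttt{BroadcastFurtherAndCalculateStochasticGradients} function produces a fresh stochastic gradient at $x^k$ at least every $h_i$ seconds. (iii) Once such a gradient is added to the local buffer $g_{i,\text{next}}^k$, the parallel \texttt{SendVectorsToNextWorker} loops running along the path in ${\overline{st}}$ forward it to $j^*$; a hop-by-hop argument---the snapshot-then-send loop in Algorithm~\ref{alg:worker} can delay a freshly buffered gradient by at most one additional send on the current edge, and the edge contention flagged in footnote~(b) at most halves the effective send speed---shows that the gradient is aggregated into $s^k$ within an additional $c\,\mu_{i\to j^*}$ seconds for some universal constant $c$. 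Combining (i)--(iii), at time $t_k^{\text{start}}+T$,
\[
  s^k \;\ge\; \sum_{i=1}^n \max\!\left\{0,\ \left\lfloor\frac{T-c\,\bar{\tau}_i}{h_i}\right\rfloor\right\}.
\]

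\textbf{Matching the equilibrium time.} To turn this bound into the claim, let $k^\star$ and $\pi$ be the minimizer and associated sorting permutation in Definition~\ref{def:time_budget} applied with $s=\sigma^2/\varepsilon$, so that $\max\{\bar{\tau}_{\pi_i},h_{\pi_i}\}\le t^\star$ for every $i\le k^\star$ and $\sigma^2/\varepsilon\le t^\star\sum_{i=1}^{k^\star}1/h_{\pi_i}$. Plugging $T = C t^\star$ for a sufficiently large universal constant $C$ into the display and keeping only the first $k^\star$ summands, the inequalities $\bar{\tau}_{\pi_i}\le t^\star$ and $h_{\pi_i}\le t^\star$ make each term at least $\Omega(t^\star/h_{\pi_i})$, so the sum exceeds $S=\lceil\sigma^2/\varepsilon\rceil$. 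Hence each iteration terminates within $O(t^\star)$ seconds, and multiplying by $K$ yields~\eqref{eq:fZHPsfIz}.

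\textbf{Main obstacle.} The only non-trivial step is (iii): verifying that the asynchronous, buffered, three-loop design of Algorithm~\ref{alg:worker} does not accumulate tree-aggregation delays that scale with the number of edges on the path or with the height of the spanning trees. Once this invariant is in hand, the remainder is a routine unrolling of Definition~\ref{def:time_budget} and closely parallels the wall-clock analysis of \algname{Rennala SGD} in \citet{tyurin2023optimal}, with the spanning-tree round-trip $\mu_{i\to j^*}+\mu_{j^*\to i}$ playing the role of the per-worker communication delay there.
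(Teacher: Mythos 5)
Your proposal is correct and follows essentially the same route as the paper's proof: an $O(L\Delta/\varepsilon)$ iteration count from Theorem~\ref{thm:sgd_homog} combined with a per-iteration wall-clock bound obtained from (a) broadcast along ${\overline{st}}_{\textnormal{bc}}$, (b) gradients every $h_i$ seconds, and (c) the hop-by-hop forwarding argument (at most one in-flight message plus one send per edge, i.e.\ $O(\mu_{i\to j^*})$ total), then matched to the minimizer $k^\star$ in Definition~\ref{def:time_budget} via $\lfloor x\rfloor\ge x/2$ and $S=\Theta(\max\{\sigma^2/\varepsilon,1\})$. The only cosmetic difference is that you use a pipelined counting inequality $s^k\ge\sum_i\max\{0,\lfloor(T-c\bar\tau_i)/h_i\rfloor\}$, whereas the paper bounds the iteration by three sequential phases of length $\bar t$ each; the step you flag as the main obstacle is resolved in the paper by exactly the per-edge argument you sketch.
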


Note that our method does not need the knowledge of $\{h_i\}$ and $\{\mu_{i \to j}\}$ to guarantee the time complexity rate, and it \emph{automatically} obtains it.

\begin{corollary}
  \label{cor:max_time_params}
  Consider the assumptions and the parameters from Theorem~\ref{cor:max_time}. Let us take a pivot worker $j^* = \arg\min\limits_{j \in [n]}t^*(\nicefrac{\sigma^2}{\varepsilon}, [h_i]_{i=1}^{n}, [\tau_{i \to j} + \tau_{j \to i}]_{i=1}^n),$ and a spanning tree ${\overline{st}}$ (spanning tree ${\overline{st}}_{\textnormal{bc}}$) that connects every worker $i$ to worker $j^*$ (worker $j^*$ to every worker $i$) with the shortest distance $\tau_{i \to j^*}$ ($\tau_{j^* \to i}$).
  Then Algorithm~\ref{alg:alg_server} converges after at most 
    \begin{align}
      \label{eq:time_complexity}
      T_{*} &\eqdef \textstyle \Theta\left(\frac{L \Delta}{\varepsilon} \min\limits_{j \in [n]} t^*(\nicefrac{\sigma^2}{\varepsilon}, [h_i]_{i=1}^{n}, [\tau_{i \to j} + \tau_{j \to i}]_{i=1}^n)\right) \\
      &\textstyle \overset{\textnormal{Def}~\ref{def:time_budget}}{\equiv} \Theta\left(\frac{L \Delta}{\varepsilon} \min\limits_{j \in [n]} \min\limits_{k \in [n]} \max\left\{\max\{\tau_{\pi_{j,k} \to j} + \tau_{j \to \pi_{j,k}}, h_{\pi_{j,k}}\}, \frac{\sigma^2}{\varepsilon}\left(\sum\limits_{i=1}^{k} \frac{1}{h_{\pi_{j,i}}}\right)^{-1}\right\}\right)
      \label{eq:time_complexity_full}
    \end{align}
    seconds, where, for all $j \in [n],$ $\pi_{j,\cdot}$ is a permutation that sorts $\{\max\{\tau_{i \to j} + \tau_{j \to i}, h_{i}\}\}_{i=1}^{n}.$
\end{corollary}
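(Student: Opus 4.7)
The plan is to apply Theorem~\ref{cor:max_time} directly, with the pivot $j^*$ and spanning trees prescribed in the statement, and then simplify using the defining property of $j^*$. The only non-routine ingredient is verifying that the prescribed shortest-path spanning trees realize the equalities $\mu_{i\to j^*} = \tau_{i\to j^*}$ and $\mu_{j^*\to i} = \tau_{j^*\to i}$ appearing in the conclusion of Theorem~\ref{cor:max_time}; once those identities are in hand, the rest is substitution and an unrolling of Definition~\ref{def:time_budget}.

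First I would construct ${\overline{st}}$ as a spanning tree rooted at $j^*$ whose unique tree-path from every worker $i$ to $j^*$ has total communication cost exactly $\tau_{i\to j^*}$. A conceptually clean choice is the \emph{star} with center $j^*$: since ${\overline{st}}$ is only a virtual routing tree and the per-tree-edge transmission from $i$ to $\textnormal{next}_{{\overline{st}},j^*}(i)=j^*$ is allowed to route through intermediate workers in the multigraph (costing $\tau_{i\to j^*}$ seconds, \emph{not} the direct-edge cost $\rho_{i\to j^*}$), the star immediately gives $\mu_{i\to j^*} = \tau_{i\to j^*}$; a Dijkstra shortest-paths tree would work equally well, and the triangle inequality $\tau_{i\to j}\le \tau_{i\to k}+\tau_{k\to j}$ confirms that no smaller value is achievable. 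The broadcast tree ${\overline{st}}_{\textnormal{bc}}$ is built symmetrically from $j^*$, giving $\mu_{j^*\to i} = \tau_{j^*\to i}$. The only concern worth addressing here is whether a single tree can simultaneously realize shortest $\tau$-paths from \emph{every} $i$ to $j^*$; because the tree is virtual and each per-tree-edge cost is already the multigraph shortest-path distance $\tau$ rather than $\rho$, the star trivially resolves this.

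Plugging these identities into Theorem~\ref{cor:max_time} turns the convergence-time bound into
\[
  \Theta\!\left(\frac{L\Delta}{\varepsilon}\, t^*\!\left(\tfrac{\sigma^2}{\varepsilon},\, [h_i]_{i=1}^{n},\, [\tau_{i\to j^*}+\tau_{j^*\to i}]_{i=1}^n\right)\right).
\]
By the defining property $j^* = \arg\min_{j\in[n]} t^*(\nicefrac{\sigma^2}{\varepsilon}, [h_i]_{i=1}^n, [\tau_{i\to j}+\tau_{j\to i}]_{i=1}^n)$, this value coincides with the outer minimum in \eqref{eq:time_complexity}. Finally, unrolling the equilibrium time via Definition~\ref{def:time_budget} with the permutation $\pi_{j,\cdot}$ sorting $\{\max\{\tau_{i\to j}+\tau_{j\to i}, h_i\}\}_{i=1}^n$ produces the explicit form \eqref{eq:time_complexity_full}. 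Beyond the spanning-tree observation in the previous paragraph, there is no further technical difficulty, as everything else is a direct substitution into Theorem~\ref{cor:max_time}.
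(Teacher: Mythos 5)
Your overall route is the intended one: the paper gives no separate proof of this corollary (it only remarks that $\tau_{i\to j}\le\mu_{i\to j}$), and the argument really is "apply Theorem~\ref{cor:max_time}, check $\mu_{i\to j^*}=\tau_{i\to j^*}$ and $\mu_{j^*\to i}=\tau_{j^*\to i}$ for the prescribed trees, invoke the $\arg\min$ choice of $j^*$, and unroll Definition~\ref{def:time_budget}." Those last three steps in your write-up are fine.

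The one claim you should not make is that a \emph{star} centered at $j^*$ "immediately gives $\mu_{i\to j^*}=\tau_{i\to j^*}$" because a single send between virtual tree-neighbors may be routed through intermediate workers at cost $\tau_{i\to j^*}$. In the model of Section~\ref{sec:setup}, what a worker can do in one send is transmit over a physical edge, at cost $\rho_{i\to j}$ (possibly $\infty$); $\tau_{i\to j^*}<\rho_{i\to j^*}=\infty$ is achieved only by multi-hop forwarding through other workers, and that forwarding is exactly what the spanning-tree structure of Algorithm~\ref{alg:worker} implements. Concretely, the per-iteration time bound in the proof of Theorem~\ref{cor:max_time} charges at most $2\rho_{i\to \textnormal{next}_{{\overline{st}},j^*}(i)}$ per tree hop and sums along the tree path; this works because intermediate workers \emph{aggregate} incoming partial sums, so each physical edge carries one message at a time (up to the factor-two interleaving noted in the algorithm's footnote). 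A star realized by relaying un-aggregated messages would force shared physical edges near $j^*$ to carry up to $\Theta(n)$ distinct vectors per round, and the $2\mu$ argument no longer applies; unlimited parallel transmissions per edge are granted only in the lower-bound protocol, not in the upper-bound analysis. The fix is the alternative you already mention in passing: take the shortest-path (Dijkstra-type) in-tree to $j^*$ and out-tree from $j^*$, whose tree-path $\rho$-costs equal $\tau_{i\to j^*}$ and $\tau_{j^*\to i}$ respectively (this is exactly the tree the corollary prescribes, cf.\ Fig.~\ref{fig:graph_example}); then $\mu_{i\to j^*}=\tau_{i\to j^*}$, $\mu_{j^*\to i}=\tau_{j^*\to i}$, and the rest of your substitution goes through verbatim.
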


This corollary has better time complexity guarantees than Theorem~\ref{cor:max_time} because, by the definition of $\tau_{i \to j},$ $\tau_{i \to j} \leq \mu_{i \to j}$ for al $i,j \in [n].$ However, it requires the particular choice of a pivot worker and spanning trees.

\subsection{Discussion}

Comparing the lower bound \eqref{eq:lower_bound_simple} with the upper bound \eqref{eq:time_complexity}, one can see that \algname{Fragile SGD} has a \emph{nearly optimal time complexity}. If we ignore the $\log n$ factor in \eqref{eq:lower_bound_simple} and assume that $\tau_{i \to j} = \tau_{j \to i}$ for all $i,j \in [n],$ which is a weak assumption in many applications, then \algname{Fragile SGD} is optimal.

Unlike most works \citep{even2023asynchronous,lian2018asynchronous,koloskova2021improved} in the decentralized setting, our time complexity guarantees \emph{do not} depend on the spectral gap of the mixing matrix that defines the topology of the multigraph. The structure of the multigraph is coded in the times $\{\tau_{i \to j}\}.$ We believe that this is an advantage of our guarantees since \eqref{eq:time_complexity} defines a physical time instead of an iteration rate that depends on the spectral gap. 

\subsection{Interpretation of the upper and lower bounds \eqref{eq:time_complexity} and \eqref{eq:lower_bound_simple}}
\label{sec:interpretation}

One interesting property of our algorithm is that \emph{some workers will potentially never contribute to optimization because either their computations are too slow or communication times to $j^*$ are too large.} 
Thus, only a subset of the workers should work to get the optimal time complexity!

Assume in this subsection that the computation and communication are fixed to $\{h_i\}$ and $\{\tau_{i \to j}\}.$ One can see that \eqref{eq:time_complexity_full} is the $\min_{j \in [n]} \min_{k \in [n]}$ over some formula. In view of our algorithm, an index $j^*$ that minimizes in \eqref{eq:time_complexity_full} is the index of a pivot worker that is the most ``central'' in the multigraph. An index $k^*$ that minimizes $\min_{k \in [n]}$ defines a set of workers $\{\pi_{j^*,1}, \dots \pi_{j^*,k^*}\}$ that can potentially contribute to optimization. The algorithm and and the time complexity \emph{will not} depend on workers $\{\pi_{j^*,k^*+1}, \dots \pi_{j^*,n}\}$ because they are too slow or they are too far from worker $j^*.$ Thus, up to a constant factor, we have
  $\textstyle T_{*} = \nicefrac{L \Delta}{\varepsilon} \max\big\{\max\{\tau_{\pi_{j^*,k^*} \to j^*} + \tau_{j^* \to \pi_{j^*,k^*}}, h_{\pi_{j^*,k^*}}\}, \nicefrac{\sigma^2}{\varepsilon}\big(\sum_{i=1}^{k^*} \nicefrac{1}{h_{\pi_{j^*,i}}}\big)^{-1}\big\},$
where $\tau_{\pi_{j^*,k^*} \to j^*} + \tau_{j^* \to \pi_{j^*,k^*}}$ is the time required to communicate with the farthest worker that can contribute to optimization, $h_{\pi_{j^*,k^*}}$ is the computation time of the slowest worker that can contribute to optimization, and $\nicefrac{\sigma^2}{\varepsilon}\big(\sum_{i=1}^{k^*} \nicefrac{1}{h_{\pi_{j^*,i}}}\big)^{-1}$ is the time required to ``eliminate'' enough noise before the algorithm does an update of $x^k$.

\subsection{Limitations}
\label{sec:limit}
To get the nearly optimal complexity, it is crucial to select the right pivot worker $j^*$ and spanning trees according to the rules of Corollary~\ref{cor:max_time_params}, which depend on the knowledge of the bounds of times. For now, we believe that is this a price for the optimality. Note that Theorem~\ref{cor:max_time} does not require this knowledge and it works with any $j^*$ and any spanning tree; thus, we can use any heuristic to estimate an optimal $j^*$ and optimal spanning trees. One possible strategy is to estimate the performance of workers and the communication channels using load testings.

\subsection{Comparison with previous methods}
\label{sec:prev_methods}
Let us discuss the time complexities of previous methods. Note that none of the previous methods can converge faster than \eqref{eq:lower_bound_simple} due to our lower bound. First, consider \eqref{eq:dPijoebwxuLIXvSe} of \algname{Minibatch SGD}. This time complexity depends on the slowest computation time $\max_{i \in [n]} h_i$ and the slowest communication times $\max_{i, j \in [n]} \tau_{i \to j}.$ In the asynchronous setup, it is possible that one the workers is a straggler, i.e., $\max_{i \in [n]} h_i \approx \infty,$ and \algname{Minibatch SGD} can be arbitrarily slow. Our time complexities \eqref{eq:fZHPsfIz} and \eqref{eq:time_complexity_full} are robust to stragglers, and ignore them. Assume the last worker $n$ is a straggler and $h_n = \infty,$ then one can take permutations with $\pi_{j,n} = n$ for all $j \in [n],$ and the minimum operator $\min_{k \in [n]}$ in \eqref{eq:time_complexity_full} will not choose $k = n$ because $\max\{\tau_{\pi_{j,n} \to j} + \tau_{j \to \pi_{j,n}}, h_{\pi_{j,n}}\} = \infty$ for all $j \in [n].$

We now consider a recent work by \citet{even2023asynchronous}, where the authors analyzed \algname{Asynchronous SGD} in the decentralized setting. In the homogeneous setting, their converge rate depends on the maximum compute delay and, thus, is not robust to stragglers. For the case $\tau_{i \to j} = 0,$ our time complexity \eqref{eq:time_complexity} reduces to \eqref{eq:rennala}. At the same time, it was shown \citep{tyurin2023optimal} that the time complexity of \algname{Asynchronous SGD} for $\tau_{i \to j} = 0$ is strongly worse than \eqref{eq:rennala}; thus, the result by \citet{even2023asynchronous} is suboptimal in our setting even if $\tau_{i \to j} = 0$ for all $i,j \in [n].$ The papers by \citet{bornstein2022swift,lian2018asynchronous} also consider the same setting, and they share a logic in that they sample a random worker and \emph{wait} while it is calculating a stochastic gradient. If one of the workers is a straggler, they can wait arbitrarily long, while our method automatically ignores slow computations. 
\subsection{Time complexity with dynamic bounds}
\label{sec:dynamic_times}
We can easily generalize Theorem~\ref{cor:max_time} to the case when bounds on the times are not static.
\begin{restatable}{theorem}{COROLLLARYMAINTIMES}
  \label{cor:max_time_dynamic}
  Consider the assumptions and the parameters from Theorem~\ref{thm:sgd_homog}. In each iteration $k$ of Algorithm~\ref{alg:receiver}, the computation times of worker $i$ are bounded by $h_i^k$. Let us fix any pivot worker $j^* \in [n]$ and any spanning trees ${\overline{st}}$ and ${\overline{st}}_{\textnormal{bc}}.$ 
  Then Algorithm~\ref{alg:alg_server} converges after at most 
  \begin{align}
    \textstyle \Theta\left(\sum_{k=0}^{\ceil{16 L \Delta / \varepsilon}} t^*(\nicefrac{\sigma^2}{\varepsilon}, [h_i^k]_{i=1}^{n}, [\mu_{i \to j^*}^k + \mu_{j^* \to i}^k]_{i=1}^n)\right)
  \end{align}
  seconds, where $\mu_{i \to j^*}^k$ ($\mu_{j^* \to i}^k$) is an upper bound on times required to send a vector from worker $i$ to worker $j^*$ (from worker $j^*$ to worker $i$) along the spanning tree ${\overline{st}}$ (spanning tree ${\overline{st}}_{\textnormal{bc}}$) in iteration $k$ of Algorithm~\ref{alg:receiver}.
\end{restatable}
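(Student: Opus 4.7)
\medskip

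The plan is to decouple the iteration-count analysis from the per-iteration wall-clock analysis, exactly as in the static case, and then replace constant bounds by their per-iteration counterparts. Recall that Theorem~\ref{thm:sgd_homog} shows that $K \geq 16 L \Delta / \varepsilon$ iterations of Algorithm~\ref{alg:receiver} suffice to reach $\varepsilon$--stationarity, and that this iteration-complexity statement makes no assumption whatsoever on the physical computation/communication times: it relies only on the fact that each iteration eventually produces $s^k \ge S$ i.i.d.\ unbiased stochastic gradients evaluated at the same iterate $x^k$. Therefore it is enough to bound the total wall-clock time by summing per-iteration durations over $k = 0, \dots, \ceil{16 L \Delta / \varepsilon} - 1$.

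Next, I would isolate the single-iteration timing analysis used in the proof of Theorem~\ref{cor:max_time}. That argument proceeds in three phases for a given iteration $k$: (i) Process $0$ sends $x^k$ along ${\overline{st}}_{\textnormal{bc}}$, which reaches every worker $i$ in at most $\mu_{j^*\to i}$ seconds; (ii) starting from the moment worker $i$ has received $x^k$, it alternates producing stochastic gradients (each taking at most $h_i$) and streaming partial sums along ${\overline{st}}$ toward $j^*$ (a trip of at most $\mu_{i\to j^*}$ seconds); (iii) the loop in Algorithm~\ref{alg:receiver} terminates as soon as $s^k \ge S = \ceil{\sigma^2/\varepsilon}$. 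An elementary counting argument (identical to the one in the proof of Theorem~\ref{cor:max_time}) shows that, letting $\bar\tau_i = \mu_{i\to j^*} + \mu_{j^*\to i}$, the three phases together finish within $\Theta(t^*(\sigma^2/\varepsilon, [h_i]_{i=1}^n, [\bar\tau_i]_{i=1}^n))$ seconds: we pick the subset of workers indexed by $\pi_1,\dots,\pi_k$ realizing the minimum in Definition~\ref{def:time_budget}, observe that after $\max\{\bar\tau_{\pi_k}, h_{\pi_k}\}$ seconds each of them has started feeding gradients to $j^*$, and then use that their aggregate throughput is $\sum_{i=1}^k 1/h_{\pi_i}$ gradients per second to cover the residual $\sigma^2/\varepsilon$ samples.

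Crucially, this per-iteration analysis only references the bounds $h_i$ and $\bar\tau_i$ \emph{during} iteration $k$; it never compares them across iterations. Hence I would apply it verbatim to iteration $k$ with the dynamic bounds $h_i^k$ and $\bar\tau_i^k \eqdef \mu_{i\to j^*}^k + \mu_{j^*\to i}^k$, obtaining that iteration $k$ takes at most $\Theta(t^*(\sigma^2/\varepsilon, [h_i^k]_{i=1}^n, [\mu_{i\to j^*}^k + \mu_{j^*\to i}^k]_{i=1}^n))$ seconds. Summing over $k = 0, \dots, \ceil{16 L \Delta / \varepsilon} - 1$ and absorbing the $+1$ in the ceiling into the $\Theta(\cdot)$ yields the claimed bound.

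The only genuinely delicate point is that a worker may, at the boundary between iterations $k$ and $k+1$, be midway through a stochastic gradient computation at $x^k$; when the broadcast of $x^{k+1}$ arrives it must abort and restart at $x^{k+1}$. This is already handled by the inner \textbf{while not received a new point} loop in Algorithm~\ref{alg:worker}: the aborted work is discarded, the new phase (i) for iteration $k+1$ begins immediately, and its duration is bounded purely by $h_i^{k+1}$ and $\mu^{k+1}$, so no cross-iteration coupling appears in the upper bound. Thus the main obstacle is really just bookkeeping — convincing the reader that the static per-iteration argument transports unchanged — rather than any new technical ingredient.
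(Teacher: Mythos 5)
Your proposal is correct and follows essentially the same route as the paper: the paper's proof simply applies the per-iteration timing argument of Theorem~\ref{cor:max_time} with the iteration-specific bounds $h_i^k$ and $\mu_{i \to j^*}^k + \mu_{j^* \to i}^k$, invokes Theorem~\ref{thm:sgd_homog} for the $\ceil{16 L \Delta/\varepsilon}$ iteration count, and sums the per-iteration durations. Your extra remark about aborted gradient computations at iteration boundaries is harmless additional bookkeeping that the paper leaves implicit.
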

This result is more general than \eqref{eq:fZHPsfIz}, and it shows that our method is robust to changing computation $\{h_i^k\}$ and communication $\{\mu_{i \to j}^k\}$ times bounds during optimization processes. For instance, worker $n$ can have either slow computation or communication to $j^*$ in the first iteration, i.e., $\max\{h_n^1, \mu_{n \to j^*}^1, \mu_{j^* \to n}^1\} \approx \infty$, then our method will ignore it, but if $\max\{h_n^2, \mu_{n \to j^*}^2, \mu_{j^* \to n}^2\}$ is small in the second iteration, then our method can potentially use the stochastic gradients from worker $n$.

\section{Example: Line or Circle}
\begin{figure}[H]
  \centering
  \begin{adjustbox}{width=0.8\columnwidth,center}  
  \begin{tikzpicture}[auto,node distance=2cm,thick]
    \tikzstyle{every state}=[circle,draw]
  
    \node (1) {$1$};
    \node (2) [right of=1] {$2$};
    \node (3) [right of=2] {$3$};
    \node (4) [right of=3] {$4$};
    \node (5) [right of=4] {$5$};
    \node (6) [right of=5] {$6$};
    \node (7) [right of=6] {$7$};
  
    \path[-]
        (1) edge node {$\rho$} (2)
        (2) edge node {$\rho$} (3)
        (3) edge node {$\rho$} (4)
        (4) edge node {$\rho$} (5)
        (5) edge node {$\rho$} (6)
        (6) edge node {$\rho$} (7);
    
    \path[->]
        (3) edge [bend right, dotted] node {$\tau_{3 \to 5} = 2 \rho$} (5)
        (3) edge [bend left, dotted] node[below] {$\tau_{3 \to 6} = 3 \rho$} (6)
        (3) edge [bend left, dotted] node {$\tau_{3 \to 2} = \rho$} (2);
  
  \end{tikzpicture}
  \end{adjustbox}
  \caption{Line with $\rho_{i + 1 \to i} = \rho_{i \to i + 1} = \rho$ for all $i \in [n- 1],$ $\rho_{i \to j} = \infty$ otherwise.
  For all $i \neq j \in [n],$ edges $i \to j$ and $j \to i$ are merged and visualized with one undirected edge.}
  \label{fid:line_graph}
\end{figure}
Let us consider Line graphs where we can get more explicit and interpretable formulas for \eqref{eq:time_complexity}. We analyze ND-Mesh, ND-Torus, and Star graphs in Section~\ref{sec:more_examples}. Surprisingly, even in some simple cases like Line or Star graphs, as far as we know, we provide new time complexity results and insights. In Section~\ref{sec:experiments}, we show that our theoretical results are supported by computational experiments.

We take a Line graph with the computation speeds $h_i = h$ for all $i \in [n],$ and the communication speeds of the edges $\rho_{i \to i+1} = \rho_{i+1 \to i} = \rho$ for all $i \in [n - 1]$ and $\rho_{i \to j} = \infty$ for all other $i,j \in [n].$ One can easily show the time required to send a vector between two workers $i,j \in [n]$ equals $\tau_{i \to j} = \tau_{j \to i} = \rho |i - j|.$ See an example with $n = 7$ in Fig.~\ref{fid:line_graph}. We can substitute these values to \eqref{eq:time_complexity} and get
\begin{align}
  \textstyle T_{\textnormal{line}} = \frac{L \Delta}{\varepsilon} \min\limits_{j \in [n]} \min\limits_{k \in [n]} \max\left\{\max\{\rho |j - \pi_{j,k}|, h\}, \frac{\sigma^2 h}{\varepsilon k}\right\},
  \label{eq:UnNwygRTS}
\end{align}
where $\pi_{j,1} = j,$ $\pi_{j,2}, \pi_{j,3} = j + 1, j - 1$ or $\pi_{j,2}, \pi_{j,3} = j - 1, j + 1$ (only for $n - 1 \geq j \geq 2$) and so forth. For simplicity, assume that $n$ is odd, then, clearly, $j^* = \frac{n - 1}{2} + 1$ minimizes $\min_{j \in [n]}$ and $T_{\textnormal{line}} = $
\begin{align}
  &\textstyle\frac{L \Delta}{\varepsilon} \min\limits_{d \in \{0, \dots, \frac{n - 1}{2}\}} \max\left\{\rho d, h, \frac{\sigma^2 h}{\varepsilon (2 d + 1)}\right\} \simeq \frac{L \Delta}{\varepsilon}\left[h + \begin{cases}
    \nicefrac{\sigma^2 h}{\varepsilon},& \textnormal{if } \sqrt{\nicefrac{\sigma^2 h}{\varepsilon \rho}} \leq 1, \\
    \sqrt{\nicefrac{\rho \sigma^2 h}{\varepsilon}},& \textnormal{if } n > \sqrt{\nicefrac{\sigma^2 h}{\varepsilon \rho}} > 1, \\
    \nicefrac{\sigma^2 h}{n \varepsilon},& \textnormal{if } \sqrt{\nicefrac{\sigma^2 h}{\varepsilon \rho}} \geq n
  \end{cases}\right].
  \label{eq:lZSCUgmEFMqAvwB}
\end{align}
According to \eqref{eq:lZSCUgmEFMqAvwB}, there are three time complexity regimes: i) slow communication, i.e., $\sqrt{\nicefrac{\sigma^2 h}{\varepsilon \rho}} \leq 1,$ this inequality means that $\rho$ is so large, that communication between workers will not increase the convergence speed, and the best strategy is to work with only one worker!, ii) medium communication, i.e., $n > \sqrt{\nicefrac{\sigma^2 h}{\varepsilon \rho}} > 1,$ more than one worker will participate in the optimization process; however, \emph{not all of them!}, some workers will not contribute since their distances $\tau_{j^* \to \cdot}$ to the pivot worker $j^*$ are large, iii) fast communication, i.e., $\sqrt{\nicefrac{\sigma^2 h}{\varepsilon \rho}} \geq n,$ all $n$ workers will participate in optimization because $\rho$ is small.

As far as we know, the result \eqref{eq:lZSCUgmEFMqAvwB} is new even for such a simple structure as a line. Note that these regimes are fundamental and can not be improved due to our lower bound (up to logarithmic factors). For Circle graphs, the result is the same up to a constant factor.

\section{Heterogeneous Setup}
In Section~\ref{sec:heter} (in more details), we consider and analyze the problem
$$\min \limits_{x \in \R^d} \Big\{f(x) \eqdef \frac{1}{n} \sum\limits_{i=1}^n \ExpSub{\xi_i \sim \mathcal{D}_i}{f_i(x;\xi_i)}\Big\},$$ where $f_i\,:\,\R^d \times \mathbb{S}_{\xi_i} \rightarrow \R^d$ and $\xi_i$ are random variables with some distributions $\mathcal{D}_i$ on $\mathbb{S}_{\xi_i}.$ For all $i \in [n],$ worker $i$ can only access $f_i$. We show that the optimal time complexity is
\begin{align}
  \label{eq:fEwMNnfNcgAlqi}
  \textstyle \Theta\left(\frac{L \Delta}{\varepsilon} \max\left\{\max\limits_{i,j \in [n]} \mu_{i \to j}, \max\limits_{i \in [n]} h_i, \frac{\sigma^2}{n \varepsilon}\left(\frac{1}{n} \sum\limits_{i=1}^n h_i\right)\right\}\right)
\end{align}
in the heterogeneous setting achieved by a new method, \algname{Amelie SGD} (Algorithm~\ref{alg:alg_server_malenia}). \algname{Amelie SGD} is closely related to \algname{Fragile SGD} but with essential algorithmic changes to make it work with heterogeneous functions. The obtained complexity \eqref{eq:fEwMNnfNcgAlqi} is worse than \eqref{eq:time_complexity_full}, which is expected because the heterogeneous setting is more challenging than the homogeneous setting.

\section{Highlights of Experiments}
In Section~\ref{sec:experiments}, we present experiments with quadratic optimization problems, logistic regression, and a neural network to substantiate our theoretical findings. Here, we focus on highlighting the results from the logistic regression experiments:
\begin{figure}[H]
  \centering
  \begin{subfigure}{.49\textwidth}
  \centering
  \includegraphics[width=0.95\textwidth]{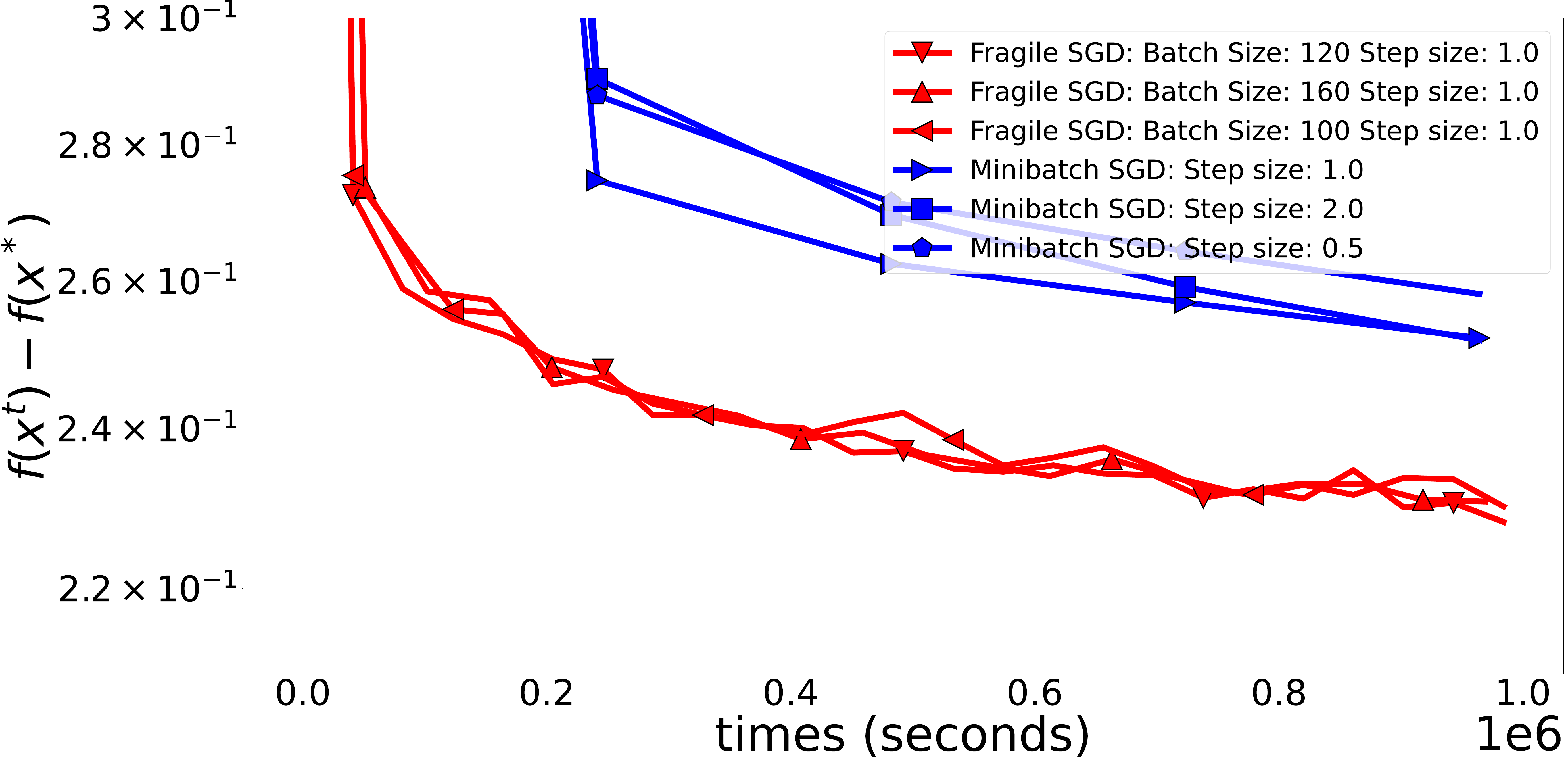}
  \end{subfigure}
  \begin{subfigure}{.49\textwidth}
  \centering
  \includegraphics[width=0.95\textwidth]{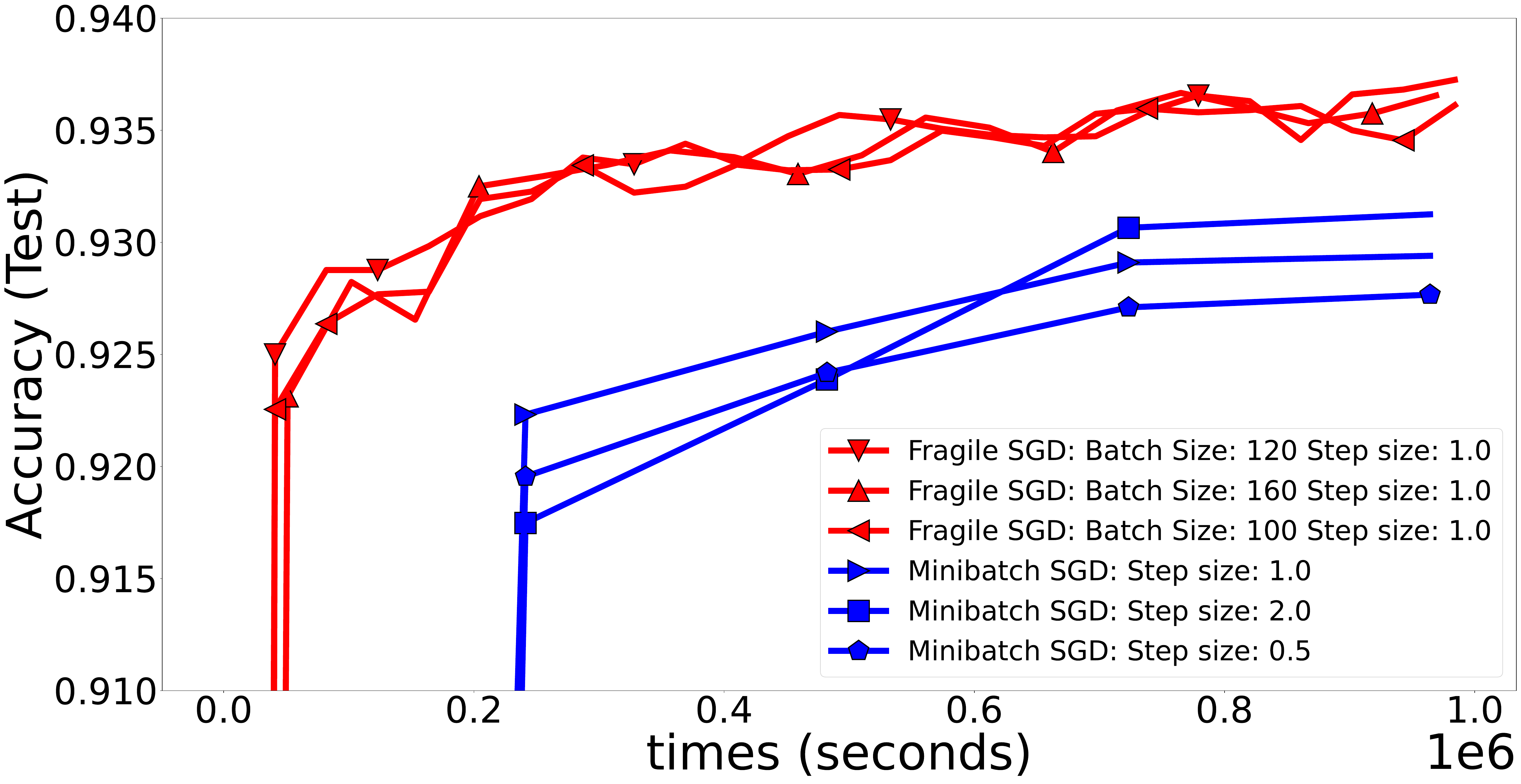}
  \end{subfigure}\hfill
  \caption{The communication time $\rho = 10$ seconds (Slow communication) in 2D-Mesh}
\end{figure}
On \textit{MNIST} dataset \citep{lecun2010mnist} with $100$ workers, \algname{Fragile SGD} is much faster and has better test accuracy than \algname{Minibatch SGD}.

\begin{ack}
  The research reported in this publication was supported by funding from King Abdullah University of Science and Technology (KAUST): i) KAUST Baseline Research Scheme, ii) Center of Excellence for Generative AI, under award number 5940, iii) SDAIA-KAUST Center of Excellence in Artificial Intelligence and Data Science. The work of A.T. was partially supported by the Analytical center under the RF Government (subsidy agreement 000000D730321P5Q0002, Grant No. 70-2021-00145 02.11.2021).
\end{ack}

\bibliography{neurips_2024}
\bibliographystyle{apalike}

\clearpage
\newpage
\appendix

\tableofcontents

\newpage

\newpage
\section{More Examples}
\label{sec:more_examples}

\subsection{ND-Mesh or ND-Torus}

\begin{figure}[t!]
  \centering
  \begin{subfigure}[t]{0.5\textwidth}
    \centering
    \begin{tikzpicture}[auto,node distance=2cm,thick,scale=0.7]
      \foreach \x in {0,...,4}
        \foreach \y in {0,...,4} 
          {\pgfmathtruncatemacro{\label}{\x - 5 *  \y +21}
          \node (\x\y) at (1.5*\x,1.5*\y) {\label};} 
      
      \foreach \x in {0,...,4}
        \foreach \y [count=\yi] in {0,...,3}
          \path[-]
            (\x\y) edge node {$\rho$} (\x\yi)
            (\y\x) edge node {$\rho$} (\yi\x);
    \end{tikzpicture}
    \caption{2D-Mesh}
    \label{fid:2d_torus}
  \end{subfigure}%
~ 
  \begin{subfigure}[t]{0.5\textwidth}
    \centering
    \begin{tikzpicture}[scale=1.7, every node/.style={circle, minimum size=4pt, inner sep=0pt}, thick]
      \foreach \x in {0,1,2}
          \foreach \y in {0,1,2}
              \foreach \z in {0,1,2}
                  \node (\x\y\z) at (\x,\y,\z) {\pgfmathtruncatemacro\label{3*(3*\x+\y)+\z+1}\label};
      
      \foreach \y in {0,1,2}
          \foreach \z in {0,1,2}
              \draw (0\y\z) -- (1\y\z) (1\y\z) -- (2\y\z);
      \foreach \x in {0,1,2}
          \foreach \z in {0,1,2}
              \draw (\x0\z) -- (\x1\z) (\x1\z) -- (\x2\z);
      \foreach \x in {0,1,2}
          \foreach \y in {0,1,2}
              \draw (\x\y0) -- (\x\y1) (\x\y1) -- (\x\y2);
  \end{tikzpicture}
  \caption{3D-Mesh}
  \label{fid:3d_torus}
  \end{subfigure}
  \caption{Examples of ND-Mesh graphs. For all $i \neq j \in [n],$ edges $i \to j$ and $j \to i$ are merged and visualized with one undirected edge.}
  \label{fig:mesh_graph}
\end{figure}

We now consider a generalization of Line graphs: ND-Mesh graphs. In Figures~\ref{fid:2d_torus} and \ref{fid:3d_torus}, we present examples of 2D-Mesh and 3D-Mesh. For simplicity, assume that $n = (2 k + 1)^N$ for some $k \in \N.$ 
The computation speeds $h_i = h$ for all $i \in [n],$ and the communicate speeds of the edges $\rho_{i \to j} = \rho$ if workers $i$ and $j$ are connected in a mesh and $\rho_{i \to j} = \infty$ for all other $i,j \in [n].$ Using geometrical reasoning, it is clear an index $j^*,$ that minimizes \eqref{eq:time_complexity}, corresponds to the worker in the middle of a graph (13 in Figures~\ref{fid:2d_torus} and 14 in Figures~\ref{fid:3d_torus}). Therefore, 
\begin{align*}
  T_{\textnormal{2D-Mesh}} = \Theta\left(\frac{L \Delta}{\varepsilon} \min_{k \in [n]} \max\left\{\tau_{\pi_{j^*,k} \to j^*}, h, \frac{\sigma^2 h}{k \varepsilon}\right\}\right).
\end{align*}
For now, let us consider a 2D-Mesh graph. 
The number of workers, that have the length of the shortest path to $j^*$ equals to $0$, is $1$. The number of workers, that have the length of the shortest path to $j^*$ less or equal to $\rho$, is $5$. The number of workers, that have the length of the shortest path to $j^*$ less or equal to $2 \rho$, is $13$. In general, the number of workers, that have the length of the shortest path to $j^*$ less or equal to $\sqrt{k} \rho$, is $\Theta(k)$ for all $k \in \{0, \dots, \Theta\left(n\right)\}.$ It means 
\begin{align*}
  T_{\textnormal{2D-Mesh}} 
  &= \Theta\left(\frac{L \Delta}{\varepsilon} \min_{k \in \{0, \dots, \Theta\left(n\right)\}} \max\left\{\sqrt{k} \rho, h, \frac{\sigma^2 h}{(k + 1) \varepsilon}\right\}\right) \\
  &= \Theta\left(\frac{L \Delta}{\varepsilon}\left[h + \begin{cases}
    \frac{\sigma^2 h}{\varepsilon},& \left(\frac{\sigma^2 h}{\rho \varepsilon}\right)^{2/3} \leq 1, \\
    \rho^{2/3}\left(\frac{\sigma^2 h}{\varepsilon}\right)^{1/3},& n > \left(\frac{\sigma^2 h}{\rho \varepsilon}\right)^{2/3} > 1, \\
    \frac{\sigma^2 h}{n \varepsilon},& \left(\frac{\sigma^2 h}{\rho \varepsilon}\right)^{2/3} \geq n
  \end{cases}\right]\right).
\end{align*}
Using the same reasoning, for the general case with a ND-Mesh graph, we get
\begin{align*}
  T_{\textnormal{ND-Mesh}} 
  &= \Theta\left(\frac{L \Delta}{\varepsilon} \min_{k \in \{0, \dots, \Theta\left(n\right)\}} \max\left\{k^{1/N} \rho, h, \frac{\sigma^2 h}{(k + 1) \varepsilon}\right\}\right) \\
  &= \Theta\left(\frac{L \Delta}{\varepsilon}\left[h + \begin{cases}
    \frac{\sigma^2 h}{\varepsilon},& \left(\frac{\sigma^2 h}{\rho \varepsilon}\right)^{N/(N+1)} \leq 1, \\
    \rho^{N/(N+1)}\left(\frac{\sigma^2 h}{\varepsilon}\right)^{1/(N+1)},& n > \left(\frac{\sigma^2 h}{\rho \varepsilon}\right)^{N/(N+1)} > 1, \\
    \frac{\sigma^2 h}{n \varepsilon},& \left(\frac{\sigma^2 h}{\rho \varepsilon}\right)^{N/(N+1)} \geq n
  \end{cases}\right]\right).
\end{align*}
As in Line graphs, these complexities have three regimes depending on the problem's parameters. Up to a constant factor, the same conclusions apply to ND-Torus.

\subsection{Star graph}
Let us consider Star graphs with different computation and communication speeds. Let us fix a graph with $n + 1$ workers, where one worker with the index $n + 1$ is in the center, and all other $n$ workers are only directly connected to worker $n + 1.$ Then, 
\begin{align*}
  \tau_{i \to j} = \rho_{i \to n + 1} + \rho_{n + 1 \to j} \quad \forall i \neq j \in [n + 1] \textnormal{ and } \tau_{i \to i} = 0 \quad \forall i \in [n].
\end{align*}
Using these constraints, we can simplify \eqref{eq:time_complexity}. There are two possible best strategies: i) a pivot worker $j^*$ works locally without communications, ii) a pivot worker $j^*$ works with communications but it would necessary require to communicate through the central worker; in this case, the central worker $n + 1$ is a pivot worker. Therefore, we get 
\begin{align*}
  T_{\textnormal{star}} 
  = \min\Bigg[
    &\frac{L \Delta}{\varepsilon} \min_{j \in [n]} \max\left\{h_{j}, \frac{\sigma^2 h_j}{\varepsilon}\right\} \\
    &\frac{L \Delta}{\varepsilon} \min_{k \in [n]} \max\left\{\max\{\rho_{\pi_{n+1,k} \to n+1} + \rho_{n+1 \to \pi_{n+1,k}}, h_{\pi_{n+1,k}}\}, \frac{\sigma^2}{\varepsilon}\left(\sum\limits_{i=1}^{k} \frac{1}{h_{\pi_{n+1,i}}}\right)^{-1}\right\}\Bigg]
\end{align*}
since $\tau_{n+1 \to \pi_{n+1,k}} = \rho_{n+1 \to \pi_{n+1,k}}$ for all $k \in [n + 1].$
Let us slightly simplify the result and assume that broadcasting from the central worker is fast, i.e., $\rho_{n + 1 \to i} \leq \rho_{i \to n + 1}$ for all $i \in [n + 1],$ then
\begin{align*}
  T_{\textnormal{star}} 
  = \min\Bigg[
    &\underbrace{\frac{L \Delta}{\varepsilon} \min_{j \in [n]} \max\left\{h_{j}, \frac{\sigma^2 h_j}{\varepsilon}\right\}}_{T_{\textnormal{slow comm.}}}, \\
    &\underbrace{\frac{L \Delta}{\varepsilon} \min_{k \in [n]} \max\left\{\max\{\rho_{\pi_{n+1,k} \to n+1}, h_{\pi_{n+1,k}}\}, \frac{\sigma^2}{\varepsilon}\left(\sum\limits_{i=1}^{k} \frac{1}{h_{\pi_{n+1,i}}}\right)^{-1}\right\}}_{T_{\textnormal{fast comm.}}}\Bigg].
\end{align*}
\citet{tyurin2024shadowheart} also considered Star graphs and showed that $T_{\textnormal{fast comm.}}$ is the optimal time complexity for methods that communicate through the central worker. Our result is more general since we also consider decentralized methods and capture the term $T_{\textnormal{slow comm.}}$
that can be potentially smaller if communication is slow. \citet{tyurin2024shadowheart} conjectured that $T_{\textnormal{star}}$ is the optimal bound, and we proved it (up to log factor) here.

\subsection{General case}
We show how one can use our generic result \eqref{eq:time_complexity} to get an explicit formula in some cases. For the general case with arbitrary communication and computation times, the minimizers $j$ and $k$ in \eqref{eq:time_complexity} can be found in the following way. First, we have to find $\tau_{i \to j}$ using any algorithm that solves \emph{the all-pairs shortest path problem} (e.g., Floyd–Warshall algorithm \citep{floyd1962algorithm}).
Once we know $\tau_{i \to j},$ we should sort $\{\max\{\tau_{i \to j} + \tau_{j \to i}, h_{i}\}\}_{k=1}^{n}$ for all $j \in [n]$ to find the permutations.
Finally, we have enough information to calculate $t^*$ from Definition~\ref{def:time_budget}.

\section{On the Connection to the Gossip Framework}
\label{sec:gossip}
Most of the previous methods were designed for a different setting, gossip-type communication. In fact, our setting is more general than the gossip communication. Indeed, recall that in the gossip communication, worker $i$ is allowed to get vectors from other workers through the operation 
$\sum_{j=1}^n w_{ij} x_j,$ where $w_{ij} \in \{0, 1\}$. This is equivalent to our setting for the case when the communication time $\rho_{ij} = \infty$ when $w_{ij}$ is zero, and $\rho_{ij} = 1$ if $w_{ij}$ is not zero, and worker $i$ sums the received vectors. But our setting is richer since we allow different communication and computation times and allow workers to do whatever they like with vectors (not only to sum).

Moreover, the gossip framework codes the communication graph through the matrix $\{w_{ij}\}.$ We propose to code graphs through the times $\{\rho_{ij}\}$ ($\{\tau_{ij}\}$). Our approach is closer to real scenarios because, as we explained previously, it includes the gossip framework.

\newpage
\section{Heterogeneous Setup}
\label{sec:heter}

We now consider the heterogeneous setting. The only difference is instead of \eqref{eq:main_task}, we consider the following problem.
\begin{align}
    \label{eq:main_task_heterog}
    \min \limits_{x \in \R^d} \Big \{f(x) \eqdef \frac{1}{n} \sum\limits_{i=1}^n \underbrace{\ExpSub{\xi_i \sim \mathcal{D}_i}{f_i(x;\xi_i)}}_{f_i(x) \eqdef}\Big \},
\end{align}
where $f_i\,:\,\R^d \times \mathbb{S}_{\xi_i} \rightarrow \R^d$ and $\xi_i$ are random variables with some distributions $\mathcal{D}_i$ on $\mathbb{S}_{\xi_i}.$ We now present our upper and lower bounds and discuss them.

\subsection{Lower bound}

In Section~\ref{sec:lower_bound_heter}, we prove the following lower bound.

\begin{theorem}[Lower Bound; Simplified Presentation of Theorem~\ref{theorem:random_lower_bound_heter}]
  \label{theorem:random_lower_bound_heter_simply}
  Consider Protocol~\ref{alg:simplified_time_multiple_oracle_protocol} with $\nabla f_i(\cdot;\cdot).$ We take any $h_i \geq 0$ and $\tau_{i \to j} \geq 0$ for all $i,j \in [n]$ such that $\tau_{i \to j} \leq \tau_{i \to k} + \tau_{k \to j}$ for all $i,k,j \in [n].$ We fix $L, \Delta, \varepsilon, \sigma^2 > 0$ that satisfy the inequality $\varepsilon < c L \Delta$ for some universal constant $c.$ For any (zero-respecting) algorithm, there exists a function $f = \frac{1}{n} \sum\limits_{i=1}^{n} f_i,$ which satisfy Assumptions~\ref{ass:lipschitz_constant}, \ref{ass:lower_bound} and $f(0) - f^* \leq \Delta$, and stochastic gradient mappings $\nabla f_i(\cdot;\cdot),$ which satisfy Assumption~\ref{ass:stochastic_variance_bounded} (${\rm \mathbb{E}}_{\xi}[\nabla f_i(x;\xi)] = \nabla f_i(x)$ and 
  ${\rm \mathbb{E}}_{\xi}[\|\nabla f_i(x;\xi) - \nabla f_i(x)\|^2] \leq \sigma^2$), such that the required time to find $\varepsilon$--solution is
  \begin{align*}
    &\Omega\left(\frac{L \Delta}{\varepsilon} \max\left\{\max\limits_{i,j \in [n]} \tau_{i \to j}, \max\limits_{i \in [n]} h_i, \frac{\sigma^2}{n \varepsilon}\left(\frac{1}{n} \sum\limits_{i=1}^n h_i\right)\right\}\right)
  \end{align*}
  seconds.
\end{theorem}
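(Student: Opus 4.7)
The plan is to observe that since the asserted lower bound is the maximum of three quantities, it suffices to construct three separate hard instances, one for each term, and then glue them together. For each instance I will exhibit a concrete choice of $f = \frac{1}{n}\sum_i f_i$ (satisfying Assumptions~\ref{ass:lipschitz_constant}--\ref{ass:stochastic_variance_bounded} with $f(0)-f^\ast \le \Delta$) together with a stochastic oracle such that any zero-respecting algorithm running under Protocol~\ref{alg:simplified_time_multiple_oracle_protocol} needs at least the claimed time to produce an $\varepsilon$-stationary point. The baseline iteration lower bound $\Omega(L\Delta/\varepsilon)$ will come, as usual, from a scaled and shifted version of the ``chain'' function of \citet{carmon2020lower,arjevani2022lower}, where progress toward the minimizer happens coordinate by coordinate, each coordinate activation requiring a gradient evaluation at a point whose previously zero coordinate has become nonzero.

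For the $\max_i h_i \cdot L\Delta/\varepsilon$ and $\max_{i,j} \tau_{i\to j} \cdot L\Delta/\varepsilon$ terms I will distribute the chain function across workers so that unlocking the next coordinate requires a specific worker's gradient. Concretely, let the heterogeneous functions be $f_i = n \cdot \psi_i$ where $\psi_i$ activates only a designated block of coordinates of the global chain (Arjevani--Shamir style construction). For the compute term, I assign the next coordinate to the slowest worker, so each of the $\Omega(L\Delta/\varepsilon)$ chain unlockings costs at least $\max_i h_i$ seconds of wall-clock. For the communication term, I pick two workers $i^\ast, j^\ast$ realizing $\max_{i,j}\tau_{i\to j}$ and arrange the chain so that activating the next coordinate alternates between information that resides at $i^\ast$ and information that resides at $j^\ast$; since the zero-respecting property forbids generating a nonzero in a coordinate before the relevant gradient is seen by a worker, each consecutive activation requires at least $\tau_{i^\ast\to j^\ast}$ or $\tau_{j^\ast\to i^\ast}$ seconds of wall-clock, yielding the bound.

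For the stochastic term $\frac{\sigma^2}{n\varepsilon}\cdot\frac{1}{n}\sum_i h_i \cdot \frac{L\Delta}{\varepsilon}$, I will take $f_i$ to be a ``one-dimensional bump'' function and define a stochastic oracle $\nabla f_i(x;\xi) = \nabla f_i(x)+\sigma\zeta$ with $\zeta$ mean-zero and of norm one on average. The gradient of $f = \frac{1}{n}\sum_i f_i$ is the average over workers, so to reduce the noise in any single empirical gradient estimate from $\sigma^2$ to $O(\varepsilon)$ the algorithm must accumulate samples from the individual workers. If worker $i$ contributes $m_i$ samples, then the variance of its contribution to the $\nabla f$ estimator is $\sigma^2/(n^2 m_i)$ and reducing the total variance to $\varepsilon$ forces $\sum_i 1/m_i \lesssim n^2\varepsilon/\sigma^2$. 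Wall-clock time to produce these samples is $\max_i m_i h_i$, and the elementary optimization of $\max_i m_i h_i$ subject to $\sum_i 1/m_i \le n^2\varepsilon/\sigma^2$ is attained when $m_i h_i$ is constant in $i$ (via AM--HM/Lagrange multipliers), giving time $\Omega\bigl(\sigma^2(\sum_i h_i)/(n^2\varepsilon)\bigr)$ per effective iteration. Multiplied by the $\Omega(L\Delta/\varepsilon)$ chain unlockings, this yields the third term. The formal justification that batching must happen per-iteration (rather than across iterations) follows a standard potential argument as in \citet{arjevani2022lower}, adapted to the zero-respecting setting.

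The main obstacle is the stochastic term: one has to argue that because the data is heterogeneous and each worker's noise can only be reduced by that worker's own computations, the classical ``divide by $n$'' does not apply uniformly, and the right measure of aggregate throughput is the arithmetic rather than the harmonic mean of the $h_i$. The cleanest way I see is to set up an information-theoretic reduction: conditional on the current iterate, the only way to distinguish the true $f_i$ from a perturbed copy is to draw $\Omega(\sigma^2/\varepsilon)$ fresh samples from that worker's oracle, and the zero-respecting property prevents ``reusing'' gradients across iterations in a way that would collapse the iteration count. The other obstacle, as in the homogeneous Theorem~\ref{theorem:lower_bound_simply}, is dealing with the randomness of computation and communication completion times; this will be handled exactly as in Section~\ref{sec:main_lemma}, by taking times equal to their deterministic upper bounds so that randomness only appears through the stochastic oracle, giving the cleanest lower bound.
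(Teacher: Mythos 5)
Your decomposition into three separate hard instances is exactly the right structure (for a $\max$ lower bound it suffices to pick, per algorithm, the instance realizing the largest term), and your construction for the communication term coincides with the paper's proof of Theorem~\ref{theorem:random_lower_bound_heter}: the paper splits the chain function $F_T$ into odd- and even-indexed blocks, gives $F_{T,1}$ and $F_{T,2}$ (scaled by $n$) to the two workers $\bar i,\bar j$ realizing $\max_{i,j}\tau_{i\to j}$, sets all other $f_i=0$, uses deterministic oracles, and counts $(T-1)/2$ forced alternating transmissions, each costing at least $\tau_{\bar i\to\bar j}$. Your compute-term instance (whole chain at the slowest worker, deterministic oracle) is also sound, although the paper does not prove this term directly: both the $\max_i h_i$ term and the stochastic term are obtained by citing Theorem~A.2 of \citet{tyurin2023optimal}, which treats the same heterogeneous setup with $\tau_{i\to j}=0$.

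The genuine gap is in your stochastic term. The argument ``to find an $\varepsilon$-stationary point the algorithm must form a gradient estimate of variance $O(\varepsilon)$, hence $\sum_i 1/m_i\lesssim n^2\varepsilon/\sigma^2$, hence time $\gtrsim \sigma^2(\sum_i h_i)/(n^2\varepsilon)$ per iteration, multiplied by $L\Delta/\varepsilon$ iterations'' is a heuristic that mirrors the upper bound (it is precisely the \algname{Amelie SGD} batching condition), but it is not a lower-bound proof: nothing forces an algorithm to build a low-variance estimate at each iterate --- it can run with high-variance estimates and smaller steps or more iterations, so per-iteration sample counts and the iteration count cannot simply be multiplied. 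Likewise, a one-dimensional bump with additive noise carries no chain of length $\Theta(L\Delta/\varepsilon)$, so there is nothing to multiply against. To make this term rigorous one needs the progress-gated construction in which the hard function is spread over \emph{all} $n$ workers and advancing each of the $T=\Theta(L\Delta/\varepsilon)$ coordinates provably requires every worker to expend $\Omega(\sigma^2/(n^2\varepsilon))$-type sample effort (so that the arithmetic rather than harmonic mean of the $h_i$ appears), together with a concentration argument over the random completion pattern --- this is exactly the content of the cited Theorem~A.2 of \citet{tyurin2023optimal}, and your ``standard potential argument / information-theoretic reduction'' placeholder is where that entire machinery would have to go. If you intend to invoke that prior result rather than reprove it, say so explicitly; as written, this part of your proposal does not yet constitute a proof.
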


\afterpage{\begin{figure}[t]
  \begin{algorithm}[H]
    \caption{\algname{Amelie SGD}}
    \label{alg:alg_server_malenia}
    \begin{algorithmic}[1]
    \STATE \textbf{Input:} starting point $x^0$, stepsize $\gamma$, parameter $S$, pivot worker $j^*$, spanning trees ${\overline{st}}$ and ${\overline{st}}_{\textnormal{bc}}$
    \STATE Start Process 0 (Alg.~\ref{alg:receiver_heter}) in worker $j^*$
    \STATE Start Process $i$ (Alg.~\ref{alg:worker_heter}) in all workers for all $i \in [n]$ (including worker $j^*$)
    \end{algorithmic}
  \end{algorithm}
  \vspace{-0.6cm}
  \begin{algorithm}[H]
    \caption{Process 0 (running in worker $j^*$)}
    \label{alg:receiver_heter}
    \begin{algorithmic}[1]
    \FOR{$k = 0, 1, \dots, K - 1$}
    \STATE Broadcast $x^{k}$ to all workers using the spanning tree ${\overline{st}}_{\textnormal{bc}}$
    \STATE Init $s^k = 0$
    \WHILE{$s^k < \frac{S}{n}$}
      \STATE Wait for a message $b_{j^*}$ from Process $j^*$
      \STATE Calculate $s^k = \frac{n}{b_{j^*}}$
    \ENDWHILE
    \STATE Run all reduce with $\{\frac{1}{s_i^k} g_i^k\}_{i=1}^n$ to find $g^k = \frac{1}{n} \sum\limits_{i=1}^{n} \frac{1}{s_i^k} g_i^k$ using the spanning tree ${\overline{st}}$ \alglinelabel{line:all_reduce}
    \STATE $x^{k+1} = x^k - \gamma g^k$ \alglinelabel{line:step_heter}
    \ENDFOR
    \end{algorithmic}
  \end{algorithm}
  \vspace{-0.6cm}
  \begin{algorithm}[H]
    \caption{Process $i$ (running in worker $i$)}
    \label{alg:worker_heter}
    \begin{algorithmic}[1]
    \WHILE{True}
      \STATE Get a new point $x^k$ broadcasted by Process 0
      \STATE Init $(g_{i}^k, s_{i}^k) = (0, 0)$
      \STATE Init $b_{i,p} = \infty$ for all $p \in [n]$ s.t. $\textnormal{next}_{{\overline{st}},j^*}(p) = i$
      \STATE Run the following three functions in parallel and go to \begin{NoHyper}Line~\ref{alg:wait}\end{NoHyper}
      \STATE \textbf{function} CalculateStochasticGradients
      \STATE \quad \textbf{while} True
        \STATE \qquad Calculate $\nabla f_i(x^k;\xi), \quad \xi \sim \mathcal{D}_i$
        \STATE \qquad Run atomic add $g_{i}^k = g_{i}^k + \nabla f_i(x^k;\xi), s_{i}^k = s_{i}^k + 1$
      \STATE \quad \textbf{end while}
      \STATE \textbf{end function} 
      \STATE \textbf{function} ReceiveCountersFromPreviousWorkers
      \STATE \quad \textbf{while} True
        \STATE \qquad Wait for a message $b_p$ from any Process $p$ s.t. $\textnormal{next}_{{\overline{st}},j^*}(p) = i$
        \STATE \qquad Run atomic update $b_{i,p} = b_p$ \alglinelabel{line:b_prev}
      \STATE \quad \textbf{end while}
      \STATE \textbf{end function} 
      \STATE \textbf{function} SendCounterToNextWorker
      \STATE \quad \textbf{while} True
        \STATE \qquad Run atomic sum $b_i = \sum\limits_{p \in [n]:\textnormal{next}_{{\overline{st}},j^*}(p) = i} b_{i,p} + \frac{1}{s_i^k}$ \alglinelabel{line:b}
        \STATE \qquad Send $b_i \in \R \cup \{\infty\}$ (one float) to Process $\textnormal{next}_{{\overline{st}},j^*}(i)$ and wait while it sending \\
        \qquad (Process $j^*$ sends to Process $0$ by the definition of $\textnormal{next}_{{\overline{st}},j^*}(\cdot)$)
        \STATE \quad \textbf{end while}
        \STATE \textbf{end function} 
      \STATE Wait for new point. If receives new point, stop all computations in functions, and continue \alglinelabel{alg:wait}
      \STATE Ignore all non-received messages
      \ENDWHILE
    \end{algorithmic}
  \end{algorithm}
  \end{figure}
  \clearpage
}
\subsection{\algname{Amelie SGD}: optimal method in the heterogeneous setting}
We now present a new method based on \algname{Malenia SGD} from \citep{tyurin2023optimal} and our \algname{Fragile SGD}. As in \algname{Fragile SGD}, we also have $n + 1$ processes running in all workers. The main idea is that all workers calculate stochastic gradients in parallel and accumulate them locally. Process 0 zero waits for the moment when $\frac{n}{b_{j^*}} \geq \frac{S}{n}.$ Process $i$ calculates $b_i$ in \begin{NoHyper}Line~\ref{line:b}\end{NoHyper} of Algorithm~\ref{alg:worker_heter}, and sends it to Process $\textnormal{next}_{{\overline{st}},j^*}(i).$ Thus, $b_{j^*}$ accumulates the sum $\sum_{i=1}^n \frac{1}{s_i^k},$
which decreases with time since $s_i^k$ is the number of calculated stochastic gradients in worker $i$ in different moments of time. Therefore, $\frac{n}{b_{j^*}} \geq \frac{S}{n}$ will hold at some point of time if workers continue computing gradients. Finally, Algorithm~\ref{alg:receiver_heter} runs all reduce and does the update of $x^k.$

\begin{restatable}{theorem}{THEOREMMAINSTATICHETER}
  \label{thm:sgd_heter}
  Let Assumptions~\ref{ass:lipschitz_constant} and \ref{ass:lower_bound} hold for the function $f$ and Assumption \ref{ass:stochastic_variance_bounded} holds for the functions $f_i$ for all $i \in [n].$  We take
  $\gamma = 1 / 2 L,$ the parameter $S = \max\{\ceil{\nicefrac{\sigma^2}{\varepsilon}}, n\},$ any pivot worker $j^* \in [n],$ and any spanning trees ${\overline{st}}$ and ${\overline{st}}_{\textnormal{bc}},$
  in Algorithm~\ref{alg:alg_server_malenia}.
  For all iterations number
    $K \geq 16 L \Delta / \varepsilon,$
 we get $\frac{1}{K}\sum_{k=0}^{K-1}\Exp{\norm{\nabla f(x^k)}^2} \leq \varepsilon.$
\end{restatable}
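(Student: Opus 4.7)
The plan is to reduce the algorithm to a standard descent analysis for an $L$-smooth nonconvex objective driven by an unbiased stochastic gradient whose variance is at most $\varepsilon.$ Concretely, unrolling Algorithms~\ref{alg:receiver_heter} and \ref{alg:worker_heter}, the update in \begin{NoHyper}Line~\ref{line:step_heter}\end{NoHyper} can be written as
\[
    \textstyle x^{k+1} = x^k - \gamma\, g^k, \qquad g^k = \frac{1}{n}\sum_{i=1}^{n} \frac{1}{s_i^k}\sum_{j=1}^{s_i^k} \nabla f_i(x^k;\xi_{i,j}^k),
\]
where $s_i^k$ is the (random) number of stochastic gradients accumulated by worker $i$ between receiving the broadcast of $x^k$ and the stopping of the inner while loop, and $\{\xi_{i,j}^k\}$ are i.i.d.\ samples from $\mathcal{D}_i.$ Note that $s_i^k \geq 1$ is guaranteed because the loop only terminates once Process~$0$ has received a finite $b_{j^*}.$

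First I would establish two probabilistic properties of $g^k$ conditional on $x^k.$ By Assumption~\ref{ass:stochastic_variance_bounded}, the stopping rule depends only on computation and communication times, which are independent of the stochastic gradients; therefore, conditionally on the $s_i^k,$ the samples $\xi_{i,j}^k$ remain i.i.d.\ and we obtain $\mathbb{E}[g^k \mid x^k] = \nabla f(x^k)$ together with the variance bound
\[
    \textstyle \mathbb{E}[\|g^k - \nabla f(x^k)\|^2 \mid x^k, \{s_i^k\}] \leq \frac{\sigma^2}{n^2}\sum_{i=1}^n \frac{1}{s_i^k}.
\]
The key step is to argue that $\sum_{i=1}^n \frac{1}{s_i^k} \leq \frac{n^2}{S}$ at the moment of the update. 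By \begin{NoHyper}Line~\ref{line:b}\end{NoHyper}, each counter $b_i$ equals the sum of $\frac{1}{s_p^k}$ over all descendants $p$ in the spanning tree ${\overline{st}},$ evaluated at the (possibly stale) snapshots propagated through \begin{NoHyper}Line~\ref{line:b_prev}\end{NoHyper}; hence $b_{j^*}$ aggregates the snapshot values of $\sum_i \tfrac{1}{s_i^k}.$ Since the counters $s_i^k$ are monotonically nondecreasing in time (workers only add gradients), the present value $\sum_i \tfrac{1}{s_i^k}$ at the moment Process~$0$ exits the loop is dominated by the received $b_{j^*} \leq n^2/S.$ Combined with $S \geq \sigma^2/\varepsilon,$ this yields $\mathbb{E}[\|g^k - \nabla f(x^k)\|^2 \mid x^k] \leq \sigma^2/S \leq \varepsilon.$

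Once unbiasedness and the variance bound $\leq \varepsilon$ are in place, the remainder is the standard descent lemma for $L$-smooth nonconvex stochastic optimization (as in \citep{lan2020first,khaled2020better}). Plugging $\gamma = 1/(2L)$ into
\[
    \textstyle \mathbb{E}[f(x^{k+1})] \leq \mathbb{E}[f(x^k)] - \big(\gamma - \tfrac{L\gamma^2}{2}\big)\mathbb{E}[\|\nabla f(x^k)\|^2] + \tfrac{L\gamma^2}{2}\mathbb{E}[\|g^k - \nabla f(x^k)\|^2]
\]
gives coefficients $3/(8L)$ and $1/(8L),$ respectively. Telescoping over $k = 0,\dots,K-1,$ using Assumption~\ref{ass:lower_bound} to bound $f(x^0) - f(x^K) \leq \Delta,$ and choosing $K \geq 16 L \Delta / \varepsilon$ yields $\tfrac{1}{K}\sum_{k=0}^{K-1}\mathbb{E}[\|\nabla f(x^k)\|^2] \leq \varepsilon,$ as claimed.

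The main obstacle I anticipate is the careful probabilistic bookkeeping for the first step: the stopping time and the values $s_i^k$ are random and correlated across workers through the asynchronous message-passing schedule, so a clean conditional argument is needed to legitimately factor out the $\xi_{i,j}^k$ from the $s_i^k$ when computing the mean and variance of $g^k.$ The independence assumption between times and stochastic gradients in Assumption~\ref{ass:stochastic_variance_bounded}, together with the monotonicity of $s_i^k$ in time, is exactly what makes this go through; everything else is a routine descent argument that does not depend on the upper bounds $h_i$ or $\tau_{i\to j},$ so the iteration complexity $O(L\Delta/\varepsilon)$ holds regardless of the computation and communication speeds.
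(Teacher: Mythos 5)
Your plan is correct and follows essentially the same route as the paper's own proof: rewrite the update as a stochastic-gradient step with random per-worker batch sizes $s_i^k$, condition on the (time-generated) $\sigma$-algebra to get unbiasedness and the variance bound $\frac{\sigma^2}{n^2}\sum_i \frac{1}{s_i^k}$, use the monotonicity/staleness of the counters to conclude $\sum_i \frac{1}{s_i^k} \leq b_{j^*} \leq \frac{n^2}{S}$ at the update moment, and finish with the standard $L$-smooth nonconvex SGD descent argument. No gaps worth flagging.
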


Note that Theorem~\ref{thm:sgd_heter} states the convergence of \algname{Amelie SGD} even if the computation and communication speeds are unbounded.

\begin{restatable}{theorem}{COROLLLARYMAINHETER}
  \label{cor:max_time_heter}
  Consider the assumptions and the parameters from Theorem~\ref{thm:sgd_heter}. 
  For any pivot worker $j^* \in [n]$ and any spanning trees ${\overline{st}}$ and ${\overline{st}}_{\textnormal{bc}},$ Algorithm~\ref{alg:alg_server_malenia} converges after at most 
  \begin{align*}
    \Theta\left(\frac{L \Delta}{\varepsilon} \max\left\{\max\limits_{i,j \in [n]} \mu_{i \to j}, \max\limits_{i \in [n]} h_i, \frac{\sigma^2}{n \varepsilon}\left(\frac{1}{n} \sum\limits_{i=1}^n h_i\right)\right\}\right)
  \end{align*}
  seconds, where $\mu_{i \to j^*}^k$ ($\mu_{j^* \to i}^k$) is an upper bound on times required to send a vector from worker $i$ to worker $j^*$ (from worker $j^*$ to worker $i$) along the spanning tree ${\overline{st}}$ (spanning tree ${\overline{st}}_{\textnormal{bc}}$) for all $i \in [n].$
\end{restatable}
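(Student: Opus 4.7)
The plan is to derive the wall-clock bound from the iteration bound of Theorem~\ref{thm:sgd_heter} by analysing the duration of one outer iteration of Algorithm~\ref{alg:receiver_heter}. Theorem~\ref{thm:sgd_heter} already guarantees that $K=\lceil 16L\Delta/\varepsilon\rceil=\Theta(L\Delta/\varepsilon)$ outer iterations suffice to reach an $\varepsilon$-stationary point, so it is enough to show that a single iteration finishes in time
\[
T_{\text{iter}} \;=\; O\!\left(\max_{i,j\in[n]}\mu_{i\to j} \;+\; \max_{i\in[n]} h_i \;+\; \tfrac{S}{n^{2}}\textstyle\sum_{i=1}^{n} h_i\right),
\]
with $S=\max\{\lceil\sigma^{2}/\varepsilon\rceil,\,n\}$. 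Substituting this $S$, using $\tfrac{1}{n}\sum_i h_i\le \max_i h_i$ to absorb the $S=n$ branch into the middle term, and multiplying $T_{\text{iter}}$ by $K$ then yields the advertised bound.

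Each iteration decomposes into three sequential phases: (i) the broadcast of $x^k$ from the pivot $j^*$ along ${\overline{st}}_{\textnormal{bc}}$; (ii) the wait loop in Process~$0$ that halts as soon as $n/b_{j^*}\geq S/n$; (iii) the all-reduce of the scaled partial sums along ${\overline{st}}$ in \begin{NoHyper}Line~\ref{line:all_reduce}\end{NoHyper}, followed by the local update in \begin{NoHyper}Line~\ref{line:step_heter}\end{NoHyper}. Any root-to-leaf communication along a spanning tree takes at most $\max_{i,j}\mu_{i\to j}$ seconds per edge times the depth, and because each edge of either tree is traversed a constant number of times, both (i) and (iii) finish in $O(\max_{i,j\in[n]}\mu_{i\to j})$ seconds.

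The nontrivial phase is (ii). Let $t$ be time measured from the completion of phase (i); by then every worker has received $x^k$ and started its CalculateStochasticGradients loop, so worker $i$ has accumulated $s_i^{k}\ge \lfloor t/h_i\rfloor$ stochastic gradients at time $t$. The partial sum $b_i$ defined in \begin{NoHyper}Line~\ref{line:b}\end{NoHyper} is resent up ${\overline{st}}$ in a tight loop (SendCounterToNextWorker), so after an additional tree-propagation delay $D=O(\max_{i,j\in[n]}\mu_{i\to j})$ the pivot reads a value dominated by $\sum_{i=1}^{n}1/s_i^{k}(t-D)$ because $s_i^{k}$ is monotone in $t$. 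Once $t-D\ge 2\max_i h_i$, this gives
\[
b_{j^*}(t)\;\le\;\sum_{i=1}^{n}\frac{h_i}{(t-D)-h_i}\;\le\;\frac{2\sum_{i=1}^{n} h_i}{t-D},
\]
so the stopping criterion $b_{j^*}\le n^{2}/S$ is certainly met for $t=O\!\bigl(\max_{i,j\in[n]}\mu_{i\to j}+\max_i h_i+\tfrac{S}{n^{2}}\sum_i h_i\bigr)$.

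The main technical obstacle is this delay analysis: what Process~$0$ observes is not a snapshot of the current local counters but a delayed, partially-aggregated estimate stitched together along ${\overline{st}}$ from values produced at different instants on each worker. The argument above resolves this by exploiting monotonicity of $s_i^{k}$ in time, so that the stale aggregate at $j^*$ is pessimistically dominated by $\sum_i 1/s_i^{k}(t-D)$ for per-worker delays bounded by the tree traversal time, and $s_i^{k}(t-D)\ge (t-D)/h_i - 1$ plugs directly into the elementary bound displayed above. Adding the durations of phases (i)--(iii), inserting $S=\max\{\lceil\sigma^{2}/\varepsilon\rceil,n\}$, and multiplying by $K=\Theta(L\Delta/\varepsilon)$ then completes the proof.
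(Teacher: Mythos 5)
Your proposal is correct and follows essentially the same route as the paper's proof: $K=\Theta(L\Delta/\varepsilon)$ iterations from Theorem~\ref{thm:sgd_heter}, a per-iteration decomposition into broadcast, waiting phase, and all-reduce each costing $O(\max_{i,j}\mu_{i\to j})$ in communication, and the waiting phase bounded via $s_i^k\ge\lfloor t/h_i\rfloor$ together with the monotonicity of the counters to handle the staleness of $b_{j^*}$ up to a tree-propagation delay $O(\max_{i,j}\mu_{i\to j})$. The only cosmetic differences are that you bound the staleness of what the pivot reads at time $t$ (rather than adding the propagation time after the local condition $\sum_i 1/s_i^k\le n^2/S$ holds) and use $\lfloor x\rfloor\ge x-1$ instead of $\lfloor x\rfloor\ge x/2$.
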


\begin{corollary}
  \label{cor:time_complexity_heter}
  Consider the assumptions and the parameters from Theorem~\ref{cor:max_time_heter}. Let us take any pivot worker $j^* \in [n]$ and a spanning tree ${\overline{st}}$ (spanning tree ${\overline{st}}_{\textnormal{bc}}$) that connects every worker $i$ to worker $j^*$ (worker $j^*$ to worker $i$) with the shortest distance $\tau_{i \to j^*}$ ($\tau_{j^* \to i}$),
  then Algorithm~\ref{alg:alg_server_malenia} converges after at most 
  \begin{align}
    \label{eq:time_complexity_heter}
    \Theta\left(\frac{L \Delta}{\varepsilon} \max\left\{\max\limits_{i,j \in [n]} \tau_{i \to j}, \max\limits_{i \in [n]} h_i, \frac{\sigma^2}{n \varepsilon}\left(\frac{1}{n} \sum\limits_{i=1}^n h_i\right)\right\}\right)
  \end{align}
  seconds.
\end{corollary}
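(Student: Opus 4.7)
The plan is to apply Theorem~\ref{cor:max_time_heter} with the specific choice of pivot worker and spanning trees stated in the corollary, and then argue that the spanning-tree communication upper bounds $\mu_{i \to j^*}$ and $\mu_{j^* \to i}$ collapse to the shortest-path distances $\tau_{i \to j^*}$ and $\tau_{j^* \to i}$, so that $\max_{i,j \in [n]} \mu_{i \to j}$ is controlled (up to a universal constant factor) by $\max_{i,j \in [n]} \tau_{i \to j}$.

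First I would fix any pivot worker $j^* \in [n]$ together with the two spanning trees ${\overline{st}}$ and ${\overline{st}}_{\textnormal{bc}}$ from the corollary, namely those whose unique root-paths from each $i$ to $j^*$ (resp.\ from $j^*$ to each $i$) realize the shortest-path distances $\tau_{i \to j^*}$ (resp.\ $\tau_{j^* \to i}$). Such trees exist: take the union of shortest paths produced by a Dijkstra-type procedure on the weighted multigraph from Section~\ref{sec:setup}. By the definition of $\mu$ in Theorem~\ref{cor:max_time_heter} and the way these trees are built, we obtain $\mu_{i \to j^*} = \tau_{i \to j^*}$ and $\mu_{j^* \to i} = \tau_{j^* \to i}$ for every $i \in [n]$.

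Second, I would bound the cross-term $\max_{i,j \in [n]} \mu_{i \to j}$ appearing in Theorem~\ref{cor:max_time_heter}. Any communication from $i$ to $j$ in the combined tree structure can always be routed through the pivot $j^*$, which gives $\mu_{i \to j} \leq \mu_{i \to j^*} + \mu_{j^* \to j} = \tau_{i \to j^*} + \tau_{j^* \to j}$. Applying the triangle inequality for $\tau$ stated in Section~\ref{sec:setup}, the right-hand side is at most $2 \max_{i,j \in [n]} \tau_{i \to j}$, so $\max_{i,j \in [n]} \mu_{i \to j} \leq 2 \max_{i,j \in [n]} \tau_{i \to j}$. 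Conversely, $\tau_{i \to j} \leq \mu_{i \to j}$ always holds since $\tau$ is defined as an infimum over all paths, so the two maxima agree up to the constant factor $2$.

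Finally, I would substitute these identities and inequalities into the bound of Theorem~\ref{cor:max_time_heter} and absorb the constant factor into the $\Theta(\cdot)$, obtaining exactly \eqref{eq:time_complexity_heter}. The entire argument is algebraic once Theorem~\ref{cor:max_time_heter} is in hand; no new probabilistic or optimization-theoretic ingredient is needed. The only mild subtlety (and the step most worth sanity-checking) is the routing inequality $\mu_{i \to j} \leq \mu_{i \to j^*} + \mu_{j^* \to j}$, which implicitly relies on the fact that in Theorem~\ref{cor:max_time_heter} the maximum over pairs $(i,j)$ is really a maximum over communications that the algorithm actually performs along the chosen trees (all of which pass through $j^*$); once this is made explicit, everything else is immediate.
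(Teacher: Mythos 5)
Your proposal is correct and matches the paper's (implicit) argument: the paper states this corollary without proof precisely because, once the shortest-path spanning trees are plugged into Theorem~\ref{cor:max_time_heter}, the quantities $\mu_{i \to j^*}$ and $\mu_{j^* \to i}$ become $\tau_{i \to j^*}$ and $\tau_{j^* \to i}$, and every communication the algorithm performs is routed through $j^*$, so the communication term is bounded by $2\max_{i,j \in [n]} \tau_{i \to j}$ and absorbed into the constant. The only cosmetic remark is that your invocation of the triangle inequality is unnecessary — each of $\tau_{i \to j^*}$ and $\tau_{j^* \to j}$ is already bounded by $\max_{i,j \in [n]} \tau_{i \to j}$ individually — but this does not affect correctness.
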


\subsection{Discussion}
Corollary~\ref{cor:time_complexity_heter} together with Theorem~\ref{theorem:random_lower_bound_heter_simply} states that the time complexity \eqref{eq:time_complexity_heter} is optimal. The result is pessimistic since the time complexity depends on the ``diameter'' $\max_{i,j \in [n]} \tau_{i \to j}$ and the slowest performance $\max_{i \in [n]} h_i.$ A similar dependence was observed in \citep{lu2021optimal,tyurin2023optimal}. As in \citep{tyurin2023optimal}, the stochastic term $\nicefrac{\sigma^2}{n \varepsilon}\left(\nicefrac{1}{n} \sum_{i=1}^n h_i\right)$ depends on the average of $\{h_i\}$ if $\nicefrac{\sigma^2}{\varepsilon}$ is large.

\subsection{Comparison with previous methods}
\label{sec:discussion_heter_previous}
Let us consider \algname{Minibatch SGD} described in Section~\ref{sec:motivation}. This method converges after 
\begin{align*}
  \Theta\left(\frac{L \Delta}{\varepsilon}\max\left\{\max\limits_{i, j \in [n]} \tau_{i \to j},\max\limits_{i \in [n]} h_i,\frac{\sigma^2}{n \varepsilon} \max\{\max\limits_{i, j \in [n]} \tau_{i \to j}, \max\limits_{i \in [n]} h_i\}\right\}\right)
\end{align*}
seconds in the heterogeneous setting. If $\nicefrac{\sigma^2}{\varepsilon}$ is large, then \algname{Amelie SGD} can be at least 
\begin{align*}
  \max\{\max\limits_{i, j \in [n]} \tau_{i \to j}, \max\limits_{i \in [n]} h_i\} / \left(\frac{1}{n} \sum_{i=1}^n h_i\right)
\end{align*}
times faster than \algname{Minibatch SGD}. There are many more advanced methods \citep{lu2021optimal,vogels2021relaysum,even2023asynchronous} that work in decentralized stochastic heterogeneous setting. \citet{lu2021optimal} developed similar lower and upper bounds, but there are at least three main differences: i) they derived an \emph{iteration complexity} instead of a time complexity and assumed the performances of all workers are the same ii) the obtained lower bound holds only for one particular multigraph while our complexity holds for any multigraph iii) they assumed the smoothness of $f_i,$ while we consider the smoothness of $f.$
The \algname{RelaySGD} and \algname{Gradient Tracking} methods by \citet{vogels2021relaysum,liu2024decentralized} wait for the slowest worker; thus, they depend on $\max_{i \in [n]} h_i$ in all regimes of $\nicefrac{\sigma^2}{\varepsilon},$ unlike our method. \citet{even2023asynchronous} consider the heterogeneous asynchronous setting, but their method assumes the similarity of the functions $f_i,$ which is not required in our method, and \algname{Amelie SGD} converges even if there is no similarity of functions.

\section{Convex Functions in the Homogeneous and Heterogeneous Setups}
\label{sec:convex}

We will be slightly more brief in the convex setting since the idea, the structure of time complexities, and the general approach do not change significantly. For instance, instead of the time complexity \eqref{eq:time_complexity} that we get for the nonconvex case, in the nonsmooth convex case, we get \eqref{eq:OvJitlzrn}. Thus, we will get $\Theta\left(\frac{M^2 R^2}{\varepsilon^2} \min_{j \in [n]} t^*(\nicefrac{\sigma^2}{M^2}, \dots)\right)$ instead of $\Theta\left(\frac{L \Delta}{\varepsilon} \min_{j \in [n]} t^*(\nicefrac{\sigma^2}{\varepsilon}, \dots)\right).$ The same idea applies to the smooth convex case. In the convex setting, we need the following assumptions.

\subsection{Assumptions in convex world}

\begin{assumption}
  \label{ass:convex}
  The function $f$ is convex and attains a minimum at some point $x^* \in \R^d.$
\end{assumption}

\begin{assumption}
  \label{ass:lipschitz_constant_function}
  The function $f$ is $M$--Lipschitz, i.e., $$\abs{f(x) - f(y)} \leq M \norm{x - y}, \quad \forall x, y \in \R^d$$ for some $M \in (0, \infty].$
\end{assumption}

\begin{assumption}
  \label{ass:stochastic_variance_bounded_convex}
  For all $x \in \R^d,$ stochastic (sub)gradients $\nabla f(x;\xi)$ are unbiased and are $\sigma^2$-variance-bounded, i.e.,
  $\ExpSub{\xi \sim \mathcal{D}}{\nabla f(x;\xi)} \in \partial f(x)$ and 
  $\ExpSub{\xi \sim \mathcal{D}}{\norm{\nabla f(x;\xi) - \ExpSub{\xi \sim \mathcal{D}}{\nabla f(x;\xi)}}^2} \leq \sigma^2,$ where $\sigma^2 \geq 0.$
\end{assumption} 

\subsection{Homogeneous setup and nonsmooth case}

\begin{restatable}{theorem}{THEOREMSGDHOMOGCONVEX}
  \label{thm:sgd_homog_convex}
Let Assumptions~\ref{ass:convex}, \ref{ass:lipschitz_constant_function} and \ref{ass:stochastic_variance_bounded_convex} hold. Choose any $\varepsilon > 0.$ Let us take the batch size $S = \max\left\{\left\lceil\nicefrac{\sigma^2}{M^2}\right\rceil, 1\right\},$
  stepsize $\gamma = \frac{\varepsilon}{M^2 + \sigma^2 / S} \in \left[\frac{\varepsilon}{2 M^2}, \frac{\varepsilon}{M^2}\right],$ any pivot worker $j^* \in [n],$ and any spanning trees ${\overline{st}}$ and ${\overline{st}}_{\textnormal{bc}}$ in Algorithm~\ref{alg:alg_server}.
  Then after $K \geq \nicefrac{2 M^2 R^2}{\varepsilon^2}$
  iterations the method guarantees $\Exp{f(\widehat{x}^K)} - f(x^*) \leq \varepsilon,$ where $\widehat{x}^K = \frac{1}{K} \sum_{k=0}^{K-1} x^k$ and $R = \norm{x^* - x^{0}}.$
\end{restatable}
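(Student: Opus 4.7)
The plan is to reduce \algname{Fragile SGD} on a nonsmooth convex $M$--Lipschitz objective to classical minibatch stochastic (sub)gradient descent with an effective batch size of at least $S$, and then plug in the standard convex nonsmooth analysis.

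First, as observed in the paragraph following Theorem~\ref{thm:sgd_homog}, Algorithm~\ref{alg:alg_server} is equivalent to the update
\begin{equation*}
\textstyle x^{k+1} = x^k - \gamma g^k,\qquad g^k \eqdef \frac{1}{s^k}\sum_{i=1}^{s^k}\nabla f(x^k;\xi^k_i),
\end{equation*}
where $\{\xi^k_i\}$ are i.i.d.~from $\mathcal{D}_\xi$ and $s^k\geq S$ is a random count determined by the asynchronous scheduling (computation and communication times). By Assumption~\ref{ass:stochastic_variance_bounded}, the scheduling is independent of the gradient noise, so conditioning on the $\sigma$--algebra generated by $x^k$ and $s^k$ I can treat the $\xi^k_i$ as i.i.d.~samples of size $s^k$. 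Under Assumption~\ref{ass:stochastic_variance_bounded_convex}, the conditional mean $u^k \eqdef \ExpCond{g^k}{x^k,s^k}$ lies in $\partial f(x^k)$ and the conditional variance is at most $\sigma^2/s^k\leq \sigma^2/S$.

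Next, I would carry out the standard one-step descent in squared distance to $x^*$:
\begin{equation*}
\sqnorm{x^{k+1}-x^*} = \sqnorm{x^k-x^*} - 2\gamma\inp{g^k}{x^k-x^*} + \gamma^2\sqnorm{g^k}.
\end{equation*}
Taking conditional expectation, using $\ExpCond{\sqnorm{g^k}}{x^k,s^k}\leq \sqnorm{u^k} + \sigma^2/S \leq M^2+\sigma^2/S$ (Lipschitzness gives $\norm{u^k}\leq M$), and convexity $\inp{u^k}{x^k-x^*}\geq f(x^k)-f(x^*)$, yields
\begin{equation*}
\Exp{\sqnorm{x^{k+1}-x^*}}\leq \Exp{\sqnorm{x^k-x^*}} - 2\gamma\Exp{f(x^k)-f(x^*)} + \gamma^2\left(M^2+\tfrac{\sigma^2}{S}\right).
\end{equation*}
Summing over $k=0,\dots,K-1$, rearranging, and applying Jensen's inequality to $\widehat{x}^K=\tfrac{1}{K}\sum_{k=0}^{K-1}x^k$ gives
\begin{equation*}
\Exp{f(\widehat{x}^K)}-f(x^*) \leq \frac{R^2}{2\gamma K} + \frac{\gamma}{2}\left(M^2+\tfrac{\sigma^2}{S}\right).
\end{equation*}
Finally, with $S=\max\{\ceil{\sigma^2/M^2},1\}$ one has $\sigma^2/S\leq M^2$, hence $M^2+\sigma^2/S\in[M^2,2M^2]$, which justifies the claimed range $\gamma\in[\varepsilon/(2M^2),\varepsilon/M^2]$. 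Substituting $\gamma=\varepsilon/(M^2+\sigma^2/S)$ makes the second term equal to $\varepsilon/2$ and the first term at most $\varepsilon/2$ as soon as $K\geq R^2(M^2+\sigma^2/S)/\varepsilon^2$, which is implied by $K\geq 2M^2R^2/\varepsilon^2$.

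The only nontrivial step is the conditioning argument in the first paragraph: $s^k$ is a stopping time tied to the asynchronous protocol, so one must be careful about which $\sigma$--algebra the expectations are taken over. However, the independence clause of Assumption~\ref{ass:stochastic_variance_bounded} makes this step routine, since conditioning on the scheduling does not bias the i.i.d.~draws $\{\xi^k_i\}$. The rest is a bookkeeping exercise identical to the one already sketched for Theorem~\ref{thm:sgd_homog}.
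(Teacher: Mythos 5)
Your proposal is correct and follows essentially the same route as the paper: reduce \algname{Fragile SGD} to classical minibatch \algname{SGD} with conditional variance at most $\sigma^2/S$ (handling the random $s^k \geq S$ via the independence of scheduling and gradient noise, exactly as in the proof of Theorem~\ref{thm:sgd_homog}), and then apply the standard nonsmooth convex analysis — which is precisely the content of the paper's Theorem~\ref{theorem:sgd_convex}, only written out inline in your version — with the same bookkeeping for $\gamma$ and $K$.
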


\begin{proof}
  The proof of Theorem~\ref{thm:sgd_homog_convex} is almost the same as in Theorem~\ref{thm:sgd_homog} (see Section~\ref{sec:proof_homog}). The proof of Theorem~\ref{thm:sgd_homog} states that the steps of \algname{Fragile SGD} are equivalent to the classical \algname{SGD} method. Thus, we can use the classical result from the literature \citep{lan2020first}. Using Theorem~\ref{theorem:sgd_convex}, we get
  \begin{align*}
      \Exp{f(\widehat{x}^K)} - f(x^*) \leq \varepsilon
  \end{align*}
  if $$K \geq \frac{2 M^2 \norm{x^* - x^{0}}^2}{\varepsilon^2} \geq \frac{(M^2 + \frac{\sigma^2}{S})\norm{x^* - x^{0}}^2}{\varepsilon^2}$$ for the stepsize $$\gamma = \frac{\varepsilon}{M^2 + \frac{\sigma^2}{S}} \in \left[\frac{\varepsilon}{2 M^2}, \frac{\varepsilon}{M^2}\right],$$
  where we use the fact that $S \geq \nicefrac{\sigma^2}{M^2}.$
\end{proof}

\begin{theorem}
  \label{cor:convex_non_smooth}
  Consider the assumptions and the parameters from Theorem~\ref{thm:sgd_homog_convex}. 
  For any pivot worker $j^* \in [n]$ and spanning trees ${\overline{st}}$ and ${\overline{st}}_{\textnormal{bc}},$ Algorithm~\ref{alg:alg_server} converges after at most 
  \begin{align}
    \Theta\left(\frac{M^2 R^2}{\varepsilon^2} t^*(\nicefrac{\sigma^2}{M^2}, [h_i]_{i=1}^{n}, [\mu_{i \to j^*} + \mu_{j^* \to i}]_{i=1}^n)\right)
  \end{align}
  seconds, where $\mu_{i \to j^*}$ ($\mu_{j^* \to i}$) is an upper bound on times required to send a vector from worker $i$ to worker $j^*$ (from worker $j^*$ to worker $i$) along the spanning tree ${\overline{st}}$ (spanning tree ${\overline{st}}_{\textnormal{bc}}$).
\end{theorem}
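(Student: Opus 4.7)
The plan is to combine the iteration complexity established in Theorem~\ref{thm:sgd_homog_convex} with the per-iteration time analysis that was already developed for the nonconvex case in Theorem~\ref{cor:max_time}. The key observation is that Algorithm~\ref{alg:alg_server} is the \emph{same} algorithm in both the nonconvex and convex regimes; the only things that change are the stepsize $\gamma$, the batch size $S$, and the number of iterations $K$. Therefore the expected time needed to complete a single iteration of Algorithm~\ref{alg:alg_server} as a function of $S$, the $h_i$, and the tree distances is exactly what it was in the nonconvex analysis.

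Concretely, I would proceed in three steps. First, invoke Theorem~\ref{thm:sgd_homog_convex} to conclude that with the choices $\gamma \in [\varepsilon/(2M^2), \varepsilon/M^2]$ and $S = \max\{\lceil \sigma^2/M^2\rceil, 1\}$, it suffices to run $K = \Theta(M^2 R^2/\varepsilon^2)$ outer iterations. Second, recall the per-iteration time lemma underlying Theorem~\ref{cor:max_time}: for any pivot worker $j^*$ and any spanning trees ${\overline{st}}, {\overline{st}}_{\textnormal{bc}}$, a single iteration of Algorithm~\ref{alg:receiver} with batch-size parameter $S$ accumulates the required $S$ gradients at $j^*$ within expected time
\[
\Theta\!\left(t^*(S,\, [h_i]_{i=1}^n,\, [\mu_{i\to j^*}+\mu_{j^*\to i}]_{i=1}^n)\right),
\]
since this quantity is, up to constants, exactly the equilibrium time of the gather/broadcast/compute schedule, independent of the functional assumptions. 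Third, multiply the two estimates.

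Plugging in $S=\max\{\lceil \sigma^2/M^2\rceil,1\}$ and using that $t^*$ is monotone non-decreasing in its first argument (immediate from Definition~\ref{def:time_budget}, since enlarging $s$ can only enlarge the $\max$ inside), together with $S = \Theta(\sigma^2/M^2 + 1)$, yields
\[
\Theta\!\left(\frac{M^2 R^2}{\varepsilon^2}\right)\cdot \Theta\!\left(t^*(\sigma^2/M^2,\, [h_i]_{i=1}^n,\, [\mu_{i\to j^*}+\mu_{j^*\to i}]_{i=1}^n)\right),
\]
which is the claimed bound. The main (and essentially only) nontrivial ingredient is the per-iteration time bound, but this is not a new obstacle: it is precisely the result used in the nonconvex analog Theorem~\ref{cor:max_time}, and the argument there transfers verbatim because the time analysis depends only on the batch size and the graph parameters, never on smoothness or convexity. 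Hence the convex statement follows as a clean corollary rather than requiring a fresh probabilistic argument.
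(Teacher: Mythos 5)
Your proposal is correct and follows essentially the same route as the paper, which simply notes that the proof is identical to that of Theorem~\ref{cor:max_time}: the per-iteration time analysis there depends only on the batch size and the computation/communication bounds, so one multiplies it by the $K = \Theta(\nicefrac{M^2R^2}{\varepsilon^2})$ iteration count from Theorem~\ref{thm:sgd_homog_convex}. Your handling of $S=\max\{\lceil\nicefrac{\sigma^2}{M^2}\rceil,1\}$ versus $\nicefrac{\sigma^2}{M^2}$ (via the observation that the extra additive $1$ contributes at most $\bigl(\sum_{i=1}^{k}\nicefrac{1}{h_{\pi_i}}\bigr)^{-1}\leq h_{\pi_k}$) mirrors the case analysis already done at the end of the proof of Theorem~\ref{cor:max_time}.
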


\begin{proof}
  The proof is identical to the proof of Theorem~\ref{cor:max_time}.
\end{proof}

\begin{corollary}
  Consider the assumptions and the parameters from Theorem~\ref{cor:convex_non_smooth}. 
  Let us take a pivot worker $j^* = \arg\min_{j \in [n]}t^*(\nicefrac{\sigma^2}{M^2}, [h_i]_{i=1}^{n}, [\tau_{i \to j} + \tau_{j \to i}]_{i=1}^n),$ and a spanning tree ${\overline{st}}$ (spanning tree ${\overline{st}}_{\textnormal{bc}}$) that connects every worker $i$ to worker $j^*$ (worker $j^*$ to every worker $i$) with the shortest distance $\tau_{i \to j^*}$ ($\tau_{j^* \to i}$).
  Then Algorithm~\ref{alg:alg_server} converges after at most 
  \begin{align}
    \label{eq:OvJitlzrn}
    \Theta\left(\frac{M^2 R^2}{\varepsilon^2} \min_{j \in [n]} t^*(\nicefrac{\sigma^2}{M^2}, [h_i]_{i=1}^{n}, [\tau_{i \to j} + \tau_{j \to i}]_{i=1}^n)\right)
  \end{align}
  seconds.
\end{corollary}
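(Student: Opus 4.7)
The plan is to reduce this corollary to Theorem~\ref{cor:convex_non_smooth} exactly as Corollary~\ref{cor:max_time_params} reduces to Theorem~\ref{cor:max_time} in the nonconvex setting. First I would invoke Theorem~\ref{cor:convex_non_smooth} with the particular pivot worker $j^*$ and the particular spanning trees $\overline{st}, \overline{st}_{\textnormal{bc}}$ specified in the statement. That theorem yields the upper bound
\[
\Theta\!\left(\frac{M^2 R^2}{\varepsilon^2}\, t^*\!\left(\tfrac{\sigma^2}{M^2}, [h_i]_{i=1}^{n}, [\mu_{i \to j^*} + \mu_{j^* \to i}]_{i=1}^n\right)\right),
\]
where $\mu_{i \to j^*}$ and $\mu_{j^* \to i}$ are the upper bounds on the times required to send a vector along the specified spanning trees.

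Next I would use the key structural fact: when $\overline{st}$ is chosen so that the unique path in $\overline{st}$ from each worker $i$ to $j^*$ coincides with a shortest path in the multigraph, the time along that tree path is exactly $\tau_{i \to j^*}$, so $\mu_{i \to j^*} = \tau_{i \to j^*}$. Symmetrically, for $\overline{st}_{\textnormal{bc}}$ chosen so that the path from $j^*$ to each $i$ is a shortest path, $\mu_{j^* \to i} = \tau_{j^* \to i}$. Such a spanning tree always exists: it is the shortest-path tree rooted at $j^*$, obtained by a Dijkstra-type construction from the triangle-inequality-satisfying distance matrix $\{\tau_{i \to j}\}$ defined in \eqref{eq:shortest_path}. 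Substituting these identities into the time bound gives
\[
\Theta\!\left(\frac{M^2 R^2}{\varepsilon^2}\, t^*\!\left(\tfrac{\sigma^2}{M^2}, [h_i]_{i=1}^{n}, [\tau_{i \to j^*} + \tau_{j^* \to i}]_{i=1}^n\right)\right).
\]

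Finally, because $j^*$ was chosen as the minimizer of $j \mapsto t^*(\nicefrac{\sigma^2}{M^2}, [h_i]_{i=1}^n, [\tau_{i\to j}+\tau_{j\to i}]_{i=1}^n)$ over $[n]$, the quantity in the display equals $\min_{j \in [n]} t^*(\nicefrac{\sigma^2}{M^2}, [h_i]_{i=1}^n, [\tau_{i\to j}+\tau_{j\to i}]_{i=1}^n)$, which is exactly \eqref{eq:OvJitlzrn}. The argument is essentially just a substitution; no real analytic difficulty arises, since the hard work, namely the convergence guarantee of \algname{Fragile SGD} in the convex regime and its time-complexity translation, is already encapsulated in Theorems~\ref{thm:sgd_homog_convex} and \ref{cor:convex_non_smooth}. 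The only step that deserves care is justifying that $\mu_{i\to j^*} \le \tau_{i\to j^*}$ when the spanning tree is built from a shortest-path tree, but this is standard and already used in the nonconvex analogue (Corollary~\ref{cor:max_time_params}).
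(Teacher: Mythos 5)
Your argument is correct and matches the paper's (implicit) reasoning: the corollary is obtained from Theorem~\ref{cor:convex_non_smooth} by choosing shortest-path spanning trees so that $\mu_{i \to j^*} = \tau_{i \to j^*}$ and $\mu_{j^* \to i} = \tau_{j^* \to i}$, and then using the $\arg\min$ choice of $j^*$, exactly as Corollary~\ref{cor:max_time_params} follows from Theorem~\ref{cor:max_time}. The paper gives no separate proof for this corollary, and your substitution argument is precisely the intended one.
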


\subsection{Homogeneous setup and smooth case}
\label{sec:convex_smooth_homog}
In the homogeneous and smooth case, we will slightly modify \algname{Fragile SGD}. Instead of \begin{NoHyper}Line~\ref{alg:step_homog}\end{NoHyper} of Algorithm~\ref{alg:receiver}, we use the following steps:
\begin{equation}
\begin{aligned}
    &\gamma_{k+1} = \gamma \cdot (k + 1), \quad \alpha_{k+1} = 2 / (k+2) \\
    &y^{k+1} = (1 - \alpha_{k+1}) x^k + \alpha_{k+1} u^k, \qquad (u^0 = x^0)\\
    &u^{k+1} = u^{k} - \frac{\gamma_{k+1}}{s^k}g^k, \\
    &x^{k+1} = (1 - \alpha_{k+1}) x^k + \alpha_{k+1} u^{k+1}.
    \label{eq:sub_steps_acc}
\end{aligned}
\end{equation}
We will call such a method the \algname{Accelerated Fragile SGD} method. The idea is to use the acceleration technique from \citep{lan2020first} (which is based on \citep{nesterov1983method}).

\begin{restatable}{theorem}{THEOREMSGDHOMOGCONVEXACC}
  \label{thm:sgd_homog_convex_acc}
  Let Assumptions~\ref{ass:convex}, \ref{ass:lipschitz_constant} and \ref{ass:stochastic_variance_bounded} hold. Choose any $\varepsilon > 0.$ Let us take the batch size $S = \max\left\{\left\lceil (\sigma^2 R) / (\varepsilon^{3/2} \sqrt{L})\right\rceil, 1\right\},$
  $\gamma = \min\left\{\frac{1}{4L}, \left[\frac{3 R^2 S}{4 \sigma^2 (K + 1)(K + 2)^2}\right]^{1/2}\right\},$ any pivot worker $j^*,$ and any spanning trees ${\overline{st}}$ and ${\overline{st}}_{\textnormal{bc}}$ in Accelerated Method~\ref{alg:alg_server} (\algname{Accelerated Fragile SGD}),
  then after $K \geq \frac{8 \sqrt{L} R}{\sqrt{\varepsilon}}$
  iterations the method guarantees that $\Exp{f(x^K)} - f(x^*) \leq \varepsilon,$ where $R = \norm{x^* - x^{0}}.$
\end{restatable}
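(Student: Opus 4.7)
The plan is to show that \algname{Accelerated Fragile SGD} is algorithmically equivalent to a classical accelerated mini-batch \algname{SGD} (specifically the AC-SA / Nesterov-Lan scheme) run at iterates $y^{k+1}$, and then to invoke a standard accelerated convergence bound to verify the claimed iteration count.

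First, I would mirror the reasoning used in the proof of Theorem~\ref{thm:sgd_homog}: the process structure of Algorithms~\ref{alg:alg_server}--\ref{alg:worker} guarantees that inside iteration $k$, the pivot broadcasts the current extrapolated point $y^{k+1}$, waits until it has accumulated $s^k \geq S$ stochastic gradients \emph{evaluated at $y^{k+1}$} (replacing the role played by $x^k$ in the nonconvex case), and then performs the three--line update \eqref{eq:sub_steps_acc}. Since stochastic gradients are drawn i.i.d.\ from $\mathcal{D}_\xi$ and, by Assumption~\ref{ass:stochastic_variance_bounded}, are independent of the times, $\frac{1}{s^k} g^k$ is a conditionally unbiased estimator of $\nabla f(y^{k+1})$ with conditional variance at most $\sigma^2/s^k \leq \sigma^2/S$. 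Thus \algname{Accelerated Fragile SGD} reduces to classical accelerated stochastic gradient descent with effective variance $\sigma^2/S$.

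Second, I would invoke the standard accelerated \algname{SGD} guarantee (e.g., Lan's AC-SA analysis, which the theorem already credits) applied to an $L$-smooth convex $f$ with the stepsize sequence $\gamma_{k+1} = \gamma(k{+}1)$ and momentum $\alpha_{k+1} = 2/(k{+}2)$. This yields a bound of the form
\begin{equation*}
\Exp{f(x^K)} - f(x^*) \;\leq\; \frac{c_1 L R^2}{K^2} + \frac{c_2 \sigma R}{\sqrt{K S}},
\end{equation*}
valid whenever $\gamma \leq 1/(4L)$, for universal constants $c_1,c_2$. Plugging in $S = \max\{\lceil \sigma^2 R / (\varepsilon^{3/2}\sqrt{L})\rceil, 1\}$ makes the stochastic term at most $c_2 \sigma R/\sqrt{KS} \leq c_2' \sqrt{\varepsilon^{3/2}\sqrt{L} R / K}$, which is $\leq \varepsilon/2$ once $K \gtrsim \sqrt{L} R/\sqrt{\varepsilon}$, while the deterministic term $c_1 L R^2/K^2 \leq \varepsilon/2$ also holds under the same condition $K \geq 8\sqrt{L}R/\sqrt{\varepsilon}$. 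The specific choice of $\gamma$ simply balances the two regimes: the $\min$ picks the smaller between the smoothness-safe stepsize $1/(4L)$ and the variance-driven $[3 R^2 S / (4 \sigma^2 (K{+}1)(K{+}2)^2)]^{1/2}$, which is exactly the stepsize prescribed by the AC-SA template.

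The main obstacle is purely technical: carefully matching the constants and the specific definitions of $(\gamma_{k+1},\alpha_{k+1})$ to the form used in the cited theorem (the choice of $(1-\alpha_{k+1})x^k + \alpha_{k+1}u^k$ extrapolation determines the precise Lyapunov function), and verifying that the stochastic noise at step $k$ is conditionally independent of $y^{k+1}$ -- which follows because $y^{k+1}$ is a function only of information available before the iteration's gradient queries, and by the time-independence clause of Assumption~\ref{ass:stochastic_variance_bounded}. Everything else is a direct substitution of the parameter settings into the accelerated bound and reducing it below $\varepsilon$, completely analogous to the reduction already carried out in the proof of Theorem~\ref{thm:sgd_homog_convex}.
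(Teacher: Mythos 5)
Your proposal takes essentially the same route as the paper's proof: the paper likewise observes that \algname{Accelerated Fragile SGD} is just classical accelerated mini-batch \algname{SGD} with batch size at least $S$ (so the estimator has variance $\sigma^2/S$), invokes Lan's accelerated guarantee (Proposition 4.4 of \citealp{lan2020first}, giving $\Exp{f(x^K)} - f(x^*) \leq \frac{4LR^2}{K^2} + \frac{4\sigma R}{\sqrt{SK}}$ for the stated $\gamma$), and then substitutes the choices of $S$ and $K$ exactly as you do. Your extra remarks about the gradients being evaluated at the extrapolated point $y^{k+1}$ and about conditional independence of the noise from the (possibly random) times are consistent with, and slightly more explicit than, the paper's one-line reduction.
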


\begin{proof}
  Using the same reasoning as in Theorem~\ref{thm:sgd_homog}, \algname{Accelerated Fragile SGD} is just the classical accelerated stochastic gradient method with a batch size greater or equal to $S$. We can use Proposition 4.4 from \citet{lan2020first}. For the stepsize $$\gamma = \min\left\{\frac{1}{4L}, \left[\frac{3 R^2 S}{4 \sigma^2 (K + 1)(K + 2)^2}\right]^{1/2}\right\},$$ we have
  \begin{align*}
      \Exp{f(x^K)} - f(x^*) \leq \frac{4 L R^2}{K^2} + \frac{4 \sqrt{\sigma^2 R^2}}{\sqrt{S K}}.
  \end{align*}
  Therefore,
  \begin{align*}
      \Exp{f(x^K)} - f(x^*) \leq \varepsilon
  \end{align*}
  if $$K \geq \frac{8 \sqrt{L} R}{\sqrt{\varepsilon}} \geq 8\max\left\{\frac{\sqrt{L} R}{\sqrt{\varepsilon}}, \frac{\sigma^2 R^2}{\varepsilon^2 S}\right\},$$
  where we use the choice of $S.$
\end{proof}

\begin{theorem}
  \label{cor:convex_smooth}
  Consider the assumptions and the parameters from Theorem~\ref{thm:sgd_homog_convex_acc}. 
  For any pivot worker $j^* \in [n]$ and spanning trees ${\overline{st}}$ and ${\overline{st}}_{\textnormal{bc}},$ Accelerated Algorithm~\ref{alg:alg_server} (\algname{Accelerated Fragile SGD}) converges after at most 
  \begin{align}
    \Theta\left(\frac{\sqrt{L} R}{\sqrt{\varepsilon}} t^*\left(\frac{\sigma^2 R}{\varepsilon^{3/2} \sqrt{L}}, [h_i]_{i=1}^{n}, [\mu_{i \to j^*} + \mu_{j^* \to i}]_{i=1}^n\right)\right)
  \end{align}
  seconds, where $\mu_{i \to j^*}$ ($\mu_{j^* \to i}$) is an upper bound on times required to send a vector from worker $i$ to worker $j^*$ (from worker $j^*$ to worker $i$) along the spanning tree ${\overline{st}}$ (spanning tree ${\overline{st}}_{\textnormal{bc}}$).
\end{theorem}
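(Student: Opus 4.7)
The plan is to combine the iteration complexity from Theorem~\ref{thm:sgd_homog_convex_acc} with the per-iteration runtime analysis from Theorem~\ref{cor:max_time}. The key observation is that \algname{Accelerated Fragile SGD} modifies only Line~\ref{alg:step_homog} of Algorithm~\ref{alg:receiver}: instead of the plain gradient-like step $x^{k+1}=x^k-\frac{\gamma}{s^k}g^k$, it executes the update chain \eqref{eq:sub_steps_acc}. This modification is a purely local computation on the pivot worker, takes $0$ seconds in our cost model, and does not alter the communication/computation pattern used to collect the minibatch $g^k$ of size at least $S$ through the spanning trees ${\overline{st}}$ and ${\overline{st}}_{\textnormal{bc}}$.

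First, I would re-examine the proof of Theorem~\ref{cor:max_time} and extract from it the following statement that does not depend on the specific form of the outer update: for any batch-size parameter $S\geq 1$, each outer iteration of Algorithm~\ref{alg:receiver} terminates within
\[
\Theta\!\left(t^*\!\left(S,\,[h_i]_{i=1}^n,\,[\mu_{i\to j^*}+\mu_{j^*\to i}]_{i=1}^n\right)\right)
\]
seconds. In the nonconvex case of Theorem~\ref{cor:max_time} one has $S=\max\{\lceil\sigma^2/\varepsilon\rceil,1\}$, which is why the first argument of $t^*$ appears as $\sigma^2/\varepsilon$ there; the derivation of the per-iteration time itself never uses this particular choice of $S$.

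Second, I would plug in the parameters of Theorem~\ref{thm:sgd_homog_convex_acc}: $K=\Theta\!\left(\sqrt{L}R/\sqrt{\varepsilon}\right)$ outer iterations with $S=\max\{\lceil \sigma^2R/(\varepsilon^{3/2}\sqrt{L})\rceil,1\}$. Multiplying the per-iteration bound by $K$ and substituting the new value of $S$ into the first slot of $t^*$ yields precisely the claimed bound
\[
\Theta\!\left(\tfrac{\sqrt{L}R}{\sqrt{\varepsilon}}\;t^*\!\left(\tfrac{\sigma^2R}{\varepsilon^{3/2}\sqrt{L}},\,[h_i]_{i=1}^n,\,[\mu_{i\to j^*}+\mu_{j^*\to i}]_{i=1}^n\right)\right).
\]
The rounding $\lceil\cdot\rceil$ and the $\max$ with $1$ in the definition of $S$ only change $S$ by a constant multiplicative factor when $\sigma^2 R/(\varepsilon^{3/2}\sqrt{L})\geq 1$, and otherwise inflate the first argument of $t^*$ to $1$; since $t^*$ is monotone and positively homogeneous of degree one in its first argument whenever that argument dominates the inner maximum, both effects are absorbed by the $\Theta(\cdot)$.

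The main obstacle is bookkeeping rather than a new analytical insight: one has to verify carefully that the proof of Theorem~\ref{cor:max_time} genuinely factors through an arbitrary batch size and does not secretly bake in $S=\Theta(\sigma^2/\varepsilon)$, and that the modified update \eqref{eq:sub_steps_acc} on the pivot worker indeed does not introduce any waiting (all three assignments use only local quantities $x^k, u^k, g^k$). Granted these checks, the result is immediate from Theorem~\ref{thm:sgd_homog_convex_acc} and Theorem~\ref{cor:max_time}.
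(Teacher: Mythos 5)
Your proposal is correct and matches the paper's argument: the paper's proof of this theorem is literally a one-line appeal to the proof of Theorem~\ref{cor:max_time}, i.e.\ it reuses the per-iteration time bound (which, as you note, depends on the batch size only through the first slot of $t^*$) with the new batch size $S = \max\{\lceil \sigma^2 R/(\varepsilon^{3/2}\sqrt{L})\rceil, 1\}$ and multiplies by the iteration count $K = \Theta(\sqrt{L}R/\sqrt{\varepsilon})$ from Theorem~\ref{thm:sgd_homog_convex_acc}. Your extra bookkeeping (checking that the per-iteration analysis never hardcodes $S=\Theta(\sigma^2/\varepsilon)$ and that the accelerated update \eqref{eq:sub_steps_acc} is a free local step at the pivot) is exactly the justification the paper leaves implicit.
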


\begin{proof}
  The proof is identical to the proof of Theorem~\ref{cor:max_time}.
\end{proof}

\begin{corollary}
  Consider the assumptions and the parameters from Theorem~\ref{cor:convex_smooth}. 
  Let us take a pivot worker $j^* = \arg\min_{j \in [n]}t^*\left(\frac{\sigma^2 R}{\varepsilon^{3/2} \sqrt{L}}, [h_i]_{i=1}^{n}, [\tau_{i \to j} + \tau_{j \to i}]_{i=1}^n\right),$ and a spanning tree ${\overline{st}}$ (spanning tree ${\overline{st}}_{\textnormal{bc}}$) that connects every worker $i$ to worker $j^*$ (worker $j^*$ to every worker $i$) with the shortest distance $\tau_{i \to j^*}$ ($\tau_{j^* \to i}$).
  Then Accelerated Algorithm~\ref{alg:alg_server} (\algname{Accelerated Fragile SGD}) converges after at most 
  \begin{align}
    \label{eq:SSlnxjczRaEgsKzK}
    \Theta\left(\frac{\sqrt{L} R}{\sqrt{\varepsilon}} \min_{j \in [n]} t^*\left(\frac{\sigma^2 R}{\varepsilon^{3/2} \sqrt{L}}, [h_i]_{i=1}^{n}, [\tau_{i \to j} + \tau_{j \to i}]_{i=1}^n\right)\right)
  \end{align}
  seconds.
\end{corollary}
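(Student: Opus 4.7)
The plan is to obtain the claim as a direct specialization of Theorem~\ref{cor:convex_smooth} after making the two specific choices prescribed by the corollary: the pivot worker $j^*$ and the two spanning trees ${\overline{st}}, {\overline{st}}_{\textnormal{bc}}$. All the analytical work (the convergence rate of \algname{Accelerated Fragile SGD} and the time-to-iteration conversion) is already packaged inside Theorem~\ref{cor:convex_smooth}; what remains is an algebraic identification.

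First I would unpack what the chosen spanning trees give us. By construction, ${\overline{st}}$ is a spanning tree rooted at $j^*$ such that for every $i \in [n]$, the unique path in ${\overline{st}}$ from $i$ to $j^*$ realizes the shortest distance $\tau_{i \to j^*}$ in the multigraph. Since $\mu_{i \to j^*}$ (as defined in Theorem~\ref{cor:max_time} / Theorem~\ref{cor:convex_smooth}) is exactly an upper bound on the time to send a vector from $i$ to $j^*$ along the tree ${\overline{st}}$, and the edges of the shortest path sum to $\tau_{i \to j^*}$, we may take $\mu_{i \to j^*} = \tau_{i \to j^*}$ for every $i \in [n]$. The symmetric argument applied to ${\overline{st}}_{\textnormal{bc}}$ yields $\mu_{j^* \to i} = \tau_{j^* \to i}$ for every $i \in [n]$.

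Next I would plug these identifications directly into the bound of Theorem~\ref{cor:convex_smooth}, yielding the time complexity
\begin{align*}
\Theta\!\left(\tfrac{\sqrt{L} R}{\sqrt{\varepsilon}}\, t^*\!\left(\tfrac{\sigma^2 R}{\varepsilon^{3/2} \sqrt{L}}, [h_i]_{i=1}^{n}, [\tau_{i \to j^*} + \tau_{j^* \to i}]_{i=1}^n\right)\right).
\end{align*}
Finally, by the definition of $j^*$ as an $\arg\min$ over $j \in [n]$ of exactly this $t^*$ quantity (with the same scalar argument and the same vector $[h_i]$), the value of $t^*$ at $j^*$ equals $\min_{j \in [n]} t^*\!\left(\tfrac{\sigma^2 R}{\varepsilon^{3/2} \sqrt{L}}, [h_i]_{i=1}^{n}, [\tau_{i \to j} + \tau_{j \to i}]_{i=1}^n\right)$, which gives \eqref{eq:SSlnxjczRaEgsKzK}.

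There is no serious obstacle here: the proof is a one-line substitution, fully analogous to the way Corollary~\ref{cor:max_time_params} is deduced from Theorem~\ref{cor:max_time} in the nonconvex homogeneous case, and to the smooth nonconvex case in Corollary~\ref{cor:max_time_params}. The only point worth being explicit about is the validity of the identification $\mu = \tau$ for shortest-path spanning trees, which follows immediately from the definition of $\tau_{i \to j}$ in \eqref{eq:shortest_path} and the fact that the spanning tree is chosen to realize this minimum path.
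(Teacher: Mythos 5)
Your proposal is correct and matches the paper's (implicit) argument exactly: the corollary is obtained by substituting the shortest-path spanning trees, for which $\mu_{i \to j^*} = \tau_{i \to j^*}$ and $\mu_{j^* \to i} = \tau_{j^* \to i}$, into Theorem~\ref{cor:convex_smooth}, and then using the $\arg\min$ choice of $j^*$ to replace the value at $j^*$ by the minimum over $j \in [n]$, just as Corollary~\ref{cor:max_time_params} follows from Theorem~\ref{cor:max_time}. No gaps.
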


\subsection{Heterogeneous setup and nonsmooth case}

Consider the optimization problem \eqref{eq:main_task_heterog}.

\begin{restatable}{theorem}{THEOREMSGDHETERCONVEX}
  \label{thm:sgd_heter_convex}
  Let Assumptions~\ref{ass:convex}, \ref{ass:lipschitz_constant_function} hold for the function $f$ and Assumption \ref{ass:stochastic_variance_bounded_convex} holds for the functions $f_i$ for all $i \in [n].$ Choose any $\varepsilon > 0.$ Let us take the batch size $S = \max\left\{\left\lceil\nicefrac{\sigma^2}{M^2}\right\rceil, n\right\},$
  $\gamma = \frac{\varepsilon}{M^2 + \sigma^2 / S} \in \left[\frac{\varepsilon}{2 M^2}, \frac{\varepsilon}{M^2}\right]$, any pivot worker $j^* \in [n],$ and any spanning trees ${\overline{st}}$ and ${\overline{st}}_{\textnormal{bc}},$ in Algorithm~\ref{alg:alg_server_malenia},
  then after $K \geq \nicefrac{2 M^2 R^2}{\varepsilon^2}$
  iterations the method guarantees that $\Exp{f(\widehat{x}^K)} - f(x^*) \leq \varepsilon,$ where $\widehat{x}^K = \frac{1}{K} \sum_{k=0}^{K-1} x^k$ and $R = \norm{x^* - x^{0}}.$
\end{restatable}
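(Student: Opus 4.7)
The plan is to follow the exact template of Theorem~\ref{thm:sgd_homog_convex}: first argue that the update produced by \algname{Amelie SGD} coincides in distribution with a classical convex nonsmooth \algname{SGD} step on $f$ whose stochastic (sub)gradient has variance at most $\sigma^2/S$, and then invoke the standard nonsmooth convex SGD bound from \citet{lan2020first} used in Theorem~\ref{thm:sgd_homog_convex}.

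Concretely, I would first inspect \begin{NoHyper}Lines~\ref{line:all_reduce} and \ref{line:step_heter}\end{NoHyper} of Algorithm~\ref{alg:receiver_heter}: the vector aggregated by Process $0$ is $g^k = \frac{1}{n}\sum_{i=1}^n \frac{1}{s_i^k} g_i^k$, where $g_i^k$ is a sum of $s_i^k$ i.i.d.\ stochastic subgradients of $f_i$ drawn at $x^k$. By Assumption~\ref{ass:stochastic_variance_bounded_convex} together with the independence of stochastic gradients from computation/communication times (the convex analogue of the last sentence of Assumption~\ref{ass:stochastic_variance_bounded}), conditioning on $x^k$ and on the realized counts $\{s_i^k\}_{i=1}^n$ gives $\mathbb{E}[\frac{1}{s_i^k} g_i^k \mid x^k,\{s_i^k\}] \in \partial f_i(x^k)$ with conditional variance at most $\sigma^2/s_i^k$. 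Summing over $i$ and using independence across workers,
\begin{align*}
\mathbb{E}[g^k \mid x^k,\{s_i^k\}] \in \partial f(x^k), \qquad \mathbb{E}\bigl[\|g^k - \mathbb{E}[g^k\mid x^k,\{s_i^k\}]\|^2 \bigm| x^k,\{s_i^k\}\bigr] \leq \frac{\sigma^2}{n^2}\sum_{i=1}^n \frac{1}{s_i^k}.
\end{align*}
The stopping rule in Process $0$ together with the definition of $b_i$ in \begin{NoHyper}Line~\ref{line:b}\end{NoHyper} of Algorithm~\ref{alg:worker_heter} yields $b_{j^*} = \sum_{i=1}^n 1/s_i^k$ and the loop exits only when $n/b_{j^*} \geq S/n$, i.e.\ $\sum_{i=1}^n 1/s_i^k \leq n^2/S$. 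Plugging this into the variance bound gives $\mathbb{E}[\|g^k - \nabla f(x^k)\|^2 \mid x^k] \leq \sigma^2/S$, exactly as in the homogeneous analysis.

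At this point the outer update $x^{k+1} = x^k - \gamma g^k$ is literally classical projected subgradient \algname{SGD} on $f$ with stepsize $\gamma$ and an unbiased oracle whose second moment is bounded by $M^2 + \sigma^2/S$ (using Assumption~\ref{ass:lipschitz_constant_function} for the subgradient norm). I would then quote Theorem~\ref{theorem:sgd_convex} (the same result used in the proof of Theorem~\ref{thm:sgd_homog_convex}) to obtain $\mathbb{E}[f(\widehat{x}^K)] - f(x^*) \leq \varepsilon$ whenever $K \geq (M^2 + \sigma^2/S)R^2/\varepsilon^2$ with the matching optimal stepsize $\gamma = \varepsilon/(M^2 + \sigma^2/S)$. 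The choice $S = \max\{\lceil \sigma^2/M^2\rceil, n\}$ guarantees $\sigma^2/S \leq M^2$, hence $M^2 + \sigma^2/S \leq 2M^2$, which collapses the requirement to $K \geq 2M^2 R^2/\varepsilon^2$ and simultaneously keeps $\gamma \in [\varepsilon/(2M^2), \varepsilon/M^2]$, as claimed.

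The main obstacle is the measurability subtlety: $\{s_i^k\}$ is a data-dependent stopping time governed by the asynchronous process, so the unbiasedness and variance identities must be justified via a filtration/optional-stopping argument. This is precisely the subtlety already resolved in the nonconvex counterpart Theorem~\ref{thm:sgd_heter}, and since Assumption~\ref{ass:stochastic_variance_bounded_convex} imposes the same conditional structure as Assumption~\ref{ass:stochastic_variance_bounded}, the argument transfers verbatim. The remaining verifications (that $S \geq n$ is sufficient to ensure the stopping rule is eventually reached, and the $n$-lower bound on $S$ is used only to control the algorithmic timing and not the convergence proof) are routine.
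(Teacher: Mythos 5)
Your proposal is correct and follows essentially the same route as the paper: reduce the \algname{Amelie SGD} update to classical \algname{SGD} on $f$ with conditional variance at most $\sigma^2/S$ (exactly the argument already given in the proof of Theorem~\ref{thm:sgd_heter}) and then invoke Theorem~\ref{theorem:sgd_convex} together with $S \geq \sigma^2/M^2$ to get $K \geq 2M^2R^2/\varepsilon^2$ and $\gamma \in [\varepsilon/(2M^2), \varepsilon/M^2]$. The only minor imprecision is that, because of staleness, the paper only asserts $b_{j^*} \geq \sum_{i=1}^n 1/s_i^k$ rather than equality, but this inequality goes in the right direction and the bound $\sum_{i=1}^n 1/s_i^k \leq n^2/S$ that you use still follows.
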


\begin{proof}
  The proof of Theorem~\ref{thm:sgd_heter_convex} is almost the same as in Theorems~\ref{thm:sgd_heter} (see Section~\ref{sec:proof_heter}). The proof of Theorem~\ref{thm:sgd_heter} states that the steps of \algname{Amelie SGD} are equivalent to the classical \algname{SGD} method. Thus, we can use the classical result from the literature \citep{lan2020first}. Using Theorem~\ref{theorem:sgd_convex}, we get
  \begin{align*}
      \Exp{f(\widehat{x}^K)} - f(x^*) \leq \varepsilon
  \end{align*}
  if $$K \geq \frac{2 M^2 \norm{x^* - x^{0}}^2}{\varepsilon^2} \geq \frac{(M^2 + \frac{\sigma^2}{S})\norm{x^* - x^{0}}^2}{\varepsilon^2}$$ for the stepsize $$\gamma = \frac{\varepsilon}{M^2 + \frac{\sigma^2}{S}} \in \left[\frac{\varepsilon}{2 M^2}, \frac{\varepsilon}{M^2}\right],$$
  where we use the fact that $S \geq \nicefrac{\sigma^2}{M^2}.$
\end{proof}

\begin{theorem}
  \label{cor:convex_non_smooth_heter}
  Consider the assumptions and the parameters from Theorem~\ref{thm:sgd_heter_convex}. 
  For any pivot worker $j^* \in [n]$ and any spanning trees ${\overline{st}}$ and ${\overline{st}}_{\textnormal{bc}},$ Algorithm~\ref{alg:alg_server_malenia} converges after at most 
  \begin{align}
    \Theta\left(\frac{M^2 R^2}{\varepsilon^2} \max\left\{\max_{i,j \in [n]} \mu_{i \to j}, \max_{i \in [n]} h_i, \frac{\sigma^2}{n M^2}\left(\frac{1}{n} \sum_{i=1}^n h_i\right)\right\}\right)
  \end{align}
  seconds, where $\mu_{i \to j^*}^k$ ($\mu_{j^* \to i}^k$) is an upper bound on times required to send a vector from worker $i$ to worker $j^*$ (worker $j^*$ to worker $i$) along the spanning tree ${\overline{st}}$ (spanning tree ${\overline{st}}_{\textnormal{bc}}$) for all $i \in [n].$
\end{theorem}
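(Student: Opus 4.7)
The plan is to mirror the argument of Theorem~\ref{cor:max_time_heter} in the nonconvex case, since \algname{Amelie SGD} itself is unchanged and only the iteration-count bound differs. By Theorem~\ref{thm:sgd_heter_convex}, it suffices to run $K = \Theta(M^2 R^2/\varepsilon^2)$ iterations of Algorithm~\ref{alg:alg_server_malenia} with batch parameter $S = \max\{\lceil \sigma^2/M^2\rceil, n\}$. I then bound the wall-clock cost of a single iteration of Algorithm~\ref{alg:receiver_heter} and multiply by $K$.

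The bulk of the work is the per-iteration time analysis, which I would split into three phases. Phase (i), the broadcast of $x^k$ along $\overline{st}_{\textnormal{bc}}$, takes at most $\max_{i,j\in[n]} \mu_{i\to j}$ seconds (the sum of edge times along any root-to-leaf path of the spanning tree is bounded by the diameter of the tree, which in turn is $O(\max_{i,j} \mu_{i\to j})$). Phase (iii), the all-reduce on \begin{NoHyper}Line~\ref{line:all_reduce}\end{NoHyper} of Algorithm~\ref{alg:receiver_heter}, is a pair of up-and-down sweeps along $\overline{st}$ and hence is also $O(\max_{i,j}\mu_{i\to j})$ seconds. Phase (ii) is the interval during which workers accumulate gradients until the stopping condition $n/b_{j^*} \geq S/n$ triggers.

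To bound phase (ii), I would argue as follows. Since worker $i$ produces a stochastic gradient every at most $h_i$ seconds, after $t$ seconds it has computed at least $\lfloor t/h_i \rfloor$ samples; for $t \geq 2\max_i h_i$ this gives $s_i^k \geq t/(2h_i)$, and hence
\begin{equation*}
b_{j^*} \;=\; \sum_{i=1}^{n} \frac{1}{s_i^k} \;\leq\; \frac{2}{t} \sum_{i=1}^{n} h_i.
\end{equation*}
The stopping criterion $b_{j^*} \leq n^2/S$ therefore fires once $t = \Theta\left(\max\left\{\max_i h_i,\; (S/n^2)\sum_{i=1}^n h_i\right\}\right)$. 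Substituting $S = \max\{\lceil \sigma^2/M^2\rceil, n\}$ and combining with phases (i) and (iii) yields a per-iteration cost of
\begin{equation*}
O\!\left(\max\left\{\max_{i,j\in[n]} \mu_{i\to j},\; \max_{i\in[n]} h_i,\; \frac{\sigma^2}{n M^2}\cdot\frac{1}{n}\sum_{i=1}^{n} h_i\right\}\right).
\end{equation*}
Multiplying this by $K = \Theta(M^2 R^2/\varepsilon^2)$ produces the claimed complexity.

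The main obstacle is phase (ii): one has to argue that $b_{j^*}$ observed at the pivot is not too stale relative to the true quantity $\sum_i 1/s_i^k$, because the counters $b_i$ are forwarded along $\overline{st}$ by repeated invocations of \begin{NoHyper}Lines~\ref{line:b_prev}--\ref{line:b}\end{NoHyper} in Algorithm~\ref{alg:worker_heter}. The lag is at most the sum of edge times along the longest path in $\overline{st}$, which is $O(\max_{i,j}\mu_{i\to j})$ and is absorbed into phase (iii); once this latency is accounted for, the remainder of the proof is the same max-of-three algebra that underlies Theorem~\ref{cor:max_time_heter} and its nonsmooth/convex twin.
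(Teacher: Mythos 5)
Your proposal is correct and takes essentially the same route as the paper: the paper's proof simply states that the argument is identical to Theorem~\ref{cor:max_time_heter}, i.e.\ the broadcast / counter-waiting / all-reduce decomposition with the staleness of $b_{j^*}$ absorbed into an extra $O(\max_{i,j}\mu_{i \to j})$ term, combined with $K = \Theta(M^2R^2/\varepsilon^2)$ and $S = \max\{\lceil \sigma^2/M^2\rceil, n\}$ from Theorem~\ref{thm:sgd_heter_convex}. Your phase (ii) bound reproduces exactly the paper's choice $\bar{t} = 2\left(\max_i h_i + \frac{S}{n}\left(\frac{1}{n}\sum_i h_i\right)\right)$ via the $\lfloor x\rfloor \geq x/2$ estimate, so no gap remains.
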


\begin{proof}
  The proof is identical to the proof of Theorem~\ref{cor:max_time_heter}.
\end{proof}

\begin{corollary}
  \label{cor:time_complexity_heter_convex_non_smooth}
  Consider the assumptions and the parameters from Theorem~\ref{cor:convex_non_smooth_heter}. Let us take any pivot worker $j^* \in [n]$ and a spanning tree ${\overline{st}}$ (spanning tree ${\overline{st}}_{\textnormal{bc}}$) that connects every worker $i$ to worker $j^*$ (worker $j^*$ to worker $i$) with the shortest distance $\tau_{i \to j^*}$ ($\tau_{j^* \to i}$),
  then Algorithm~\ref{alg:alg_server_malenia} converges after at most 
  \begin{align*}
    \Theta\left(\frac{M^2 R^2}{\varepsilon^2} \max\left\{\max_{i,j \in [n]} \tau_{i \to j}, \max_{i \in [n]} h_i, \frac{\sigma^2}{n M^2}\left(\frac{1}{n} \sum_{i=1}^n h_i\right)\right\}\right)
  \end{align*}
  seconds.
\end{corollary}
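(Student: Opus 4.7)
The plan is to invoke Theorem~\ref{cor:convex_non_smooth_heter} with the specific choice of spanning trees prescribed in the corollary's hypothesis, and then identify the upper bounds $\mu_{i\to j^*}$ and $\mu_{j^*\to i}$ with the shortest-path distances $\tau_{i\to j^*}$ and $\tau_{j^*\to i}$. Concretely, the spanning trees ${\overline{st}}$ and ${\overline{st}}_{\textnormal{bc}}$ are chosen so that the unique path in ${\overline{st}}$ from worker $i$ to worker $j^*$ realizes the shortest directed distance $\tau_{i\to j^*}$, and analogously for ${\overline{st}}_{\textnormal{bc}}$. Therefore the time required to send a vector along the tree is exactly $\tau_{i\to j^*}$ (respectively $\tau_{j^*\to i}$), so one may take $\mu_{i\to j^*}=\tau_{i\to j^*}$ and $\mu_{j^*\to i}=\tau_{j^*\to i}$ for every $i\in[n]$.

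Next, I would replace the $\mu$-quantities in the bound of Theorem~\ref{cor:convex_non_smooth_heter} by $\tau$-quantities. The only nontrivial substitution concerns the ``diameter'' term $\max_{i,j\in[n]}\mu_{i\to j}$: communication between two non-pivot workers $i$ and $j$ is routed through the pivot $j^*$, so it takes at most $\mu_{i\to j^*}+\mu_{j^*\to j}=\tau_{i\to j^*}+\tau_{j^*\to j}$ seconds. By the triangle inequality (which is listed as an available property of $\tau$ in Section~\ref{sec:setup}), each summand is bounded by $\max_{i,j\in[n]}\tau_{i\to j}$, hence
\begin{equation*}
\max_{i,j\in[n]}\mu_{i\to j}\ \leq\ 2\,\max_{i,j\in[n]}\tau_{i\to j}.
\end{equation*}
The remaining terms $\max_{i\in[n]}h_i$ and $\tfrac{\sigma^2}{nM^2}\left(\tfrac{1}{n}\sum_{i=1}^{n}h_i\right)$ are unchanged, because they do not depend on the communication topology. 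Absorbing the factor of $2$ into the $\Theta(\cdot)$ yields exactly the claimed bound
\begin{equation*}
\Theta\!\left(\frac{M^2R^2}{\varepsilon^2}\max\!\left\{\max_{i,j\in[n]}\tau_{i\to j},\ \max_{i\in[n]}h_i,\ \frac{\sigma^2}{nM^2}\Big(\frac{1}{n}\sum_{i=1}^{n}h_i\Big)\right\}\right).
\end{equation*}

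I do not expect any serious obstacle: the corollary is a straightforward specialization of Theorem~\ref{cor:convex_non_smooth_heter}, which already contains all the analytical work (convergence, time accounting, equivalence of \algname{Amelie SGD} iterates to classical \algname{SGD} with a sufficiently large batch size). The only step that requires any argument, rather than just substitution, is the triangle-inequality bound on $\max_{i,j}\mu_{i\to j}$, and this follows immediately from the triangle inequality for $\tau$ established in Section~\ref{sec:setup}. This mirrors how Corollary~\ref{cor:time_complexity_heter} is obtained from Theorem~\ref{cor:max_time_heter} in the nonconvex case.
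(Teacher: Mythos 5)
Your proposal is correct and matches the paper's (implicit) argument: the paper states this corollary without a separate proof, treating it exactly as the specialization of Theorem~\ref{cor:convex_non_smooth_heter} obtained by choosing shortest-path spanning trees so that $\mu_{i\to j^*}=\tau_{i\to j^*}$ and $\mu_{j^*\to i}=\tau_{j^*\to i}$, with pairwise communication routed through $j^*$ and bounded by the triangle inequality up to a factor of $2$ absorbed in the $\Theta(\cdot)$ — precisely as in the passage from Theorem~\ref{cor:max_time_heter} to Corollary~\ref{cor:time_complexity_heter}.
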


\subsection{Heterogeneous setup and smooth case}

Consider the optimization problem \eqref{eq:main_task_heterog}. In this section, we use the same idea as in Section~\ref{sec:convex_smooth_homog}. We will modify \algname{Amelie SGD} and, instead of \begin{NoHyper}Line~\ref{line:step_heter}\end{NoHyper} from Algorithm~\ref{alg:receiver_heter}, we use the lines \eqref{eq:sub_steps_acc}. We call such a method the \algname{Accelerated Amelie SGD} method.

\begin{restatable}{theorem}{THEOREMSGDHETERCONVEXACC}
  \label{thm:sgd_heter_convex_acc}
  Let Assumptions~\ref{ass:convex} and \ref{ass:lipschitz_constant} hold for the function $f$ and Assumption~\ref{ass:stochastic_variance_bounded} holds for the functions $f_i.$ Choose any $\varepsilon > 0.$ Let us take the batch size $S = \max\left\{\left\lceil (\sigma^2 R) / (\varepsilon^{3/2} \sqrt{L})\right\rceil, n\right\},$
  $\gamma = \min\left\{\frac{1}{4L}, \left[\frac{3 R^2 S}{4 \sigma^2 (K + 1)(K + 2)^2}\right]^{1/2}\right\},$ any pivot worker $j^*,$ and any spanning trees ${\overline{st}}$ and ${\overline{st}}_{\textnormal{bc}}$ in Accelerated Method~\ref{alg:alg_server_malenia} (\algname{Accelerated Amelie SGD}),
  then after $K \geq \frac{8 \sqrt{L} R}{\sqrt{\varepsilon}}$
  iterations the method guarantees that $\Exp{f(x^K)} - f(x^*) \leq \varepsilon,$ where $R = \norm{x^* - x^{0}}.$
\end{restatable}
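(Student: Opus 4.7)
The plan is to reduce the analysis of \algname{Accelerated Amelie SGD} to that of the classical accelerated stochastic mini-batch method applied to $f$, and then invoke Proposition~4.4 of \citet{lan2020first}. This is essentially a composition of the two reductions used just before: the equivalence argument from Theorem~\ref{thm:sgd_heter} (which makes \algname{Amelie SGD} look like mini-batch \algname{SGD} with batch size $\geq S$ and variance $\leq \sigma^2/S$) together with the black-box acceleration argument used in Theorem~\ref{thm:sgd_homog_convex_acc}.

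First, I would observe that replacing Line~\ref{line:step_heter} of Algorithm~\ref{alg:receiver_heter} by the accelerated sub-steps \eqref{eq:sub_steps_acc} does not affect how the gradient estimator $g^k$ is formed: we still have the all-reduce from Line~\ref{line:all_reduce} producing $g^k = \tfrac{1}{n}\sum_{i=1}^n \tfrac{1}{s_i^k} g_i^k$, where $g_i^k$ is a sum of $s_i^k$ i.i.d.\ stochastic gradients of $f_i$ at the shared point $x^k$ (equivalently $y^{k+1}$ after the change of variables). The termination condition $\tfrac{n}{b_{j^*}} \geq \tfrac{S}{n}$ from Algorithm~\ref{alg:receiver_heter} ensures, exactly as in the proof of Theorem~\ref{thm:sgd_heter} in Section~\ref{sec:proof_heter}, that $g^k$ is an unbiased estimator of $\nabla f(x^k)$ with variance bounded by $\sigma^2 / S$. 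Hence the dynamics \eqref{eq:sub_steps_acc} driven by $g^k$ coincides, in distribution, with Nesterov's accelerated stochastic gradient method applied to $f$ using a stochastic oracle of variance at most $\sigma^2/S$.

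Next, I would apply Proposition~4.4 of \citet{lan2020first} verbatim to this scheme with the prescribed stepsize $\gamma = \min\bigl\{\tfrac{1}{4L}, [\tfrac{3R^2 S}{4\sigma^2 (K+1)(K+2)^2}]^{1/2}\bigr\}$, which yields
\[
  \Exp{f(x^K)} - f(x^*) \;\leq\; \frac{4 L R^2}{K^2} + \frac{4\sqrt{\sigma^2 R^2}}{\sqrt{S K}}.
\]
With $S \geq \lceil (\sigma^2 R)/(\varepsilon^{3/2}\sqrt{L})\rceil$, the second term is controlled whenever $K \gtrsim \sqrt{L}R/\sqrt{\varepsilon}$, and the first term is $\leq \varepsilon$ under the same condition. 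Combining, $K \geq 8\sqrt{L}R/\sqrt{\varepsilon}$ suffices to obtain $\Exp{f(x^K)} - f(x^*) \leq \varepsilon$, which is exactly the statement of the theorem.

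The only non-routine step is the equivalence with mini-batch accelerated \algname{SGD}, and specifically verifying that the heterogeneous all-reduce still yields an unbiased estimator with variance bound $\sigma^2/S$ when the per-worker counts $s_i^k$ are random and potentially correlated with each other through the stopping rule. This is precisely what the proof of Theorem~\ref{thm:sgd_heter} establishes (using Assumption~\ref{ass:stochastic_variance_bounded}, independence of the $\xi$'s across workers and the independence of the times from the stochastic gradients), so I would just import that lemma and observe that the acceleration layer \eqref{eq:sub_steps_acc} does not interact with the gradient-estimator construction at all.
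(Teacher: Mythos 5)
Your proposal matches the paper's own argument: the paper proves this theorem by noting that \algname{Accelerated Amelie SGD} is equivalent to the mini-batch accelerated stochastic gradient method of \citet{lan2020first}, importing the unbiasedness and $\sigma^2/S$ variance bound from the proof of Theorem~\ref{thm:sgd_heter} and the Proposition~4.4 computation with the stated $S$, $\gamma$, and $K$ from the proof of Theorem~\ref{thm:sgd_homog_convex_acc} — exactly the composition you describe. Your reduction and the resulting bound $\frac{4LR^2}{K^2} + \frac{4\sqrt{\sigma^2 R^2}}{\sqrt{SK}}$ are the same, so the proposal is correct and essentially identical to the paper's proof.
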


\begin{proof}
  \algname{Accelerated Amelie SGD} is equivalent to the accelerated stochastic gradient method with a mini-batch from \cite{lan2020first}. The proof repeats the proofs of Theorem~\ref{thm:sgd_homog_convex_acc} and Theorem~\ref{thm:sgd_heter}.
\end{proof}

\begin{theorem}
  \label{cor:convex_smooth_heter}
  Consider the assumptions and the parameters from Theorem~\ref{thm:sgd_heter_convex_acc}. 
  For any pivot worker $j^* \in [n]$ and any spanning trees ${\overline{st}}$ and ${\overline{st}}_{\textnormal{bc}},$ Accelerated Algorithm~\ref{alg:alg_server_malenia} (\algname{Accelerated Amelie SGD}) converges after at most 
  \begin{align}
    \Theta\left(\frac{\sqrt{L} R}{\sqrt{\varepsilon}} \max\left\{\max_{i,j \in [n]} \mu_{i \to j}, \max_{i \in [n]} h_i, \frac{\sigma^2 R}{n \varepsilon^{3/2} \sqrt{L}}\left(\frac{1}{n} \sum_{i=1}^n h_i\right)\right\}\right)
  \end{align}
  seconds, where $\mu_{i \to j^*}^k$ ($\mu_{j^* \to i}^k$) is an upper bound on times required to send a vector from worker $i$ to worker $j^*$ (worker $j^*$ to worker $i$) along the spanning tree ${\overline{st}}$ (spanning tree ${\overline{st}}_{\textnormal{bc}}$) for all $i \in [n].$
\end{theorem}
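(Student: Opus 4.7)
The plan is to combine the iteration complexity from Theorem~\ref{thm:sgd_heter_convex_acc} with the per-iteration wall-clock time analysis that already underlies Theorem~\ref{cor:max_time_heter}. The key observation is that \algname{Accelerated Amelie SGD} differs from \algname{Amelie SGD} only in the local update carried out on worker $j^*$: the single line $x^{k+1}=x^k-\gamma g^k$ is replaced by the three auxiliary updates in \eqref{eq:sub_steps_acc}. These are $O(1)$ arithmetic operations on the pivot worker and do not affect the broadcast, the distributed gradient-collection loop, or the all-reduce subroutines executed by Processes~$i$. Hence the per-iteration wall-clock time is identical, up to constants, to that of \algname{Amelie SGD}, and the whole proof reduces to (a) extracting the per-iteration cost as a function of the batch parameter $S$, and (b) substituting the accelerated-method choices of $K$ and $S$.

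For step (a), I would re-examine the derivation behind Theorem~\ref{cor:max_time_heter} and decompose one outer iteration of Algorithm~\ref{alg:receiver_heter} into three pieces: broadcasting $x^k$ along ${\overline{st}}_{\textnormal{bc}}$ (at most $\max_{i}\mu_{j^*\to i}$ seconds); waiting until Process~0 observes $n/b_{j^*}\geq S/n$; and the final all-reduce on Line~\ref{line:all_reduce} (at most $\max_{i}\mu_{i\to j^*}$ seconds). For the waiting time, after $T$ seconds each worker $i$ has produced $s_i^k\geq \lfloor T/h_i\rfloor$ gradients, so $\sum_i 1/s_i^k\leq \sum_i h_i/T$ once every $s_i^k\geq 1$; this yields a waiting time of order $\max\{\max_i h_i,\,(S/n^2)\sum_i h_i\}$, the first term guaranteeing every $b_{i,p}$ becomes finite. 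Summing, a single iteration costs
\[
\Theta\!\left(\max\!\left\{\max_{i,j\in[n]}\mu_{i\to j},\;\max_{i\in[n]}h_i,\;\tfrac{S}{n^2}\sum_{i=1}^n h_i\right\}\right)
\]
seconds, irrespective of the local step rule used on worker $j^*$.

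For step (b), I would plug in $K=\Theta(\sqrt{L}R/\sqrt{\varepsilon})$ and $S=\max\{\lceil \sigma^2R/(\varepsilon^{3/2}\sqrt{L})\rceil,n\}$ from Theorem~\ref{thm:sgd_heter_convex_acc}. Two cases: if $\sigma^2R/(\varepsilon^{3/2}\sqrt{L})\geq n$, then $(S/n^2)\sum_i h_i=\bigl(\sigma^2R/(n\varepsilon^{3/2}\sqrt{L})\bigr)\cdot(1/n)\sum_i h_i$, which is exactly the third term in the stated bound; otherwise $S=n$ and $(S/n^2)\sum_i h_i=(1/n)\sum_i h_i\leq \max_i h_i$, so this term is absorbed by the second term inside the maximum. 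In either regime, multiplying $K$ by the per-iteration cost gives precisely the claimed expression.

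The main obstacle I anticipate is the careful handling of the ``waiting'' component: the counter mechanism in \algname{Amelie SGD} propagates $\{b_{i,p}\}$ along the spanning tree ${\overline{st}}$ with each $b_{i,p}$ initialized to $\infty$, so one must verify that once every worker has produced at least one gradient and a bounded number of counter messages have traveled the tree, the condition $\sum_i 1/s_i^k\leq n^2/S$ is indeed reached within time $\Theta((S/n^2)\sum_i h_i+\max_{i,j}\mu_{i\to j})$. This is exactly the technical content already handled in the proof of Theorem~\ref{cor:max_time_heter}, so it can be quoted verbatim rather than re-derived, making the present proof essentially a bookkeeping exercise on top of existing results.
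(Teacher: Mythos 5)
Your proposal is correct and follows essentially the same route as the paper, which simply declares the proof identical to that of Theorem~\ref{cor:max_time_heter}: the per-iteration cost analysis of \algname{Amelie SGD} is unchanged (the accelerated update \eqref{eq:sub_steps_acc} is a zero-time local operation on the pivot), and one multiplies it by the accelerated iteration count with $S=\max\{\lceil\sigma^2R/(\varepsilon^{3/2}\sqrt{L})\rceil,n\}$. Your explicit case analysis on $S$ just spells out the substitution the paper leaves implicit.
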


\begin{proof}
  The proof is identical to the proof of Theorem~\ref{cor:max_time_heter}.
\end{proof}

\begin{corollary}
  \label{cor:time_complexity_heter_convex_smooth}
  Consider the assumptions and the parameters from Theorem~\ref{cor:convex_smooth_heter}. Let us take any pivot worker $j^* \in [n]$ and a spanning tree ${\overline{st}}$ (spanning tree ${\overline{st}}_{\textnormal{bc}}$) that connects every worker $i$ to worker $j^*$ (worker $j^*$ to worker $i$) with the shortest distance $\tau_{i \to j^*}$ ($\tau_{j^* \to i}$),
  then Accelerated Algorithm~\ref{alg:alg_server_malenia} (\algname{Accelerated Amelie SGD}) converges after at most 
  \begin{align}
    \Theta\left(\frac{\sqrt{L} R}{\sqrt{\varepsilon}} \max\left\{\max_{i,j \in [n]} \tau_{i \to j}, \max_{i \in [n]} h_i, \frac{\sigma^2 R}{n \varepsilon^{3/2} \sqrt{L}}\left(\frac{1}{n} \sum_{i=1}^n h_i\right)\right\}\right)
    \label{eq:UEHuTWc}
  \end{align}
  seconds.
\end{corollary}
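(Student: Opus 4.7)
The plan is to derive this corollary as a direct specialization of Theorem~\ref{cor:convex_smooth_heter} under the stated choice of spanning trees. Since Theorem~\ref{cor:convex_smooth_heter} already handles \emph{arbitrary} spanning trees ${\overline{st}}$ and ${\overline{st}}_{\textnormal{bc}}$ and any pivot $j^*$, the remaining task is purely combinatorial: to upper-bound the tree-dependent quantities $\mu_{i \to j^*}$, $\mu_{j^* \to i}$, and $\max_{i,j} \mu_{i \to j}$ in terms of the graph-theoretic shortest-path distances $\tau_{i \to j}$.

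First, I would invoke the defining property of the trees ${\overline{st}}$ and ${\overline{st}}_{\textnormal{bc}}$, chosen as shortest-path trees from every worker $i$ to $j^*$ and from $j^*$ to every worker $i$, respectively. By this choice, the tree-based upper bounds on the transmission times coincide with the shortest-path distances in the multigraph, i.e., $\mu_{i \to j^*} = \tau_{i \to j^*}$ and $\mu_{j^* \to i} = \tau_{j^* \to i}$ for every $i \in [n]$. Consequently the $\max_{i \in [n]} h_i$ and $\frac{1}{n}\sum_{i=1}^n h_i$ terms in the bound of Theorem~\ref{cor:convex_smooth_heter} are unaffected by substituting $\tau$ for $\mu$, and the shortest-path terms match the claimed formula directly.

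Next, for the diameter-type term $\max_{i,j \in [n]} \mu_{i \to j}$ appearing in Theorem~\ref{cor:convex_smooth_heter}, any communication between arbitrary workers $i$ and $j$ along the two spanning trees must be routed through $j^*$, so $\mu_{i \to j} \leq \mu_{i \to j^*} + \mu_{j^* \to j} = \tau_{i \to j^*} + \tau_{j^* \to j}$. Applying the triangle inequality $\tau_{i \to j^*} + \tau_{j^* \to j} \leq 2 \max_{i,j \in [n]} \tau_{i \to j}$, which is stated immediately after \eqref{eq:shortest_path}, I obtain $\max_{i,j \in [n]} \mu_{i \to j} \leq 2 \max_{i,j \in [n]} \tau_{i \to j}$, a factor of two that is absorbed into the hidden constant of $\Theta(\cdot)$.

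Substituting these three estimates into the time complexity guaranteed by Theorem~\ref{cor:convex_smooth_heter} yields exactly the bound \eqref{eq:UEHuTWc}. I do not expect any genuine obstacle here: the argument mirrors the reduction from Theorem~\ref{cor:max_time_heter} to Corollary~\ref{cor:time_complexity_heter} in the nonconvex setting, and the only minor subtlety is the factor of two coming from routing non-pivot-to-non-pivot traffic through $j^*$, which is harmless under the $\Theta$ notation.
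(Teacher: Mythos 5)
Your proposal is correct and follows essentially the same route as the paper, which leaves this corollary's proof implicit: choosing shortest-distance spanning trees makes $\mu_{i \to j^*} = \tau_{i \to j^*}$ and $\mu_{j^* \to i} = \tau_{j^* \to i}$, and all communication in \algname{Accelerated Amelie SGD} (broadcast, counter gathering, all-reduce) is routed through $j^*$, so the diameter-type term is bounded by $\max_i \tau_{i \to j^*} + \max_i \tau_{j^* \to i} \leq 2\max_{i,j \in [n]} \tau_{i \to j}$, with the factor $2$ absorbed into $\Theta(\cdot)$, exactly as in the nonconvex reduction from Theorem~\ref{cor:max_time_heter} to Corollary~\ref{cor:time_complexity_heter}. (Your final bound $\tau_{i \to j^*} + \tau_{j^* \to j} \leq 2\max_{i,j}\tau_{i \to j}$ is just bounding each summand by the maximum rather than a triangle inequality, but the conclusion is the same.)
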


\subsection{On lower bounds}
In previous subsections, we provide upper bounds on the time complexities for different classes of convex functions and optimization problems. Using the same reasoning as in Section~\ref{sec:lower_bound_main} and \citep{tyurin2023optimal}[Section B], we conjecture that the obtained upper bounds are tight and optimal (up to log factors in the homogeneous case).

\subsection{Previous works}
Let us consider the time complexities \eqref{eq:OvJitlzrn} and \eqref{eq:SSlnxjczRaEgsKzK} (accelerated rate) in the homogeneous and convex cases.
When $\tau_{i \to j} = 0$ for all $i ,j \in [n],$ \citet{even2023asynchronous} recovers the time complexity (nonaccelerated in the smooth case) of \algname{Asynchronous SGD} \citep{mishchenko2022asynchronous,koloskova2022sharper} which is suboptimal \citep{tyurin2023optimal}. When the communication is free, we recover the time complexity (accelerated in the smooth case) of \algname{Accelerated Rennala SGD} from \citep{tyurin2023optimal}, which is optimal if $\tau_{i \to j} = 0$ for all $i ,j \in [n].$

In the heterogeneous and convex setting, \eqref{eq:UEHuTWc} improves the result from \citep{even2023asynchronous} since \eqref{eq:UEHuTWc} is an accelerated rate and does not depend on a quantity that measures the similarity of the functions $f_i.$ When $\sigma = 0,$ the result \eqref{eq:UEHuTWc} is consistent with the lower bound from \citep{scaman2017optimal}. However, when $\sigma > 0,$ as far as we know, the time complexity \eqref{eq:UEHuTWc} is new in the convex smooth setting.

\newpage 
\section{Lower Bound: Diving Deeper into the Construction}

\begin{protocol}[H]
  \caption{Time Multiple Oracles Protocol with Communication}
  \label{alg:time_multiple_oracle_protocol}
  \begin{algorithmic}[1]
  \STATE 
  \textbf{Input: }function $f$ (or functions $f_i$) computation oracles $\{O_{i}\}_{i=1}^{n} \in \mathcal{O}(f)$ communication oracles $\{C^{p}_{i \to j}\}_{i \in [n],j \in [n], p \geq 1},$ algorithm $\{(M^k, L^k, D^k, P^k_1, \dots, P^k_n, V^k_1, \dots, V^k_{n})\}_{k=0}^{\infty}~\in~\cA$
  \STATE $s^{h, 0}_i = s^{\tau, 0}_{i,j,p} = 0$ for all $i, j \in [n],$ $p \in \N$
  \FOR{$k = 0, \dots, \infty$}
  \STATE $(t^{k+1}, c^{k+1}) = M^k(g^{1}_{1}, \dots g^{1}_{n}, g^{2}_{1}, \dots, g^{2}_{n}, \dots, g^{k}_{1}, \dots, g^{k}_{n})$ \hfill $\rhd \,{t^{k+1} \geq t^k}$ \\
  \IF{$c^{k+1} = 0$}
  \STATE $i^{k+1} = L^k(g^{1}_{1}, \dots g^{1}_{n}, g^{2}_{1}, \dots, g^{2}_{n}, \dots, g^{k}_{1}, \dots, g^{k}_{n})$
  \STATE $x^k_{i^{k+1}} = P^k_{i^{k+1}}(g^{1}_{i^{k+1}}, \dots, g^{k}_{i^{k+1}})$
  \STATE $(s^{h, k+1}_{{i^{k+1}}}, g^{k+1}_{i^{k+1}}) = O_{{i^{k+1}}}({t^{k+1}}, x^k_{i^{k+1}}, s^{h, k}_{{i^{k+1}}})$ \\ $\rhd \forall j \neq i^{k+1}: \,s^{h, k+1}_j = s^{h, k}_j, \quad g^{k+1}_{j} = 0$
  \ELSE
  \STATE $i^{k+1}, j^{k+1}, p^{k+1} = D^k(g^{1}_{1}, \dots g^{1}_{n}, g^{2}_{1}, \dots, g^{2}_{n}, \dots, g^{k}_{1}, \dots, g^{k}_{n})$
  \STATE $v^k_{i^{k+1}} = V^k_{i^{k+1}}(g^{1}_{i^{k+1}}, \dots, g^{k}_{i^{k+1}})$
  \STATE $(s^{\tau, k+1}_{{i^{k+1}, j^{k+1}, p^{k+1}}}, g^{k+1}_{j^{k+1}}) = C^{p^{k+1}}_{i^{k+1} \to j^{k+1}}({t^{k+1}}, v^k_{i^{k+1}}, s^{\tau, k}_{{i^{k+1}, j^{k+1}, p^{k+1}}})$ \\ $\rhd \forall j \neq j^{k+1}: g^{k+1}_{j} = 0, \, \forall i \neq i^{k+1}, j \neq j^{k+1}, p \neq p^{k+1}: s^{\tau, k+1}_{i,j,p} = s^{\tau, k}_{i,j,p}, \quad $
  \ENDIF
  \ENDFOR
  \end{algorithmic}
\end{protocol}

In Section~\ref{sec:lower_bound_main}, we present a brief overview and simplified theorem for the lower bound. We now provide a formal and strict mathematical construction.

\subsection{Description of Protocols~\ref{alg:time_multiple_oracle_protocol} and \ref{alg:simplified_time_multiple_oracle_protocol}}
One way how we can formalize Protocol~\ref{alg:simplified_time_multiple_oracle_protocol} is to use Protocol~\ref{alg:time_multiple_oracle_protocol}. Let us explain it. Using Protocol~\ref{alg:time_multiple_oracle_protocol}, we consider any possible method that works in our distributed asynchronous setting. 
The mapping $M^k$ of the algorithm returns the time $t^{k+1}$ (ignore for now) and $c^{k+1}.$ If $c^{k+1} = 0,$ then the algorithm decides to start the calculation of a stochastic gradient: it determines the index $i^{k+1}$ of a worker that will start the calculation, then, using all locally available information $g^{1}_{i^{k+1}}, \dots, g^{k}_{i^{k+1}},$ it calculates a new point $x^{k}_{i^{k+1}},$ and passes this point to the computation oracle $O_{{i^{k+1}}}$ that will return a new stochastic gradient after $h_{i^{k+1}}$ seconds. If $c^{k+1} = 1,$ then the algorithm decides to communicate a vector: it returns the indices $i^{k+1}$ and $j^{k+1}$ of two workers that will communicate, and the index $p^{k+1} \in \N$ of a communication oracle, calculates $v^k_{i^{k+1}}$ in worker $i^{k+1}$ using only all locally available information $g^{1}_{i^{k+1}}, \dots, g^{k}_{i^{k+1}},$ and passes it to the communication oracle $C^{p^{k+1}}_{i^{k+1} \to j^{k+1}}$ that will send the vector after $\tau_{i^{k+1} \to j^{k+1}}$ seconds.

The computation oracles we define as

\begin{align*}
  O_{i}\,:\, &\underbrace{\R_{\geq 0}}_{\textnormal{time}} \times \underbrace{\R^d}_{\textnormal{point}} \times \underbrace{(\R_{\geq 0} \times \R^d \times \{0, 1\})}_{\textnormal{input state}} \rightarrow \underbrace{(\R_{\geq 0} \times \R^d \times \{0, 1\})}_{\textnormal{output state}} \times \R^d
\end{align*}
\begin{align}
  \label{eq:oracle_inside_random}
  \begin{split}
      &\textnormal{such that } \qquad O_{i}(t, x, (s_t, s_x, s_q)) = \left\{
  \begin{aligned}
      &((t, x, 1), &0), \quad & s_q = 0, \\
      &((s_t, s_x, 1), &0), \quad & s_q = 1, t < s_t + h_i,\\
      &((0, 0, 0), & \nabla f(s_x; \xi)), \quad & s_q = 1, t \geq s_t + h_i,\\
  \end{aligned}
  \right.
  \end{split}
\end{align}

where $\xi \sim \mathcal{D}.$

The communication oracles we define as

\begin{align*}
  C^p_{i \to j}\,:\, &\underbrace{\R_{\geq 0}}_{\textnormal{time}} \times \underbrace{\R^d}_{\textnormal{point}} \times \underbrace{(\R_{\geq 0} \times \R^d \times \{0, 1\})}_{\textnormal{input state}} \rightarrow \underbrace{(\R_{\geq 0} \times \R^d \times \{0, 1\})}_{\textnormal{output state}} \times \R^d
\end{align*}
\begin{align}
  \label{eq:comm_oracle_inside}
  \begin{split}
      &\textnormal{such that } \qquad C^p_{i \to j}(t, x, (s_t, s_x, s_q)) = \left\{
  \begin{aligned}
      &((t, x, 1), &0), \quad & s_q = 0, \\
      &((s_t, s_x, 1), &0), \quad & s_q = 1, t < s_t + \tau_{i \to j},\\
      &((0, 0, 0), & s_x), \quad & s_q = 1, t \geq s_t + \tau_{i \to j}.\\
  \end{aligned}
  \right.
  \end{split}
\end{align}

The idea is that the computation oracle \eqref{eq:oracle_inside_random} emulates the behavior of a real worker $i$ that requires $h_i$ seconds to calculate a stochastic gradient. The communication oracle emulates the behavior of a real communication channel that requires $\tau_{i \to j}$ seconds to send a vector from worker $i$ to worker $j.$

One can see that, for all $i \neq j \in [n],$ an algorithm can access an infinite number of communication oracles $C^1_{i \to j}, C^2_{i \to j}, \dots$. We allow an algorithm to send as many vectors from worker $i$ to worker $j$ in parallel as it wants.

We now discuss the role of $t^{k+1}.$ One can see that the time $t^{k+1}$ is returned by the algorithm, and $t^{k+1}$ is passed to the oracles $O_{{i^{k+1}}}$ and $C^{p^{k+1}}_{i^{k+1} \to j^{k+1}}.$ Consider that $O_{{i^{k+1}}}$ was called with $t^{k+1}$ for the first time, then it will return zero vector because $(s_{i^{k+1}}^{h,k})_q = 0$ (see \eqref{eq:oracle_inside_random}) at the beginning. Then, by the construction of \eqref{eq:oracle_inside_random}, the oracle will return a non-zero vector in the second output if only the algorithm returns a time that is greater or equal to $t^{k+1} + h_{i^{k+1}}.$ The same idea applies to $C^{p^{k+1}}_{i^{k+1} \to j^{k+1}}:$ if it was called $t^{k+1}$ for the first time, then worker $j^{k+1}$ will get a non-zero vector only if the algorithm passes a time that is greater or equal to $t^{k+1} + \tau_{i^{k+1} \to j^{k+1}}.$

The oracles force the algorithm to increase the time $t^{k+1};$ otherwise, it will not get new information ($\nabla f(s_x; \xi)$ in \eqref{eq:oracle_inside_random}) about the function. One crucial constraint is $t^{k+1} \geq t^{k}$, meaning that the algorithm can not ``cheat'' and travel into the past. The idea of such a protocol was proposed in \cite{tyurin2023optimal}, and we refer to Sections~3-5 for a detailed description. A more readable and less formal version of Protocol~\ref{alg:time_multiple_oracle_protocol} is presented in the Protocol~\ref{alg:simplified_time_multiple_oracle_protocol}.
\subsection{Lower bound}

Before we state the main theorem, let us define the class of zero-respecting algorithms.

\begin{definition}[Algorithm Class $\cA_{\textnormal{zr}}$]
  \label{def:alg_class}
  Let us consider Protocol~\ref{alg:time_multiple_oracle_protocol}. We say that the sequence of tuples of mappings $\{(M^k, L^k, D^k, P^k_1, \dots, P^k_n, V^k_1, \dots, V^k_{n})\}_{k=0}^{\infty}$ is a zero-respecting algorithm, if
  \begin{enumerate}
    \item $M^k\,:\, \underbrace{\R^d \times \dots \times \R^d}_{n \times k \textnormal{ times}} \rightarrow \R_{\geq 0} \times \{0, 1\}$ for all $k \geq 1,$ and $M^0 \in \R_{\geq 0} \times \{0, 1\}.$ \label{def:class_one}
    \item $L^k\,:\, \underbrace{\R^d \times \dots \times \R^d}_{n \times k \textnormal{ times}} \rightarrow [n]$ for all $k \geq 1,$ and $L^0 \in [n].$
    \item $D^k\,:\, \underbrace{\R^d \times \dots \times \R^d}_{n \times k \textnormal{ times}} \rightarrow [n] \times [n] \times \N$ for all $k \geq 1,$ and $D^0 \in [n] \times [n] \times \N.$
    \item For all $i \in [n],$ $P^k_i\,:\, \underbrace{\R^d \times \dots \times \R^d}_{k \textnormal{ times}} \rightarrow \R^d$ for all $k \geq 1,$ and $P^0_i \in \R^d.$
    \item For all $i \in [n],$ $V^k_i\,:\, \underbrace{\R^d \times \dots \times \R^d}_{k \textnormal{ times}} \rightarrow \R^d$ for all $k \geq 1,$ and $V^0_i \in \R^d.$ \label{def:class_two}
    \item For all $k \geq 1$ and $g^{1}_{1}, \dots g^{1}_{n}, g^{2}_{1}, \dots, g^{2}_{n}, \dots, g^{k}_{1}, \dots, g^{k}_{n} \in \R^d,$ $$t^{k+1} \geq t^k,$$
    where $t^{k+1}$ and $t^k$ are defined as $(t^{k+1}, \dots) = M^k(g^{1}_{1}, \dots g^{1}_{n}, g^{2}_{1}, \dots, g^{2}_{n}, \dots, g^{k}_{1}, \dots, g^{k}_{n})$ and $(t^k, \dots) = M^{k-1}(g^{1}_{1}, \dots g^{1}_{n}, g^{2}_{1}, \dots, g^{2}_{n}, \dots, g^{k-1}_{1}, \dots, g^{k-1}_{n}).$ \label{def:class_three}
    \item For all $i \in [n],$ $\textnormal{supp} \left(x^k_i\right) \subseteq \bigcup_{j = 1}^k \textnormal{supp} \left(g^j_i\right),$ and $\textnormal{supp} \left(v^k_i\right) \subseteq \bigcup_{j = 1}^k \textnormal{supp} \left(g^j_i\right),$
    for all $k \in \N_0,$ where $\textnormal{supp}(x) \eqdef \{i \in [d]\,|\,x_i \neq 0\}.$ \label{def:class_four}
  \end{enumerate}
  The set of all algorithms with this properties we define as $\cA_{\textnormal{zr}}.$
\end{definition}

Constraints \ref{def:class_one}--\ref{def:class_two} define domains of the mappings. Constraint~\ref{def:class_three} is required to ensure that the time sequence $t^k$ does not decrease. Constraint~\ref{def:class_four} is a standard assumption that an algorithm is zero-respecting \citep{arjevani2022lower}.

\begin{restatable}{theorem}{RANDOMLOWERBOUNDFULL}
  \label{theorem:random_lower_bound}
  Consider Protocol~\ref{alg:time_multiple_oracle_protocol}. We take any $h_i \geq 0$ and $\tau_{i \to j} \geq 0$ for all $i,j \in [n]$ such that $\tau_{i \to j} \leq \tau_{i \to k} + \tau_{k \to j}$ for all $i,k,j \in [n].$ We fix $L, \Delta, \varepsilon, \sigma^2 > 0$ that satisfy the inequality $\varepsilon < c' L \Delta$. For any algorithm $A \in \cA_{\textnormal{zr}},$ there exists a function $f,$ which satisfy Assumptions~\ref{ass:lipschitz_constant}, \ref{ass:lower_bound} and $f(0) - f^* \leq \Delta$, and a stochastic gradient mapping $\nabla f(\cdot;\cdot),$ which satisfy Assumption~\ref{ass:stochastic_variance_bounded},
  such that $\Exp{\inf\limits_{k \in S_t, i \in [n]} \norm{\nabla f(x^k_i)}^2} > \varepsilon,$ 
  where $S_t \eqdef \left\{k \in \N_0 \,|\,t^k \leq t\right\},$ 
$$t = c \times \frac{1}{\log n + 1}\frac{L \Delta}{\varepsilon} \min_{j \in [n]} \min_{k \in [n]} \max\left\{\max\{\tau_{\pi_{j,k} \to j}, h_{\pi_{j,k}}\}, \frac{\sigma^2}{\varepsilon}\left(\sum_{i=1}^{k} \frac{1}{h_{\pi_{j,i}}}\right)^{-1}\right\},$$
where $\pi_{j,\cdot}$ is a permutation that sorts $\max\{h_{i}, \tau_{i \to j}\},$ i.e.,
$$\max\{h_{\pi_{j,1}}, \tau_{\pi_{j,1} \to j}\} \leq \dots \leq \max\{h_{\pi_{j,n}}, \tau_{\pi_{j,n} \to j}\}$$ for all $j \in [n].$
The quantities $c'$ and $c$ are universal constants. The sequences $x^k$ and $t^k$ are defined in Protocol~\ref{alg:time_multiple_oracle_protocol}.
\end{restatable}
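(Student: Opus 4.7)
The plan is to adapt the worst-case construction of Arjevani et al.~(2022) to the distributed time-based protocol. First, I would take a zero-chain ``progress'' function $F_T(x) = -\Psi(1)\Phi(x_1) + \sum_{i=2}^T [\Psi(-x_{i-1})\Phi(-x_i) - \Psi(x_{i-1})\Phi(x_i)]$ with $T = \Theta(L\Delta/\varepsilon)$ coordinates, and embed it via a rescaling $f(x) = \lambda F_T(x/\alpha)$, choosing $\lambda,\alpha$ so that $f$ satisfies Assumptions~\ref{ass:lipschitz_constant}, \ref{ass:lower_bound}, $f(0)-f^* \leq \Delta$, and $\norm{\nabla f(x)}^2 > \varepsilon$ unless all coordinates $1,\dots,T$ are ``activated'' (i.e.~$|x_i|>1/2$). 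The critical feature is that $\nabla f(x)$ vanishes on coordinate $i+1$ whenever coordinates $1,\dots,i$ are inactive.

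Next I would introduce a stochastic oracle $\nabla f(\cdot;\xi)$ that, on the first unactivated coordinate, reveals nonzero information only with probability $p \asymp \min\{1,\varepsilon/\sigma^2\}$ (and is centered and unbiased otherwise), with variance bounded by $\sigma^2$. By Definition~\ref{def:alg_class}, any $A \in \cA_{\textnormal{zr}}$ has the property that the support of its internal iterates $x^k_i, v^k_i$ is contained in the union of supports of previously seen stochastic gradients. Hence the ``progress level'' of worker $i$ (the largest index in the support of its state at time $t$) can only advance (i) by a successful stochastic gradient computation at a point of matching progress level, which costs at least $h_i$ seconds per attempt and takes a geometric number of attempts with parameter $p$, or (ii) by receiving a vector from worker $u$ with higher progress level, which costs at least $\tau_{u \to i}$ seconds along any path.

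These two mechanisms let me reduce the lower bound to analyzing the random time series described in the excerpt: let $y^{T}_j$ denote the earliest time worker $j$ attains progress level $T$. Then for $T \geq 1$,
\begin{equation*}
y^{T}_j \;\geq\; \min_{i \in [n]} \Bigl\{\, y^{T-1}_i + h_i \eta^{T}_i + \tau_{i \to j}\,\Bigr\},
\end{equation*}
with $y^0_i = 0$ and $\{\eta^{T}_i\}$ i.i.d.\ $\mathrm{Geom}(p)$, since any improvement from level $T-1$ at node $i$ to level $T$ at node $j$ requires at least one successful compute at some worker $i$ followed by relaying to $j$. The global first time an $\varepsilon$-stationary point can be produced is then at least $\min_{j \in [n]} y^{T}_j$. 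Using $T \asymp L\Delta/\varepsilon$ and $1/p \asymp \sigma^2/\varepsilon$, taking expectation and carefully exchanging the $\min$ with the sum via a routing/permutation argument recovers the deterministic equilibrium time $t^*(\sigma^2/\varepsilon,[h_i]_i,[\tau_{i\to j}]_i)$ of Definition~\ref{def:time_budget}: the optimal strategy is to use only the $k$ closest-and-fastest workers to pivot $j$, balancing the slowest active worker's cost $\max\{\tau_{\pi_{j,k}\to j}, h_{\pi_{j,k}}\}$ against the noise-reduction cost $(\sigma^2/\varepsilon)(\sum_{i\leq k} 1/h_{\pi_{j,i}})^{-1}$.

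The main obstacle is the concentration of the minimum-structured recursion, which is what produces the extra $1/(\log n + 1)$ factor in the bound. Unlike in a sum recursion $\bar y^T = \bar y^{T-1} + \varrho^T$, where Bernstein/Hoeffding yields sharp concentration, here the algorithm may opportunistically pick, at every level, the ``luckiest'' worker-and-path among $n$ candidates. The plan is a union-bound argument over $O(\log n)$ geometric buckets of workers partitioned by $\max\{h_i,\tau_{i\to j}\}$, combined with a Bernstein-type tail bound for sums of truncated geometrics along any fixed bucket, showing that the probability that any bucket achieves level $T$ at time smaller than $c \cdot t^{*}/(\log n + 1)$ is at most $1/3$. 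Taking one final union over the $n$ choices of pivot $j$ absorbs another $\log n$ factor into the universal constant. This step, formalized as a lemma on the minimum of $n$ correlated random-walk-plus-communication-offset processes, is where almost all technical work lies; the remaining reduction from ``no worker has level $T$ by time $t$'' to ``$\Exp{\inf_{k \in S_t, i \in [n]}\norm{\nabla f(x^k_i)}^2} > \varepsilon$'' follows from the zero-chain property and a standard Markov-inequality argument.
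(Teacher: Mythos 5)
Your Parts 1 and 2 coincide with the paper's own proof: the same rescaled zero-chain function $F_T$ with $T \asymp L\Delta/\varepsilon$, the same Bernoulli-masked stochastic gradient with $p \asymp \min\{1,\varepsilon/\sigma^2\}$, and the same reduction (via the zero-respecting property and the timed oracles) to the recursion $y^T_j \geq \min_{i\in[n]}\{y^{T-1}_i + h_i\eta^T_i + \tau_{i\to j}\}$ with i.i.d.\ geometric $\eta$'s, followed by the observation that it suffices to exhibit a deterministic $\bar y$ of the right order with $\Prob{y^T \leq \bar y}\leq \nicefrac{1}{2}$. The divergence is in the concentration step, the paper's Lemma~\ref{theorem:bound_lbst}, which you rightly identify as carrying essentially all of the technical weight but only sketch, and your sketch has two concrete problems.

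First, the final union over the $n$ choices of pivot $j$ cannot be ``absorbed into the universal constant'': if each pivot's bad event has probability at most $\nicefrac{1}{3}$, the union bound gives $\nicefrac{n}{3}$, which is vacuous. You need each pivot to fail with probability $O(1/n)$, i.e.\ the $\log n$ must be paid inside the exponent; the paper pays it there, via $\Prob{y^T\leq t}\leq n\,e^{st-T}$ and the choice $s_j=\log(8n)/\bar t_j$. Second, a Bernstein bound ``along any fixed bucket'' does not control $y^T_j$: unrolling the recursion, $y^T_j$ is a minimum over roughly $n^T$ worker sequences (the minimizing worker, hence the bucket, can change at every one of the $T$ levels, and the terms are coupled through the $y^{T-1}_i$'s), so a tail bound for a single fixed sum of truncated geometrics is not enough; you would need either a careful union bound over these sequences or the paper's cleaner device, a recursive Chernoff/MGF contraction
\begin{align*}
\Exp{e^{-s y^k_j}} \;\leq\; \Bigl(\textstyle\sum_{i=1}^n \Exp{e^{-s_j(h_i\eta^k_i+\tau_{i\to j})}}\Bigr)\,\max_{i\in[n]}\Exp{e^{-s y^{k-1}_i}},
\end{align*}
where the bracket is shown to be at most $e^{-1}$ for $s_j=\log(8n)/\bar t_j$ with $\bar t_j$ exactly the equilibrium-time quantity (this is the deterministic Lemma~\ref{lemma:min_max}, which is where $t^*$ enters); iterating gives $\max_j\Exp{e^{-sy^T_j}}\leq e^{-T}$, and the outer $\min_j$ then costs only an additive $\log n$ against $T$. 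If you replace your bucketing/Bernstein step by such an MGF recursion (or rigorously carry out the per-level union over worker choices), the rest of your plan matches the paper's argument.
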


\subsection{Proof sketch of Theorem~\ref{theorem:random_lower_bound}}

Let us provide a proof sketch that will give intuition behind the theorem. The full proof starts in Section~\ref{sec:lower_random_proof}.

\begin{hproof}
  \leavevmode
  \begin{center}
    \bf Part 1: Construct a function and stochastic gradient
  \end{center}
  The first part of the proof is standard \citep{carmon2020lower,arjevani2022lower,tyurin2023optimal,huang2022lower,lu2021optimal}, and we delegate it to Section~\ref{sec:lower_random_proof}. For any algorithm, we construct oracles and a ``worst-case'' function such that
  \begin{align}
    \label{eq:stoch_lower_bound}
    \inf_{k \in S_t, i \in [n]} \norm{\nabla f(x^k_i)}^2 > 2 \varepsilon \inf_{k \in S_t, i \in [n]} \mathbbm{1}\left[\textnormal{prog}(x^k_i) < T\right],
  \end{align}
  where $\textnormal{prog}(x) \eqdef \max \{i \geq 0\,|\,x_i \neq 0\} \quad (x_0 \equiv 1)$ and $T \approx \nicefrac{L \Delta}{\varepsilon}.$
  This inequality says that while all points in Protocol~\ref{alg:time_multiple_oracle_protocol} have the last coordinate equals $0$ by the time $t,$ an algorithm can not find an $\varepsilon$--stationary point.

  Since we assume that $A \in \cA_{\textnormal{zr}}$ is zero-respecting, the only way to discover the next non-zero coordinate is through stochastic gradients. They are constructed in the following way \citep{arjevani2022lower}:
  \begin{align*}
    [\nabla f(x; \xi)]_j \eqdef \nabla_j f(x) \left(1 + \mathbbm{1}\left[j > \textnormal{prog}(x)\right]\left(\frac{\xi}{p} - 1\right)\right) \quad \forall x \in \R^T,
  \end{align*} and $\xi \sim \textnormal{Bernoulli}(p)$ for all $i \in [n],$ where $p \approx \nicefrac{\varepsilon}{\sigma^2}.$ We denote $[x]_j$ as the $j$\textsuperscript{th} index of a vector $x \in \R^T.$ The stochastic gradient equals to the exact gradient except for the last non-zero coordinate: it zeros out it with the high probability $1 - p.$
  \begin{center}
    \bf Part 2: The Level Game
  \end{center}

  In essence, the protocol is equivalent to the following collaborative game. Each worker $i$ starts with level $\ell_i = 0.$ The goal is to reach level $T$ with at least one worker as fast as possible. There are two ways how a worker can increase its level: i) worker $i$ flips one coin per time from $\xi \sim \textnormal{Bernoulli}(p),$ it takes $h_i$ seconds to flip one coin, and if the worker is lucky, i.e., $\xi = 1,$ then it moves to the next level $\ell_i = \ell_i + 1$; ii) worker $i$ can share its level with another worker $j,$ and it takes $\tau_{i \to j}$ seconds (we are allowed to run this operation again even if the previous is not finished). Both options can be executed in parallel. What is the minimum possible time to reach the game's goal?

  Since all workers have the levels equal to $0$ at the beginning, they should flip coins in parallel and wait for the moment when at least one worker moves to level $1$. We define $\eta^1_i$ as the number of flips in worker $i$ to get $\xi = 1.$ Clearly, $\eta^1_i \sim \textnormal{Geometric}(p)$ are i.i.d. geometric random variables with the probability $p$. With any strategy, it is \emph{necessary} to wait at least
  \begin{align*}
    y^{1}_j \eqdef \min_{i \in [n]} \left\{h_i \eta^1_i + \tau_{i \to j}\right\}
  \end{align*}
  seconds to reach level $1$ in worker $j$ because once worker $i$ flips $\xi = 1,$ it will take at least $\tau_{i \to j}$ seconds to share level $1$ to worker $j$ due to the triangle inequality $\tau_{i \to j} \leq \tau_{i \to k} + \tau_{k \to j}$ for all $i,j,k \in [n],$ and we should minimize $h_i \eta^1_i + \tau_{i \to j}$ over all workers. We now use mathematical induction to prove that it is necessary to wait at least 
  \begin{align*}
    y^{T}_j \eqdef \min_{i \in [n]} \left\{y^{T-1}_i + h_i \eta^{T}_i + \tau_{i \to j}\right\}
  \end{align*}
  seconds to reach level $T,$ where we define $y^{0}_i \eqdef 0$ for all $i \in [n]$ and $\{\eta^{T}_i\}$ are i.i.d. random variables from $\textnormal{Geometric}(p).$
  The base case is proven above. Assume that it is true for $1, \dots, T - 1,$ then worker $i$ requires at least $y^{T-1}_i + h_i \eta^{T}_i$ seconds to flip a coin that moves to level $T,$ it takes at least $\tau_{i \to j}$ seconds to share the level $T,$ so it is necessary to wait at least $\min_{i \in [n]} \left\{y^{T-1}_i + h_i \eta^{T}_i + \tau_{i \to j}\right\}$ seconds in worker $i$ to get level $T.$ Ultimately, the minimum possible time to reach the game's goal is
  \begin{align*}
    y^{T} \eqdef \min_{j \in [n]} y^{T}_j.
  \end{align*}

  From the previous result, we can conclude that if $t \leq \frac{1}{2}y^{T},$ then
  \begin{align}
    \label{eq:dtFUSE}
    \inf_{k \in S_t, i \in [n]} \norm{\nabla f(x^k_i)}^2 > 2 \varepsilon
  \end{align}
  for any algorithm $A \in \cA_{\textnormal{zr}}.$

  \begin{center}
    \bf Part 3: The high probability bound of $y^{T}$
  \end{center}
  Let us fix any determenistic value $\bar{y} \in \R$ and take 
  \begin{align}
    \label{eq:pFMUXLAFyshFqlTSaWPx}
    t = \frac{1}{2}\bar{y}. 
  \end{align}
    Using \eqref{eq:dtFUSE}, we have
  \begin{align*}
    \Exp{\inf_{k \in S_t, i \in [n]} \norm{\nabla f(x^k_i)}^2} \geq \ExpCond{\inf_{k \in S_t, i \in [n]} \norm{\nabla f(x^k_i)}^2}{y^{T} > \bar{y}} \Prob{y^{T} > \bar{y}} > \Prob{y^{T} > \bar{y}} 2 \varepsilon.
  \end{align*}
  Thus, it is sufficient to find any $\bar{y} \in \R$ such that
  \begin{align}
    \label{eq:BKQzZZ}
    \Prob{y^{T} \leq \bar{y}} \leq \frac{1}{2}.
  \end{align}
  The sequence $y^{T}$ is a well-define time series. In Lemma~\ref{theorem:bound_lbst}, we show that \eqref{eq:BKQzZZ} holds with 
  \begin{align*}
    \bar{y} = \Theta\left(\frac{1}{\log n + 1}\frac{L \Delta}{\varepsilon} \min_{j \in [n]} \min_{k \in [n]} \max\left\{\max\{\tau_{\pi_{j,k} \to j}, h_{\pi_{j,k}}\}, \frac{\sigma^2}{\varepsilon}\left(\sum_{i=1}^{k} \frac{1}{h_{\pi_{j,i}}}\right)^{-1}\right\}\right),
  \end{align*}
  $\pi_{j,\cdot}$ is a permutation that sorts $\max\{h_{i}, \tau_{i \to j}\},$ i.e.,
  $$\max\{h_{\pi_{j,1}}, \tau_{\pi_{j,1} \to j}\} \leq \dots \leq \max\{h_{\pi_{j,n}}, \tau_{\pi_{j,n} \to j}\}$$ for all $j \in [n].$ We substitute this $\bar{y}$ to \eqref{eq:pFMUXLAFyshFqlTSaWPx} and get the result of theorem.
\end{hproof}

\subsection{Full proof of Theorem~\ref{theorem:random_lower_bound}}

\label{sec:lower_random_proof}

\label{sec:worst_case}
This section considers the standard ``worst case'' function that helps to provide lower bounds in the nonconvex world. Let us define
\begin{align*}
    \textnormal{prog}(x) \eqdef \max \{i \geq 0\,|\,x_i \neq 0\} \quad (x_0 \equiv 1).
\end{align*}

For any $T \in \N,$ \citet{carmon2020lower,arjevani2022lower} define
\begin{align}
    \label{eq:worst_case}
    F_T(x) \eqdef -\Psi(1) \Phi(x_1) + \sum_{i=2}^T \left[\Psi(-x_{i-1})\Phi(-x_i) - \Psi(x_{i-1})\Phi(x_i)\right],
\end{align}
where
\begin{align*}
    \Psi(x) = \begin{cases}
        0, & x \leq 1/2, \\
        \exp\left(1 - \frac{1}{(2x - 1)^2}\right), & x \geq 1/2,
    \end{cases}
    \quad\textnormal{and}\quad
    \Phi(x) = \sqrt{e} \int_{-\infty}^{x}e^{-\frac{1}{2}t^2}dt.
\end{align*}

We will only rely on the following facts. 

\begin{lemma}[\cite{carmon2020lower, arjevani2022lower}]
    \label{lemma:worst_function}
    The function $F_T$ satisfies:
    \begin{enumerate}
        \item $F_T(0) - \inf_{x \in \R^T} F_T(x) \leq \Delta^0 T,$ where $\Delta^0 = 12.$
        \item The function $F_T$ is $l_1$--smooth, where $l_1 = 152.$
        \item For all $x \in \R^T,$ $\norm{\nabla F_T(x)}_{\infty} \leq \gamma_{\infty},$ where $\gamma_{\infty} = 23.$
        \item For all $x \in \R^T,$ $\textnormal{prog}(\nabla F_T(x)) \leq \textnormal{prog}(x) + 1.$
        \item For all $x \in \R^T,$ if $\textnormal{prog}(x) < T,$ then $\norm{\nabla F_T(x)} > 1.$
    \end{enumerate}
\end{lemma}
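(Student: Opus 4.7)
Since the lemma is stated as a direct citation from \cite{carmon2020lower, arjevani2022lower}, the ``proof'' reduces to verifying the five properties of the specific ``zero-chain'' construction $F_T$. My approach would be to first collect uniform bounds on the univariate building blocks $\Psi$ and $\Phi$ and their first two derivatives. Specifically, I would record that $\Psi\ge 0$ is $C^\infty$, identically zero on $(-\infty,1/2]$, monotone, and bounded above by $e$; that $\Psi'$ and $\Psi''$ are uniformly bounded by explicit numerical constants computable from the exponential formula; and that $\Phi$ is bounded, with $\Phi'(x)=\sqrt{e}\,e^{-x^2/2}$ bounded in absolute value by $\sqrt{e}$ and $\Phi''$ similarly bounded. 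These numerical bounds drive all five conclusions.

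Properties 1--3 are routine bookkeeping. For part~1, $F_T(0)=-\Psi(1)\Phi(0)$ is a fixed constant and each of the $T$ pair-terms $\Psi(-x_{i-1})\Phi(-x_i)-\Psi(x_{i-1})\Phi(x_i)$ is uniformly bounded below, so the total drop from $F_T(0)$ to $\inf F_T$ is at most a constant times $T$; the constant $\Delta^0=12$ follows from a tight numerical computation using the bounds above. For parts~2 and~3, I would write out $\partial_i F_T$ explicitly and observe that it is a sum of a constant number of products of $\Psi$ (or $\Psi'$) and $\Phi$ (or $\Phi'$), each uniformly bounded, yielding $\gamma_\infty=23$. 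The Hessian of $F_T$ is tridiagonal because each coordinate $x_i$ appears only in the terms indexed by $i-1, i, i+1$; bounding its entries by the same calculus and applying Gershgorin's theorem yields the operator-norm bound $l_1=152$.

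The structural parts~4 and~5 are the crux. For part~4, the key observation is that $\Psi$ and $\Psi'$ both vanish on $(-\infty,1/2]$. If $\textnormal{prog}(x)=k$, then $x_j=0$ for $j>k$, and for $i>k+1$ every summand of $\partial_i F_T(x)$ contains a factor $\Psi(\pm x_{i-1})=\Psi(0)=0$ or $\Psi'(\pm x_{i-1})=\Psi'(0)=0$, so $\partial_i F_T(x)=0$. For part~5, assume $\textnormal{prog}(x)=k<T$, so $x_{k+1}=0$. I would show that $\partial_{k+1}F_T(x)$ reduces to a term of the form $\sqrt{e}(\Psi(-x_k)-\Psi(x_k))$ (all other summands vanish because they involve $\Phi'$ at coordinates $x_j=0$ paired with $\Psi(\pm 0)$ factors), and then argue that for every $x_k\in\R$ at least one of $\Psi(x_k),\Psi(-x_k)$ exceeds a definite threshold, forcing $|\partial_{k+1}F_T(x)|>1$. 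The main obstacle is verifying this strict inequality uniformly in $x_k$: the antisymmetric design of the summands ensures $\Psi(x_k)$ and $\Psi(-x_k)$ cannot both be small, but pinning the numerical constant exactly requires a careful case split on the sign and magnitude of $x_k$. These constants are those verified explicitly in \cite{carmon2020lower, arjevani2022lower}, to which I would ultimately defer rather than re-derive here.
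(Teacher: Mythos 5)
The paper never proves this lemma --- it is imported verbatim from \cite{carmon2020lower,arjevani2022lower} --- so the comparison is with the standard verification in those references. Your plan for items 1--3 matches it (uniform bounds on $\Psi,\Phi$ and their first two derivatives, explicit coordinate formulas for $\nabla F_T$, tridiagonal Hessian plus a Gershgorin-type bound), and item 4 is essentially right up to an index slip: for $i>\textnormal{prog}(x)+1$ the summands coming from the $(i{+}1)$-st block contain $\Psi'(\pm x_i)$, not $\Psi'(\pm x_{i-1})$, and they vanish because $x_i=0$; the summands from the $i$-th block contain $\Psi(\pm x_{i-1})=\Psi(0)=0$. Deferring the exact numerical constants $12,152,23$ to the cited works is consistent with what the paper itself does.

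Item 5, however, has a genuine gap as you propose to prove it. Writing $k=\textnormal{prog}(x)$, so $x_{k+1}=0$, one gets $\partial_{k+1}F_T(x)=-\sqrt{e}\,\bigl(\Psi(x_k)+\Psi(-x_k)\bigr)$ (a sum, not the difference you wrote), and your key claim --- that for every $x_k$ at least one of $\Psi(x_k),\Psi(-x_k)$ exceeds a fixed threshold --- is false: $\Psi$ vanishes identically on $(-\infty,1/2]$, so for $0<|x_k|\le 1/2$ both terms are exactly zero and $\partial_{k+1}F_T(x)=0$ (take $T=2$, $x=(0.1,0)$). The lemma is still true because the large gradient coordinate need not be coordinate $\textnormal{prog}(x)+1$. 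The standard argument instead sets $j=\min\{i\ge 1: |x_i|<1\}$ (with the convention $x_0\equiv 1$; such $j\le T$ exists since $\textnormal{prog}(x)<T$ forces $x_T=0$), uses $|x_{j-1}|\ge 1$ so that $\Psi(x_{j-1})+\Psi(-x_{j-1})\ge\Psi(1)=1$, uses $|x_j|<1$ so that $\Phi'(\pm x_j)=\sqrt{e}\,e^{-x_j^2/2}>\sqrt{e}\,e^{-1/2}=1$, and observes that all summands of $\partial_j F_T(x)$ have the same sign (since $\Psi,\Psi',\Phi,\Phi'\ge 0$), so their magnitudes add and $|\partial_j F_T(x)|>1$, hence $\norm{\nabla F_T(x)}>1$. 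Replacing your coordinate-$(k{+}1)$ argument with this choice of $j$ closes the gap; the rest of your outline then goes through.
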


\RANDOMLOWERBOUNDFULL*

\begin{proof}
  $ $\newline
  \textbf{Part 1: The Worst Case Function} \\
  This part of the proof mirrors the proofs from \citet{carmon2020lower,arjevani2022lower,tyurin2023optimal,huang2022lower,lu2021optimal}. We provide it for completeness. The goal of this part to construct a hard instance.

      We fix $\lambda > 0,$ $T \in \N$ and take the function $f(x) \eqdef \frac{L \lambda^2}{l_1} F_T\left(\frac{x}{\lambda}\right),$  where the function $F_T$ is defined in Section~\ref{sec:worst_case}. Note that the function $f$ is $L$-smooth: 
      \begin{align*}
          \norm{\nabla f(x) - \nabla f(y)} = \frac{L \lambda}{l_1} \norm{\nabla F_T\left(\frac{x}{\lambda}\right) - \nabla F_T\left(\frac{y}{\lambda}\right)} \leq L \lambda \norm{\frac{x}{\lambda} - \frac{y}{\lambda}} = L \norm{x - y} \quad \forall x, y \in \R^d,
      \end{align*}
      where $l_1$--smoothness of $F_T$ (Lemma~\ref{lemma:worst_function}).
      Let us take $$T = \left\lfloor\frac{\Delta l_1}{L \lambda^2 \Delta^0}\right\rfloor,$$ then
      \begin{align*}
          f(0) - \inf_{x \in \R^T} f(x) = \frac{L \lambda^2}{l_1} (F_T\left(0\right) - \inf_{x \in \R^T} F_T(x)) \leq \frac{L \lambda^2 \Delta^0 T}{l_1} \leq \Delta.
      \end{align*}
      We showed that the function $f$ satisfy Assumptions~\ref{ass:lipschitz_constant}, \ref{ass:lower_bound} and $f(0) - f^* \leq \Delta.$

      For each worker $i$ we take an oracle $O_i,$ from \eqref{eq:oracle_inside_random} with the mapping $g$ such that 
      \begin{align*}[\nabla f(x; \xi)]_j \eqdef \nabla_j f(x) \left(1 + \mathbbm{1}\left[j > \textnormal{prog}(x)\right]\left(\frac{\xi}{p} - 1\right)\right) \quad \forall x \in \R^T,\end{align*} and $\mathcal{D}_i = \textnormal{Bernouilli}(p)$ for all $i \in [n],$ where $p \in (0, 1].$ We denote $[x]_j$ as the $j$\textsuperscript{th} index of a vector $x \in \R^T.$
  This stochastic gradient is unbiased and $\sigma^2$-variance-bounded. We have $$\Exp{[\nabla f(x, \xi)]_i} = \nabla_i f(x) \left(1 + \mathbbm{1}\left[i > \textnormal{prog}(x)\right]\left(\frac{\Exp{\xi}}{p} - 1\right)\right) = \nabla_i f(x)$$ for all $i \in [T],$ and
  \begin{align*}
      \Exp{\norm{\nabla f(x; \xi) - \nabla f(x)}^2} \leq \max_{j \in [T]} \left|\nabla_j f(x)\right|^2\Exp{\left(\frac{\xi}{p} - 1\right)^2}
  \end{align*}
  because the difference is non-zero only in one coordinate. Thus
  \begin{align*}
      \Exp{\norm{\nabla f(x, \xi) - \nabla f(x)}^2} &\leq \frac{\norm{\nabla f(x)}_{\infty}^2(1 - p)}{p} = \frac{L^2 \lambda^2 \norm{\nabla F_T\left(\frac{x}{\lambda}\right)}_{\infty}^2(1 - p)}{l_1^2 p} \\
      &\leq \frac{L^2 \lambda^2 \gamma_{\infty}^2 (1 - p)}{l_1^2 p} \leq \sigma^2,
  \end{align*}
  where we use Lemma~\ref{lemma:worst_function} and take 
  $$p = \min\left\{\frac{L^2 \lambda^2 \gamma_{\infty}^2}{\sigma^2 l_1^2}, 1\right\}.$$

  Let us take
  $$\lambda = \frac{\sqrt{2 \varepsilon} l_1}{L}$$
  to ensure that
  \begin{align*}
    \norm{\nabla f(x)}^2 = \frac{L^2 \lambda^2}{l_1^2}\norm{\nabla F_T \left(\frac{x}{\lambda}\right)}^2 = 2 \varepsilon \norm{\nabla F_T \left(\frac{x}{\lambda}\right)}^2
  \end{align*} 
  for all $x \in \R^T.$ From Lemma~\ref{lemma:worst_function}, we know that if $\textnormal{prog}(x) < T,$ then $\norm{\nabla F_T(x)} > 1.$ Thus, we get
  \begin{align}
    \label{eq:euWrjZuZwaKrj}
    \norm{\nabla f(x)}^2 > 2 \varepsilon \mathbbm{1}\left[\textnormal{prog}(x) < T\right]
  \end{align}
  Therefore,
  \begin{align}
    \label{eq:qFafEhpjZkvv}
    T = \left\lfloor\frac{\Delta L}{2 \varepsilon l_1 \Delta^0}\right\rfloor
  \end{align}
  and
  \begin{align}
    \label{eq:qFafEhpjZkvvxx}
    p = \min\left\{\frac{2 \varepsilon \gamma_{\infty}^2}{\sigma^2}, 1\right\}.
  \end{align}

  The inequality \eqref{eq:euWrjZuZwaKrj} implies
  \begin{align}
      \label{eq:aux_part_3_homog}
      \inf_{k \in S_t, i \in [n]} \norm{\nabla f(x^k_i)}^2 > 2 \varepsilon \inf_{k \in S_t, i \in [n]} \mathbbm{1}\left[\textnormal{prog}(x^k_i) < T\right],
  \end{align}
  where $\{x^k_i\}_{k=0}^{\infty}$ are sequences from Protocol~\ref{alg:time_multiple_oracle_protocol}.
  \\ \textbf{Part 2: Reduction to Lower Bound Time Series} \\
  We now focus on \eqref{eq:oracle_inside_random}. In \eqref{eq:oracle_inside_random}, when the oracle in worker $i$ calculates a new stochastic gradient, it samples a random variable $\xi \sim \mathcal{D}.$ 
  This is equivalent to the procedure if we had $T$ infinite sequences of i.i.d. Bernoulli random variables
  \begin{align*}
    &\xi^{1,1}_i, \xi^{1,2}_i, \dots \qquad (\textnormal{prog}(s_x) = 0) \\
    &\xi^{2,1}_i, \xi^{2,2}_i, \dots \qquad (\textnormal{prog}(s_x) = 1) \\
    &\dots \\
    &\xi^{T,1}_i, \xi^{T,2}_i, \dots \qquad (\textnormal{prog}(s_x) = T - 1)
  \end{align*}
  for all $i \in [n]$ and 
  the oracle would look at the progress of $s_x$ and take the next non-taken Bernoulli random variable in the sequence that corresponds to that progress. For instance, if $\textnormal{prog}(s_x) = j$ for the first time in worker $i$, then the oracle will apply $\xi^{j,1}_i$ in \eqref{eq:oracle_inside_random}. The next time when it gets $\textnormal{prog}(s_x) = j$ it will apply $\xi^{j,2}_i$ and so on. One by one, we get i.i.d. Bernoulli random variables.

  Let us define 
  \begin{align*}
    \eta^k_i = \inf\{j \in \N\,|\,\xi_i^{k,j} = 1\}
  \end{align*}
  for all $i \in [n]$ and $k \in [T].$
  This is the first Bernoulli random variable from the sequence $\xi^{k,1}_i, \xi^{k,2}_i, \dots$ that equals $1.$ The random variables $\{\eta^k_i\}$ are i.i.d. geometrically distributed random variables with the probability $p.$ 
  
  The next steps mirror \emph{Proof Sketch} from Theorem~\ref{theorem:random_lower_bound}. The oracles constructed in such a way that it takes $h_i$ seconds to calculate a stochastic gradient, and at least $\tau_{i \to j}$ seconds to send a vector from one worker to another.
  Using the same reasoning as in the Level Game in \emph{Proof Sketch} of Theorem~\ref{theorem:random_lower_bound}, the first time moment when worker $j$ can get a vector with the first non-zero coordinate greater or equal
  \begin{align*}
    y^{1}_j \eqdef \min_{i \in [n]} \left\{h_i \eta^1_i + \tau_{i \to j}\right\}
  \end{align*}
  because, at the beginning, each worker $i$ calculates stochastic gradients with $\textnormal{prog}(s_x) = 0$ and should wait at least $h_i \eta^1_i$ seconds to get a stochastic gradient with the progress equals $1.$ Then, it can share this vector with any other worker $j$, but it takes at least $\tau_{i \to j}$ seconds.

  As in \emph{Proof Sketch} of Theorem~\ref{theorem:random_lower_bound}, we can use mathematical induction to prove that it is necessary to wait at least 
  \begin{align*}
    y^{T}_j \eqdef \min_{i \in [n]} \left\{y^{T-1}_i + h_i \eta^{T}_i + \tau_{i \to j}\right\}
  \end{align*}
  seconds to get a point such that $\textnormal{prog}(s_x) = T.$ The base case for $y^{1}_j$ has been proven. Worker $i$ requires at least $y^{T-1}_i + h_i \eta^{T}_i$ seconds to wait for the moment when the corresponding oracle will return a stochastic gradient with progress $T$ because $y^{T-1}_i$ is the first time possible time to get a vector with progress $T - 1$ by the induction, and it will take at least additional $h_i \eta^{T}_i$ seconds to calculate $\eta^{T}_i$ vectors with $\textnormal{prog}(s_x) = T - 1$ in \eqref{eq:oracle_inside_random}. Also, it takes at least $\tau_{i \to j}$ seconds to share a vector, so it is necessary to wait at least $\min_{i \in [n]} \left\{y^{T-1}_i + h_i \eta^{T}_i + \tau_{i \to j}\right\}$ seconds in worker $i$ to get the first vector with progress $T.$

  In the end, the fastest possible time to get a vector with progress $T$ is at least
  \begin{align*}
    y^{T} \eqdef \min_{j \in [n]} y^{T}_j.
  \end{align*}
  \\ \textbf{Part 3: Reduction to the concentration of $y^{T}$} \\
  The last statement means that $\textnormal{prog}(x^k_i) < T$ for all $i \in [n]$ and $k$ such that $t^k \leq \frac{1}{2} y^{T}.$ Therefore, we obtain
  \begin{align}
      \label{eq:MkdJZPUgt}
      \inf_{k \in S_t, i \in [n]} \norm{\nabla f(x^k_i)}^2 > 2 \varepsilon
  \end{align}
  for
  \begin{align*}
    t \leq \frac{1}{2} y^{T},
  \end{align*}
  where 
  $$T = \left\lfloor\frac{\Delta L}{2 \varepsilon l_1 \Delta^0}\right\rfloor = \left\lfloor c_T \cdot \frac{\Delta L}{\varepsilon}\right\rfloor$$
  and $\eta^{j}_i \sim \textnormal{Geometric}(p)$
  with
  $$p = \min\left\{\frac{2 \varepsilon \gamma_{\infty}^2}{\sigma^2}, 1\right\} = \min\left\{c_p \cdot \frac{\varepsilon}{\sigma^2}, 1\right\},$$
  where $c_T = 3648^{-1}$ and $c_p = 1058$ are universal constants. Let us fix any determenistic value $\bar{y} \in \R$ and take 
  \begin{align}
    \label{eq:pFMUXLAFyshFqlTSaWPxxx}
    t = \frac{1}{2}\bar{y}. 
  \end{align}
    Using \eqref{eq:MkdJZPUgt}, we have
  \begin{align*}
    \Exp{\inf_{k \in S_t, i \in [n]} \norm{\nabla f(x^k_i)}^2} \geq \ExpCond{\inf_{k \in S_t, i \in [n]} \norm{\nabla f(x^k_i)}^2}{y^{T} > \bar{y}} \Prob{y^{T} > \bar{y}} > \Prob{y^{T} > \bar{y}} 2 \varepsilon.
  \end{align*}
  Thus, it is sufficient to find any $\bar{y} \in \R$ such that
  \begin{align}
    \label{eq:BKQzZZxx}
    \Prob{y^{T} \leq \bar{y}} \leq \frac{1}{2}.
  \end{align}
  In the following lemma we use the notation of this theorem. We prove it in Section~\ref{sec:main_lemma}.
  \begin{restatable}{lemma}{THEOREMBOUNDLBST}
    \label{theorem:bound_lbst}
    With
    \begin{align}
      \label{eq:svBUeOnJ}
      \bar{y} = c \times \frac{1}{\log n + 1}\frac{L \Delta}{\varepsilon} \min_{j \in [n]} \min_{k \in [n]} \max\left\{\max\{\tau_{\pi_{j,k} \to j}, h_{\pi_{j,k}}\}, \frac{\sigma^2}{\varepsilon}\left(\sum_{i=1}^{k} \frac{1}{h_{\pi_{j,i}}}\right)^{-1}\right\},
    \end{align}
    we have $\Prob{y^{T} \leq \bar{y}} \leq \frac{1}{2},$ where $\pi_{j,\cdot}$ is a permutation that sorts $\max\{h_{i}, \tau_{i \to j}\},$ i.e.,
    $$\max\{h_{\pi_{j,1}}, \tau_{\pi_{j,1} \to j}\} \leq \dots \leq \max\{h_{\pi_{j,n}}, \tau_{\pi_{j,n} \to j}\}$$ for all $j \in [n].$ The quantity $c$ is a universal constant.
  \end{restatable}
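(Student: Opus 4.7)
The plan is to establish the lower-tail bound on $y^T$ via a union bound over the $n$ candidate pivots combined with a Chernoff-type concentration estimate for i.i.d.\ geometric minima.

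First, I would unroll the recursion: by straightforward induction, using $y^0_i = 0$ and $\tau_{i \to i} = 0$, one obtains
\begin{equation*}
y^T_j = \min_{(i_1,\ldots,i_T)\in[n]^T}\Big\{\sum_{k=1}^T h_{i_k}\eta^k_{i_k} + \sum_{k=1}^{T-1}\tau_{i_k \to i_{k+1}} + \tau_{i_T \to j}\Big\}.
\end{equation*}
Since $y^T = \min_j y^T_j$, the event $\{y^T \leq \bar{y}\}$ is covered by $\bigcup_j\{y^T_j \leq \bar{y}\}$, so by a union bound over $j$ it suffices to prove $\Prob{y^T_j \leq \bar{y}} \leq 1/(2n)$ for each fixed pivot $j$.

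Next, I would apply an \emph{active-set reduction}. On the event $\{y^T_j \leq \bar{y}\}$ an optimal path satisfies $h_{i_k} \leq h_{i_k}\eta^k_{i_k} \leq \bar{y}$ (since $\eta^k_{i_k} \geq 1$) and, by telescoping the triangle inequality, $\tau_{i_k \to j} \leq \sum_{\ell=k}^{T-1}\tau_{i_\ell \to i_{\ell+1}} + \tau_{i_T \to j} \leq \bar{y}$. Hence every worker on the path lies in the active set $\mathcal{S}_j := \{i \in [n] : \max\{h_i,\tau_{i\to j}\} \leq \bar{y}\}$. Dropping the non-negative $\tau$-contributions yields
\begin{equation*}
\{y^T_j \leq \bar{y}\} \subseteq \Big\{\sum_{k=1}^T Y_k \leq \bar{y}\Big\}, \qquad Y_k := \min_{i \in \mathcal{S}_j} h_i\,\eta^k_i,
\end{equation*}
where the $Y_k$ are i.i.d. The estimate $\Prob{Y_k > t} = \prod_{i \in \mathcal{S}_j}(1-p)^{\lfloor t/h_i\rfloor} \geq (1-p)^{tS_m}$, with $m := |\mathcal{S}_j|$ and $S_m := \sum_{i=1}^m 1/h_{\pi_{j,i}}$, shows that $Y_k$ stochastically dominates an exponential variable with rate $\asymp p S_m$. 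A standard Chernoff inequality for Gamma lower tails then gives $\Prob{\sum_{k=1}^T Y_k \leq T/(c_1 p S_m)} \leq \exp(-c_0 T)$ for absolute constants $c_0, c_1 > 0$.

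Finally, I would match the deterministic threshold to this Chernoff bound. By the definition of $m$, the pair $\max\{h_{\pi_{j,m}},\tau_{\pi_{j,m}\to j}\} \leq \bar{y}$; evaluating the $\min_k$ in the definition of $\bar{y}$ at $k = m$ gives $\bar{y} \leq \frac{c}{\log n + 1} \cdot \frac{L\Delta}{\varepsilon} \cdot \max\big\{\max\{h_{\pi_{j,m}},\tau_{\pi_{j,m}\to j}\},\; (\sigma^2/\varepsilon)/S_m\big\}$. A short case split (stochastic term vs.\ communication term dominating) combined with $T = \Theta(L\Delta/\varepsilon)$ and $p = \Theta(\varepsilon/\sigma^2)$ then yields $\bar{y} \leq T/(c_1 p S_m)$, so the Chernoff estimate applies and a union bound over the $n$ choices of $j$ produces the desired probability $1/2$.

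The main obstacle is the matching step: the active set $\mathcal{S}_j$ need not coincide with the optimizer $k^*$ of the $\min_k$ defining $\bar{y}$, and the \emph{extra} workers in $\mathcal{S}_j$ only make $1/S_m$ smaller, weakening the stochastic lower bound. The $1/(\log n + 1)$ slack in the definition of $\bar{y}$ is precisely what closes this gap and keeps the Chernoff estimate $\exp(-c_0 T) \leq 1/(2n)$ valid across the full parameter range allowed by $\varepsilon < c'L\Delta$, even when $L\Delta/\varepsilon$ is only mildly larger than $\log n$; in the borderline regime where $T = O(\log n)$, one has to fall back on the trivial bound $Y_k \geq \min_{i \in \mathcal{S}_j} h_i$ to handle the concentration directly, rather than through the exponential approximation.
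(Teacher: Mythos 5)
Your unrolling of the recursion and the pathwise inclusion $\{y^T_j\le\bar y\}\subseteq\{\sum_{k=1}^T Y_k\le\bar y\}$ are correct, and the route is genuinely different from the paper's: the paper never unrolls, but instead runs a Chernoff/MGF contraction one level at a time, keeping each term $h_i\eta_i^k+\tau_{i\to j}$ intact and filtering workers through the indicator $\mathbbm{1}\left[\tau_{i\to j}\le\bar t_j\right]$ at the \emph{per-iteration} scale $\bar t_j$ chosen in Lemma~\ref{lemma:min_max}. The genuine gap in your plan is exactly the step ``dropping the non-negative $\tau$-contributions.'' Your active set is defined at the \emph{total-time} scale $\bar y\asymp\frac{T}{\log n+1}V_j$ (where $V_j$ denotes the inner $\min_k\max\{\cdot\}$ for pivot $j$), not at the per-level scale $V_j$, so it may contain many workers whose distance to $j$ is as large as $\bar y\gg V_j$; once their $\tau$'s are discarded, these workers feed $S_m$ at every one of the $T$ levels for free, although in reality each use of such a worker costs a hop of order its distance. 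Concretely, take $h_i=\delta$ for all $i$, $\tau_{i\to j}=A$ for all $i\ne j$ with $A=\Theta(\delta/p)$, and parameters with $np\gg1$, $p(\log n+1)\ll1$, $T\gg\log n$ (e.g.\ $p=1/\log^2 n$). Then $V_j=\Theta(A)$ for every $j$, so $\bar y=\Theta\!\left(\frac{TA}{\log n+1}\right)=\Theta\!\left(\frac{T\delta}{p(\log n+1)}\right)$, the active set is all of $[n]$, $S_m=n/\delta$, and your required matching inequality $\bar y\le T/(c_1pS_m)=T\delta/(c_1pn)$ fails by a factor of order $n/\log n$; moreover $\sum_k Y_k=\Theta(T\delta)\ll\bar y$ with overwhelming probability (and the fallback $Y_k\ge\min_{i\in\mathcal{S}_j}h_i$ gives only $T\delta$ as well), so your inclusion yields $\Prob{\sum_k Y_k\le\bar y}\approx1$ and proves nothing, even though the lemma is true for this instance (every level must either reuse a single worker's geometric, of expected cost $\delta/p\asymp A$, or pay a hop $A$, which is precisely what the paper's per-step analysis charges). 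No choice of the universal constant repairs this; the fix is to re-introduce the communication cost at every level, i.e.\ charge a hop each time the unrolled path changes workers, which is what the paper's recursion does automatically via the triangle inequality.

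A secondary remark: your union bound demands $\Prob{y^T_j\le\bar y}\le\frac{1}{2n}$ per pivot and hence $e^{-c_0T}\le\frac{1}{2n}$, i.e.\ $T\ge\Omega(\log n)$, which is not implied by $\varepsilon<c'L\Delta$; your proposed fallback does not close the small-$p$ part of that regime either (there $\bar y$ can exceed $T\min_i h_i$ by a factor of order $1/(p(\log n+1))$). I would weigh this less heavily, since the paper's own step of bounding the maximum over $j$ by a sum (and its admissible $t\le\frac{1}{s}(T-\log n+\log\frac12)$) carries essentially the same restriction; the decisive problem with your proposal is the loss of the per-level communication costs described above.
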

  Using Lemma~\ref{theorem:bound_lbst}, we can conclude that \eqref{eq:BKQzZZxx} holds and 
  \begin{align*}
    \Exp{\inf_{k \in S_t, i \in [n]} \norm{\nabla f(x^k_i)}^2} > \varepsilon
  \end{align*}
  for 
  \begin{align*}
    t = \frac{c}{2} \times \frac{1}{\log n + 1}\frac{L \Delta}{\varepsilon} \min_{j \in [n]} \min_{k \in [n]} \max\left\{\max\{\tau_{\pi_{j,k} \to j}, h_{\pi_{j,k}}\}, \frac{\sigma^2}{\varepsilon}\left(\sum_{i=1}^{k} \frac{1}{h_{\pi_{j,i}}}\right)^{-1}\right\}.
  \end{align*}
\end{proof}

\subsection{Proof of Lemma~\ref{theorem:bound_lbst}}
\label{sec:main_lemma}

\THEOREMBOUNDLBST*

\begin{proof}
  Using the Chernoff method for any $s > 0$, we get
  \begin{align*}
    \Prob{y^{k} \leq t} 
    &= \Prob{- s y^{k} \geq - s t} = \Prob{e^{- s y^{k}} \geq e^{- s t}} \leq e^{s t} \Exp{e^{- s y^{k}}} \\
    &= e^{s t} \Exp{\exp\left(- s \min_{j \in [n]} y^{k}_{j}\right)} = e^{s t} \Exp{\max_{j \in [n]} \exp\left(- s y^{k}_{j}\right)}.
  \end{align*}
  We have a maximum operation that complicates the analysis. In response to this problem, we use a well-known trick that bounds a maximum by a sum.
  \begin{equation}
    \begin{aligned}
      \Prob{y^{k} \leq t} 
      &\leq e^{s t} \sum_{j=1}^n \Exp{\exp\left(- s y^{k}_{j}\right)} \leq n e^{s t} \max_{j \in [n]} \Exp{\exp\left(- s y^{k}_{j}\right)}.
      \label{eq:kBuMLsZQWUiknxCFL}
    \end{aligned}
    \end{equation}
    We refer to \citep{van2014probability}[Part II] for the explanation why it can be (almost) tight. This is the main reason why we get an extra $\log n$ factor in \eqref{eq:svBUeOnJ}. Let us consider the last exponent separately and use the same trick again:
  \begin{align*}
    \Exp{\exp\left(- s y^{k}_{j}\right)} 
    &= \Exp{\exp\left(- s \min_{i \in [n]} \left\{y^{k - 1}_{i} + h_i \eta_i^k + \tau_{i \to j}\right\}\right)} \\
    &= \Exp{\max_{i \in [n]} \exp\left(- s \left\{y^{k - 1}_{i} + h_i \eta_i^k + \tau_{i \to j}\right\}\right)}.
  \end{align*}
  Next, we get
  \begin{align*}
    \Exp{\exp\left(- s y^{k}_{j}\right)} 
    &\leq \sum_{i=1}^{n} \Exp{\exp\left(- s \left\{y^{k - 1}_{i} + h_i \eta_i^k + \tau_{i \to j}\right\}\right)} \\
    &= \sum_{i=1}^{n} \Exp{e^{-s \left(h_i \eta_i^k + \tau_{i \to j}\right)}} \Exp{\exp\left(- s y^{k - 1}_{i}\right)}.
  \end{align*}
  In the last equality we use the independence. We now bound $\Exp{\exp\left(- s y^{k - 1}_{i}\right)}$ by $\max_{i \in [n]} \Exp{\exp\left(- s y^{k - 1}_{i}\right)}$ and get
  \begin{align}
    \Exp{\exp\left(- s y^{k}_{j}\right)} 
    &\leq \sum_{i=1}^{n} \Exp{e^{-s \left(h_i \eta_i^k + \tau_{i \to j}\right)}} \Exp{\exp\left(- s y^{k - 1}_{i}\right)} \nonumber \\
    &\leq \left(\sum_{i=1}^{n} \Exp{e^{-s \left(h_i \eta_i^k + \tau_{i \to j}\right)}} \right) \max_{i \in [n]} \Exp{\exp\left(- s y^{k - 1}_{i}\right)} \nonumber.
  \end{align}
  Let us fix any $s_j > 0$ for all $j \in [n]$ and take $s = \max_{j \in [n]} s_j.$ Then
  \begin{align}
    \label{eq:pKZNFgg}
    \Exp{\exp\left(- s y^{k}_{j}\right)} 
    &\leq \left(\sum_{i=1}^{n} \Exp{e^{-s_j \left(h_i \eta_i^k + \tau_{i \to j}\right)}} \right) \max_{i \in [n]} \Exp{\exp\left(- s y^{k - 1}_{i}\right)}.
  \end{align}
  Let us fix $\bar{t}_j > 0$ and consider
  \begin{align}
    \label{eq:MvhzTloltLKl}
    a_j \eqdef \sum_{i=1}^{n} \Exp{e^{-s_j \left(h_i \eta_i^k + \tau_{i \to j}\right)}}.
  \end{align}
  Then, we can use the following inequalities:
  \begin{align*}
    a_j 
    &\leq \sum_{i=1}^{n} \Exp{e^{-s_j \left(h_i \eta_i^k + \tau_{i \to j}\right)} \mathbbm{1}\left[h_i \eta_i^k 
    + \tau_{i \to j} \leq \bar{t}_j\right] + e^{-s_j \left(h_i \eta_i^k + \tau_{i \to j}\right)} \left(1 - \mathbbm{1}\left[h_i \eta_i^k + \tau_{i \to j} \leq \bar{t}_j\right]\right)} \\
    &\leq \sum_{i=1}^{n} \Exp{\mathbbm{1}\left[h_i \eta_i^k 
    + \tau_{i \to j} \leq \bar{t}_j\right] + e^{-s_j \bar{t}_j} \left(1 - \mathbbm{1}\left[h_i \eta_i^k + \tau_{i \to j} \leq \bar{t}_j\right]\right)} \\
    &= n e^{-s_j \bar{t}_j} + (1 - e^{-s_j \bar{t}_j})\sum_{i=1}^{n} \Exp{\mathbbm{1}\left[h_i \eta_i^k + \tau_{i \to j} \leq \bar{t}_j\right]} \\
    &\leq n e^{-s_j \bar{t}_j} + \sum_{i=1}^{n} \Exp{\mathbbm{1}\left[h_i \eta_i^k + \tau_{i \to j} \leq \bar{t}_j\right]} \\
    &\leq n e^{-s_j \bar{t}_j} + \sum_{i=1}^{n} \Prob{h_i \eta_i^k + \tau_{i \to j} \leq \bar{t}_j} \\
    &\leq n e^{-s_j \bar{t}_j} + \sum_{i=1}^{n} \Prob{h_i \eta_i^k \leq \bar{t}_j} \mathbbm{1}\left[\tau_{i \to j} \leq \bar{t}_j\right].
  \end{align*}
  Since $\eta_i^k \sim \textnormal{Geometric}(p),$ we get
  \begin{align*}
    \Prob{h_i \eta_i^k \leq \bar{t}_j} = 1 - (1 - p)^{\flr{\frac{\bar{t}_j}{h_i}}} \leq p \flr{\frac{\bar{t}_j}{h_i}}.
  \end{align*} Then
  \begin{align*}
    a_j 
    &\leq n e^{-s_j \bar{t}_j} + p \sum_{i=1}^{n} \flr{\frac{\bar{t}_j}{h_i}} \mathbbm{1}\left[\tau_{i \to j} \leq \bar{t}_j\right].
  \end{align*}
  For all $j \in [n],$ we take any permutation $\pi_{j,\cdot}$ that sorts $\max\{h_{i}, \tau_{i \to j}\},$ i.e.,  
  $$\max\{h_{\pi_{j,1}}, \tau_{\pi_{j,1} \to j}\} \leq \dots \leq \max\{h_{\pi_{j,n}}, \tau_{\pi_{j,n} \to j}\}.$$ We have
  \begin{align}
    a_j 
    &\leq n e^{-s_j \bar{t}_j} + p \sum_{i=1}^{n} \flr{\frac{\bar{t}_j}{h_{\pi_{j,i}}}} \mathbbm{1}\left[\tau_{\pi_{j,i} \to j} \leq \bar{t}_j\right].
    \label{eq:uEqudEdZQKsketj}
  \end{align}
  Recall that $\bar{t}_j > 0$ is a parameter. In the following technical lemma, we choose $\bar{t}_j$ and show that the second term in \eqref{eq:uEqudEdZQKsketj} is ``small.'' We prove it in Section~\ref{sec:lemma:min_max}.
  \begin{restatable}{lemma}{LEMMAMINMAX}
    \label{lemma:min_max}
    For any $n \geq 1,$ $h_i \geq 0,$$\tau_{i \to j} \geq 0$ for all $i,j \in [n],$ and $p \in (0, 1],$ we have
    \begin{align*}
      p \sum_{i=1}^{n} \flr{\frac{\bar{t}_j}{h_{\pi_{j,i}}}} \mathbbm{1}\left[\tau_{\pi_{j,i} \to j} \leq \bar{t}_j\right] \leq \frac{1}{8}
    \end{align*}
    for all $j \in [n],$ where
    \begin{align}
      \bar{t}_j \eqdef \frac{1}{8} \min_{k \in [n]} \max\left\{\max\{\tau_{\pi_{j,k} \to j}, h_{\pi_{j,k}}\}, \left(\sum_{i=1}^{k} \frac{p}{h_{\pi_{j,i}}}\right)^{-1}\right\}.
      \label{eq:CLJBvAtjqBNDbNYAVJ}
    \end{align}
    and 
    $\pi_{j,\cdot}$ is a permutation that sorts $\max\{h_{i}, \tau_{i \to j}\},$ i.e.,  
  $$\max\{h_{\pi_{j,1}}, \tau_{\pi_{j,1} \to j}\} \leq \dots \leq \max\{h_{\pi_{j,n}}, \tau_{\pi_{j,n} \to j}\}$$ for all $j \in [n].$
  \end{restatable}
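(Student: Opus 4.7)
The plan is to fix $j$ throughout, suppress it from the notation, and set
\[
\alpha_k \;:=\; \max\{h_{\pi_{j,k}}, \tau_{\pi_{j,k} \to j}\}, \qquad \beta_k \;:=\; \left(\sum_{i=1}^{k} \frac{p}{h_{\pi_{j,i}}}\right)^{-1},
\]
so that $(\alpha_k)$ is non-decreasing, $(\beta_k)$ is non-increasing, and $\bar{t}_j = \tfrac{1}{8} \min_{k \in [n]} \max\{\alpha_k, \beta_k\}$. The first observation I would use is that only indices $i$ with $\alpha_i \leq \bar{t}_j$ actually contribute to the target sum: if $h_{\pi_{j,i}} > \bar{t}_j$ then $\lfloor \bar{t}_j / h_{\pi_{j,i}}\rfloor = 0$, and if $\tau_{\pi_{j,i} \to j} > \bar{t}_j$ then the indicator kills the term. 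Because $\pi_{j,\cdot}$ sorts the $\alpha_i$ in ascending order, these contributing indices form a (possibly empty) prefix $\{1, \ldots, K\}$.

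On that prefix I would simply use $\lfloor \bar{t}_j / h_{\pi_{j,i}}\rfloor \leq \bar{t}_j / h_{\pi_{j,i}}$ to obtain
\[
p \sum_{i=1}^{n} \left\lfloor \frac{\bar{t}_j}{h_{\pi_{j,i}}} \right\rfloor \mathbbm{1}\!\left[\tau_{\pi_{j,i} \to j} \leq \bar{t}_j\right] \;\leq\; \bar{t}_j \sum_{i=1}^{K} \frac{p}{h_{\pi_{j,i}}} \;=\; \frac{\bar{t}_j}{\beta_K},
\]
so the whole lemma reduces to establishing $\beta_K \geq 8\bar{t}_j$ (the case $K = 0$ is vacuous since the sum is then zero).

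This is where the defining minimum pays off. By construction $\max\{\alpha_K, \beta_K\} \geq 8\bar{t}_j$, while by the very definition of $K$ we have $\alpha_K \leq \bar{t}_j < 8\bar{t}_j$. The maximum must therefore be realized by $\beta_K$, which gives exactly $\beta_K \geq 8\bar{t}_j$ and hence the claimed bound $1/8$. I want to emphasize that this argument is oblivious to whether $K$ is below, equal to, or above the minimizer $k^\ast$ in the definition of $\bar{t}_j$, so no case split on where $k^\ast$ sits is required. The only mild subtlety I anticipate is handling degenerate values $h_{\pi_{j,i}} \in \{0, \infty\}$, but with the natural conventions $1/0 = +\infty$ and $1/\infty = 0$, a zero $h_{\pi_{j,i}}$ in the prefix would force $\beta_K = 0$, contradicting $K$'s definition unless $\bar{t}_j = 0$, in which case the target sum is already trivially zero.
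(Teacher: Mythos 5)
Your proposal is correct, and it takes a genuinely different and leaner route than the paper. The paper's proof fixes $k^*$ as the \emph{largest minimizer} of the expression defining $\bar{t}_j$, first shows $\bar{t}_j < \max\{\tau_{\pi_{j,k^*+1} \to j}, h_{\pi_{j,k^*+1}}\}$ when $k^* < n$ so that the sum truncates at $k^*$, and then runs a three-way case analysis (which of the two terms attains the max at $k^*$; whether $k^* = 1$; and, in the remaining case, an auxiliary index $\ell < k^*$ together with a contradiction-with-minimality argument) to land on the $1/8$ bound. You instead truncate at the largest index $K$ with $\alpha_K = \max\{h_{\pi_{j,K}}, \tau_{\pi_{j,K}\to j}\} \leq \bar{t}_j$ --- which is exactly the set of indices the floor and the indicator allow to survive --- and then invoke the minimality defining $\bar{t}_j$ \emph{once}, at $k = K$: since $\alpha_K \leq \bar{t}_j < 8\bar{t}_j \leq \max\{\alpha_K, \beta_K\}$ (for $\bar{t}_j > 0$), necessarily $\beta_K \geq 8\bar{t}_j$, so the sum is at most $\bar{t}_j/\beta_K \leq 1/8$. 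This collapses the paper's Part~1 and the three cases of Part~2 into a single observation, and it is indeed oblivious to where $k^*$ sits; the only thing the paper's longer route buys is some structural information about the minimizer that the lemma itself does not need. Your handling of the degenerate values ($h_{\pi_{j,i}} = 0$, hence $\bar{t}_j = 0$) is at least as careful as the paper's own proof, which divides by $h_{\pi_{j,i}}$ without comment, so no gap there either.
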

  Using Lemma~\ref{lemma:min_max} and \eqref{eq:uEqudEdZQKsketj}, we get
  \begin{align*}
    a_j \leq n e^{-s_j \bar{t}_j} + \frac{1}{8}
  \end{align*}
  for all $j \in [n].$
  Let us take
  \begin{align}
    \label{eq:SgYjQmUgUJjkPWRdtliU}
    s_j = \frac{\log 8 n}{\bar{t}_j}
  \end{align}
  to get
  \begin{align*}
    a_j \leq \frac{1}{8} + \frac{1}{8} \leq e^{-1}.
  \end{align*}
  for all $j \in [n].$ Using \eqref{eq:MvhzTloltLKl} and \eqref{eq:pKZNFgg}, we obtain
  \begin{align}
    \Exp{\exp\left(- s y^{k}_{j}\right)} 
    &\leq e^{-1} \max_{i \in [n]} \Exp{\exp\left(- s y^{k - 1}_{i}\right)},
  \end{align}
  for all $j \in [n].$ We can conclude that 
  \begin{align*}
    \max_{i \in [n]}\Exp{\exp\left(- s y^{k}_{i}\right)} \leq e^{-1} \max_{i \in [n]} \Exp{\exp\left(- s y^{k - 1}_{i}\right)} \leq e^{-k},
  \end{align*}
  where use the same reasoning in the recursion and $y^0_j = 0$ for all $j \in [n].$ We substitute the inequality to \eqref{eq:kBuMLsZQWUiknxCFL}:
  \begin{align*}
    \Prob{y^{k} \leq t} \leq n e^{s t - k} = e^{s t - k + \log n}
  \end{align*}
  It is sufficient to take $k = T$ and any
  \begin{align*}
    t &\leq \frac{1}{s} \left(T - \log n + \log \frac{1}{2}\right) \\
    &= \frac{1}{8 \log 8 n} \min_{i \in [n]} \min_{k \in [n]} \max\left\{\max\{\tau_{\pi_{j,k} \to j}, h_{\pi_{j,k}}\}, \left(\sum_{i=1}^{k} \frac{p}{h_{\pi_{j,i}}}\right)^{-1}\right\} \left(T - \log n + \log \frac{1}{2}\right)
  \end{align*}
  to get 
  \begin{align*}
    \Prob{y^{T} \leq t} \leq \frac{1}{2},
  \end{align*}
  where we use the definitions $s = \max_{i \in [n]} s_i,$ \eqref{eq:CLJBvAtjqBNDbNYAVJ} and \eqref{eq:SgYjQmUgUJjkPWRdtliU}. 
  Recall the choice of $T$ and $p$ in \eqref{eq:qFafEhpjZkvv} and \eqref{eq:qFafEhpjZkvvxx}: $T = \left\lfloor c_T \cdot \frac{\Delta L}{\varepsilon}\right\rfloor$ and $p = \min\left\{c_p \cdot \frac{\varepsilon}{\sigma^2}, 1\right\}$ for some universal constants $c_T$ and $c_p.$ Since we have the condition $\varepsilon < c' L \Delta$ for some universal constant $c'$ in the conditions of Theorem~\ref{theorem:random_lower_bound}, we can conclude that we can take 
  \begin{align*}
    t &= c \times \frac{1}{\log n + 1} \frac{L \Delta}{\varepsilon} \min_{i \in [n]} \min_{k \in [n]} \max\left\{\max\{\tau_{\pi_{j,k} \to j}, h_{\pi_{j,k}}\}, \frac{\sigma^2}{\varepsilon}\left(\sum_{i=1}^{k} \frac{1}{h_{\pi_{j,i}}}\right)^{-1}\right\},
  \end{align*}
  where $c$ is a universal constant.
\end{proof}

\subsection{Proof of Lemma~\ref{lemma:min_max}}

\label{sec:lemma:min_max}

\LEMMAMINMAX*

Let us define $k^* \in [n]$ as the largest index that minimizes \eqref{eq:CLJBvAtjqBNDbNYAVJ}. \\
\textbf{(Part 1): bound $\bar{t}_j$ by $\max\{\tau_{\pi_{j,k^* + 1} \to j}, h_{\pi_{j,k^* + 1}}\}$}\\
Let us consider the case $k^* < n.$ We have two options:\\
 \\
1. Let $\max\{\tau_{\pi_{j,k^*} \to j}, h_{\pi_{j,k^*}}\} \geq \left(\sum_{i=1}^{k^*} \frac{p}{h_{\pi_{j,i}}}\right)^{-1},$ then 
\begin{align}
  \label{eq:gkbXVnaVEs}
  \max\{\tau_{\pi_{j,k^* + 1} \to j}, h_{\pi_{j,k^* + 1}}\} \geq \left(\sum_{i=1}^{k^* + 1} \frac{p}{h_{\pi_{j,i}}}\right)^{-1},
\end{align}
since $\max\{\tau_{\pi_{j,i} \to j}, h_{\pi_{j,i}}\}$ are sorted.
Then, we get
\begin{align*}
  \bar{t}_j &< \frac{1}{8}\max\left\{\max\{\tau_{\pi_{j,k^* + 1} \to j}, h_{\pi_{j,k^* + 1}}\}, \left(\sum_{i=1}^{k^* + 1} \frac{p}{h_{\pi_{j,i}}}\right)^{-1}\right\} \\
  &\overset{\eqref{eq:gkbXVnaVEs}}{=} \frac{1}{8}\max\{\tau_{\pi_{j,k^* + 1} \to j}, h_{\pi_{j,k^* + 1}}\} \leq \max\{\tau_{\pi_{j,k^* + 1} \to j}, h_{\pi_{j,k^* + 1}}\}
\end{align*}
The first inequality follows from the fact that $k^*$ is the largest minimizer.\\
2. Let  $\max\{\tau_{\pi_{j,k^*} \to j}, h_{\pi_{j,k^*}}\} < \left(\sum_{i=1}^{k^*} \frac{p}{h_{\pi_{j,i}}}\right)^{-1},$ then it is not possible that $\max\{\tau_{\pi_{j,k^* + 1} \to j}, h_{\pi_{j,k^* + 1}}\} < \left(\sum_{i=1}^{k^* + 1} \frac{p}{h_{\pi_{j,i}}}\right)^{-1}$
because it would yield the inequality
\begin{align*}
  \bar{t}_j = \frac{1}{8} \left(\sum_{i=1}^{k^*} \frac{p}{h_{\pi_{j,i}}}\right)^{-1} \geq \frac{1}{8} \left(\sum_{i=1}^{k^* + 1} \frac{p}{h_{\pi_{j,i}}}\right)^{-1} = \frac{1}{8} \max\left\{\max\{\tau_{\pi_{j,k^* + 1} \to j}, h_{\pi_{j,k^* + 1}}\}, \left(\sum_{i=1}^{k^* + 1} \frac{p}{h_{\pi_{j,i}}}\right)^{-1}\right\}.
\end{align*}
The last inequality contradicts the fact that $k^*$ is the largest minimizer. Thus, if $k^* < n$ and $\max\{\tau_{\pi_{j,k^*} \to j}, h_{\pi_{j,k^*}}\} < \left(\sum_{i=1}^{k^*} \frac{p}{h_{\pi_{j,i}}}\right)^{-1},$ then
\begin{align*}
  \bar{t}_j < \frac{1}{8} \max\left\{\max\{\tau_{\pi_{j,k^* + 1} \to j}, h_{\pi_{j,k^* + 1}}\}, \left(\sum_{i=1}^{k^* + 1} \frac{p}{h_{\pi_{j,i}}}\right)^{-1}\right\} = \frac{1}{8} \max\{\tau_{\pi_{j,k^* + 1} \to j}, h_{\pi_{j,k^* + 1}}\}.
\end{align*}
In total, we have
\begin{align*}
  \bar{t}_j < \max\{\tau_{\pi_{j,k^* + 1} \to j}, h_{\pi_{j,k^* + 1}}\}
\end{align*}
if $k^* < n.$
Using this inequality, we get
\begin{align}
  \label{eq:OQSwWnheYF}
  p \sum_{i=1}^{n} \flr{\frac{\bar{t}_j}{h_{\pi_{j,i}}}} \mathbbm{1}\left[\tau_{\pi_{j,i} \to j} \leq \bar{t}_j\right] = p \sum_{i=1}^{k^*} \flr{\frac{\bar{t}_j}{h_{\pi_{j,i}}}} \mathbbm{1}\left[\tau_{\pi_{j,i} \to j} \leq \bar{t}_j\right]
\end{align}
for any $k^*.$ \\
\textbf{(Part 2)} \\
We have three options: \\
1. If $\left(\sum_{i=1}^{k^*} \frac{p}{h_{\pi_{j,i}}}\right)^{-1} \geq \max\{\tau_{\pi_{j,k^*} \to j}, h_{\pi_{j,k^*}}\},$
then, using \eqref{eq:OQSwWnheYF} and $\flr{x} \leq x$ for all $x \geq 0$, we get
\begin{align*}
  p \sum_{i=1}^{n} \flr{\frac{\bar{t}_j}{h_{\pi_{j,i}}}} \mathbbm{1}\left[\tau_{\pi_{j,i} \to j} \leq \bar{t}_j\right] 
  &\leq p \sum_{i=1}^{k^*} \frac{\bar{t}_j}{h_{\pi_{j,i}}} = \frac{1}{8} \left(\sum_{i=1}^{k^*} \frac{p}{h_{\pi_{j,i}}}\right)^{-1} p \sum_{i=1}^{k^*} \frac{1}{h_{\pi_{j,i}}} = \frac{1}{8}.
\end{align*}
2. If $\left(\sum_{i=1}^{k^*} \frac{p}{h_{\pi_{j,i}}}\right)^{-1} < \max\{\tau_{\pi_{j,k^*} \to j}, h_{\pi_{j,k^*}}\}$ and $k^* = 1,$ then
\begin{align*}
  p \sum_{i=1}^{n} \flr{\frac{\bar{t}_j}{h_{\pi_{j,i}}}} \mathbbm{1}\left[\tau_{\pi_{j,i} \to j} \leq \bar{t}_j\right] = p \flr{\frac{\bar{t}_j}{h_{\pi_{j,k^*}}}} \mathbbm{1}\left[\tau_{\pi_{j,k^*} \to j} \leq \bar{t}_j\right] = 0
\end{align*}
because 
\begin{align*}
  \bar{t}_j = \frac{1}{8} \max\left\{\max\{\tau_{\pi_{j,k^*} \to j}, h_{\pi_{j,k^*}}\}, \left(\sum_{i=1}^{k^*} \frac{p}{h_{\pi_{j,i}}}\right)^{-1}\right\} = \frac{1}{8} \max\{\tau_{\pi_{j,k^*} \to j}, h_{\pi_{j,k^*}}\} < \max\{\tau_{\pi_{j,k^*} \to j}, h_{\pi_{j,k^*}}\}.
\end{align*}
3. If $\left(\sum_{i=1}^{k^*} \frac{p}{h_{\pi_{j,i}}}\right)^{-1} < \max\{\tau_{\pi_{j,k^*} \to j}, h_{\pi_{j,k^*}}\}$ and $k^* > 1,$ then $\bar{t}_j = \frac{1}{8} \max\{\tau_{\pi_{j,k^*} \to j}, h_{\pi_{j,k^*}}\}$ and
\begin{align*}
  \flr{\frac{\bar{t}_j}{h_{\pi_{j,i}}}} \mathbbm{1}\left[\tau_{\pi_{j,i} \to j} \leq \bar{t}_j\right] = 0
\end{align*}
for all $i \leq k^*$ such that $\max\{\tau_{\pi_{j,i} \to j}, h_{\pi_{j,i}}\} = \max\{\tau_{\pi_{j,k^*} \to j}, h_{\pi_{j,k^*}}\}.$ If this equality holds for all $i \leq k^*,$ then 
\begin{align*}
  p \sum_{i=1}^{n} \flr{\frac{\bar{t}_j}{h_{\pi_{j,i}}}} \mathbbm{1}\left[\tau_{\pi_{j,i} \to j} \leq \bar{t}_j\right] = 0.
\end{align*}
Otherwise, there exists $\ell < k^*$ such that $\max\{\tau_{\pi_{j,\ell} \to j}, h_{\pi_{j,\ell}}\} < \max\{\tau_{\pi_{j,k^*} \to j}, h_{\pi_{j,k^*}}\}$
and
\begin{align*}
  p \sum_{i=1}^{n} \flr{\frac{\bar{t}_j}{h_{\pi_{j,i}}}} \mathbbm{1}\left[\tau_{\pi_{j,i} \to j} \leq \bar{t}_j\right] = p \sum_{i=1}^{\ell} \flr{\frac{\bar{t}_j}{h_{\pi_{j,i}}}} \mathbbm{1}\left[\tau_{\pi_{j,i} \to j} \leq \bar{t}_j\right].
\end{align*}
It is not possible that $\max\{\tau_{\pi_{j,\ell} \to j}, h_{\pi_{j,\ell}}\} \geq \left(\sum_{i=1}^{\ell} \frac{p}{h_{\pi_{j,i}}}\right)^{-1}$
because it would yield the inequality
\begin{align*}
  \bar{t}_j = \frac{1}{8} \max\{\tau_{\pi_{j,k^*} \to j}, h_{\pi_{j,k^*}}\} 
  &> \frac{1}{8} \max\{\tau_{\pi_{j,\ell} \to j}, h_{\pi_{j,\ell}}\} \\
  &= \frac{1}{8} \max\left\{\max\{\tau_{\pi_{j,\ell} \to j}, h_{\pi_{j,\ell}}\}, \left(\sum_{i=1}^{\ell} \frac{p}{h_{\pi_{j,i}}}\right)^{-1}\right\}
\end{align*}
that contradicts the fact that $\bar{t}_j$ is the minimum (see \eqref{eq:CLJBvAtjqBNDbNYAVJ}). Thus, we have
\begin{align*}
  \bar{t}_j \leq \frac{1}{8} \max\left\{\max\{\tau_{\pi_{j,\ell} \to j}, h_{\pi_{j,\ell}}\}, \left(\sum_{i=1}^{\ell} \frac{p}{h_{\pi_{j,i}}}\right)^{-1}\right\} = \frac{1}{8} \left(\sum_{i=1}^{\ell} \frac{p}{h_{\pi_{j,i}}}\right)^{-1}
\end{align*}
and 
\begin{align*}
  p \sum_{i=1}^{n} \flr{\frac{\bar{t}_j}{h_{\pi_{j,i}}}} \mathbbm{1}\left[\tau_{\pi_{j,i} \to j} \leq \bar{t}_j\right] &= p \sum_{i=1}^{\ell} \flr{\frac{\bar{t}_j}{h_{\pi_{j,i}}}} \mathbbm{1}\left[\tau_{\pi_{j,i} \to j} \leq \bar{t}_j\right] \leq p \sum_{i=1}^{\ell} \frac{\bar{t}_j}{h_{\pi_{j,i}}} \\
  &\leq \frac{1}{8} \left(\sum_{i=1}^{\ell} \frac{p}{h_{\pi_{j,i}}}\right)^{-1} p \sum_{i=1}^{\ell} \frac{1}{h_{\pi_{j,i}}} \leq \frac{1}{8}.
\end{align*}
In total, we have
\begin{align*}
  p \sum_{i=1}^{n} \flr{\frac{\bar{t}_j}{h_{\pi_{j,i}}}} \mathbbm{1}\left[\tau_{\pi_{j,i} \to j} \leq \bar{t}_j\right] \leq \frac{1}{8}
\end{align*}
for $\bar{t}_j$ from \eqref{eq:CLJBvAtjqBNDbNYAVJ}.

\section{Lower Bound in the Heterogeneous Setup}
\label{sec:lower_bound_heter}

In this case, we consider the following oracle mappings. For all $i \in [n],$ we define

\begin{align*}
  O_{i}\,:\, &\underbrace{\R_{\geq 0}}_{\textnormal{time}} \times \underbrace{\R^d}_{\textnormal{point}} \times \underbrace{(\R_{\geq 0} \times \R^d \times \{0, 1\})}_{\textnormal{input state}} \rightarrow \underbrace{(\R_{\geq 0} \times \R^d \times \{0, 1\})}_{\textnormal{output state}} \times \R^d
\end{align*}
\begin{align}
  \label{eq:oracle_inside_random_heter}
  \begin{split}
      &\textnormal{such that } \qquad O_{i}(t, x, (s_t, s_x, s_q)) = \left\{
  \begin{aligned}
      &((t, x, 1), &0), \quad & s_q = 0, \\
      &((s_t, s_x, 1), &0), \quad & s_q = 1, t < s_t + h_i,\\
      &((0, 0, 0), & \nabla \myred{f_i}(s_x; \xi)), \quad & s_q = 1, t \geq s_t + h_i,\\
  \end{aligned}
  \right.
  \end{split}
\end{align}

where $\xi \sim \mathcal{D}.$ Unlike \eqref{eq:oracle_inside_random}, the mapping \eqref{eq:oracle_inside_random_heter} returns $\nabla \myred{f_i}(s_x; \xi).$

\begin{restatable}{theorem}{RANDOMLOWERBOUNDHETROG}
  \label{theorem:random_lower_bound_heter}
  Consider Protocol~\ref{alg:time_multiple_oracle_protocol} with the mappings \eqref{eq:oracle_inside_random_heter}. We take any $h_i \geq 0$ and $\tau_{i \to j} \geq 0$ for all $i,j \in [n]$ such that $\tau_{i \to j} \leq \tau_{i \to k} + \tau_{k \to j}$ for all $i,k,j \in [n].$ We fix $L, \Delta, \varepsilon, \sigma^2 > 0$ that satisfy the inequality $\varepsilon < c_1 L \Delta$. For any algorithm $A \in \cA_{\textnormal{zr}},$ there exists a function $f = \frac{1}{n} \sum_{i=1}^{n} f_i,$ which satisfy Assumptions~\ref{ass:lipschitz_constant}, \ref{ass:lower_bound} and $f(0) - f^* \leq \Delta$, and stochastic gradient mappings $\nabla f_i(\cdot;\cdot),$ which satisfy Assumption~\ref{ass:stochastic_variance_bounded} (${\rm \mathbb{E}}_{\xi}[\nabla f_i(x;\xi)] = \nabla f_i(x)$ and 
  ${\rm \mathbb{E}}_{\xi}[\|\nabla f_i(x;\xi) - \nabla f_i(x)\|^2] \leq \sigma^2$),
  such that $\Exp{\inf\limits_{k \in S_t, i \in [n]} \norm{\nabla f(x^k_i)}^2} > \varepsilon,$ 
  where $S_t \eqdef \left\{k \in \N_0 \,|\,t^k \leq t\right\},$ 
$$t = c_2 \times \frac{L \Delta}{\varepsilon} \max\left\{\max_{i,j \in [n]} \tau_{i \to j}, \max_{i \in [n]} h_i, \frac{\sigma^2}{n \varepsilon}\left(\frac{1}{n} \sum_{i=1}^n h_i\right)\right\},$$
The quantities $c_1$ and $c_2$ are universal constants. The sequences $x^k$ and $t^k$ are defined in Protocol~\ref{alg:time_multiple_oracle_protocol}.
\end{restatable}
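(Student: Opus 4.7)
The plan is to mirror the three-part structure of the homogeneous lower bound in Theorem~\ref{theorem:random_lower_bound}, but with a multi-worker hard instance adapted to the heterogeneous setting. In Part~1, I would construct $f = \frac{1}{n}\sum_{i=1}^n f_i$ where each $f_i$ is built from a suitably rescaled Carmon-type chain function $F_T$ of Section~\ref{sec:worst_case}, designed so that progress in the chain requires information from \emph{every} worker (for instance, by shifting each $f_i$ so that $\nabla f$ has a well-defined "progress" coordinate only after all workers have advanced to the same index, mirroring the construction underlying \algname{Malenia SGD} in \citep{tyurin2023optimal}). The stochastic oracle for $f_i$ is the standard Bernoulli-masked gradient with success probability $p = \Theta(\varepsilon/\sigma^2)$, so that each $f_i$ satisfies Assumption~\ref{ass:stochastic_variance_bounded} and the function $f$ satisfies Assumptions~\ref{ass:lipschitz_constant}, \ref{ass:lower_bound} with $f(0)-f^*\leq\Delta$. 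As in the homogeneous proof, I would reduce $\varepsilon$-stationarity to the event that the per-worker progress $\textnormal{prog}(x^k_i)$ reaches $T=\Theta(L\Delta/\varepsilon)$ by time $t$.

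In Part~2, I would separately prove three lower bounds and take their maximum. The max of bounds (a) and (b) is essentially deterministic (set $\sigma=0$): bound~(a) $\Omega(\tfrac{L\Delta}{\varepsilon}\max_{i,j}\tau_{i\to j})$ follows because some pair of workers is separated by the graph diameter and progress in the chain requires information to cross that distance at least $\Omega(T)$ times; bound~(b) $\Omega(\tfrac{L\Delta}{\varepsilon}\max_i h_i)$ follows because the construction forces the bottleneck worker to produce a new gradient before each unit of chain progress can be completed. These are the same arguments used in \citep{lu2021optimal,scaman2017optimal,tyurin2023optimal} and reduce to elementary chase arguments once the worst-case function is in place.

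The main obstacle, and the most technical step, is bound~(c): $\Omega\!\left(\tfrac{L\Delta}{\varepsilon}\cdot\tfrac{\sigma^2}{n\varepsilon}\cdot\tfrac{1}{n}\sum_{i=1}^n h_i\right)$. Here I would adapt the "level game" of the proof of Theorem~\ref{theorem:random_lower_bound}, but with the rule that advancing one level requires a \emph{simultaneous} Bernoulli success from each of the $n$ workers (one per each $f_i$), which is the game-theoretic content of the Malenia-type construction. At each worker the time between successes is $h_i/p$ in expectation with $p=\Theta(\varepsilon/\sigma^2)$. For $T=\Theta(L\Delta/\varepsilon)$ levels, the total number of Bernoulli trials worker $i$ must perform is $\Omega(T/p)$, taking $\Omega(h_i T/p)$ seconds; since all workers are needed and run in parallel, the argument must instead bound the sum of individual work. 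Concretely, I would show that any algorithm that completes $T$ levels by time $t$ must, in expectation, have executed at least $\Omega(nT/p)$ total Bernoulli trials across all workers with the scheduling constraint that each worker $i$ contributes at rate $\leq 1/h_i$; by a Chernoff-union argument on independent geometric sums (analogous to Lemma~\ref{theorem:bound_lbst}, but with the $\min_{j}$ replaced by a $\prod_i$-style joint-success event) this forces $t\cdot \sum_i 1/h_i = \Omega(nT/p)$, i.e., $t = \Omega\!\left(\tfrac{nT}{p}\cdot\tfrac{1}{\sum_i 1/h_i}\right)$. Noting $(\sum_i 1/h_i)^{-1}\geq \tfrac{1}{n^2}\sum_i h_i$ by Cauchy-Schwarz yields exactly $\Omega(\tfrac{\sigma^2 L\Delta}{n^2\varepsilon^2}\sum_i h_i)$.

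Finally, Part~3 combines the three bounds: since the hard instance can be chosen adversarially for each regime, a single algorithm must satisfy all three simultaneously, giving the max. The $\log n$ factor that appeared in the homogeneous lower bound does not arise here because in the heterogeneous construction the slowest worker (and the joint-success requirement) removes the need for the $\min_{j\in[n]}$ concentration trick in \eqref{eq:kBuMLsZQWUiknxCFL}; the bound is tight up to a universal constant, matching the upper bound established in Corollary~\ref{cor:time_complexity_heter}.
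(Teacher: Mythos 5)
Your overall plan (prove three separate lower bounds and take the max) is structurally sound and partly matches the paper: for the diameter term the paper also builds a dedicated instance, splitting the odd- and even-indexed links of the chain $F_T$ between the two workers $\bar i,\bar j$ realizing $\max_{i,j}\tau_{i\to j}$ (all other $f_i\equiv 0$), with deterministic, $0$-variance gradients, so that every unit of chain progress forces one traversal of the diameter; your bound (a) is this argument in sketch form, and your bound (b) together with the variance term is not reproved in the paper at all --- it is obtained by invoking Theorem A.2 of \citet{tyurin2023optimal}, which covers exactly the case $\tau_{i\to j}=0$ and yields the terms $\max_i h_i$ and $\frac{\sigma^2}{n\varepsilon}\bigl(\frac1n\sum_i h_i\bigr)$.

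The genuine gap is in your step (c). First, the inequality you invoke is backwards: by Cauchy--Schwarz (equivalently AM--HM), $\bigl(\sum_{i=1}^n 1/h_i\bigr)^{-1}\le \frac{1}{n^2}\sum_{i=1}^n h_i$, not $\ge$. Hence your total-work bookkeeping (``at least $nT/p$ Bernoulli trials in total, with worker $i$ contributing at rate at most $1/h_i$'') only yields $t=\Omega\bigl(\tfrac{nT}{p}\bigl(\sum_i 1/h_i\bigr)^{-1}\bigr)$, a harmonic-mean bound that can be arbitrarily smaller than the arithmetic-mean term you need (a single very fast worker makes $\sum_i 1/h_i$ blow up); counting total trials implicitly treats samples as interchangeable across workers, which is exactly what heterogeneity forbids. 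Second, the proposed fix --- a level game in which each level requires a simultaneous success from every worker --- is not enforceable by a zero-respecting hard instance (once any one worker's stochastic gradient uncovers the next coordinate, that coordinate can simply be communicated, so ``all $n$ must succeed'' cannot be encoded through support arguments), and if it could be enforced it would give a per-level cost of at least $\max_i h_i/p$, i.e. of order $\tfrac{\sigma^2}{\varepsilon}\max_i h_i$ for your choice $p=\Theta(\varepsilon/\sigma^2)$, which contradicts the matching upper bound of Corollary~\ref{cor:time_complexity_heter} in the regime with one slow and many fast workers. The correct mechanism (the one behind the cited Theorem A.2) is different in kind: any averaged estimator built from $s_i$ samples of $\nabla f_i$ has variance scaling as $\frac{\sigma^2}{n^2}\sum_i 1/s_i$, so progress forces $\sum_i 1/s_i\lesssim n^2\varepsilon/\sigma^2$; combined with the scheduling constraint $s_i\le t/h_i$ this gives $t\gtrsim\frac{\sigma^2}{n^2\varepsilon}\sum_i h_i$ per level --- a constraint on the sum of reciprocals of per-worker counts, not on the total count. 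You should either reproduce that argument (handling carefully the rescaling of each $f_i$ by a factor of $n$, which changes the admissible masking probability and is where the $1/n$ factors come from) or simply cite it, as the paper does.
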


\begin{proof}
  The last two terms in the max follow from Theorem A.2 by \citet{tyurin2023optimal}, who considered the same setup but with $\tau_{i \to j} = 0$ for all $i,j \in [n].$ It is left to prove the first term.
  Let us fix $\lambda > 0.$ Let us take any pair $(\bar{i},\bar{j})$ of workers such that $\max_{i,j \in [n]} \tau_{i \to j} = \tau_{\bar{i} \to \bar{j}}.$ Next, we split the blocks of the function $F_T(x)$ from \eqref{eq:worst_case} and define two new functions:
  \begin{align}
    \label{eq:CXOAktJs}
    F_{T,1}(x) \eqdef -\Psi(1) \Phi(x_1) + \sum_{i \in \{2, \dots, T\}, i\,|\,2 = 1} \left[\Psi(-x_{i-1})\Phi(-x_i) - \Psi(x_{i-1})\Phi(x_i)\right],
  \end{align}
  and 
  \begin{align*}
    F_{T,2}(x) \eqdef \sum_{i \in \{2, \dots, T\}, i\,|\,2 = 0} \left[\Psi(-x_{i-1})\Phi(-x_i) - \Psi(x_{i-1})\Phi(x_i)\right].
  \end{align*}
  We consider the following functions $f_i:$
    \begin{align*}
        f_i(x) \eqdef 
        \begin{cases} 
          \frac{n L \lambda^2}{l_1} F_{T,1}\left(\frac{x}{\lambda}\right), & i = \bar{i}, \\
          \frac{n L \lambda^2}{l_1} F_{T,2}\left(\frac{x}{\lambda}\right), & i = \bar{j}, \\
          0 , & i \neq \bar{i} \textnormal{ and } i \neq \bar{j}.
        \end{cases}
    \end{align*}
    Then, we get
    \begin{align*}
      f(x) = \frac{1}{n} \sum_{i=1}^n f_i(x) = \frac{1}{n} \left(\frac{n L \lambda^2}{l_1} F_{T,1} \left(\frac{x}{\lambda}\right) + \frac{n L \lambda^2}{l_1} F_{T,2}\left(\frac{x}{\lambda}\right) \right) = \frac{L \lambda^2}{l_1} F_{T}\left(\frac{x}{\lambda}\right).
    \end{align*}
    Let us show that the function $f$ is $L$-smooth:
    \begin{align*}
        \norm{\nabla f(x) - \nabla f(y)} &= \frac{L \lambda}{l_1} \norm{\nabla F_{T}\left(\frac{x}{\lambda}\right) - \nabla F_{T}\left(\frac{y}{\lambda}\right)} \leq L \norm{x - y}.
    \end{align*}
    Let us take $$T = \left\lfloor\frac{\Delta l_1}{L \lambda^2 \Delta^0}\right\rfloor,$$ then
    \begin{align*}
        f(0) - \inf_{x \in \R^T} f(x) = \frac{L \lambda^2}{l_1} (F_{T}\left(0\right) - \inf_{x \in \R^{T}} F_{T}(x)) \leq \frac{L \lambda^2 \Delta^0 T}{l_1} \leq \Delta.
    \end{align*}
    We showed that the function $f$ satisfy Assumptions~\ref{ass:lipschitz_constant}, \ref{ass:lower_bound} and $f(0) - f^* \leq \Delta.$\\

    In the oracles $O_i,$ we simply take the non-stochastic mappings $\nabla f_i(x; \xi) \eqdef \nabla f_i(x)$ that are unbiased and $0$-variance-bounded.

    We take 
    $$\lambda = \frac{l_1 \sqrt{\varepsilon}}{L}$$
    to ensure that 
    \begin{align*}
        \norm{\nabla f(x)}^2 = \frac{L^2 \lambda^2}{l_1^2} \norm{\nabla F_{T}\left(\frac{x}{\lambda}\right)}^2 > \frac{L^2 \lambda^2}{l_1^2} = \varepsilon
    \end{align*}
    for all $x \in \R^T$ such that $\textnormal{prog}(x) < T.$ In the last inequality, we use Lemma~\ref{lemma:worst_function}.
    Thus
    $$T = \left\lfloor\frac{\Delta L}{l_1 \varepsilon \Delta^0}\right\rfloor.$$
    Only workers $\bar{i}$ and $\bar{j}$ contain the information about the function $f.$ The function $f$ is a zero-chain function, and we split it between workers $\bar{i}$ and $\bar{j}.$ Due to this splitting, workers $\bar{i}$ and $\bar{j}$ have to communicate to find the next non-zero coordinate. Only worker $\bar{i}$ can get a non-zero value in the first coordinate through the gradient of $F_{T,1}$. Next, this worker can not get a non-zero value in the second coordinate due to the construction of \eqref{eq:CXOAktJs}. Thus, it has to pass a vector with a non-zero value in the first coordinate to worker $\bar{j}$ because only this worker can get a non-zero value in the second coordinate. This communication takes at least $\tau_{\bar{i} \to \bar{j}}$ seconds. Using the same reasoning, worker $\bar{j}$ has to send a vector to worker $\bar{i}$ once worker $\bar{j}$ has discovered a non-zero value in the second coordinate.
    An algorithm has to repeat such communications at least $\frac{T - 1}{2}$ times to find a vector $x \in \R^T$ such that $\textnormal{prog}(x) = T.$ 
      
    Thus, we get
    \begin{align*}
      \inf_{k \in S_t, i \in [n]} \norm{\nabla f(x^k_i)}^2 > \varepsilon
    \end{align*}
    for $$t = \tau_{\bar{i} \to \bar{j}} \left(\frac{T - 1}{2}\right) = \frac{\max_{i,j \in [n]}\tau_{i \to j}}{2} \left(\left\lfloor\frac{\Delta L}{l_1 \varepsilon \Delta^0}\right\rfloor - 1\right).$$

\end{proof}

\section{Proof of the Time Complexity for Homogeneous Case}
\label{sec:proof_homog}
\THEOREMMAINSTATIC*

\begin{proof}
  Algorithm~\ref{alg:alg_server} produces the sequence $x^{k}$ such as $x^{k+1} = x^k - \frac{\gamma}{s^k} g^k = x^k - \gamma \bar{g}^k,$ where $\bar{g}^k \eqdef \frac{1}{s^k} g^k.$ By the design of the algorithm, 
  \begin{align*}
    \bar{g}^k = \frac{1}{s^k}\sum_{i=1}^{s^k} \nabla f(x^k;\xi_i),
  \end{align*}
  where the $\xi_i$ are independent random samples and $s^k \geq S.$ We do not dismiss the possibility that the computation and communication times are random, so $s^k$ can be random. Assume $\mathcal{V}_k$ is a $\sigma$-algebra generated by all computation and communication times and $g^0, \dots, g^{k-1},$ then $s^k$ is $\mathcal{V}_k$--measurable.
  Using the independence and Assumption~\ref{ass:stochastic_variance_bounded}, we have 
  \begin{align*}
    \ExpCond{\bar{g}^k}{\mathcal{G}_k} = \ExpCond{\ExpCond{\frac{1}{s^k}\sum_{i=1}^{s^k} \nabla f(x^k;\xi_i)}{\mathcal{V}_k}}{\mathcal{G}_k} 
    = \ExpCond{\frac{1}{s^k}\sum_{i=1}^{s^k} \ExpCond{\nabla f(x^k;\xi_i)}{\mathcal{V}_k}}{\mathcal{G}_k} = \nabla f(x^k)
  \end{align*}
   and 
  \begin{align*}
    &\ExpCond{\norm{\bar{g}^k - \nabla f(x^k)}^2}{\mathcal{G}_k} \\
    &=\ExpCond{\ExpCond{\norm{\frac{1}{s^k}\sum_{i=1}^{s^k} \nabla f(x^k;\xi_i) - \nabla f(x^k)}^2}{\mathcal{V}_k}}{\mathcal{G}_k} \\
    &=\ExpCond{\frac{1}{(s^k)^2}\sum_{i=1}^{s^k} \ExpCond{\norm{\nabla f(x^k;\xi_i) - \nabla f(x^k)}^2}{\mathcal{V}_k}}{\mathcal{G}_k} \\
    &\leq \ExpCond{\frac{\sigma^2}{s^k}}{\mathcal{G}_k} \leq \frac{\sigma^2}{S}.
  \end{align*}
  where $\mathcal{G}_k$ is a $\sigma$-algebra generated by $\bar{g}^0, \dots, \bar{g}^{k-1}.$
  We can use a well-known SGD result \citep{ghadimi2013stochastic,khaled2020better}. Using Theorem~\ref{theorem:sgd}, for the stepsize 
  $$\gamma = \frac{1}{2L}\min\left\{1, \frac{\varepsilon S}{\sigma^2}\right\},$$ we have
  
  \begin{align*}
      \frac{1}{K}\sum_{k=0}^{K-1}\Exp{\norm{\nabla f(x^k)}^2} \leq \varepsilon,
  \end{align*}
  if $$K \geq \frac{8 \Delta L}{\varepsilon} + \frac{8 \Delta L \sigma^2}{\varepsilon^2 S}.$$ 
  Using the choice of $S,$ we get that Algorithm~\ref{alg:alg_server} converges after $$K \geq \frac{16 \Delta L}{\varepsilon}$$ steps with 
  $$\gamma = \frac{1}{2L}\min\left\{1, \frac{\varepsilon S}{\sigma^2}\right\} = \frac{1}{2 L}.$$
\end{proof}

\COROLLLARYMAIN*

\begin{proof}
  Due to Theorem~\ref{thm:sgd_homog}, we know that Algorithm~\ref{alg:receiver} finds an $\varepsilon$--stationary point after at most $K = \Theta\left(\frac{L \Delta}{\varepsilon}\right)$ iterations. It is left to bound the time of one iteration to prove the theorem. 
  
  For all $j \in [n],$ we define $\pi_{j,\cdot}$ as a permutation
  that sorts $\{\max\{\mu_{i \to j} + \mu_{j \to i}, h_{i}\}\}_{i=1}^{n}$ 
  as 
  \begin{align*}
    \max\{\mu_{\pi_{j,1} \to j} + \mu_{j \to \pi_{j,1}}, h_{\pi_{j,1}}\} \leq \dots \leq \max\{\mu_{\pi_{j,n} \to j} + \mu_{j \to \pi_{j,n}}, h_{\pi_{j,n}}\}.
  \end{align*}

  Let us define the index
  \begin{align*}
    k^* = \arg\min_{k \in [n]} \max\left\{\max\{\mu_{\pi_{j^*,k} \to j^*} + \mu_{j^* \to \pi_{j^*,k}}, h_{\pi_{j^*,k}}\}, S \left(\sum_{i=1}^{k} \frac{1}{h_{\pi_{j^*,i}}}\right)^{-1}\right\}.
  \end{align*}
  and the set
  \begin{align*}
    A^* \eqdef \left\{\pi_{j^*,i} \in [n]\,|\,i \leq k^*\right\}
  \end{align*}
  that represents a set of the ``fastest'' workers that can potentially contribute to an optimization process.
  We take 
  \begin{equation}
  \begin{aligned}
    \label{eq:YxzEIorJvI}
    \bar{t} 
    &\eqdef 2 \max\left\{\max\{\mu_{\pi_{j^*,k^*} \to j^*} + \mu_{j^* \to \pi_{j^*,k^*}}, h_{\pi_{j^*,k^*}}\}, S \left(\sum_{i=1}^{k^*} \frac{1}{h_{\pi_{j^*,i}}}\right)^{-1}\right\} \\
    &= 2 \min_{k \in [n]}\max\left\{\max\{\mu_{\pi_{j^*,k} \to j^*} + \mu_{j^* \to \pi_{j^*,k}}, h_{\pi_{j^*,k}}\}, S \left(\sum_{i=1}^{k} \frac{1}{h_{\pi_{j^*,i}}}\right)^{-1}\right\}.
  \end{aligned}
  \end{equation}

  In the following steps of the proof we show that every iteration takes at most 
  \begin{align*}
    \underbrace{\bar{t}}_{\textnormal{(Step 1): Calculate enough stochastic gradients}} 
    + \underbrace{\bar{t}}_{\textnormal{(Step 2): Send stochastic gradients to Process $j^*$}} + \underbrace{\bar{t}}_{\textnormal{(Step 3): Broadcast a new point}} = 3 \bar{t}
  \end{align*}
  seconds.

  \textbf{(Step 1):} Since it takes at most $h_i$ seconds to calculate a stochastic gradient in worker $i,$ all workers from the set $A^*$ will calculate at least 
  \begin{align}
    \label{eq:tqrdowvGGC}
    \sum_{i \in A^*} \flr{\frac{\bar{t}}{h_i}} = \sum_{i=1}^{k^*} \flr{\frac{\bar{t}}{h_{\pi_{j^*,i}}}}
  \end{align}
  stochastic gradients after $\bar{t}$ seconds at the point $x^k.$
  We have
  \begin{align}
    \label{eq:mlYZUmBGqgVIOyEw}
    \bar{t} \geq 2 \max\{\mu_{\pi_{j^*,k^*} \to j^*} + \mu_{j^* \to \pi_{j^*,k^*}}, h_{\pi_{j^*,k^*}}\} \geq 2 \max\{\mu_{\pi_{j^*,i} \to j^*} + \mu_{j^* \to \pi_{j^*,i}}, h_{\pi_{j^*,i}}\}
  \end{align}
  for all $i \leq k^*$ by the definition of the permutations $\pi_{\cdot,\cdot}.$ Therefore,
  \begin{align*}
    \bar{t} \geq 2 h_{\pi_{j^*,i}}
  \end{align*}
  for all $i \leq k^*.$
  Thus, using \eqref{eq:tqrdowvGGC} and $\flr{x} \geq \frac{x}{2}$ for all $x \geq 1$, we get
  \begin{align*}
    \sum_{i \in A^*} \flr{\frac{\bar{t}}{h_i}} \geq \sum_{i=1}^{k^*} \frac{\bar{t}}{2 h_{\pi_{j^*,i}}} \overset{\eqref{eq:YxzEIorJvI}}{\geq} S.
  \end{align*}
  Therefore, after $\bar{t}$ seconds the algorithm will calculate at least $S$ stochastic gradients at the point $x^{k}$ using the workers $A^*$. 
  
  \textbf{(Step 2):} By the design of Algorithm~\ref{alg:worker}, once a stochastic gradient $\nabla f(x^{k};\bar{\xi})$ is calculated, it is added to $g_{i, \textnormal{next}}^{k}.$ Then, $g_{i, \textnormal{next}}^k$ is assigned to $g_{i, \textnormal{send}}^k$ which is sent to Process $\textnormal{next}_{{\overline{st}},j^*}(i).$ Finally, Process $\textnormal{next}_{{\overline{st}},j^*}(i)$ receives $g_{i, \textnormal{send}}^k$ and adds it to $g_{(\textnormal{next}_{{\overline{st}},j^*}(i)), \textnormal{next}}^k$.
  Thus, the stochastic gradient $\nabla f(x^{k};\bar{\xi})$ is presented in the sum of $g_{(\textnormal{next}_{{\overline{st}},j^*}(i)), \textnormal{next}}^k$ of Process $\textnormal{next}_{{\overline{st}},j^*}(i)$. At some point, Process 0 in worker $j^*$ will receive a vector $g_{\cdot, \textnormal{send}}^k$ where the stochastic gradient $\nabla f(x^{k};\bar{\xi})$ is presented.

  Let us bound the time required to transmit a stochastic gradient to Process 0 of worker $j^*.$ Once a stochastic gradient $\nabla f(x^{k};\bar{\xi})$ is calculated in Process $i$ from $A^*,$ it is added to a vector $g^k_{i, \textnormal{next}}$ in Process $i.$
  It will take at most $2 \rho_{i \to \textnormal{next}_{{\overline{st}},j^*}(i)}$ seconds to transmit it to Process $\textnormal{next}_{{\overline{st}},j^*}(i)$ because it takes at most $\rho_{i \to \textnormal{next}_{{\overline{st}},j^*}(i)}$ seconds to wait for the transmission of a message $g^k_{i,\textnormal{send}}$ where the stochastic gradient $\nabla f(x^{k};\bar{\xi})$ is not presented, and an additional $\rho_{i \to \textnormal{next}_{{\overline{st}},j^*}(i)}$ seconds to send the next $g^k_{i,\textnormal{send}}$ where it will present. After that, Process $\textnormal{next}_{{\overline{st}},j^*}(i)$ will receive $g^k_{i, \textnormal{send}},$ where the stochastic gradient $\nabla f(x^{k};\bar{\xi})$ presents, and add the vector $g^k_{i, \textnormal{send}}$ to $g^k_{(\textnormal{next}_{{\overline{st}},j^*}(i)), \textnormal{next}}$. Then, it will take at most $2 \rho_{\textnormal{next}_{{\overline{st}},j^*}(i) \to \textnormal{next}(\textnormal{next}_{{\overline{st}},j^*}(i))}$ seconds to send a vector, where the stochastic gradient $\nabla f(x^{k};\bar{\xi})$ presents, to Process $\textnormal{next}(\textnormal{next}_{{\overline{st}},j^*}(i))$ and so forth. In total, after a finite number of such steps a stochastic gradient calculated in Process $i$ will be transmitted to Process 0 of worker $j^*.$ From Definition~\ref{def:next} of $\textnormal{next}_{{\overline{st}},j^*},$ we can conclude that the vector $\nabla f(x^{k};\bar{\xi})$ will be transmitted through the path between workers $i$ and $j^*$ in the spanning tree ${\overline{st}}.$ Thus, it will take at most $2 \mu_{i \to j^*}$ seconds by the definition of $\mu_{i \to j^*}.$

  Using \eqref{eq:mlYZUmBGqgVIOyEw} and the definition of $A^*$, we have
  \begin{align}
    \label{eq:bpNdzkf}
    \bar{t} \geq 2 \max\{\mu_{\pi_{j^*,k^*} \to j^*} + \mu_{j^* \to \pi_{j^*,k^*}}, h_{\pi_{j^*,k^*}}\} \geq 2 \max\{\mu_{i \to j^*} + \mu_{j^* \to i}, h_{i}\} \geq 2 \mu_{i \to j^*}
  \end{align}
  for all $i \in A^*.$ Therefore, it will take at most $\bar{t}$ seconds to calculate at least $S$ stochastic gradients, and at most $\bar{t}$ seconds to send all these stochastic gradients to Process 0. 
  
  \textbf{(Step 3):} It is left to estimate the time of the broadcast steps (\begin{NoHyper}Lines~\ref{alg:begin}--\ref{alg:end_broadcast}\end{NoHyper} in Algorithm~\ref{alg:worker}) through the spanning tree ${\overline{st}}_{\textnormal{bc}}.$ By the definition of $\mu_{j^* \to i},$ the time required to broadcast $x^{k}$ to Process $i$ through the spanning tree ${\overline{st}}_{\textnormal{bc}}$ is less or equal to $2 \mu_{j^* \to i}$ since, in all edges from $j^*$ to $i,$ workers wait at most $\rho_{\cdot \to \cdot}$ seconds while edges are blocked by previous communications, and additional $\rho_{\cdot \to \cdot}$ seconds to send $x^k$ to next workers.
  Using \eqref{eq:mlYZUmBGqgVIOyEw} and the definition of $A^*$, we have
  \begin{align*}
    \bar{t} \geq 2 \max\{\mu_{\pi_{j^*,k^*} \to j^*} + \mu_{j^* \to \pi_{j^*,k^*}}, h_{\pi_{j^*,k^*}}\} \geq 2 \max\{\mu_{i \to j^*} + \mu_{j^* \to i}, h_{i}\} \geq 2 \mu_{j^* \to i}.
  \end{align*}
  Thus, every worker from $A^*$ will get $x^{k}$ after $\bar{t}$ seconds. By combining all times, we can conclude that every iteration in Algorithm~\ref{alg:receiver} will take at most $\bar{t} + \bar{t} + \bar{t} = 3 \bar{t}$ seconds.

  It left to show that
  \begin{align}
    \label{eq:VRuKUashOctloRPRG}
    \bar{t} = \operatorname{O}\left(\max\left\{\max\{\mu_{\pi_{j^*,k^*} \to j^*} + \mu_{j^* \to \pi_{j^*,k^*}}, h_{\pi_{j^*,k^*}}\}, \frac{\sigma^2}{\varepsilon} \left(\sum_{i=1}^{k^*} \frac{1}{h_{\pi_{j^*,i}}}\right)^{-1}\right\}\right).
  \end{align}
  If $S > 1,$ then $S = \max\{\ceil{\nicefrac{\sigma^2}{\varepsilon}}, 1\} = \ceil{\nicefrac{\sigma^2}{\varepsilon}} \leq \nicefrac{2 \sigma^2}{\varepsilon},$ so it is true. Otherwise, if $S \leq 1,$ then
  \begin{align*}
    \bar{t} 
    &\leq 2 \max\left\{\max\{\mu_{\pi_{j^*,k^*} \to j^*} + \mu_{j^* \to \pi_{j^*,k^*}}, h_{\pi_{j^*,k^*}}\}, \left(\sum_{i=1}^{k^*} \frac{1}{h_{\pi_{j^*,i}}}\right)^{-1}\right\} \\
    &\leq 2 \max\left\{\max\{\mu_{\pi_{j^*,k^*} \to j^*} + \mu_{j^* \to \pi_{j^*,k^*}}, h_{\pi_{j^*,k^*}}\}, h_{\pi_{j^*,k^*}}\right\} \\
    &= 2 \max\{\mu_{\pi_{j^*,k^*} \to j^*} + \mu_{j^* \to \pi_{j^*,k^*}}, h_{\pi_{j^*,k^*}}\}
  \end{align*}
  and \eqref{eq:VRuKUashOctloRPRG} holds. Notice that the r.h.s. of $\eqref{eq:VRuKUashOctloRPRG}$ equals to $\operatorname{O}\left(t^*(\nicefrac{\sigma^2}{\varepsilon}, [h_i]_{i=1}^{n}, [\mu_{i \to j^*} + \mu_{j^* \to i}]_{i=1}^n)\right),$ where $t^*$ is the equilibrium time defined in Definition~\ref{def:time_budget}.
\end{proof}

\COROLLLARYMAINTIMES*

\begin{proof}
  The proof is almost the same as in Theorem~\ref{cor:max_time}. If we fix a pivot worker $j^*,$ then the $k$\textsuperscript{th} iteration will finish after at most 
  \begin{align*}
    c \times t^*(\nicefrac{\sigma^2}{\varepsilon}, [h_i^k]_{i=1}^{n}, [\mu_{i \to j^*}^k + \mu_{j^* \to i}^k]_{i=1}^n)
  \end{align*}
  seconds, where $c$ is a universal constant. According to Theorem~\ref{thm:sgd_homog}, the number of iterations is at most $\ceil{\frac{16 L \Delta}{\varepsilon}}.$
  Therefore, the total required time is at most
  \begin{align*}
    c \times \sum_{k=0}^{\ceil{\frac{16 L \Delta}{\varepsilon}}} t^*(\nicefrac{\sigma^2}{\varepsilon}, [h_i^k]_{i=1}^{n}, [\mu_{i \to j^*}^k + \mu_{j^* \to i}^k]_{i=1}^n).
  \end{align*}
\end{proof}

\section{Proof of the Time Complexity for Heterogeneous Case}
\label{sec:proof_heter}
\THEOREMMAINSTATICHETER*

\begin{proof}
  Algorithm~\ref{alg:alg_server_malenia} produces the sequence $x^{k}$ such that 
  $$x^{k+1} = x^k - \gamma g^k = x^k - \gamma\left(\frac{1}{n} \sum_{i=1}^{n} \frac{1}{s_i^k} g_i^k\right) = x^k - \gamma\left(\frac{1}{n} \sum_{i=1}^{n} \frac{1}{s_i^k} \sum_{j=1}^{s_i^k}\nabla f_i(x^k;\xi_{ij})\right),$$ where the $\xi_{ij}$ are independent random samples.
  Using the independence and Assumption~\ref{ass:stochastic_variance_bounded}, we have $\ExpCond{g^k}{\mathcal{G}_k} = \nabla f(x^k)$ and 
  \begin{align}
    &\ExpCond{\norm{g^k - \nabla f(x^k)}^2}{\mathcal{G}_k} \nonumber\\
    &=\ExpCond{\norm{\frac{1}{n} \sum_{i=1}^{n} \frac{1}{s_i^k} \sum_{j=1}^{s_i^k}\nabla f_i(x^k;\xi_{ij}) - \frac{1}{n} \sum_{i=1}^{n} \nabla f_i(x^k)}^2}{\mathcal{G}_k}.
  \end{align}
  where $\mathcal{G}_k$ is a $\sigma$-algebra generated by $g^0, \dots, g^{k-1}.$ We do not dismiss the possibility that the computation and communication times are random, so $s_i^k$ can be random. Assume $\mathcal{V}_k$ is a $\sigma$-algebra generated by all computation and communication times and $g^0, \dots, g^{k-1},$ then $s_i^k$ is $\mathcal{V}_k$--measurable for all $i \in [n].$ Using the independence of stochastic gradients and the times and the tower property, we get

  \begin{align}
    &\ExpCond{\norm{g^k - \nabla f(x^k)}^2}{\mathcal{G}_k} \nonumber\\
    &=\ExpCond{\ExpCond{\norm{\frac{1}{n} \sum_{i=1}^{n} \frac{1}{s_i^k} \sum_{j=1}^{s_i^k}\nabla f_i(x^k;\xi_{ij}) - \frac{1}{n} \sum_{i=1}^{n} \nabla f_i(x^k)}^2}{\mathcal{V}_k}}{\mathcal{G}_k} \\
    &=\frac{1}{n^2} \sum_{i=1}^n \ExpCond{\ExpCond{\norm{\frac{1}{s_i^k} \sum_{j=1}^{s_i^k}\nabla f_i(x^k;\xi_{ij}) - \nabla f_i(x^k)}^2}{\mathcal{V}_k}}{\mathcal{G}_k} \nonumber\\
    &= \frac{1}{n^2} \sum_{i=1}^n \ExpCond{\frac{1}{(s_i^k)^2} \sum_{j=1}^{s_i^k} \ExpCond{\norm{\nabla f_i(x^k;\xi_{ij}) - \nabla f_i(x^k)}^2}{\mathcal{V}_k}}{\mathcal{G}_k} \leq \frac{\sigma^2}{n^2} \ExpCond{\sum_{i=1}^n \frac{1}{s_i^k}}{\mathcal{G}_k} \label{eq:NLLbUcdcMNh}.
  \end{align}

  Algorithm~\ref{alg:receiver_heter} waits for the moment when $s^k \geq \frac{S}{n},$ which is equivalent to 
  \begin{align}
    \label{eq:UHbyWrzIz}
    b_{j^*} \leq \frac{n^2}{S}.
  \end{align}
  The value $b_{j^*}$ is calculated in \begin{NoHyper}Line~\ref{line:b}\end{NoHyper} of Algorithm~\ref{alg:worker_heter}. Due to the asynchronous nature of the algorithm, we can only conclude that
  \begin{align}
    \label{eq:TaMiDCVnhjqSSGnVS}
    b_{j^*} \geq \sum\limits_{p \in [n]:\textnormal{next}_{{\overline{st}},j^*}(p) = j^*} b_{i,p} + \frac{1}{s_{j^*}^k}
  \end{align}
  because $s_{j^*}^k$ can be increased by the time when Process 0 will receive $b_{j^*}.$ Using \begin{NoHyper}Line~\ref{line:b_prev}\end{NoHyper} from Algorithm~\ref{alg:worker_heter}, we can unroll the recursion in \eqref{eq:TaMiDCVnhjqSSGnVS} and get
  \begin{align}
    \label{eq:SVLLxHABVlRpC}
    b_{j^*} \geq \sum_{i=1}^{n} \frac{1}{s_{i}^k}.
  \end{align}
  Let us substitute this inequality to \eqref{eq:UHbyWrzIz} and \eqref{eq:NLLbUcdcMNh} and get
  \begin{align*}
    \ExpCond{\norm{g^k - \nabla f(x^k)}^2}{\mathcal{G}_k} \leq \frac{\sigma^2}{S}.
  \end{align*}

  As in Theorem~\ref{thm:sgd_homog}, we can use the classical SGD result. Using Theorem~\ref{theorem:sgd}, for the stepsize 
  $$\gamma = \frac{1}{2L}\min\left\{1, \frac{\varepsilon S}{\sigma^2}\right\},$$ we have
  
  \begin{align*}
      \frac{1}{K}\sum_{k=0}^{K-1}\Exp{\norm{\nabla f(x^k)}^2} \leq \varepsilon,
  \end{align*}
  if $$K \geq \frac{8 \Delta L}{\varepsilon} + \frac{8 \Delta L \sigma^2}{\varepsilon^2 S}.$$ 
  Using the choice of $S,$ we get that Algorithm~\ref{alg:alg_server_malenia} converges after $$K \geq \frac{16 \Delta L}{\varepsilon}$$ steps with 
  $$\gamma = \frac{1}{2L}\min\left\{1, \frac{\varepsilon S}{\sigma^2}\right\} = \frac{1}{2 L}.$$
\end{proof}

\COROLLLARYMAINHETER*

\begin{proof}
  Due to Theorem~\ref{thm:sgd_heter}, the algorithm converges after $K = \Theta\left(L \Delta / \varepsilon\right)$ iterations. Thus, it left to bound the time of one iteration. At the beginning of every iteration Process 0 broadcasts $x^k,$ which takes at most $\max_{i,j \in [n]} \mu_{i \to j}$ seconds. Then, Algorithm~\ref{alg:receiver_heter} waits for the moment when $s^k \geq \frac{S}{n},$ which is equivalent to $\frac{n^2}{S} \geq b_{j^*}.$ Thus, Algorithm~\ref{alg:receiver_heter} waits for the moment when $\frac{n^2}{S} \geq b_{j^*}.$ We will return to this fact later.

  Let us consider the term
  \begin{align*}
    \frac{S}{n^2}\sum_{i=1}^{n} \frac{1}{s_{i}^k},
  \end{align*}
  where $s_{i}^k$ is the number of stochastic gradients calculated in worker $i.$ Let us fix any time $\bar{t} > 0.$ Then, worker $i$ will calculate at least $\flr{\frac{\bar{t}}{h_i}}$ stochastic gradients by the time $\bar{t}.$ Using this, we get
  \begin{align*}
    \frac{S}{n^2} \sum_{i=1}^{n} \frac{1}{s_{i}^k} \leq \frac{S}{n^2} \sum_{i=1}^{n} \frac{1}{\flr{\frac{\bar{t}}{h_i}}}.
  \end{align*}
  Let us take $$\bar{t} = 2 \left(\max_{i \in [n]} h_i + \frac{S}{n} \left(\frac{1}{n} \sum_{i=1}^{n} h_i\right)\right).$$ Then, since $\flr{x} \geq \frac{x}{2}$ for all $x \geq 1,$ we get $\flr{\frac{\bar{t}}{h_i}} \geq \frac{\bar{t}}{2 h_i}$ and 
  \begin{align*}
    \frac{S}{n^2} \sum_{i=1}^{n} \frac{1}{s_{i}^k} \leq \frac{2 S}{n \bar{t}} \left(\frac{1}{n} \sum_{i=1}^{n} h_i\right).
  \end{align*}
  Using $\bar{t} \geq \frac{2 S}{n} \left(\frac{1}{n} \sum_{i=1}^{n} h_i\right),$ we get
  \begin{align*}
    \frac{S}{n^2} \sum_{i=1}^{n} \frac{1}{s_{i}^k} \leq 1
  \end{align*}
  and 
  \begin{align}
    \label{eq:HeUIznvgiLfcgnqpyL}
    \sum_{i=1}^{n} \frac{1}{s_{i}^k} \leq \frac{n^2}{S}
  \end{align}
  after at most $\bar{t}$ seconds.

  Recall that Algorithm~\ref{alg:receiver_heter} waits for the moment when $\frac{n^2}{S} \geq b_{j^*}.$
  Note that by the time when Process 0 receives $b_{j^*}$, the counter $s_i^k$ can be the same or increased; thus, $b_{j^*}$ captures potentially outdated information about $s_i^k$. We know that \eqref{eq:HeUIznvgiLfcgnqpyL} holds after at most $\bar{t}$ seconds. In \begin{NoHyper}Line~\ref{line:b}\end{NoHyper} of Algorithm~\ref{alg:worker_heter}, Processes recursively collect $\frac{1}{s_i^k}$ to $b_{j^*}.$ Such a procedure will take at most $2 \max_{i \in [n]}\mu_{i \to j^*} \leq 2 \max_{i,j \in [n]}\mu_{i \to j}$ seconds. Thus, the value $b_{j^*}$ will be less or equal $\frac{n^2}{S}$ after at most $\bar{t} + 2 \max_{i,j \in [n]}\mu_{i \to j}$ seconds.

  The all reduce operation in \eqref{line:all_reduce} will take at most $\max_{i,j \in [n]} \mu_{i \to j}$ seconds. Thus, the total time of one iteration can be bounded by
  \begin{align*}
    &\underbrace{\max_{i,j \in [n]} \mu_{i \to j}}_{\textnormal{broadcast}} + (\bar{t} + 2 \max_{i,j \in [n]}\mu_{i \to j}) + \underbrace{\max_{i,j \in [n]} \mu_{i \to j}}_{\textnormal{all reduce}} \\
    &=\operatorname{O}\left(\max_{i,j \in [n]} \mu_{i \to j} + \max_{i \in [n]} h_i + \frac{S}{n} \left(\frac{1}{n} \sum_{i=1}^{n} h_i\right)\right) \\
    &=\operatorname{O}\left(\max_{i,j \in [n]} \mu_{i \to j} + \max_{i \in [n]} h_i + \frac{\sigma^2}{n \varepsilon} \left(\frac{1}{n} \sum_{i=1}^{n} h_i\right)\right).
  \end{align*}
  seconds.
\end{proof}

\section{Classical SGD Theory}
We reprove the classical SGD result \citep{ghadimi2013stochastic, khaled2020better}, for completeness.

\begin{theorem}
    \label{theorem:sgd}
    Let Assumptions~\ref{ass:lipschitz_constant} and \ref{ass:lower_bound} hold. We consider the SGD method: $$x^{k+1} = x^k - \gamma g(x^k),$$ where
    \begin{align*}
        \gamma = \frac{1}{2 L}\min\left\{1, \frac{\varepsilon}{\sigma^2}\right\}
    \end{align*} 
    For all $k \geq 0,$ the vector $g(x)$ is a random vector such that $\ExpCond{g(x^k)}{\mathcal{G}_k} = \nabla f(x^k),$ \begin{align}
        \label{eq:aux_sigma}
        \ExpCond{\norm{g(x^k) - \nabla f(x^k)}^2}{\mathcal{G}_k} \leq \sigma^2,
    \end{align} where $\mathcal{G}_k$ is a $\sigma$-algebra generated by $g(x^0), \dots, g(x^{k-1}).$ Then
    \begin{align*}
        \frac{1}{K}\sum_{k=0}^{K-1}\Exp{\norm{\nabla f(x^k)}^2} \leq \varepsilon
    \end{align*}
    for
    \begin{align*}
        K \geq \frac{8 \Delta L}{\varepsilon} + \frac{8 \Delta L \sigma^2}{\varepsilon^2}.
    \end{align*}
\end{theorem}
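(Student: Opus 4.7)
The plan is to follow the standard descent-lemma analysis. First I would invoke $L$-smoothness (Assumption~\ref{ass:lipschitz_constant}) in the form
\begin{align*}
f(x^{k+1}) \leq f(x^k) + \langle \nabla f(x^k), x^{k+1} - x^k\rangle + \tfrac{L}{2}\|x^{k+1}-x^k\|^2,
\end{align*}
substitute $x^{k+1}-x^k = -\gamma g(x^k)$, and take the conditional expectation with respect to $\mathcal{G}_k$. Using $\mathbb{E}[g(x^k)\mid \mathcal{G}_k] = \nabla f(x^k)$ and the bias-variance decomposition $\mathbb{E}[\|g(x^k)\|^2\mid \mathcal{G}_k] = \|\nabla f(x^k)\|^2 + \mathbb{E}[\|g(x^k)-\nabla f(x^k)\|^2\mid \mathcal{G}_k]$ together with \eqref{eq:aux_sigma}, I get
\begin{align*}
\mathbb{E}[f(x^{k+1})\mid \mathcal{G}_k] \leq f(x^k) - \gamma\|\nabla f(x^k)\|^2 + \tfrac{L\gamma^2}{2}\bigl(\|\nabla f(x^k)\|^2 + \sigma^2\bigr).
\end{align*}

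Next I would use the step-size constraint $\gamma \leq \tfrac{1}{2L}$, implied by the choice of $\gamma$, so that $L\gamma^2/2 \leq \gamma/4 \leq \gamma/2$, yielding
\begin{align*}
\mathbb{E}[f(x^{k+1})\mid \mathcal{G}_k] \leq f(x^k) - \tfrac{\gamma}{2}\|\nabla f(x^k)\|^2 + \tfrac{L\gamma^2 \sigma^2}{2}.
\end{align*}
Taking total expectation, rearranging, and telescoping $k=0,\dots,K-1$ using $f(x^K) \geq f^*$ and $\Delta \geq f(x^0) - f^*$ (Assumption~\ref{ass:lower_bound}) gives
\begin{align*}
\tfrac{1}{K}\sum_{k=0}^{K-1}\mathbb{E}\|\nabla f(x^k)\|^2 \leq \tfrac{2\Delta}{\gamma K} + L\gamma\sigma^2.
\end{align*}

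Finally I would plug in $\gamma = \tfrac{1}{2L}\min\{1, \varepsilon/\sigma^2\}$ and split into two regimes. If $\sigma^2 \leq \varepsilon$, then $\gamma = 1/(2L)$ and $L\gamma\sigma^2 \leq \varepsilon/2$, so the first term is at most $\varepsilon/2$ once $K \geq 8L\Delta/\varepsilon$. If $\sigma^2 > \varepsilon$, then $\gamma = \varepsilon/(2L\sigma^2)$ and $L\gamma\sigma^2 = \varepsilon/2$, while the first term is $4L\Delta\sigma^2/(\varepsilon K) \leq \varepsilon/2$ once $K \geq 8L\Delta\sigma^2/\varepsilon^2$. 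Taking the sum of the two thresholds yields $K \geq 8L\Delta/\varepsilon + 8L\Delta\sigma^2/\varepsilon^2$, as claimed. No step is genuinely hard; the only subtlety is being careful that $\gamma \leq 1/L$ (indeed $\leq 1/(2L)$) so the $L\gamma^2/2$ term can be absorbed into half the gradient-norm term, and keeping the case split clean when substituting the min-form step size.
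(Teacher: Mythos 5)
Your proposal is correct and follows essentially the same route as the paper: the smoothness descent lemma, unbiasedness plus the variance bound \eqref{eq:aux_sigma} to absorb the $L\gamma^2/2$ term into $-\tfrac{\gamma}{2}\norm{\nabla f(x^k)}^2$, telescoping to the bound $\tfrac{2\Delta}{\gamma K} + L\gamma\sigma^2$, and then verifying the stated $\gamma$ and $K$ make this at most $\varepsilon$. Your explicit two-regime case split at the end is just a slightly more detailed write-up of the paper's final step, not a different argument.
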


\begin{proof}
    From Assumption~\ref{ass:lipschitz_constant}, we have
    \begin{align*}
        f(x^{k+1}) &\leq f(x^k) + \inp{\nabla f(x^k)}{x^{k+1} - x^{k}} + \frac{L}{2} \norm{x^{k+1} - x^{k}}^2 \\
        &= f(x^k) - \gamma \inp{\nabla f(x^k)}{g(x^k)} + \frac{L \gamma^2}{2} \norm{g(x^k)}^2.
    \end{align*}
    We denote $\mathcal{G}^k$ as a sigma-algebra generated by $g(x^0), \dots, g(x^{k-1}).$ Using unbiasedness and \eqref{eq:aux_sigma}, we obtain
    \begin{align*}
        \ExpCond{f(x^{k+1})}{\mathcal{G}^k} &\leq f(x^k) - \gamma \left(1 - \frac{L \gamma}{2}\right) \norm{\nabla f(x^k)}^2 + \frac{L \gamma^2}{2} \ExpCond{\norm{g(x^k) - \nabla f(x^k)}^2}{\mathcal{G}^k} \\
        &\leq f(x^k) - \gamma \left(1 - \frac{L \gamma}{2}\right) \norm{\nabla f(x^k)}^2 + \frac{L \gamma^2 \sigma^2}{2}.
    \end{align*}
    Since $\gamma \leq \nicefrac{1}{L},$ we get
    \begin{align*}
        \ExpCond{f(x^{k+1})}{\mathcal{G}^k} \leq f(x^k) - \frac{\gamma}{2} \norm{\nabla f(x^k)}^2 + \frac{L \gamma^2 \sigma^2}{2}.
    \end{align*}
    We subtract $f^*$ and take the full expectation to obtain
    \begin{align*}
        \Exp{f(x^{k+1}) - f^*} \leq \Exp{f(x^k) - f^*} - \frac{\gamma}{2} \Exp{\norm{\nabla f(x^k)}^2} + \frac{L \gamma^2 \sigma^2}{2}.
    \end{align*}
    Next, we sum the inequality for $k \in \{0, \dots, K - 1\}$:
    \begin{align*}
        \Exp{f(x^{K}) - f^*} &\leq f(x^0) - f^* - \sum_{k=0}^{K-1}\frac{\gamma}{2} \Exp{\norm{\nabla f(x^k)}^2} + \frac{K L \gamma^2 \sigma^2}{2} \\
        & = \Delta - \sum_{k=0}^{K-1}\frac{\gamma}{2} \Exp{\norm{\nabla f(x^k)}^2} + \frac{K L \gamma^2 \sigma^2}{2}.
    \end{align*}
    Finally, we rearrange the terms and use that $\Exp{f(x^{K}) - f^*} \geq 0$:
    \begin{align*}
        \frac{1}{K}\sum_{k=0}^{K-1}\Exp{\norm{\nabla f(x^k)}^2} \leq \frac{2 \Delta}{\gamma K} + L \gamma \sigma^2.
    \end{align*}
    The choice of $\gamma$ and $K$ ensures that
    \begin{align*}
        \frac{1}{K}\sum_{k=0}^{K-1}\Exp{\norm{\nabla f(x^k)}^2} \leq \varepsilon.
    \end{align*}
\end{proof}

\begin{theorem}
  \label{theorem:sgd_convex}
  Let Assumptions~\ref{ass:convex} and \ref{ass:lipschitz_constant_function} hold. We consider the SGD method: $$x^{k+1} = x^k - \gamma g(x^k),$$ where
  \begin{align*}
      \gamma = \frac{\varepsilon}{M^2 + \sigma^2}
  \end{align*} 
  For all $k \geq 0,$ the vector $g(x)$ is a random vector such that $\ExpCond{g(x^k)}{\mathcal{G}_k} \in \partial f(x^k)$  
  \begin{align*}
    \ExpCond{\norm{g(x^k) - \ExpCond{g(x^k)}{\mathcal{G}_k}}^2}{\mathcal{G}_k} \leq \sigma^2,
  \end{align*} where $\mathcal{G}_k$ is a $\sigma$-algebra generated by $g(x^0), \dots, g(x^{k-1}).$ Then
  \begin{align}
      \label{eq:stoch_mirr_desc}
      \Exp{f\left(\frac{1}{K} \sum_{k=0}^{K-1} x^k\right)} - f(x^*) \leq \varepsilon
  \end{align}
  for
  \begin{align*}
      K \geq \frac{(M^2 + \sigma^2)\norm{x^* - x^{0}}^2}{\varepsilon^2}.
  \end{align*}
\end{theorem}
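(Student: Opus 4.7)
The plan is to follow the classical stochastic subgradient analysis, tracking the distance to $x^*$ rather than function values, and then applying Jensen at the end via the averaged iterate. Let me describe the steps.

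First I would expand the squared-distance recursion: from $x^{k+1} = x^k - \gamma g(x^k)$ one obtains
\begin{equation*}
\norm{x^{k+1}-x^*}^2 = \norm{x^k-x^*}^2 - 2\gamma \inp{g(x^k)}{x^k-x^*} + \gamma^2 \norm{g(x^k)}^2.
\end{equation*}
Taking conditional expectation with respect to $\mathcal{G}_k$, and using $\ExpCond{g(x^k)}{\mathcal{G}_k} \in \partial f(x^k)$ together with convexity (Assumption~\ref{ass:convex}) gives $\inp{\ExpCond{g(x^k)}{\mathcal{G}_k}}{x^k-x^*} \geq f(x^k) - f(x^*)$. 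For the second-order term, I split $g(x^k) = \ExpCond{g(x^k)}{\mathcal{G}_k} + (g(x^k) - \ExpCond{g(x^k)}{\mathcal{G}_k})$ so that $\ExpCond{\norm{g(x^k)}^2}{\mathcal{G}_k} \leq \norm{\ExpCond{g(x^k)}{\mathcal{G}_k}}^2 + \sigma^2$, and then I use the standard fact that $M$-Lipschitzness of $f$ (Assumption~\ref{ass:lipschitz_constant_function}) implies $\norm{s} \leq M$ for every $s \in \partial f(x)$, yielding $\ExpCond{\norm{g(x^k)}^2}{\mathcal{G}_k} \leq M^2 + \sigma^2$.

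Next I rearrange to obtain the per-step bound
\begin{equation*}
2\gamma \, \ExpCond{f(x^k) - f(x^*)}{\mathcal{G}_k} \leq \norm{x^k - x^*}^2 - \ExpCond{\norm{x^{k+1}-x^*}^2}{\mathcal{G}_k} + \gamma^2 (M^2 + \sigma^2),
\end{equation*}
take full expectations, and telescope $k = 0, \dots, K-1$, throwing away the nonnegative residual $\Exp{\norm{x^K - x^*}^2}$. This produces
\begin{equation*}
\frac{1}{K}\sum_{k=0}^{K-1} \Exp{f(x^k)} - f(x^*) \leq \frac{\norm{x^0 - x^*}^2}{2\gamma K} + \frac{\gamma (M^2 + \sigma^2)}{2}.
\end{equation*}

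Finally I apply Jensen's inequality to $\widehat{x}^K = \frac{1}{K}\sum_{k=0}^{K-1} x^k$ using convexity of $f$, which absorbs the average on the left-hand side, and then I substitute the stepsize $\gamma = \varepsilon / (M^2 + \sigma^2)$. The second term becomes exactly $\varepsilon/2$, and the first term is $\leq \varepsilon/2$ precisely when $K \geq (M^2 + \sigma^2)\norm{x^0 - x^*}^2 / \varepsilon^2$, giving the claimed bound. There is really no obstacle here, the only point that needs a line of justification is the bound $\norm{s} \leq M$ for subgradients of an $M$-Lipschitz convex function; everything else is routine telescoping and Jensen.
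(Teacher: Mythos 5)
Your proposal is correct and is essentially the paper's own argument: the paper's proof uses the same expansion (written as a three-point identity for $\inp{g(x^k)}{x-x^k}$, which is just your squared-distance recursion rearranged), the same bias--variance bound $\ExpCond{\norm{g(x^k)}^2}{\mathcal{G}_k} \leq M^2 + \sigma^2$ via boundedness of subgradients of an $M$-Lipschitz convex function, and the same telescoping plus Jensen step with the identical split $\varepsilon/2 + \varepsilon/2$ under the stated $\gamma$ and $K$. No gaps.
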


\begin{proof}
  We denote $\mathcal{G}^k$ as a sigma-algebra generated by $g(x^0), \dots, g(x^{k-1}).$ Using the convexity, for all $x \in \R^d,$ we have
  \begin{align*}
      f(x) \geq f(x^k) + \inp{\ExpCond{g(x^k)}{\mathcal{G}^k}}{x - x^k} = f(x^k) + \ExpCond{\inp{g(x^k)}{x - x^k}}{\mathcal{G}^k}.
  \end{align*}
  Note that
  \begin{align*}
      \inp{g(x^k)}{x - x^k} &= \inp{g(x^k)}{x^{k+1} - x^k} + \inp{g(x^k)}{x - x^{k+1}} \\
      &= -\gamma \norm{g(x^k)}^2 + \frac{1}{\gamma}\inp{x^k - x^{k+1}}{x - x^{k+1}} \\
      &= -\gamma \norm{g(x^k)}^2 + \frac{1}{2\gamma}\norm{x^k - x^{k+1}}^2 + \frac{1}{2\gamma}\norm{x - x^{k+1}}^2 - \frac{1}{2\gamma}\norm{x - x^{k}}^2 \\
      &= -\frac{\gamma}{2} \norm{g(x^k)}^2 + \frac{1}{2\gamma}\norm{x - x^{k+1}}^2 - \frac{1}{2\gamma}\norm{x - x^{k}}^2
  \end{align*}
  and
  \begin{align*}
      \ExpCond{\norm{g(x^k)}^2}{\mathcal{G}^k} = \ExpCond{\norm{g(x^k) - \ExpCond{g(x^k)}{\mathcal{G}^k}}^2}{\mathcal{G}^k} + \norm{\ExpCond{g(x^k)}{\mathcal{G}^k}}^2 \leq \sigma^2 + M^2.
  \end{align*}
  Therefore, we get
  \begin{align*}
      f(x^k) &\leq f(x) + \ExpCond{\inp{g(x^k)}{x^k - x}}{\mathcal{G}^k} \\
      &= f(x) + \frac{\gamma}{2} \ExpCond{\norm{g(x^k)}^2}{\mathcal{G}^k} + \frac{1}{2\gamma}\norm{x - x^{k}}^2 - \frac{1}{2\gamma}\ExpCond{\norm{x - x^{k+1}}^2}{\mathcal{G}^k} \\
      &\leq f(x) + \frac{\gamma}{2} \left(M^2 + \sigma^2\right) + \frac{1}{2\gamma}\norm{x - x^{k}}^2 - \frac{1}{2\gamma}\ExpCond{\norm{x - x^{k+1}}^2}{\mathcal{G}^k}.
  \end{align*}
  By taking the full expectation and summing the last inequality for $t$ from $0$ to $K - 1,$ we obtain
  \begin{align*}
      \Exp{\sum_{k=0}^{K-1} f(x^k)} &\leq K f(x) + \frac{K \gamma}{2} \left(M^2 + \sigma^2\right) + \frac{1}{2\gamma}\norm{x - x^{0}}^2 - \frac{1}{2\gamma}\Exp{\norm{x - x^{K}}^2} \\
      &\leq K f(x) + \frac{K \gamma}{2} \left(M^2 + \sigma^2\right) + \frac{1}{2\gamma}\norm{x - x^{0}}^2.
  \end{align*}
  Let divide the last inequality by $K,$ take $x = x^*,$ and use the convexity:
  \begin{align*}
      \Exp{f\left(\frac{1}{K} \sum_{k=0}^{K-1} x^k\right)} - f(x^*) \leq \frac{\gamma}{2} \left(M^2 + \sigma^2\right) + \frac{1}{2\gamma K}\norm{x^* - x^{0}}^2.
  \end{align*}
  The choices of $\gamma$ and $K$ ensure that \eqref{eq:stoch_mirr_desc} holds.
\end{proof}

\section{Experiments}
\label{sec:experiments}

We now consider \algname{Fragile SGD} with \algname{Minibatch SGD} on quadratic optimization tasks with stochastic gradients. The working environment was emulated in Python 3.8 with one Intel(R) Xeon(R) Gold 6248 CPU @ 2.50GHz. The homogeneous optimization problem \eqref{eq:main_task} is constructed in the following way. We take 
\begin{align*}
    f(x) = \frac{1}{2} x^\top \mA x - b^\top x \quad \forall x \in \R^d,
\end{align*}
$d = 1000,$ 
$$\mA = \frac{1}{4}\left( \begin{array}{cccc}
    2 & -1 & & 0\\
    -1 & \ddots & \ddots & \\
    & \ddots & \ddots & -1 \\
    0 & & -1 & 2 \end{array} \right) \in \R^{d \times d}, \quad \textnormal{ and } \quad b = \frac{1}{4}\left[ \begin{array}{c}
        -1\\
        0\\
        \vdots\\
        0\end{array} \right] \in \R^{d}.$$

Let us define $[x]_j$ as the $j$\textsuperscript{th} index of a vector $x \in \R^d.$ All $n$ workers calculate the stochastic gradients
\begin{align*}
    [\nabla f(x; \xi)]_j \eqdef [\nabla f(x)]_j \left(1 + \mathbbm{1}\left[j > \textnormal{prog}(x)\right]\left(\frac{\xi}{p} - 1\right)\right) \quad \forall x \in \R^d, \forall i \in [n],
\end{align*}
where $\xi \sim \textnormal{Bernouilli}(p)$, $p \in (0, 1].$ In our experiments, we take $p = 0.001$ and the starting point $x^0 = [\sqrt{d}, 0, \dots, 0]^\top.$ 
We emulate our setup by considering that the $i$\textsuperscript{th} worker requires $h_i = 1$ second to calculate a stochastic gradient. And we assume that the workers have the structure of 2D-Mesh (see Figure~\ref{fid:2d_torus}) and take $\rho_{i \to j} = \rho \in \{0.1, 1, 10\}$ seconds for all edges that connect workers in 2D-Mesh. We take $n = 100.$ In all methods we fine-tune step sizes from the set $\{2^i\,|\,i \in [-20, 20]\}$. In \algname{Fragile SGD}, we fine-tune the batch size $S$ from the set $\{10, 20, 40, 80, 120\}.$ 

The results are presented in Figures~\ref{fig:pho_0_1}, \ref{fig:pho_1_0}, and \ref{fig:pho_10}. The plots are fully consisted with Table~\ref{table:complexities}. One can see that when the communication is fast (Fig.~\ref{fig:pho_0_1}), there is no big difference between the methods because both \algname{Fragile SGD} and \algname{Minibatch SGD} use all workers in the optimization steps. However, when we start decreasing the communication speed, we observe that \algname{Fragile SGD} converges faster. We looked deeper into the optimization processes of \algname{Fragile SGD} in Figure~\ref{fig:pho_10} and observed that only 13 of 100 workers contribute to the optimization process for the batch size $S = 120.$ Other workers are too far away from the pivot worker, and their contributions can only slow down optimization.

\begin{figure}[H]
  \centering
  \includegraphics[width=0.75\textwidth]{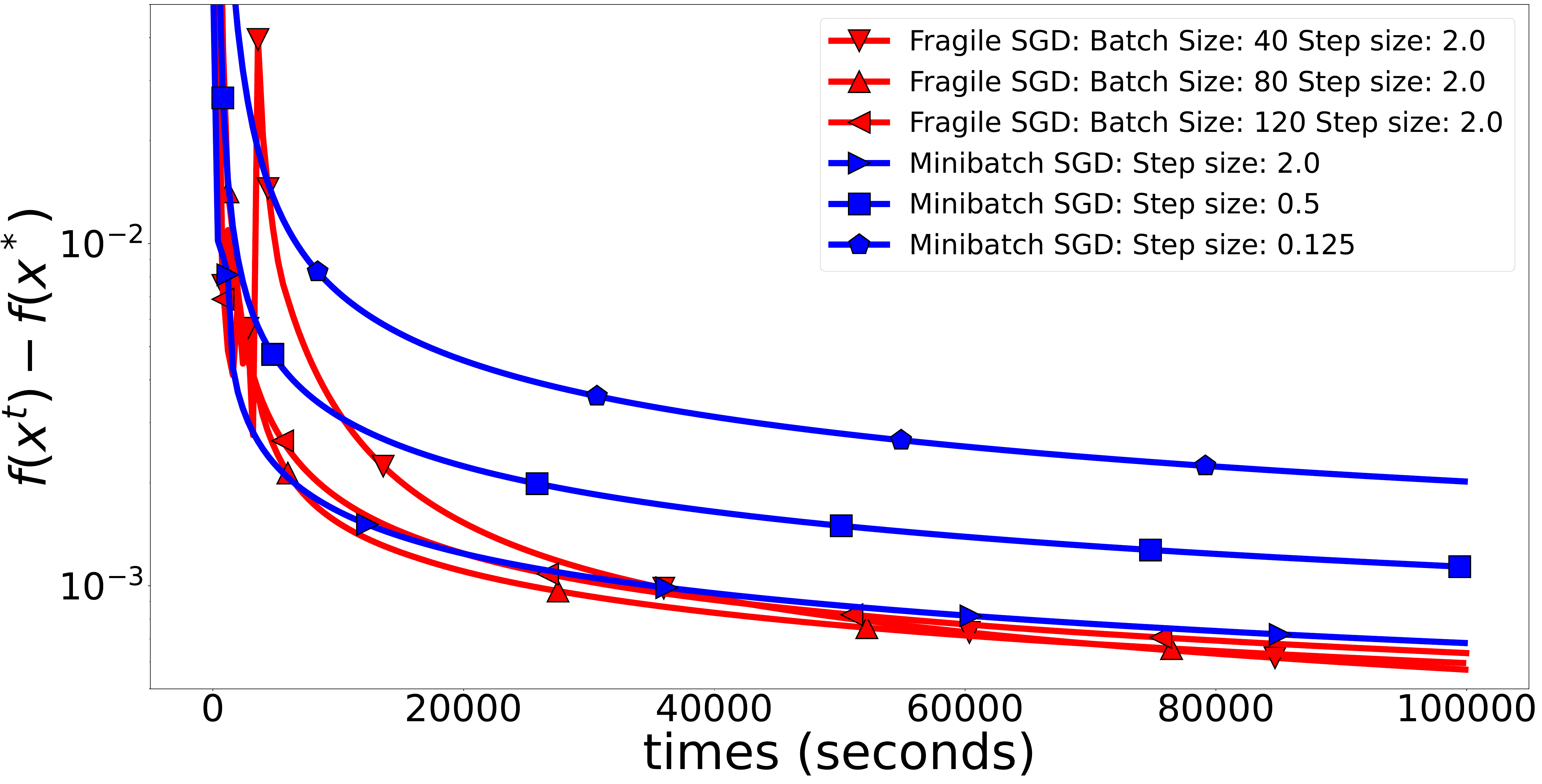}
  \caption{The communication time $\rho = 0.1$ seconds (Fast communication)}
  \label{fig:pho_0_1}
\end{figure}

\begin{figure}[H]
  \centering
  \includegraphics[width=0.75\textwidth]{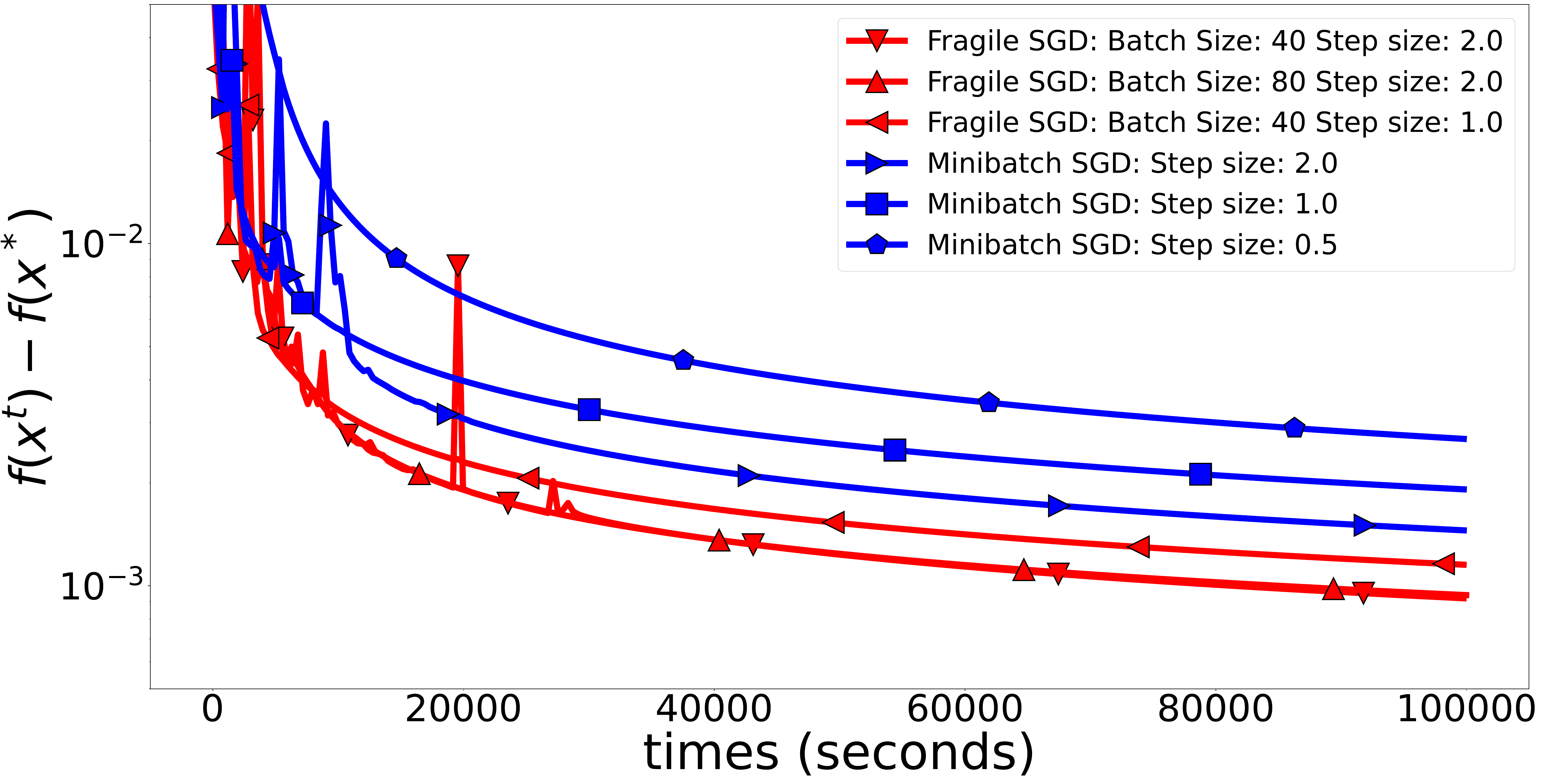}
  \caption{The communication time $\rho = 1$ seconds (Medium speed communication)}
  \label{fig:pho_1_0}
\end{figure}

\begin{figure}[H]
  \centering
  \includegraphics[width=0.75\textwidth]{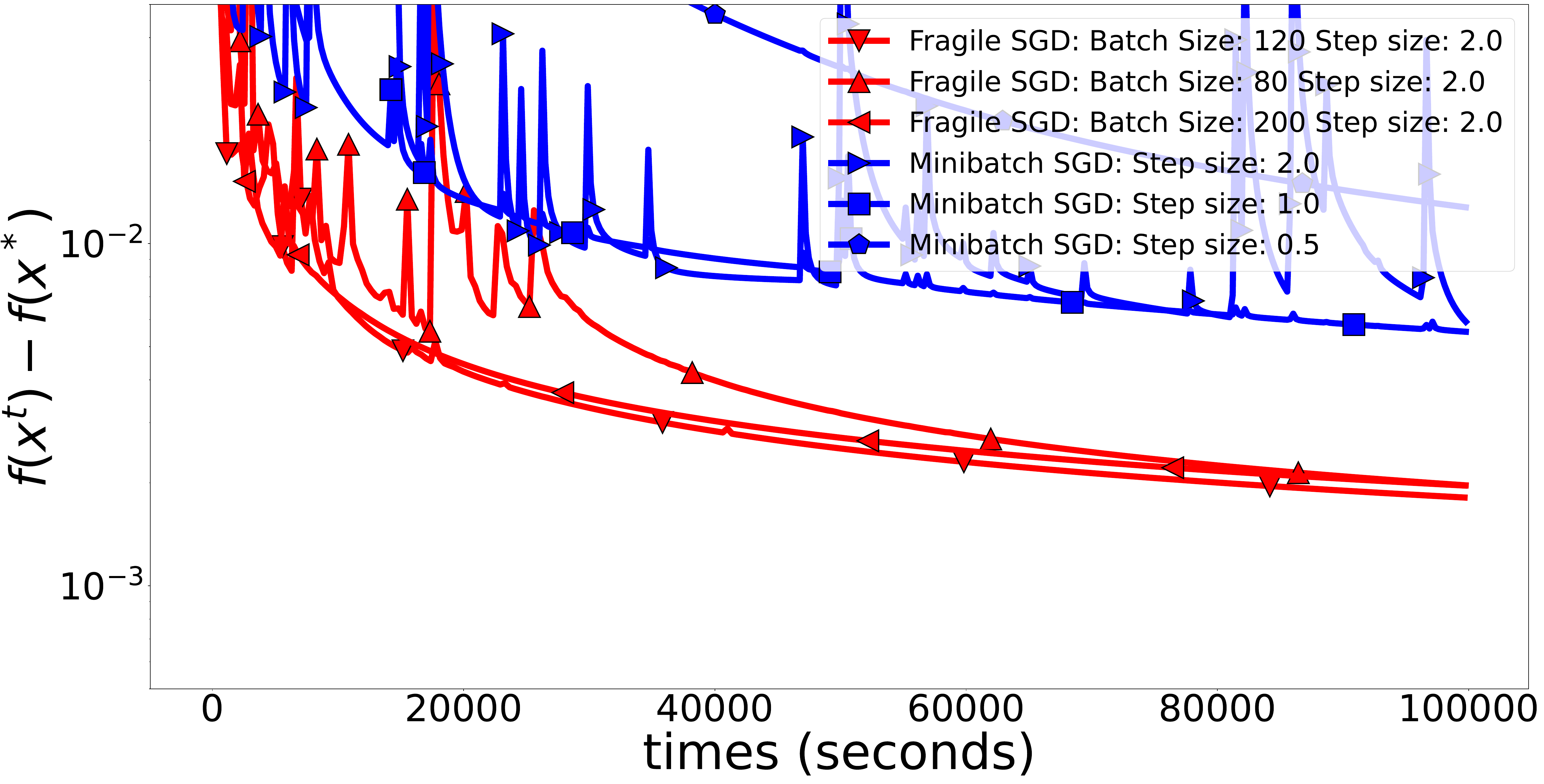}
  \caption{The communication time $\rho = 10$ seconds (Slow communication)}
  \label{fig:pho_10}
\end{figure}

\subsection{Experiments with Logistic Regression: Fast vs Slow Communication}

We now repeat the previous experiments but with logistic regression on \textit{MNIST} dataset (LeCun et al., 2010) with $100$ workers. We consider two regimes: fast and slow communication between workers. One can see that when the communication is fast, the gap between the methods is small, which is expected and compliant with the theory. However, \algname{Fragile SGD} is much faster and has better test accuracy when the communication is slow.
\begin{figure}[h]
\centering
\begin{subfigure}{.49\textwidth}
\centering
\includegraphics[width=0.95\textwidth]{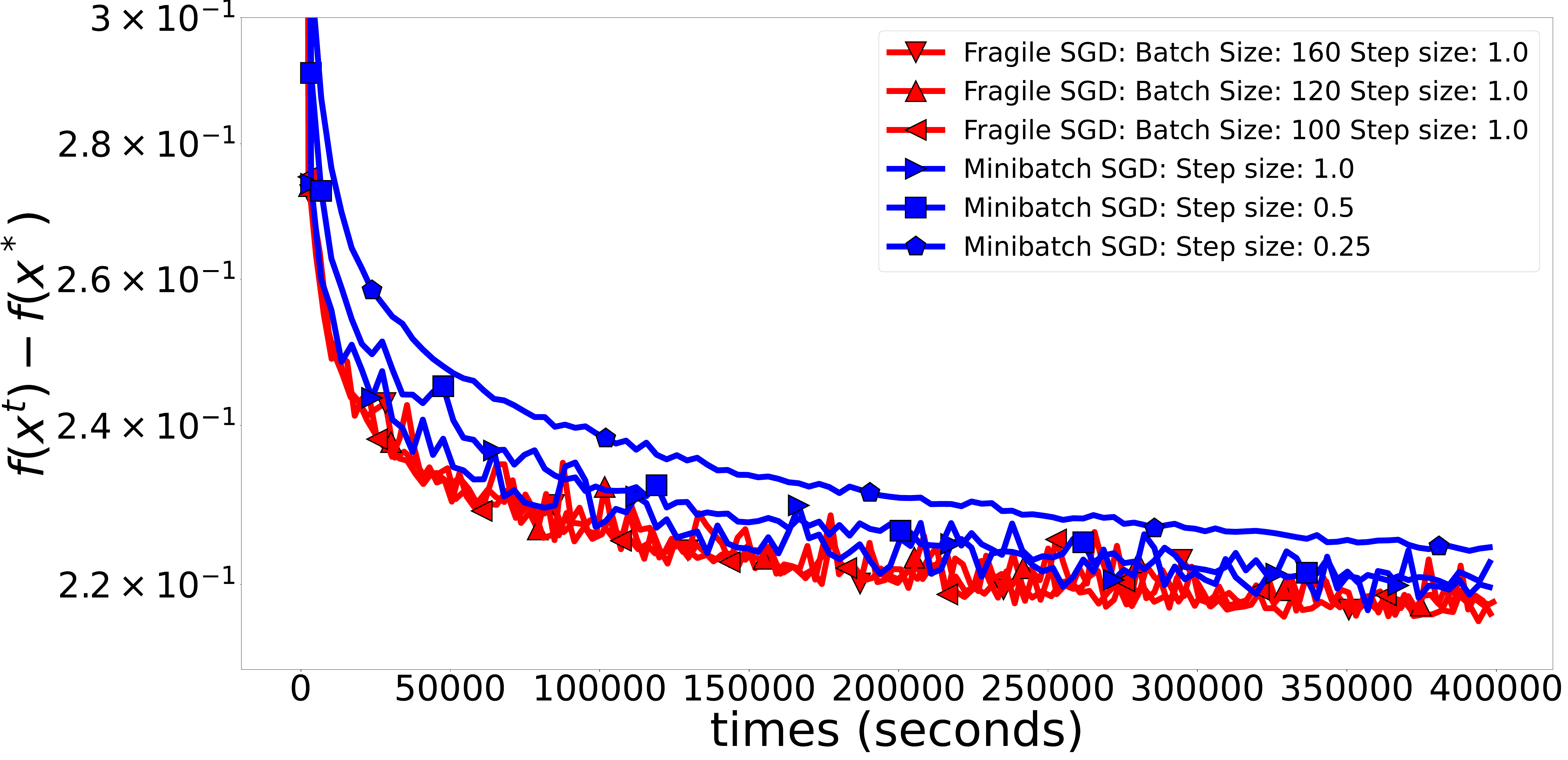}
\end{subfigure}
\begin{subfigure}{.49\textwidth}
\centering
\includegraphics[width=0.95\textwidth]{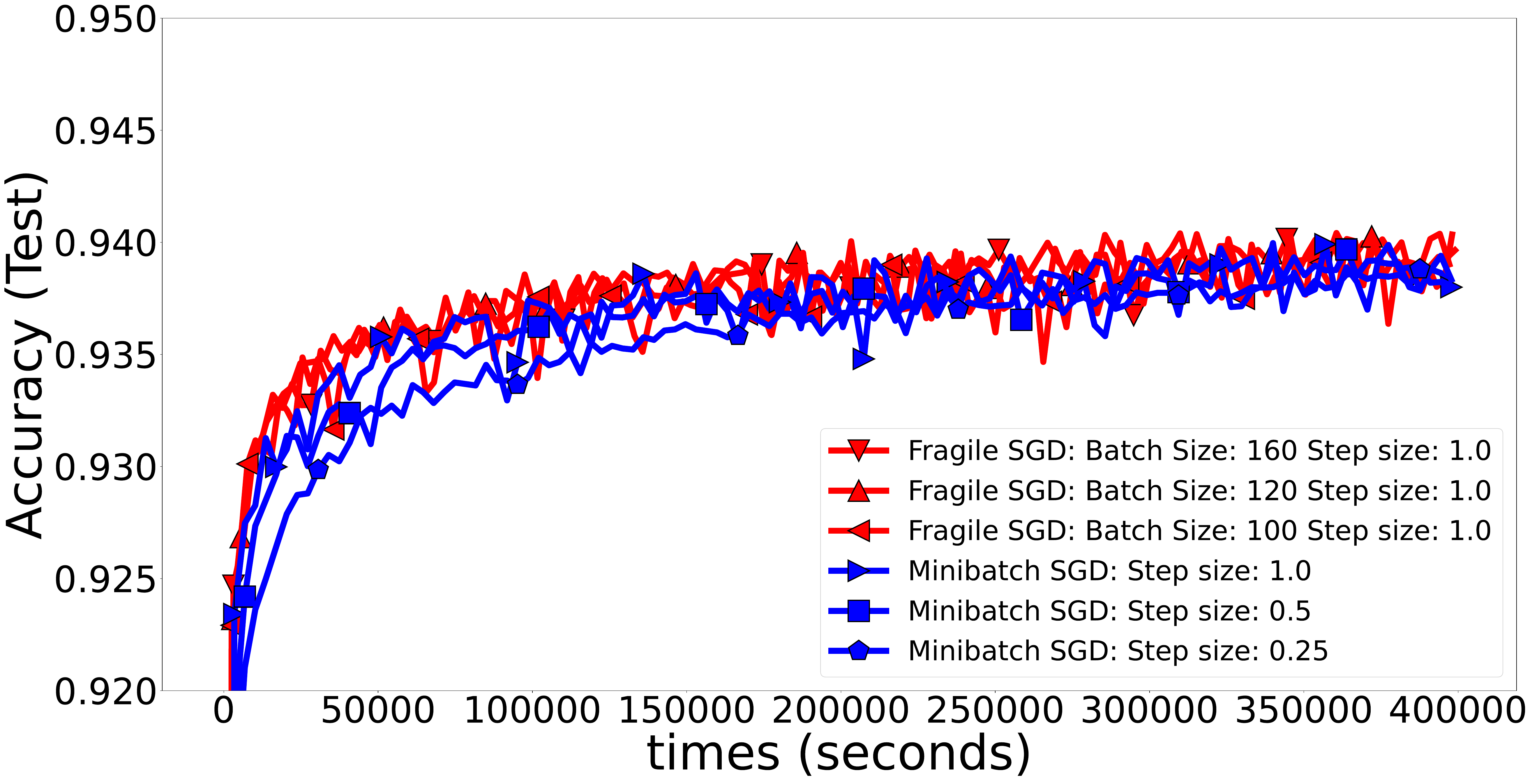}
\end{subfigure}\hfill
\caption{The communication time $\rho = 0.1$ seconds (Fast communication)}
\end{figure}
\begin{figure}[h]
\centering
\begin{subfigure}{.49\textwidth}
\centering
\includegraphics[width=0.95\textwidth]{results/fragile_torus_num_nodes_100_dist_scale_10_0_libsvm_concat.pdf}
\end{subfigure}
\begin{subfigure}{.49\textwidth}
\centering
\includegraphics[width=0.95\textwidth]{results/fragile_torus_num_nodes_100_dist_scale_10_0_libsvm_concat_accuracy.pdf}
\end{subfigure}\hfill
\caption{The communication time $\rho = 10$ seconds (Slow communication)}
\end{figure}

\subsection{Experiments with ResNet-18}

We test algorithms on an image recognition task, \textit{CIFAR10} (Krizhevsky et al., 2009), with the \textit{ResNet-18} (He et al., 2016) deep neural network (the number of parameters $d \approx 10^7$). We use the torus structure and 9 workers. 
We run all methods with the step sizes $\{0.025, 0.25, 2.5\}.$
Our findings from the low-scale experiments are also evident in the large-scale experiments. \algname{Fragile SGD} converges faster than \algname{Minibatch SGD} in terms of function values. When we compare accuracies on the test split of \emph{MNIST}, the superiority of \algname{Fragile SGD} is even more transparent. 
\begin{figure}[H]
\centering
\begin{subfigure}{.45\textwidth}
  \centering
  \includegraphics[width=1.0\textwidth]{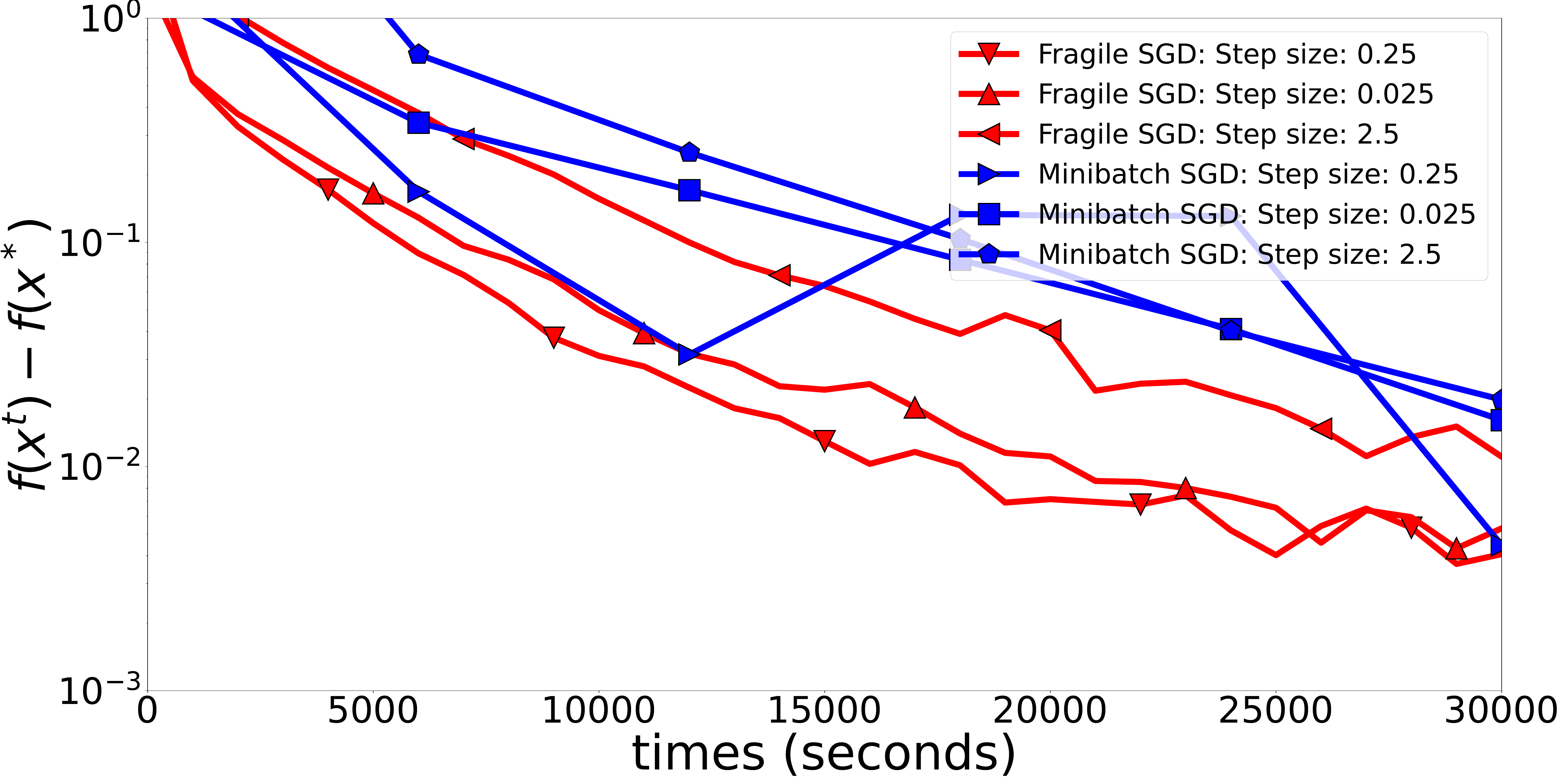}
\end{subfigure}
\begin{subfigure}{.45\textwidth}
  \centering
  \includegraphics[width=1.0\textwidth]{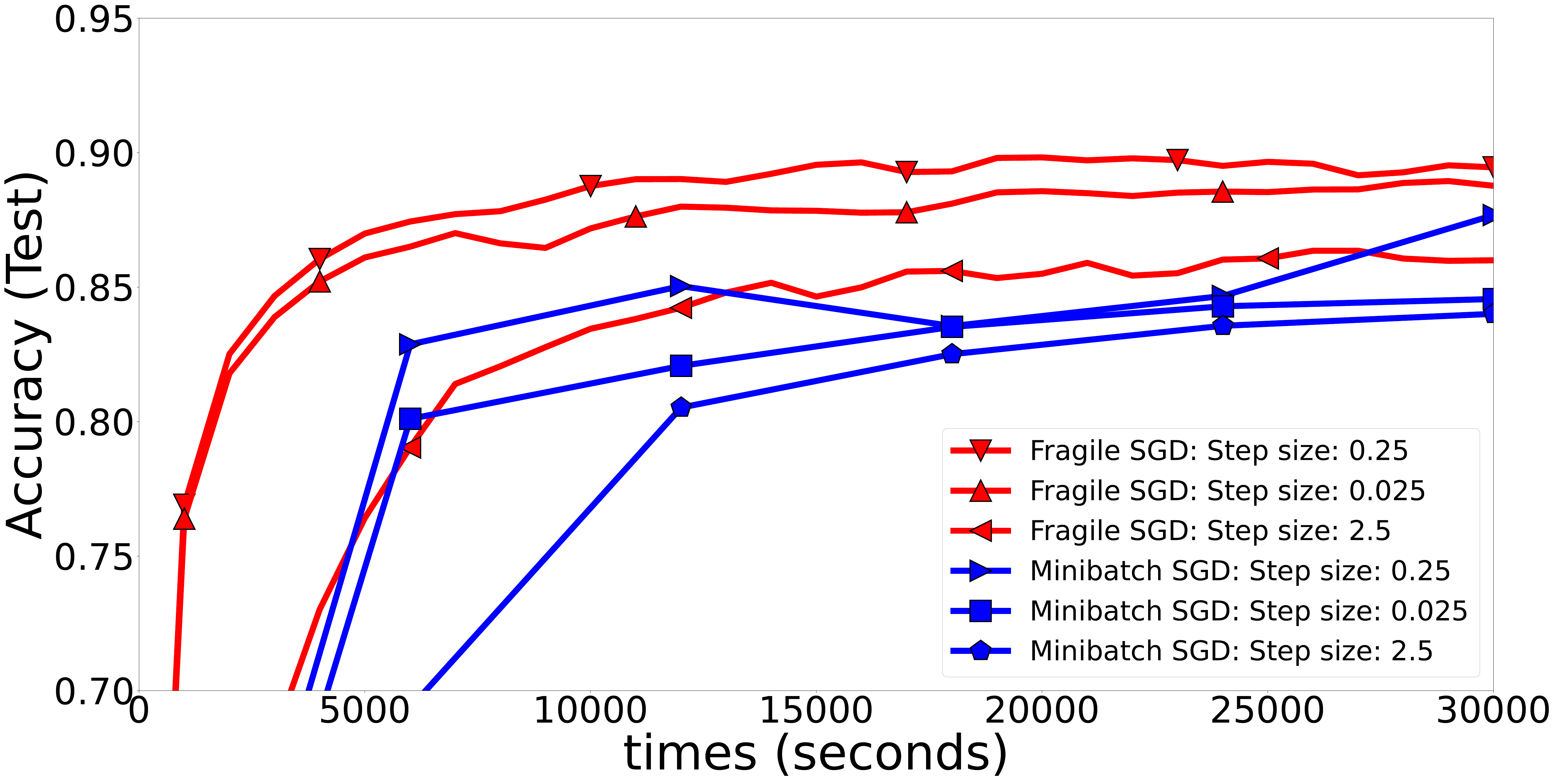}
\end{subfigure}
\caption{\textit{ResNet-18} on \textit{CIFAR10} dataset with 9 workers and the torus structure with the communication time $\rho = 1$ seconds (Medium communication)}
\end{figure}

\newpage
\section*{NeurIPS Paper Checklist}

\begin{enumerate}

\item {\bf Claims}
    \item[] Question: Do the main claims made in the abstract and introduction accurately reflect the paper's contributions and scope?
    \item[] Answer: \answerYes{} % Replace by \answerYes{}, \answerNo{}, or \answerNA{}.
    \item[] Justification: Table~\ref{table:complexities}, the main part of the paper, Section~\ref{sec:heter}
    \item[] Guidelines:
    \begin{itemize}
        \item The answer NA means that the abstract and introduction do not include the claims made in the paper.
        \item The abstract and/or introduction should clearly state the claims made, including the contributions made in the paper and important assumptions and limitations. A No or NA answer to this question will not be perceived well by the reviewers. 
        \item The claims made should match theoretical and experimental results, and reflect how much the results can be expected to generalize to other settings. 
        \item It is fine to include aspirational goals as motivation as long as it is clear that these goals are not attained by the paper. 
    \end{itemize}

\item {\bf Limitations}
    \item[] Question: Does the paper discuss the limitations of the work performed by the authors?
    \item[] Answer: \answerYes{} % Replace by \answerYes{}, \answerNo{}, or \answerNA{}.
    \item[] Justification: Section~\ref{sec:limit}
    \item[] Guidelines:
    \begin{itemize}
        \item The answer NA means that the paper has no limitation while the answer No means that the paper has limitations, but those are not discussed in the paper. 
        \item The authors are encouraged to create a separate "Limitations" section in their paper.
        \item The paper should point out any strong assumptions and how robust the results are to violations of these assumptions (e.g., independence assumptions, noiseless settings, model well-specification, asymptotic approximations only holding locally). The authors should reflect on how these assumptions might be violated in practice and what the implications would be.
        \item The authors should reflect on the scope of the claims made, e.g., if the approach was only tested on a few datasets or with a few runs. In general, empirical results often depend on implicit assumptions, which should be articulated.
        \item The authors should reflect on the factors that influence the performance of the approach. For example, a facial recognition algorithm may perform poorly when image resolution is low or images are taken in low lighting. Or a speech-to-text system might not be used reliably to provide closed captions for online lectures because it fails to handle technical jargon.
        \item The authors should discuss the computational efficiency of the proposed algorithms and how they scale with dataset size.
        \item If applicable, the authors should discuss possible limitations of their approach to address problems of privacy and fairness.
        \item While the authors might fear that complete honesty about limitations might be used by reviewers as grounds for rejection, a worse outcome might be that reviewers discover limitations that aren't acknowledged in the paper. The authors should use their best judgment and recognize that individual actions in favor of transparency play an important role in developing norms that preserve the integrity of the community. Reviewers will be specifically instructed to not penalize honesty concerning limitations.
    \end{itemize}

\item {\bf Theory Assumptions and Proofs}
    \item[] Question: For each theoretical result, does the paper provide the full set of assumptions and a complete (and correct) proof?
    \item[] Answer: \answerYes{} % Replace by \answerYes{}, \answerNo{}, or \answerNA{}.
    \item[] Justification: Section\ref{sec:setup}, and the appendix
    \item[] Guidelines:
    \begin{itemize}
        \item The answer NA means that the paper does not include theoretical results. 
        \item All the theorems, formulas, and proofs in the paper should be numbered and cross-referenced.
        \item All assumptions should be clearly stated or referenced in the statement of any theorems.
        \item The proofs can either appear in the main paper or the supplemental material, but if they appear in the supplemental material, the authors are encouraged to provide a short proof sketch to provide intuition. 
        \item Inversely, any informal proof provided in the core of the paper should be complemented by formal proofs provided in appendix or supplemental material.
        \item Theorems and Lemmas that the proof relies upon should be properly referenced. 
    \end{itemize}

    \item {\bf Experimental Result Reproducibility}
    \item[] Question: Does the paper fully disclose all the information needed to reproduce the main experimental results of the paper to the extent that it affects the main claims and/or conclusions of the paper (regardless of whether the code and data are provided or not)?
    \item[] Answer: \answerYes{} % Replace by \answerYes{}, \answerNo{}, or \answerNA{}.
    \item[] Justification: Section~\ref{sec:experiments}
    \item[] Guidelines:
    \begin{itemize}
        \item The answer NA means that the paper does not include experiments.
        \item If the paper includes experiments, a No answer to this question will not be perceived well by the reviewers: Making the paper reproducible is important, regardless of whether the code and data are provided or not.
        \item If the contribution is a dataset and/or model, the authors should describe the steps taken to make their results reproducible or verifiable. 
        \item Depending on the contribution, reproducibility can be accomplished in various ways. For example, if the contribution is a novel architecture, describing the architecture fully might suffice, or if the contribution is a specific model and empirical evaluation, it may be necessary to either make it possible for others to replicate the model with the same dataset, or provide access to the model. In general. releasing code and data is often one good way to accomplish this, but reproducibility can also be provided via detailed instructions for how to replicate the results, access to a hosted model (e.g., in the case of a large language model), releasing of a model checkpoint, or other means that are appropriate to the research performed.
        \item While NeurIPS does not require releasing code, the conference does require all submissions to provide some reasonable avenue for reproducibility, which may depend on the nature of the contribution. For example
        \begin{enumerate}
            \item If the contribution is primarily a new algorithm, the paper should make it clear how to reproduce that algorithm.
            \item If the contribution is primarily a new model architecture, the paper should describe the architecture clearly and fully.
            \item If the contribution is a new model (e.g., a large language model), then there should either be a way to access this model for reproducing the results or a way to reproduce the model (e.g., with an open-source dataset or instructions for how to construct the dataset).
            \item We recognize that reproducibility may be tricky in some cases, in which case authors are welcome to describe the particular way they provide for reproducibility. In the case of closed-source models, it may be that access to the model is limited in some way (e.g., to registered users), but it should be possible for other researchers to have some path to reproducing or verifying the results.
        \end{enumerate}
    \end{itemize}

\item {\bf Open access to data and code}
    \item[] Question: Does the paper provide open access to the data and code, with sufficient instructions to faithfully reproduce the main experimental results, as described in supplemental material?
    \item[] Answer: \answerYes{} % Replace by \answerYes{}, \answerNo{}, or \answerNA{}.
    \item[] Justification: The code in the supplementary materials.
    \item[] Guidelines:
    \begin{itemize}
        \item The answer NA means that paper does not include experiments requiring code.
        \item Please see the NeurIPS code and data submission guidelines (\url{https://nips.cc/public/guides/CodeSubmissionPolicy}) for more details.
        \item While we encourage the release of code and data, we understand that this might not be possible, so “No” is an acceptable answer. Papers cannot be rejected simply for not including code, unless this is central to the contribution (e.g., for a new open-source benchmark).
        \item The instructions should contain the exact command and environment needed to run to reproduce the results. See the NeurIPS code and data submission guidelines (\url{https://nips.cc/public/guides/CodeSubmissionPolicy}) for more details.
        \item The authors should provide instructions on data access and preparation, including how to access the raw data, preprocessed data, intermediate data, and generated data, etc.
        \item The authors should provide scripts to reproduce all experimental results for the new proposed method and baselines. If only a subset of experiments are reproducible, they should state which ones are omitted from the script and why.
        \item At submission time, to preserve anonymity, the authors should release anonymized versions (if applicable).
        \item Providing as much information as possible in supplemental material (appended to the paper) is recommended, but including URLs to data and code is permitted.
    \end{itemize}

\item {\bf Experimental Setting/Details}
    \item[] Question: Does the paper specify all the training and test details (e.g., data splits, hyperparameters, how they were chosen, type of optimizer, etc.) necessary to understand the results?
    \item[] Answer: \answerYes{} % Replace by \answerYes{}, \answerNo{}, or \answerNA{}.
    \item[] Justification: Section~\ref{sec:experiments}
    \item[] Guidelines:
    \begin{itemize}
        \item The answer NA means that the paper does not include experiments.
        \item The experimental setting should be presented in the core of the paper to a level of detail that is necessary to appreciate the results and make sense of them.
        \item The full details can be provided either with the code, in appendix, or as supplemental material.
    \end{itemize}

\item {\bf Experiment Statistical Significance}
    \item[] Question: Does the paper report error bars suitably and correctly defined or other appropriate information about the statistical significance of the experiments?
    \item[] Answer: \answerYes{} % Replace by \answerYes{}, \answerNo{}, or \answerNA{}.
    \item[] Justification: We provide the top 3 best plots for each algorithm to reduce randomness factors in Section~\ref{sec:experiments}.
    \item[] Guidelines:
    \begin{itemize}
        \item The answer NA means that the paper does not include experiments.
        \item The authors should answer "Yes" if the results are accompanied by error bars, confidence intervals, or statistical significance tests, at least for the experiments that support the main claims of the paper.
        \item The factors of variability that the error bars are capturing should be clearly stated (for example, train/test split, initialization, random drawing of some parameter, or overall run with given experimental conditions).
        \item The method for calculating the error bars should be explained (closed form formula, call to a library function, bootstrap, etc.)
        \item The assumptions made should be given (e.g., Normally distributed errors).
        \item It should be clear whether the error bar is the standard deviation or the standard error of the mean.
        \item It is OK to report 1-sigma error bars, but one should state it. The authors should preferably report a 2-sigma error bar than state that they have a 96\% CI, if the hypothesis of Normality of errors is not verified.
        \item For asymmetric distributions, the authors should be careful not to show in tables or figures symmetric error bars that would yield results that are out of range (e.g. negative error rates).
        \item If error bars are reported in tables or plots, The authors should explain in the text how they were calculated and reference the corresponding figures or tables in the text.
    \end{itemize}

\item {\bf Experiments Compute Resources}
    \item[] Question: For each experiment, does the paper provide sufficient information on the computer resources (type of compute workers, memory, time of execution) needed to reproduce the experiments?
    \item[] Answer: \answerYes{} % Replace by \answerYes{}, \answerNo{}, or \answerNA{}.
    \item[] Justification: Section~\ref{sec:experiments}
    \item[] Guidelines:
    \begin{itemize}
        \item The answer NA means that the paper does not include experiments.
        \item The paper should indicate the type of compute workers CPU or GPU, internal cluster, or cloud provider, including relevant memory and storage.
        \item The paper should provide the amount of compute required for each of the individual experimental runs as well as estimate the total compute. 
        \item The paper should disclose whether the full research project required more compute than the experiments reported in the paper (e.g., preliminary or failed experiments that didn't make it into the paper). 
    \end{itemize}
    
\item {\bf Code Of Ethics}
    \item[] Question: Does the research conducted in the paper conform, in every respect, with the NeurIPS Code of Ethics \url{https://neurips.cc/public/EthicsGuidelines}?
    \item[] Answer: \answerYes{} % Replace by \answerYes{}, \answerNo{}, or \answerNA{}.
    \item[] Justification: We have read the code of ethics, and our paper does not violate it.
    \item[] Guidelines:
    \begin{itemize}
        \item The answer NA means that the authors have not reviewed the NeurIPS Code of Ethics.
        \item If the authors answer No, they should explain the special circumstances that require a deviation from the Code of Ethics.
        \item The authors should make sure to preserve anonymity (e.g., if there is a special consideration due to laws or regulations in their jurisdiction).
    \end{itemize}

\item {\bf Broader Impacts}
    \item[] Question: Does the paper discuss both potential positive societal impacts and negative societal impacts of the work performed?
    \item[] Answer: \answerNA{} % Replace by \answerYes{}, \answerNo{}, or \answerNA{}.
    \item[] Justification: We consider a mathematical problem for machine learning.
    \item[] Guidelines:
    \begin{itemize}
        \item The answer NA means that there is no societal impact of the work performed.
        \item If the authors answer NA or No, they should explain why their work has no societal impact or why the paper does not address societal impact.
        \item Examples of negative societal impacts include potential malicious or unintended uses (e.g., disinformation, generating fake profiles, surveillance), fairness considerations (e.g., deployment of technologies that could make decisions that unfairly impact specific groups), privacy considerations, and security considerations.
        \item The conference expects that many papers will be foundational research and not tied to particular applications, let alone deployments. However, if there is a direct path to any negative applications, the authors should point it out. For example, it is legitimate to point out that an improvement in the quality of generative models could be used to generate deepfakes for disinformation. On the other hand, it is not needed to point out that a generic algorithm for optimizing neural networks could enable people to train models that generate Deepfakes faster.
        \item The authors should consider possible harms that could arise when the technology is being used as intended and functioning correctly, harms that could arise when the technology is being used as intended but gives incorrect results, and harms following from (intentional or unintentional) misuse of the technology.
        \item If there are negative societal impacts, the authors could also discuss possible mitigation strategies (e.g., gated release of models, providing defenses in addition to attacks, mechanisms for monitoring misuse, mechanisms to monitor how a system learns from feedback over time, improving the efficiency and accessibility of ML).
    \end{itemize}
    
\item {\bf Safeguards}
    \item[] Question: Does the paper describe safeguards that have been put in place for responsible release of data or models that have a high risk for misuse (e.g., pretrained language models, image generators, or scraped datasets)?
    \item[] Answer: \answerNA{} % Replace by \answerYes{}, \answerNo{}, or \answerNA{}.
    \item[] Justification: We consider a mathematical problem for machine learning.
    \item[] Guidelines:
    \begin{itemize}
        \item The answer NA means that the paper poses no such risks.
        \item Released models that have a high risk for misuse or dual-use should be released with necessary safeguards to allow for controlled use of the model, for example by requiring that users adhere to usage guidelines or restrictions to access the model or implementing safety filters. 
        \item Datasets that have been scraped from the Internet could pose safety risks. The authors should describe how they avoided releasing unsafe images.
        \item We recognize that providing effective safeguards is challenging, and many papers do not require this, but we encourage authors to take this into account and make a best faith effort.
    \end{itemize}

\item {\bf Licenses for existing assets}
    \item[] Question: Are the creators or original owners of assets (e.g., code, data, models), used in the paper, properly credited and are the license and terms of use explicitly mentioned and properly respected?
    \item[] Answer: \answerNA{} % Replace by \answerYes{}, \answerNo{}, or \answerNA{}.
    \item[] Justification:
    \item[] Guidelines:
    \begin{itemize}
        \item The answer NA means that the paper does not use existing assets.
        \item The authors should cite the original paper that produced the code package or dataset.
        \item The authors should state which version of the asset is used and, if possible, include a URL.
        \item The name of the license (e.g., CC-BY 4.0) should be included for each asset.
        \item For scraped data from a particular source (e.g., website), the copyright and terms of service of that source should be provided.
        \item If assets are released, the license, copyright information, and terms of use in the package should be provided. For popular datasets, \url{paperswithcode.com/datasets} has curated licenses for some datasets. Their licensing guide can help determine the license of a dataset.
        \item For existing datasets that are re-packaged, both the original license and the license of the derived asset (if it has changed) should be provided.
        \item If this information is not available online, the authors are encouraged to reach out to the asset's creators.
    \end{itemize}

\item {\bf New Assets}
    \item[] Question: Are new assets introduced in the paper well documented and is the documentation provided alongside the assets?
    \item[] Answer: \answerYes{} % Replace by \answerYes{}, \answerNo{}, or \answerNA{}.
    \item[] Justification: The code and documentation in the supplementary materials.
    \item[] Guidelines:
    \begin{itemize}
        \item The answer NA means that the paper does not release new assets.
        \item Researchers should communicate the details of the dataset/code/model as part of their submissions via structured templates. This includes details about training, license, limitations, etc. 
        \item The paper should discuss whether and how consent was obtained from people whose asset is used.
        \item At submission time, remember to anonymize your assets (if applicable). You can either create an anonymized URL or include an anonymized zip file.
    \end{itemize}

\item {\bf Crowdsourcing and Research with Human Subjects}
    \item[] Question: For crowdsourcing experiments and research with human subjects, does the paper include the full text of instructions given to participants and screenshots, if applicable, as well as details about compensation (if any)? 
    \item[] Answer: \answerNA{} % Replace by \answerYes{}, \answerNo{}, or \answerNA{}.
    \item[] Justification:
    \item[] Guidelines:
    \begin{itemize}
        \item The answer NA means that the paper does not involve crowdsourcing nor research with human subjects.
        \item Including this information in the supplemental material is fine, but if the main contribution of the paper involves human subjects, then as much detail as possible should be included in the main paper. 
        \item According to the NeurIPS Code of Ethics, workers involved in data collection, curation, or other labor should be paid at least the minimum wage in the country of the data collector. 
    \end{itemize}

\item {\bf Institutional Review Board (IRB) Approvals or Equivalent for Research with Human Subjects}
    \item[] Question: Does the paper describe potential risks incurred by study participants, whether such risks were disclosed to the subjects, and whether Institutional Review Board (IRB) approvals (or an equivalent approval/review based on the requirements of your country or institution) were obtained?
    \item[] Answer: \answerNA{} % Replace by \answerYes{}, \answerNo{}, or \answerNA{}.
    \item[] Justification:
    \item[] Guidelines:
    \begin{itemize}
        \item The answer NA means that the paper does not involve crowdsourcing nor research with human subjects.
        \item Depending on the country in which research is conducted, IRB approval (or equivalent) may be required for any human subjects research. If you obtained IRB approval, you should clearly state this in the paper. 
        \item We recognize that the procedures for this may vary significantly between institutions and locations, and we expect authors to adhere to the NeurIPS Code of Ethics and the guidelines for their institution. 
        \item For initial submissions, do not include any information that would break anonymity (if applicable), such as the institution conducting the review.
    \end{itemize}

\end{enumerate}

\end{document}